\definecolor{darkgreen}{rgb}{0.0,0,0.9}
\theoremstyle{plain}
\providecommand{\customgenericname}{}
\newcommand{\newcustomtheorem}[2]{%
  \newenvironment{#1}[1]
  {%
   \renewcommand\customgenericname{#2}%
   \renewcommand\theinnercustomgeneric{##1}%
   \innercustomgeneric
  }
  {\endinnercustomgeneric}
}
\newlength{\widebarargwidth}
\newlength{\widebarargheight}
\newlength{\widebarargdepth}
\DeclareRobustCommand{\widebar}[1]{%
  \settowidth{\widebarargwidth}{\ensuremath{#1}}%
  \settoheight{\widebarargheight}{\ensuremath{#1}}%
  \settodepth{\widebarargdepth}{\ensuremath{#1}}%
  \addtolength{\widebarargwidth}{-0.3\widebarargheight}%
  \addtolength{\widebarargwidth}{-0.3\widebarargdepth}%
  \makebox[0pt][l]{\hspace{0.3\widebarargheight}%
    \hspace{0.3\widebarargdepth}%
    \addtolength{\widebarargheight}{0.3ex}%
    \rule[\widebarargheight]{0.95\widebarargwidth}{0.1ex}}%
  {#1}}
\long\def\@makecaption#1#2{
        \vskip 0.8ex
        \setbox\@tempboxa\hbox{\small {\bf #1:} #2}
        \parindent 1.5em  
        \dimen0=\hsize
        \advance\dimen0 by -3em
        \ifdim \wd\@tempboxa >\dimen0
                \hbox to \hsize{
                        \parindent 0em
                        \hfil 
                        \parbox{\dimen0}{\def\baselinestretch{0.96}\small
                                {\bf #1.} #2
                                } 
                        \hfil}
        \else \hbox to \hsize{\hfil \box\@tempboxa \hfil}
        \fi
        }
\long\def\comment#1{}
\newcommand{\Prob}{\ensuremath{{\mathbb{P}}}}
\DeclareMathOperator{\diag}{diag}
\DeclareSymbolFont{rsfs}{U}{rsfs}{m}{n}
\DeclareSymbolFontAlphabet{\mathscrsfs}{rsfs}
\numberwithin{equation}{section}
\newtheoremstyle{myexample} 
    {\topsep}                    
    {\topsep}                    
    {\rm }                   
    {}                           
    {\bf }                   
    {.}                          
    {.5em}                       
    {}  
\newtheoremstyle{myremark} 
    {\topsep}                    
    {\topsep}                    
    {\rm}                        
    {}                           
    {\bf}                        
    {.}                          
    {.5em}                       
    {}  
\newtheorem{claim}{Claim}[section]
\newtheorem{lemma}[claim]{Lemma}
\newtheorem{fact}[claim]{Fact}
\newtheorem{theorem}{Theorem}
\newtheorem{proposition}[claim]{Proposition}
\newtheorem{corollary}[claim]{Corollary}
\newtheorem{definition}[claim]{Definition}
\theoremstyle{myremark}
\newtheorem{remark}{Remark}[section]
 \newenvironment{proofof}[1]{{\bf {\em Proof of #1.}}}{\hfill \rule{2mm}{2mm} 
 }
\tikzset{
data/.style={circle, draw, text centered, minimum height=3em ,minimum width = .5em, inner sep = 2pt},
empty/.style={circle, text centered, minimum height=3em ,minimum width = .5em, inner sep = 2pt},
}
\pgfplotsset{compat=1.5}
\newcommand{\norm}[1]{\left\lVert#1\right\rVert}
\DeclarePairedDelimiterX{\inp}[2]{\langle}{\rangle}{#1, #2}
\DeclareMathOperator{\E}{\mathbb{E}}
\DeclareMathOperator{\pr}{\mathbb{P}}
\DeclareMathOperator*{\argmin}{arg\,min}
\newcommand{\beps}{\boldsymbol{\epsilon}}
\newcommand{\bbe}{\boldsymbol{\beta}}
\newcommand{\bw}{\boldsymbol{w}}
\newcommand{\bX}{\boldsymbol{X}}
\newcommand{\by}{\boldsymbol{y}}
\newcommand{\bM}{\boldsymbol{M}}
\newcommand{\bZ}{\boldsymbol{Z}}
\newcommand{\bz}{\boldsymbol{z}}
\newcommand{\bW}{\boldsymbol{W}}
\newcommand{\bI}{\boldsymbol{I}}
\newcommand{\bSig}{\boldsymbol{\Sigma}}
\newcommand{\be}{\boldsymbol{e}}
\newcommand{\bg}{\boldsymbol{g}}
\newcommand{\bv}{\boldsymbol{v}}
\newcommand{\bu}{\boldsymbol{u}}
\newcommand{\bx}{\boldsymbol{x}}
\newcommand{\bA}{\boldsymbol{A}}
\newcommand{\bOmega}{\boldsymbol{\Omega}}
\def\normal{{\sf N}}
\title{Imputation for High-Dimensional Linear Regression}
\author{Kabir Aladin Chandrasekher\thanks{Department of Electrical Engineering,
Stanford University} \and Ahmed El Alaoui\footnotemark[1]~\thanks{Department of Statistics, Stanford University} \and
Andrea Montanari\footnotemark[1]~\footnotemark[2]}
\begin{document}

%
%
%
%
\date{}
%
\maketitle
\begin{abstract}

We study high-dimensional regression with missing entries in the covariates.  A
common strategy in practice is to \emph{impute} the missing entries with an
appropriate substitute and then implement a standard statistical procedure
acting as if the covariates were fully observed. Recent literature on this
subject proposes instead to design a specific, often complicated or non-convex, algorithm tailored to the case of
missing covariates. We investigate a simpler approach where we fill-in the
missing entries with their conditional mean given the observed covariates.  We
show that this imputation scheme coupled with standard off-the-shelf procedures such as the LASSO and square-root
LASSO retains the minimax estimation rate in the random-design setting where the
covariates are i.i.d.\ sub-Gaussian.  We further show that the square-root
LASSO remains \emph{pivotal} in this setting.

It is often the case that  the conditional expectation cannot be computed
exactly and must be approximated from data. We study two cases where the
covariates either follow an  autoregressive (AR) process, or are jointly
Gaussian with sparse precision matrix. We propose tractable estimators for the
conditional expectation and then perform linear regression via LASSO, and show
similar estimation rates in both cases. We complement our theoretical results with
simulations on synthetic and semi-synthetic examples, illustrating not only the sharpness
of our bounds, but also the broader utility of this strategy beyond our theoretical
assumptions.        
\end{abstract}


\section{Introduction}
\label{sec:Introduction}
Statistical estimation procedures are usually designed under the assumption that
data is fully observed. It is however common that some
portion of the data is missing or observed through a noisy channel.  A natural strategy to address this problem is to replace the missing data with a sensible proxy.  Such a strategy, known as imputation, is widely used in practice.

We observe a response vector $\by \in \mathbb{R}^n$ from a linear model with design matrix $\bX \in \mathbb{R}^{n \times p}$ in following manner:
\begin{align}
\by = \bX \bbe_0 + \beps,
\end{align} 
where $\beps$ is zero-mean sub-Gaussian noise.  We are interested in recovering the unknown regression vector $\bbe_0 \in
\mathbb{R}^p$.  We are specifically interested in the high dimensional regime,
when $p \gg n$.  In this underdetermined setting, it is necessary to assume some
structure on the regression vector $\bbe_0$.  It is of particular interest when
$\bbe_0$ has a few non-zero entries. In this case we say the vector $\bbe_0$ is sparse and denote its number of non-zeros by $s$.  
Sparse regression in the high-dimensional regime has been widely studied in the last
decade and has witnessed a beautiful line of results showing that convex programs
such as the Lasso~\cite{tibshirani1996regression} and the Dantzig
Selector~\cite{candes2007dantzig} give rate-optimal statistical guarantees~\cite{raskutti2011minimax,candes2013well,bickel2009simultaneous}.

These estimation procedures typically require full knowledge of the design matrix $\bX$. 
We consider a setting where a corrupted version of the data, $\bZ$ is observed instead of $\bX$.
This models the scenario where some covariates have missing entries.
We will be mainly interested in two types of patterns of missingness, outlined for
instance in the book~\cite{little2014statistical}: 
\begin{enumerate}[align=left]
\item {\bf Missing Completely at Random (MCAR)}: The most benign mechanism in which
each entry is missing independently of everything with probability $1-\alpha$:
\begin{align}
\label{eq:MCAR}
    Z_{ia} = \begin{cases} 
            X_{ia} & \text{ with probability } \alpha, \\
            \star & \text{ with probability } 1 - \alpha.
            \end{cases}
\end{align}
(The symbol $\star$ indicates that an entry is missing.)

\item {\bf Missing Not at Random  (MNAR)}: The general case where the
missingness pattern $(\mathbf{1}_{\{Z_{ia} = \star\}})_{1 \le i \le n, 1 \le a \le p}$ is composed of i.i.d.\ arbitrarily distributed rows.
\end{enumerate}
Given the response vector $\by$ and the observed data $\bZ$, how can we estimate
$\bbe_0$?

\paragraph{An example: standard Gaussian design.} We illustrate our approach in
the simple case where the design matrix $\bX$ is i.i.d.\ normal $X_{ij} \sim
\normal(0,1)$, and assume MCAR with parameter $\alpha$. Let us additionally
consider the case that there is
no additive noise. We thus observe the pair $(\by,\bZ)$ given by 
\begin{align*}
    \by = \bX\bbe_0, \qquad Z_{ia} = \begin{cases} 
            X_{ia} & \text{ with probability } \alpha, \\
            \star & \text{ with probability } 1 - \alpha.
            \end{cases}
\end{align*}
One natural strategy is to then impute the missing entries with some reasonable proxy.  
Since we know the data is standard normal, a first
attempt could be to replace each missing entry with its mean; that
is, whenever an entry is missing, give it the value $0$.  We thus construct the
imputed matrix
\begin{align*}
    \widehat{X}_{ia} = \begin{cases} 
        Z_{ia} & \text{if} \ Z_{ia} \neq \star, \\
            0 & \text{ otherwise}. 
            \end{cases}
\end{align*}
We can now run the LASSO with data $(\by,\widehat{\bX})$ and regularization parameter 
$\lambda = \sqrt{\frac{(1 - \alpha) \log{p}}{\alpha n}}$ (this choice is justified in Corollary~\ref{cor:IdentityCovariance}):
\begin{align}
\label{eq:ImputeLasso}
\widehat{\bbe} \in \argmin_{\bbe \in \mathbb{R}^p}\left\{\frac{1}{2n}\norm{\by -
\widehat{\bX}\bbe}_2^2 + \lambda\norm{\bbe}_1\right\}.
\end{align}
We plot the error $\|\widehat{\bbe} - \bbe_0\|_2$ in
Figure~\ref{fig:IdentityCovarianceGaussian} as a function of $\alpha$, compared
to the scaling of the theoretical error bound of Corollary~\ref{cor:IdentityCovariance} in
solid line (letting the universal constant pre-factor be one). The figure shows that this strategy recovers the regression vector $\bbe_0$ at the optimal rate $\sqrt{\frac{s \log p}{n}}$ adjusted with a $\alpha$-dependent term accounting for the missingness.   

\begin{figure}[h!]
\centering
\begin{tikzpicture}
\begin{axis}[
    width=0.7\textwidth,
    height=0.4\textwidth,
    title={MSE Imputation for Identity Covariance Gaussian},
    xlabel={Density of observed entries $\alpha$},
    ylabel={$\norm{\widehat{\bbe} - \bbe_0}_2$},
    ymode=log,
    xmin=0.5, xmax=1,
    xtick={0.5, 0.6, 0.7, 0.8, 0.9, 1.0},
    legend pos=south west,
    ymajorgrids=true,
    grid style=dashed,
]

\addplot[
    color=black,
    mark=*,
    mark size=1.25pt   
    ]
    table[x=alpha,y=err] {data/IdentityGaussianBig.dat};
    \legend{Empirical Error}

\addplot [name path=upper,draw=none] table[x=alpha,y=max_err]
    {data/IdentityGaussianBig.dat};
\addplot [name path=lower,draw=none] table[x=alpha,y=min_err]
    {data/IdentityGaussianBig.dat};
\addplot [fill=black!10] fill between[of=upper and lower];

\addplot [
    domain=0.5:0.99, 
    samples=100, 
    color=black,
    line width=1.5pt
]
    {0.498149264028527105 * sqrt{(1-x)/(x)}};
    \addlegendentry{$\alpha \mapsto \sqrt{\frac{\left(1 - \alpha\right)s
    \log{p}}{\alpha n}}$}
\end{axis}
\end{tikzpicture}
\caption{MSE as function of $\alpha$. We used
    $n=1000,\ p=1200,\ s=\lceil \sqrt{p} \rceil = 35$ (square root sparsity).
    Each data point is an average of 100 trials.  Shaded area: errors obtained
    over the $50$ trials. The solid line shows the scaling of the bound shown
    in Corollary~\ref{cor:IdentityCovariance}.  Note that this bound holds for $\alpha >
    0$, and we appeal to classical results on sparse linear regression when
    $\alpha = 0$.}
\label{fig:IdentityCovarianceGaussian}
\end{figure}
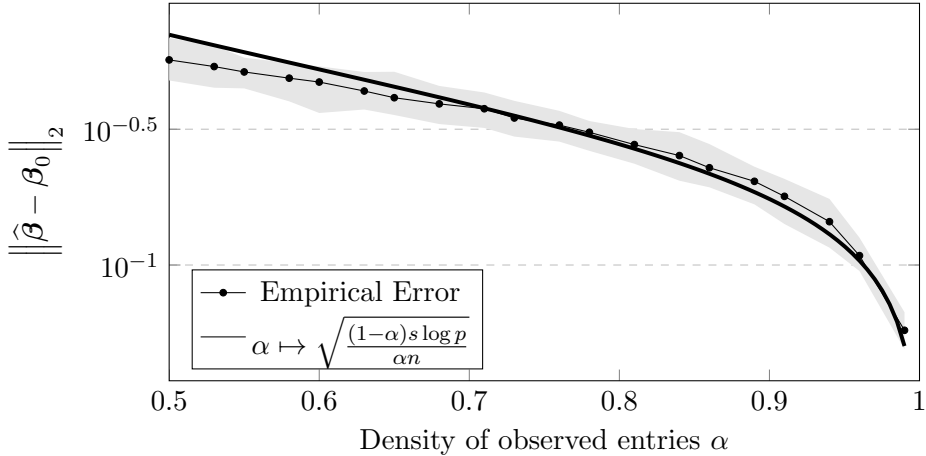

How should we generalize the above success? Consider re-writing
\begin{align}
    \label{eq:decomp}
    \by = \bX\bbe_0 + \beps = \widehat{\bX}\bbe_0 + \left(\bX -
    \widehat{\bX}\right)\bbe_0 + \beps \equiv \widehat{\bX}\bbe_0 +
\widetilde{\beps}.
\end{align}
Now, notice that if we take $\widehat{\bX} = \E\{\bX \mid \bZ \}$, by the orthogonality of conditional expectation,
$\widehat{\bX}\bbe_0$ and  $(\bX -
\widehat{\bX})\bbe_0$ are uncorrelated and we can intuitively think of the
problem as a linear model with data $\widehat{\bX}$ and noise
$\widetilde{\beps}$.  Using this intuition, we analyze the strategy of
imputation by conditional expectation and present the following contributions:

\vspace{2mm}
\textbf{Rate-optimal estimation via imputation and the LASSO.} Assuming the rows
of $\bX$ are i.i.d.\ sub-Gaussian, we show that imputation with conditional
expectation followed by LASSO, Eq.~\eqref{eq:ImputeLasso}, retains rate-optimal
statistical guarantees, regardless of the model of missingness. 

\vspace{2mm}
 \textbf{Rate-optimal and pivotal estimation via imputation and the square-root
 LASSO.} Using the same imputation strategy, we show that the minimizers of
 square-root LASSO program~\eqref{eq:ImputeSqrtLasso} introduced by Belloni et al.~\cite{belloni2011square} 
 \begin{align}
\label{eq:ImputeSqrtLasso}
\widehat{\bbe} \in \argmin_{\bbe \in \mathbb{R}^p}\left\{\frac{1}{\sqrt{n}}\norm{\by -
\widehat{\bX}\bbe}_2 + \lambda\norm{\bbe}_1\right\},
\end{align}
retain rate-optimal guarantees and are moreover \emph{pivotal} with respect to
noise variance $\sigma^2$ and the radius of the problem $\norm{\bbe_0}_2$.  That
is, the appropriate choice of the regularization parameter $\lambda$ in
Eq.~\eqref{eq:ImputeSqrtLasso} does not require the knowledge of these
quantities, nor do they need to be estimated in order to optimally implement the
square-root LASSO.
We emphasize that both \eqref{eq:ImputeLasso} and \eqref{eq:ImputeSqrtLasso} can be implemented using existing packages for the LASSO and square-root LASSO; the statistician needs only to operate on the observed data matrix $\bZ$.

\vspace{2mm}
 \textbf{Imputation with estimated conditional expectation.} We provide two examples of data from Gaussian graphical models and leverage this structure to approximately compute the conditional expectation given the observed data.  We provide statistical guarantees for both of these cases, noting that in our results we use the entire dataset to estimate the conditional expectation, and then run the LASSO on this data. The structure of the graphical model is used in a crucial way to design tractable estimators; we expect similar results to hold for more general models retaining such structure.    

\vspace{2mm}
We organize our results as follows:
\begin{itemize}
\item Section~\ref{sec:ConditionalExpectation} contains our main results in
    the case in which we give rates of statistical
error of the convex program~\eqref{eq:ImputeLasso} under MCAR and the assumption that a
conditional expectation can be computed exactly.  In particular, we show that our estimator $\widehat{\bbe}$ 
 has the optimal dependence on each on the parameters $R, \sigma, p, n, s$ for a design with i.i.d.\ sub-Gaussian rows. Moreover, when the covariance is the identity, our upper bound is optimal in the parameter $\alpha$ as well. 
 We additionally state our result regarding rates when the data is MNAR. The proofs for the results of this
section can be found in appendix~\ref{sec:ProofsConditionalExpectation}. 

\item Section~\ref{sec:pivotal} states our results on pivotal estimation; the
proof for the main result of this section can be found in
appendix~\ref{sec:proofssqrtlasso}.

\item Section~\ref{sec:ApproximateConditionalExpectation} provides two
examples of cases where the conditional expectation may not be available, but an
approximate conditional expectation may be computed.  In particular, our first
example of autoregressive--AR(1)--model provides a case in which an
approximate conditional expectation yields optimal results.  We then provide rates for Gaussian design with sparse inverse
covariance matrix.  The proofs of the results of this section can be found in
appendix~\ref{sec:ProofsApproximateConditionalExpectation}.
\item Section~\ref{sec:Simulations} provides simulation results, giving
    numerical evidence of our results; both on synthetic and semi-synthetic data.
\end{itemize}


\section{Related literature}
Estimation in the presence of missing data has been studied for decades.  An
in-depth overview of techniques can be found in~\cite{little2014statistical}.  The problem of high-dimensional
linear regression with missing data was first studied theoretically in~\cite{rosenbaum2010sparse}.   The first rate optimal
theoretical results in this direction were given in a sequence of papers by
Loh and Wainwright~\cite{loh2012high,loh2012corrupted}.  The main idea in these
papers is to recast the LASSO optimization as a quadratic program taking the
covariance matrix of the (fully observed) covariates as input. In the missing
data case, the authors follow a plug-in principle and construct an unbiased
estimator of this covariance to be used in lieu of the full covariance. The fact
that the some covariates are not observed is accounted for by subtracting a
diagonal term in their estimator which causes it to be non-positive
semidefinite. Consequently the resulting optimization problem is no longer
convex. Remarkably, the authors are able to show that a simple projected gradient
descent procedure reaches a near-optimal point with high probability, thereby
producing a good estimate of $\bbe_0$. Following this result, many authors have
proposed using this plug-in principle for various sparse recovery
algorithms such as orthogonal matching pursuit (OMP)~\cite{chen2013noisy} and
the Dantzig Selector~\cite{wang2017rate}. Other convex
surrogates for this problem have been given
by~\cite{rosenbaum2013improved,belloni2017linear, datta2017cocolasso} and the more complicated case of dependent measurements has been tackled by Rudelson and Zhou~\cite{rudelson2017errors}.  Notably, Belloni et al.~\cite{belloni2017pivotal} proposed a pivotal estimator for this problem based on the idea of self-normalization.  Our approach differs from these in its use of imputation.  In particular, we do not design any new algorithms for linear regression; rather, we use existing algorithms (LASSO and square-root LASSO) in their most `vanilla' version and analyze their statistical guarantees under a particular imputation strategy.  

More recently Agarwal et al.~\cite{agarwal2019model} consider a different setting where no sparsity is assumed on $\bbe_0$ but the covariates have a low-dimensional structure. They propose a matrix estimation approach for imputation followed by a simple least squares method. We finally mention that beyond linear regression, other models of high-dimensional statistical problems have been studied in the setting of missing data, such as covariance estimation~\cite{lounici2014high}, sparse principal component analysis~\cite{elsener2018sparse,lounici2013sparse}, and precision matrix estimation~\cite{kolar2012estimating,fan2019precision}.

\paragraph{Notation and basic notions.}
We use bold-face lower-case letters to denote vectors ($\bw, \bv, \dots$) and
bold-face upper-case letters to denote matrices ($\bX, \bZ, \dots$).  Rows of
matrices will be denoted by the letters $(i, j, \dots)$ and columns by $(a, b,
\dots)$.  Accordingly we will refer to rows of the matrix $\bX$ as $(\bX_i,
\bX_j, \dots)$ and its columns as $(\bX_a, \bX_b, \dots)$.  Additionally, for a
vector $\bx \in \mathbb{R}^{d}$, and a set $T = \{i_1, i_2, \dots, i_{\lvert T
\rvert}\} \subseteq [d]$, we will use the notation $\bx_T$ to denote the vector
$(x_{i_1}, x_{i_2}, \dots, x_{i_{\lvert T \rvert}})$ of length $\lvert T
\rvert$.  

A mean zero random variable $\bx \in \mathbb{R}$ is called $\sigma_X^2$-sub-Gaussian if
\[\E\{e^{\theta X}\} \leq e^{\frac{\sigma_X^2 \theta^2}{2}} \qquad \forall \theta \in \mathbb{R}.\]  
It is instead called $(\sigma_X^2,b)$-subexponential if the above holds for all $\theta \le 1/b$.
By extension, a random vector $\bx$ is sub-Gaussian (sub-exponential) if $\inp{\bu}{\bx}$ is sub-Gaussian (sub-exponential) for all unit norm vectors $\bu$.

Throughout the paper, we will use $C$ to denote
a universal constant that may change from line to line, and $C(\cdot)$ as a constant
depending only on its arguments.  We will write $f \lesssim g$ to denote $f \leq Cg$ (the symbol $\gtrsim$ is similarly defined), and we write $f \asymp g$ if $f \lesssim g$ and $f \gtrsim g$.

We will write MCAR($\alpha$) to indicate that the data is MCAR with probability
of observing an entry equal to $\alpha$.  We will assume $\alpha$ to be a
constant throughout the paper.

\section{Imputation by conditional expectation}
\label{sec:ConditionalExpectation}
In this section, we state our results on the statistical error of our estimator
$\widehat{\bbe}$; the proofs of each statement can be found in
Appendix~\ref{sec:ProofsConditionalExpectation}.  Recall that $\widehat{\bX} =
\E \left\{\bX \mid \bZ\right\}$. We will denote its covariance matrix  by $\Sigma_{\widehat{\bX}} \equiv \frac{1}{n}\E \widehat{\bX}^\top\widehat{\bX} \in \mathbb{R}^{p \times p}$.  Throughout this section, we will make the following assumptions:
\begin{itemize}
    \item[\bf A1.]\label{assumption:A1} The rows of $\bX$ are i.i.d.\ zero-mean and $\sigma_X^2$ sub-Gaussian with covariance matrix $\Sigma_{\bX}$.   
    \item[\bf A2.] The regression vector $\bbe_0$ is $s$-sparse: $|\text{supp}(\bbe_0)| = s$. Additionally, $\norm{\bbe_0}_2 \leq R$.
    \item[\bf A3.] There exists a constant $c_{\min} > 0$ such that the minimum
        eigenvalue of $\Sigma_{\widehat{\bX}}$ satisfies
        $\lambda_{\min}(\Sigma_{\widehat{\bX}}) \geq c_{\min}$.
    \item[\bf A4.] There exists a constant $c_0 > 0$ such that 
    $n \geq c_0\max\left(\frac{\sigma_X^4}{\lambda_{\min}(\Sigma_{\widehat{\bX}})^2},
\frac{\sigma_X^2}{\lambda_{\min}(\Sigma_{\widehat{\bX}})}\right)s\log{p}$.
\end{itemize}
 
\begin{theorem}
\label{thm:mainres}
Assume \textbf{A1--A4} and that the data is MCAR($\alpha$) for $\alpha < 1$.  Assume
additionally that $\sqrt{\frac{\log{p}}{n}} \leq c_1\sqrt{1 -
    \alpha}$ for a positive constant $c_1$.
Then, there exist positive constants $c_2, c_3, c_4$ such that with probability
at least $1 - c_2p^{-c_3}$, $\widehat{\bbe}$
as defined in~\eqref{eq:ImputeLasso} with regularization parameter
    \[\lambda = c_4\left(\sigma_X \sigma + \sigma_X^2\sqrt{1 - \alpha}R\right)\sqrt{\frac{\log{p}}{n}},\]
satisfies
    \begin{align}\norm{\widehat{\bbe} - \bbe_0}_2
        \lesssim\frac{\sigma_X \sigma + \sigma_X^2 \sqrt{1 - \alpha}R}{\lambda_{\min}(\Sigma_{\widehat{\bX}})}\sqrt{\frac{s\log{p}}{n}}\, .
    \end{align}
\end{theorem}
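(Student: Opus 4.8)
The plan is to recognize the program \eqref{eq:ImputeLasso} as an ordinary LASSO applied to the surrogate linear model $\by = \widehat{\bX}\bbe_0 + \widetilde{\beps}$ of \eqref{eq:decomp}, with effective noise $\widetilde{\beps} = (\bX - \widehat{\bX})\bbe_0 + \beps$, and then to invoke the standard deterministic oracle inequality for the LASSO. That inequality requires two probabilistic ingredients: a restricted eigenvalue (RE) condition for the surrogate design $\widehat{\bX}$, and a deviation bound $\lambda \ge \frac{2}{n}\norm{\widehat{\bX}^\top\widetilde{\beps}}_\infty$ controlling the effective noise. Granting these with an RE constant of order $c_{\min} = \lambda_{\min}(\Sigma_{\widehat{\bX}})$, the conclusion $\norm{\widehat{\bbe}-\bbe_0}_2 \lesssim \sqrt{s}\,\lambda / c_{\min}$ follows at once, and substituting the claimed $\lambda$ reproduces the stated rate.

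First I would establish the RE condition. Since the rows of $\bX$ are i.i.d.\ and the MCAR mask acts independently across rows, the rows $\widehat{\bX}_i = \E\{\bX_i \mid \bZ_i\}$ are i.i.d.; moreover conditional expectation is a contraction, so these rows remain sub-Gaussian with parameter $\lesssim \sigma_X$ by \textbf{A1}. Combined with $\lambda_{\min}(\Sigma_{\widehat{\bX}}) \ge c_{\min}$ (\textbf{A3}) and the sample-size condition $n \gtrsim s\log p$ (\textbf{A4}), the standard sub-Gaussian restricted-eigenvalue result yields that $\frac{1}{n}\widehat{\bX}^\top\widehat{\bX}$ satisfies RE over the cone of approximately $s$-sparse vectors with constant of order $c_{\min}$, on an event of probability at least $1 - c_2 p^{-c_3}$.

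The crux is the deviation bound, which I would prove column by column and then union over the $p$ columns. Writing $\frac{1}{n}[\widehat{\bX}^\top\widetilde{\beps}]_a = \frac{1}{n}\widehat{\bX}_a^\top\beps + \frac{1}{n}\widehat{\bX}_a^\top(\bX - \widehat{\bX})\bbe_0$, the first term is a sum of independent products of sub-Gaussians (as $\beps$ is independent of $\bX$, hence of $\widehat{\bX}$), and a Bernstein bound gives $\lesssim \sigma_X\sigma\sqrt{\log p / n}$. The second term is the delicate one: conditionally on the mask and on $\bZ_i$, the factor $\widehat{X}_{ia}$ is deterministic while $\E\{(\bX_i - \widehat{\bX}_i)\mid\bZ_i\} = 0$, so by the orthogonality of conditional expectation each summand is mean zero, and Bernstein's inequality applies. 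The essential point---and the \emph{main obstacle}---is to capture the sharp $\sqrt{1-\alpha}$ factor: the per-summand sub-Gaussian scale of $(\bX_i-\widehat{\bX}_i)^\top\bbe_0$ is only of order $\sigma_X R$, but its variance is $\bbe_0^\top\E\{\mathrm{Cov}(\bX_i\mid\bZ_i)\}\bbe_0 = \bbe_0^\top(\Sigma_{\bX}-\Sigma_{\widehat{\bX}})\bbe_0 \lesssim \sigma_X^2(1-\alpha)R^2$, by the law of total variance together with the MCAR structure (this identity is exact, $=(1-\alpha)R^2$, in the identity-covariance case, and the general bound is the part of the computation requiring the most care). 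Hence the Gaussian part of Bernstein contributes $\sigma_X^2\sqrt{1-\alpha}\,R\sqrt{\log p / n}$, while the sub-exponential correction, which scales linearly in $\log p / n$, is of strictly smaller order precisely because the hypothesis $\sqrt{\log p / n}\le c_1\sqrt{1-\alpha}$ forces it to be dominated by the $\sqrt{\log p / n}$ term. Combining the two contributions and taking a union bound justifies the choice $\lambda \asymp (\sigma_X\sigma + \sigma_X^2\sqrt{1-\alpha}\,R)\sqrt{\log p / n}$.

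Finally, on the intersection of the RE event and the deviation event (each of probability at least $1 - c_2 p^{-c_3}$), the deterministic LASSO oracle inequality delivers $\norm{\widehat{\bbe}-\bbe_0}_2 \lesssim \sqrt{s}\,\lambda / c_{\min}$, which is exactly the asserted bound. I expect everything apart from the variance estimate and the Bernstein trade-off for the imputation-error term to be a routine application of now-standard high-dimensional regression machinery.
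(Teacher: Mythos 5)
Your proposal matches the paper's proof essentially step for step: the same surrogate-model decomposition fed into a deterministic LASSO oracle inequality (the paper's Proposition~\ref{prop:GeneralLasso}), the same RE argument via sub-Gaussianity of conditional expectations (Fact~\ref{fact:subGaussianConditionalExpectation} plus Proposition~\ref{prop:RE_Condition}), the same split of the deviation term into a noise piece and an imputation-error piece (Lemmas~\ref{lem:term1thm1} and~\ref{lem:term2thm1}), and crucially the same variance-versus-scale Bernstein trade-off in which the increment has variance of order $\sigma_X^2(1-\alpha)R^2$ but sub-Gaussian scale only $\sigma_X R$, with the hypothesis $\sqrt{\log p/n}\le c_1\sqrt{1-\alpha}$ keeping the tail in its Gaussian regime. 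The only cosmetic difference is that you obtain the $(1-\alpha)$ variance factor by the law of total variance, while the paper conditions on the MCAR mask and controls the averaged conditional MGF via a Herbst-type argument---two implementations of the identical idea.
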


\noindent Specializing the analysis for the special case of Gaussian design with
identity covariance, $\bX_i \sim \normal(0, \bI_{p\times p})$, we have the following
corollary which establishes that for this special case, our imputation estimator is
rate optimal, in a minimax sense, with respect to \textit{all} the parameters $\sigma, \alpha, n, p,
s$.

\begin{corollary}
\label{cor:IdentityCovariance}
Assume the data is MCAR($\alpha$) for $\alpha < 1$ and $X_{ij} \sim_{iid}
\normal(0,1)$. 
Assume additionally that $\sqrt{\frac{\log{p}}{n}} \leq 
    c_1\sqrt{\alpha(1-\alpha)}$ for a positive constant $c_1$.  Then, there exist positive constants $c_2,
    c_3, c_4$ such that with probability at least $1 - c_2p^{-c_3}$, $\widehat{\bbe}$ as defined in~\eqref{eq:ImputeLasso} with regularization parameter
    \[\lambda =  c_4 \sqrt{\alpha}\left(\sigma + \sqrt{1
    - \alpha}R\right)\sqrt{\frac{\log{p}}{n}},\]
satisfies
    \begin{align}\norm{\widehat{\bbe} - \bbe_0}_2
       \lesssim \frac{\sigma +
        \sqrt{1-\alpha}R}{\sqrt{\alpha}}\sqrt{\frac{s\log{p}}{n}}\, .
        \end{align}
\end{corollary}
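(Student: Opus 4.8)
The plan is to obtain the corollary by specializing, and slightly sharpening, the argument behind Theorem~\ref{thm:mainres} to the standard Gaussian design. First I would compute the imputed matrix explicitly. Since the entries $X_{ia}$ are independent, for a missing entry the conditional mean given the observed data is the marginal mean $0$, so
\begin{equation*}
\widehat{X}_{ia} = \E\{X_{ia}\mid \bZ\} = X_{ia}\,\mathbf{1}_{\{Z_{ia}\neq\star\}} = X_{ia}M_{ia},
\end{equation*}
with $M_{ia}\sim\mathrm{Ber}(\alpha)$ i.i.d.\ and independent of $\bX$. A direct second-moment computation then gives $\Sigma_{\widehat{\bX}} = \alpha\,\bI_{p\times p}$, so $\lambda_{\min}(\Sigma_{\widehat{\bX}}) = \alpha$ exactly, while the rows of $\bX$ are $1$-sub-Gaussian, i.e.\ $\sigma_X = 1$. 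With these identifications the hypotheses of Theorem~\ref{thm:mainres} hold with $\sigma_X=1$ and $c_{\min}=\alpha$ (note the corollary's hypothesis $\sqrt{\log p/n}\le c_1\sqrt{\alpha(1-\alpha)}$ is stronger than the theorem's), and a black-box application already yields the rate $\tfrac{\sigma+\sqrt{1-\alpha}R}{\alpha}\sqrt{s\log p/n}$.

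The content of the corollary is the improvement of the $\alpha$-dependence from $1/\alpha$ to $1/\sqrt{\alpha}$, and this is where I would depart from merely invoking the theorem. Writing the effective noise $\widetilde{\beps}=(\bX-\widehat{\bX})\bbe_0+\beps$ as in~\eqref{eq:decomp}, the LASSO analysis needs a regularization parameter dominating the deviation term $\norm{\tfrac1n\widehat{\bX}^\top\widetilde{\beps}}_\infty$. For a fixed column $a$, the summands are
\begin{equation*}
\widehat{X}_{ia}\widetilde{\epsilon}_i = X_{ia}M_{ia}\epsilon_i + \sum_{b\neq a}X_{ia}M_{ia}X_{ib}(1-M_{ib})\beta_{0,b},
\end{equation*}
where I used $M_{ia}(1-M_{ia})=0$ to drop the $b=a$ term. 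These are mean zero, and using $\E[\widehat{X}_{ia}^2]=\alpha$ together with the independence of $M_{ia}$ from the remaining factors, their variance is exactly $\alpha\big(\sigma^2+(1-\alpha)\norm{\bbe_{0,-a}}_2^2\big)\le \alpha\big(\sigma^2+(1-\alpha)R^2\big)$. Hence the correct per-coordinate scale is $\sqrt{\alpha}(\sigma+\sqrt{1-\alpha}R)\sqrt{\log p/n}$, which is the stated choice of $\lambda$ and is a factor $\sqrt{\alpha}$ smaller than the generic bound; it is precisely the exact value $(\Sigma_{\widehat{\bX}})_{aa}=\alpha$, rather than the crude bound $\sigma_X^2=1$ used in the theorem, that produces this gain.

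The main obstacle is controlling this deviation at the \emph{sub-Gaussian} scale $\sqrt{\mathrm{Var}\cdot\log p/n}$ rather than at the cruder sub-exponential scale. Each summand is a product of (sub-)Gaussian factors and is therefore only sub-exponential; the cross term $X_{ia}M_{ia}\sum_{b\neq a}X_{ib}(1-M_{ib})\beta_{0,b}$ is the delicate one. I would apply a Bernstein-type inequality coordinatewise followed by a union bound over the $p$ columns; the regime in which the sub-Gaussian term dominates the sub-exponential correction is exactly secured by the hypothesis $\sqrt{\log p/n}\le c_1\sqrt{\alpha(1-\alpha)}$, the extra $\sqrt{\alpha}$ relative to Theorem~\ref{thm:mainres} again tracking $\mathrm{Var}(\widehat{X}_{ia})=\alpha$. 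This gives $\norm{\tfrac1n\widehat{\bX}^\top\widetilde{\beps}}_\infty\lesssim\lambda$ with probability $1-c_2p^{-c_3}$.

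Finally I would pair this with a restricted-eigenvalue bound: since the rows of $\widehat{\bX}$ are independent sub-Gaussian with covariance $\alpha\bI$, the empirical Gram matrix $\tfrac1n\widehat{\bX}^\top\widehat{\bX}$ satisfies a restricted eigenvalue condition with constant $\kappa\asymp\alpha$ on the cone $\{\norm{\bv_{S^c}}_1\le 3\norm{\bv_S}_1\}$ under \textbf{A4}. The standard oracle inequality for the LASSO then yields $\norm{\widehat{\bbe}-\bbe_0}_2\lesssim \lambda\sqrt{s}/\kappa$, and substituting $\lambda\asymp\sqrt{\alpha}(\sigma+\sqrt{1-\alpha}R)\sqrt{\log p/n}$ and $\kappa\asymp\alpha$ produces the claimed bound $\tfrac{\sigma+\sqrt{1-\alpha}R}{\sqrt{\alpha}}\sqrt{s\log p/n}$.
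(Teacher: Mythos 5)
Your proposal is correct and follows essentially the same route as the paper's proof: zero imputation giving $\Sigma_{\widehat{\bX}} = \alpha \bI$, sharpened per-coordinate $\ell_\infty$ bounds at the true variance scale $\alpha\bigl(\sigma^2 + (1-\alpha)R^2\bigr)$ via a two-parameter Bernstein bound whose sub-Gaussian regime is exactly the hypothesis $\sqrt{\log p / n} \le c_1\sqrt{\alpha(1-\alpha)}$, followed by the restricted-eigenvalue condition with $\kappa \asymp \alpha$ and the generic LASSO oracle inequality (Proposition~\ref{prop:GeneralLasso}). The only detail left implicit in your sketch is how the Bernstein condition is verified at the variance scale $\alpha$ (a generic sub-exponential norm bound would lose the factor $\alpha$); the paper does this by conditioning on the mask to write the MGF as $(1-\alpha) + \alpha\,\E e^{\theta X_{ia}\langle \cdot,\cdot\rangle}$ and applying the numeric inequality $1 + \alpha(e^x - 1) \le e^{2\alpha x}$ for $x \le 1$, which is precisely the mechanism your variance computation anticipates.
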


Indeed, the bound displayed in Corollary~\ref{cor:IdentityCovariance} is matched by a lower bound  Wang et al.~\cite{wang2017rate}:
\begin{theorem}\textbf{\cite{wang2017rate}}
    \label{thm:Wang}
    Suppose $4 \leq s < 4p/5, \frac{s
    \log{p/s}}{\alpha n} \rightarrow 0$ and $X_{ij} \sim_{iid} \normal(0,1)$.
    Then, 
    \begin{align*}
    \inf_{\widehat{\bbe}}\sup_{\bbe_0 \in B_2(R) \cap B_0(s)} \E
    \norm{\widehat{\bbe} - \bbe_0}_2^2 &\gtrsim 
    \phi(R,\sigma,\alpha) \min\left\{\sqrt{\frac{s\log{p/s}}{(1 - \alpha)^2n}},
    \frac{s\log{p/s}}{\alpha n}\right\},
    \end{align*}
    where $\phi(R,\sigma,\alpha) = \min\big\{\sigma^2 + (1 - \alpha)R^2, e^{(1-\alpha)s}
        \sigma^2\big\}$.
\end{theorem}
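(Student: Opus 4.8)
The plan is to establish the bound information-theoretically, reducing estimation to a multiple-hypothesis testing problem over a carefully chosen finite subset of $B_2(R)\cap B_0(s)$ and then invoking Fano's inequality. Throughout, the crucial observation is that the missingness pattern and the \emph{observed} entries of $\bX$ carry no information about $\bbe_0$: conditioning on the mask and the revealed covariates, each response is
\begin{align*}
y_i \mid \bZ \sim \normal\Big(\langle \widehat{\bX}_i, \bbe_0\rangle,\ \sigma^2 + \|(\bbe_0)_{\mathrm{miss}(i)}\|_2^2\Big),
\end{align*}
where $\mathrm{miss}(i)$ is the (random) set of coordinates missing in row $i$ and $\widehat{\bX}_i$ agrees with $\bX_i$ on observed coordinates and vanishes elsewhere. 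Thus $\bbe_0$ reaches the data through two channels: its imprint on the conditional mean, effectively scaled by the observed fraction $\alpha$, and its imprint on the conditional variance through $\|(\bbe_0)_{\mathrm{miss}(i)}\|_2^2$, whose average over the mask is $(1-\alpha)\|\bbe_0\|_2^2$.

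First I would use the Varshamov--Gilbert bound to produce a packing $\{\bbe^{(1)},\dots,\bbe^{(M)}\}\subset B_2(R)\cap B_0(s)$ with $\log M \gtrsim s\log(p/s)$, pairwise separation $\|\bbe^{(j)}-\bbe^{(k)}\|_2 \asymp \delta$, and near-constant Euclidean norm, so that competing hypotheses induce comparable conditional variances and the \emph{mean} channel is the binding constraint. Bounding the per-sample Kullback--Leibler divergence between two such Gaussians by the mean term $\tfrac{\alpha\|\bbe^{(j)}-\bbe^{(k)}\|_2^2}{2(\sigma^2+(1-\alpha)R^2)}$ and summing over the $n$ rows yields a bound on the mutual information $I(J;(\by,\bZ))\lesssim \tfrac{n\alpha\delta^2}{\sigma^2+(1-\alpha)R^2}$ between a uniformly chosen index $J$ and the data; balancing this against $\tfrac12\log M$ through Fano and translating back via $\Exs\|\widehat{\bbe}-\bbe_0\|_2^2 \gtrsim \delta^2\,\Prob(\text{test error})$ produces the branch $\big(\sigma^2+(1-\alpha)R^2\big)\,\tfrac{s\log(p/s)}{\alpha n}$, matching the upper bound of Corollary~\ref{cor:IdentityCovariance}.

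The harder half is the square-root branch $\sqrt{s\log(p/s)/((1-\alpha)^2 n)}$, which must come from the \emph{variance} channel rather than the mean. Here I would analyze the location-scale Gaussian family directly, bounding a $\chi^2$- or KL-type divergence driven by the discrepancy in conditional variances; because the variance signal $\|(\bbe_0)_{\mathrm{miss}(i)}\|_2^2$ is a $\mathrm{Bernoulli}(1-\alpha)$-thinned sum over the $s$-element support, its fluctuations contribute information scaling like $(1-\alpha)^2$ per sample, which is what places $(1-\alpha)^2 n$ in the denominator and degrades the squared-error rate to $n^{-1/2}$; controlling the moment generating function of this thinned sum is precisely what surfaces the alternative $e^{(1-\alpha)s}\sigma^2$ inside $\phi(R,\sigma,\alpha)$. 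The stated minimum is then the envelope of the two constructions.

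I expect the variance-channel argument to be the main obstacle. Unlike the mean channel, the information it carries is a second-order effect, so one must (i) design a packing whose members are distinguishable through $\|(\bbe_0)_{\mathrm{miss}(i)}\|_2^2$ while remaining inside $B_2(R)\cap B_0(s)$, and (ii) control the resulting location-scale divergence uniformly over the random mask, which is where the sub-exponential concentration of the thinned support sum and the factor $e^{(1-\alpha)s}$ enter. By contrast, the mean-channel branch is a fairly routine Fano computation once the conditional law of $\by$ given $\bZ$ is identified.
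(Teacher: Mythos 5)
This statement is quoted from Wang et al.~\cite{wang2017rate}; the paper you are working from does not prove Theorem~\ref{thm:Wang} at all, but imports it as the matching lower bound for Corollary~\ref{cor:IdentityCovariance}. So there is no in-paper proof to compare against, and your proposal has to be judged on its own. Your architecture is the right one --- the conditional law $y_i \mid \bZ \sim \normal\big(\langle \widehat{\bX}_i,\bbe_0\rangle,\ \sigma^2+\|(\bbe_0)_{\mathrm{miss}(i)}\|_2^2\big)$ is correctly identified, and the split into a mean channel (giving the $\frac{s\log(p/s)}{\alpha n}$ branch) and a variance channel (giving the $\sqrt{\cdot}$ branch and the $e^{(1-\alpha)s}\sigma^2$ cap) matches the structure of the result --- but two steps have genuine gaps.

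First, the mean-channel KL bound as stated does not hold. You bound the per-sample divergence by $\frac{\alpha\,\|\bbe^{(j)}-\bbe^{(k)}\|_2^2}{2(\sigma^2+(1-\alpha)R^2)}$, but the KL between the conditional Gaussians carries the \emph{realized} variance $\sigma^2+\|(\bbe)_{\mathrm{miss}(i)}\|_2^2$ in the denominator, and on mask realizations where the support is largely observed this is close to $\sigma^2$, not $\sigma^2+(1-\alpha)R^2$. Averaging over the mask does not rescue the larger denominator. To legitimately inflate the noise floor you need a packing whose members share a common heavy component of norm $\approx R$ spread over order-$s$ coordinates, so that the thinned norm concentrates near $(1-\alpha)R^2$, together with an explicit bound on the contribution of atypical masks --- and that bad-mask control is precisely where the $e^{(1-\alpha)s}\sigma^2$ term in $\phi$ must emerge. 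Your condition of ``near-constant Euclidean norm'' only equalizes variances \emph{across hypotheses}; it does not pin the variance \emph{level}, and you also discard the variance-mismatch term of the Gaussian KL without justification.

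Second, the square-root branch --- the distinctive content of the theorem --- is only a plan, not a proof. You correctly diagnose why it should scale as $(1-\alpha)^{-2}n^{-1/2}$ (estimating the scale parameter $(1-\alpha)\|\bbe_0\|_2^2$ from $n$ variance observations), but no packing separated through the variance channel is constructed, no location-scale divergence is computed, and nothing rules out the mean channel distinguishing those hypotheses faster (one must check that the packing separation is simultaneously below the mean-channel detection threshold $\frac{s\log(p/s)}{\alpha n}$, which is consistent with the $\min$ but must be verified). As written, the proposal plausibly yields the $\frac{s\log(p/s)}{\alpha n}$ branch modulo the first gap, and acknowledges rather than closes the second.
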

\begin{remark}
 We observe that in the regime where $\sqrt{\frac{s\log{p/s}}{(1 - \alpha)^2n}}
 \geq 
    \frac{s\log{p/s}}{\alpha n}$ and $\sigma^2 + (1 -
        \alpha)R^2 \leq e^{(1-\alpha)s}
        \sigma^2$, Corollary~\ref{cor:IdentityCovariance} matches the lower bound in dependence on all
        parameters.  Indeed, the lower bound simplifies to
        \[\frac{\sigma^2 + (1 -\alpha)R^2}{\alpha}\frac{s\log{p/s}}{n}.\]
\end{remark}
\noindent Finally, we give analogous rates when the missing data mechanism is MNAR.  
\begin{theorem}
\label{thm:mnar}
Assume that the data is MNAR and \textbf{A1--A4}.  Assume additionally that
$\sqrt{\frac{\log{p}}{n}} \leq c_1$ for a positive constant $c_1$.  Then, there
exist positive constants $c_2, c_3, c_4$ such that with probability at least $1
- c_2p^{-c_3}$, $\widehat{\bbe}$ as defined in Eq.~\eqref{eq:ImputeLasso} with $\widehat{\bX} = \E\left\{ \bX \mid \bZ \right\}$ and regularization parameter
    \[\lambda = c_4\left(\sigma_X\sigma + \sigma_X^2 R\right)\sqrt{\frac{\log{p}}{n}},\]
satisfies
    \begin{align}\norm{\widehat{\bbe} - \bbe_0}_2
        \lesssim \frac{\sigma_X \sigma +
         \sigma_X^2 R}{\lambda_{\min}(\Sigma_{\widehat{\bX}})}\sqrt{\frac{s\log{p}}{n}}.
    \end{align}
\end{theorem}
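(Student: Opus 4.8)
The plan is to reduce Theorem~\ref{thm:mnar} to the two ingredients that drive any LASSO analysis: a restricted eigenvalue (RE) condition for the imputed design $\widehat{\bX}$, and a bound on the dual ($\ell_\infty$) norm of the effective noise. Using the decomposition~\eqref{eq:decomp}, I would treat the program~\eqref{eq:ImputeLasso} as a LASSO fit of $\by$ onto $\widehat{\bX}$ with effective noise $\widetilde{\beps} = (\bX - \widehat{\bX})\bbe_0 + \beps$. Two structural facts make this work under MNAR. First, since the rows of $\bX$ are i.i.d.\ and the missingness pattern has i.i.d.\ rows, the pairs $(\bX_i, \bZ_i)$ are i.i.d.\ and $\bX_i$ is independent of $\bZ_j$ for $j \neq i$; hence conditioning on the full matrix reduces to conditioning row-wise, $\widehat{\bX}_i = \E\{\bX_i \mid \bZ_i\}$, and the rows of $\widehat{\bX}$ are themselves i.i.d. Second, by Jensen's inequality applied conditionally, $\widehat{\bX}_i$ is $\sigma_X^2$-sub-Gaussian and the residual $\bX_i - \widehat{\bX}_i$ is $O(\sigma_X^2)$-sub-Gaussian, and most importantly the tower property gives the orthogonality $\E\{\widehat{X}_{ia}(X_{ib} - \widehat{X}_{ib})\} = 0$ for all $a,b$, since $\widehat{X}_{ia}$ is $\bZ_i$-measurable and $\E\{X_{ib} - \widehat{X}_{ib} \mid \bZ_i\} = 0$.

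Given these facts, I would first establish the RE condition for $\frac{1}{n}\widehat{\bX}^\top\widehat{\bX}$. Since its rows are i.i.d.\ sub-Gaussian with population covariance $\Sigma_{\widehat{\bX}}$ whose minimum eigenvalue is bounded below by $c_{\min}$ (Assumption~\textbf{A3}), a standard sub-Gaussian matrix-concentration argument shows that the sample Gram matrix satisfies an RE condition of order $c_{\min}$ on the cone of approximately $s$-sparse vectors, provided $n$ exceeds the threshold in Assumption~\textbf{A4}. This step is identical to the corresponding step in the proof of Theorem~\ref{thm:mainres} and does not use the missingness model beyond the i.i.d.\ sub-Gaussian structure of the rows.

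The crux is controlling $\frac{1}{n}\norm{\widehat{\bX}^\top \widetilde{\beps}}_\infty$, which I would split as $\frac{1}{n}\widehat{\bX}^\top\beps + \frac{1}{n}\widehat{\bX}^\top(\bX - \widehat{\bX})\bbe_0$. For the first term, each coordinate is an average of products of a sub-Gaussian entry of $\widehat{\bX}$ with the independent sub-Gaussian noise $\beps$; Bernstein's inequality for sub-exponential variables together with a union bound over the $p$ columns yields a bound of order $\sigma_X \sigma \sqrt{\log p / n}$ under $\sqrt{\log p / n} \le c_1$. For the second term, write the $a$-th coordinate as $\frac{1}{n}\sum_i \widehat{X}_{ia}\inp{\bbe_0}{\bX_i - \widehat{\bX}_i}$; here $\inp{\bbe_0}{\bX_i - \widehat{\bX}_i}$ is $O(\sigma_X^2 R^2)$-sub-Gaussian because $\norm{\bbe_0}_2 \le R$, its product with $\widehat{X}_{ia}$ is $O(\sigma_X^2 R)$-sub-exponential, and by the orthogonality noted above each summand is mean-zero. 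Bernstein plus a union bound then gives a bound of order $\sigma_X^2 R \sqrt{\log p / n}$. This is exactly where the MNAR case differs from Theorem~\ref{thm:mainres}: under MCAR one can sharpen the sub-Gaussian parameter of $\bX_i - \widehat{\bX}_i$ to $O(\sigma_X^2(1-\alpha))$, since the residual vanishes on the observed (probability-$\alpha$) coordinates, producing the extra $\sqrt{1-\alpha}$ factor; under MNAR no such refinement is available and we simply use the trivial bound $\var(X_{ib} - \widehat{X}_{ib}) \le \sigma_X^2$. I expect this term to be the main obstacle, both because it requires the orthogonality argument to remove the bias and because the sub-exponential tail bookkeeping must be done carefully enough to land the combined scale $\lambda \asymp (\sigma_X\sigma + \sigma_X^2 R)\sqrt{\log p / n}$.

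Finally, with $\lambda$ chosen to dominate $2 \cdot \frac{1}{n}\norm{\widehat{\bX}^\top\widetilde{\beps}}_\infty$ on the high-probability event, the standard LASSO basic-inequality argument confines $\widehat{\bbe} - \bbe_0$ to the $s$-sparse cone and, combined with the RE condition, yields $\norm{\widehat{\bbe} - \bbe_0}_2 \lesssim \sqrt{s}\,\lambda / \lambda_{\min}(\Sigma_{\widehat{\bX}})$. Substituting the value of $\lambda$ gives the claimed bound, and collecting the failure probabilities of the RE and noise events produces the $1 - c_2 p^{-c_3}$ guarantee.
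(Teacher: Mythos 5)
Your proposal is correct and reaches the theorem through the same skeleton as the paper (the basic inequality of Proposition~\ref{prop:GeneralLasso}, the restricted eigenvalue bound of Proposition~\ref{prop:RE_Condition} via Fact~\ref{fact:subGaussianConditionalExpectation}, and the noise term $\norm{\frac{1}{n}\widehat{\bX}^\top\beps}_\infty$ handled exactly as in Lemma~\ref{lem:term2mnar}), but you handle the key cross-term $\norm{\frac{1}{n}\widehat{\bX}^\top(\widehat{\bX}-\bX)\bbe_0}_\infty$ by a genuinely different argument than the paper's Lemma~\ref{lem:term1mnar}. The paper uses a polarization identity: it writes $\frac{2}{n}\langle\widehat{\bX}\be_a,(\bX-\widehat{\bX})\widetilde{\bbe}_0\rangle$ as a difference of three \emph{centered squared norms}, each an i.i.d.\ average of centered squares of sub-Gaussian variables, and applies sub-exponential concentration (Lemmas~\ref{lem:subGaussianSquared} and~\ref{lem:subexpconcentration}) to each piece, with orthogonality entering only through the vanishing of the cross expectation; this costs a three-way union bound and the large constants (e.g.\ the $480e$ in the paper's $\lambda$). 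You instead bound each summand $\widehat{X}_{ia}\langle\bX_i-\widehat{\bX}_i,\widetilde{\bbe}_0\rangle$ directly: it is mean-zero by the tower property, both factors are unconditionally sub-Gaussian (conditional Jensen for $\widehat{X}_{ia}$, Fact~\ref{fact:sumsubgaussian} for the residual), and Lemma~\ref{lem:productsubgaussian} — whose Taylor-expansion proof needs zero mean of the product but \emph{no independence} of the factors — yields sub-exponentiality at scale $\sigma_X^2 R$, after which Bernstein and a union bound over $a\in[p]$ land the rate. This is essentially the MCAR argument of Lemma~\ref{lem:term1thm1} stripped of its Herbst-type refinement over missingness patterns, and your diagnosis of why that refinement is unavailable under MNAR is exactly right: conditioning on an MNAR pattern changes the law of $\bX_i$, so the per-pattern MGF computation and the identity $\E_S\norm{\widetilde{\bbe}_{0,S^c}}_2^2 = 1-\alpha$ both fail, while the unconditional orthogonality and sub-Gaussianity survive. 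Your route is shorter and delivers the same rate and $1-c_2p^{-c_3}$ guarantee up to constants; the paper's polarization route buys freedom from any product-MGF lemma at the price of heavier bookkeeping. The one point to state explicitly (as the paper does implicitly) is that under the MNAR model the pairs of rows $(\bX_i,\bZ_i)$ are i.i.d., so that $\widehat{\bX}_i=\E\{\bX_i\mid\bZ_i\}$ and the concentration lemmas for i.i.d.\ summands apply.
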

\begin{remark}
    Consider the scenario in which $X_{ij} \sim \normal(0,1)$ and the data is
    MCAR($\alpha$).  The bound one gets from Theorem~\ref{thm:mnar} is $\frac{\sigma +
    R}{\alpha}\sqrt{\frac{s\log{p}}{n}}$, whereas the tighter result
    from Corollary~\ref{cor:IdentityCovariance} gives $\frac{\sigma +
        \sqrt{1-\alpha}R}{\sqrt{\alpha}}\sqrt{\frac{s\log{p}}{n}}$.

\end{remark}

\noindent Two remarks are in order: 
\begin{enumerate}
\item Our results dictate a specific choice of the regularization parameter $\lambda$ which depends on $\sigma_X, \sigma, R$ and $\alpha$ (in the MCAR case). Although $\alpha, \sigma_X$ may be estimated, it is difficult to estimate the noise variance $\sigma$, and in many situations it is difficult to determine $R$, which is a function of the regression vector we are aim to recover.
\item We assume that the conditional expectation $\widehat{\bX}$ can be computed exactly. This may not be realistic in practice; even with Gaussian designs, this quantity requires the knowledge of the covariance matrix.
\end{enumerate}
We address these two questions in the following two sections.  Regarding the first question, we show that the square-root LASSO with conditional expectation imputation is pivotal with respect to $\sigma, R$ and as such these parameters need not be estimated to set the regularization $\lambda$.  Regarding the second question, we show that if the covariates come from a graphical model, we can approximate conditional expectation efficiently to an accuracy sufficient for consistency of LASSO.



\section{Pivotal estimation: the square-root LASSO}
\label{sec:pivotal}
When the data matrix $\bX$ is known, the insight of~\cite{belloni2011square,antoniadis2010comments} was that the procedure
\[
\widehat{\bbe}_{\text{S}} \in \argmin_{\bbe \in \mathbb{R}^{p}}\left\{\frac{1}{
\sqrt{n}}\norm{\by - \bX \bbe}_2 + \lambda \norm{\bbe}_1\right\},
\]
known as the square-root LASSO,
retains the statistical guarantees of the LASSO, but does not require knowledge
of the noise
standard deviation $\sigma$ to choose $\lambda$.  The intuition that guided us
in the decomposition~\eqref{eq:decomp} leads to the insight that in the imputed
linear model, the effective noise standard deviation will be $\sigma + \sigma_X \sqrt{(1 -
\alpha)}R$.
The following theorem formalizes this intuition and shows
that the square-root LASSO with conditional expectation imputation,
\begin{align}
    \label{eq:squarerootlasso}
\widehat{\bbe} \in \argmin_{\bbe \in \mathbb{R}^{p}}\frac{1}{
\sqrt{n}}\norm{\by - \widehat{\bX} \bbe}_2 + \lambda \norm{\bbe}_1, 
\end{align}
retains such
a guarantee, showing that when using the square-root LASSO, the statistician
need not know $\sigma$ nor $R$ when picking the regularization constant
$\lambda$.
\begin{theorem}
\label{thm:sqrtlasso}
Assume \textbf{A1--A4} and that the data is MCAR($\alpha$).  Assume
additionally that there exists a constant $c_1 < 1$ such that $n \geq
\frac{1}{c_1}\left(\ln{\delta^{-1}}\right)\left(R^2\sigma_X^2 +
    \sigma^2\right)^2$.  Then, there
exist positive constants $c_2, c_3, c_4$ such that with probability at least $1
- \delta - c_2p^{-c_3}$, $\widehat{\bbe}$ as defined in~\eqref{eq:squarerootlasso} with
regularization parameter
\[\lambda = c_4\sigma_X\sqrt{\frac{\log{p}}{n}},\]
satisfies
\[
    \norm{\widehat{\bbe} - \bbe_0}_2 \lesssim \frac{\sigma_X\left(\sigma + \sigma_X
    R\right)}{\lambda_{\min}(\Sigma_{\widehat{\bX}})}\sqrt{\frac{s\log{p}}{n}}.
\]
\end{theorem}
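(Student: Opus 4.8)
The plan is to recast the pivotal square-root LASSO as a standard penalized $M$-estimation problem for the imputed design $\widehat{\bX}$ and then assemble the bound from three ingredients: control of the self-normalized score at $\bbe_0$, a restricted eigenvalue condition for $\widehat{\bX}$, and concentration of the effective noise level $\frac{1}{\sqrt{n}}\norm{\widetilde{\beps}}_2$. Following the decomposition \eqref{eq:decomp}, I would write $\by = \widehat{\bX}\bbe_0 + \widetilde{\beps}$ with $\widetilde{\beps} = (\bX - \widehat{\bX})\bbe_0 + \beps$, treating this as a linear model with design $\widehat{\bX}$ and noise $\widetilde{\beps}$. The observation that powers pivotality is that the subgradient of $L(\bbe) = \frac{1}{\sqrt{n}}\norm{\by - \widehat{\bX}\bbe}_2$ at $\bbe_0$ equals $-\frac{1}{\sqrt{n}}\,\widehat{\bX}^\top\widetilde{\beps}/\norm{\widetilde{\beps}}_2$, so its $\ell_\infty$ norm is the self-normalized statistic
\[
\frac{1}{\sqrt{n}}\,\frac{\norm{\widehat{\bX}^\top\widetilde{\beps}}_\infty}{\norm{\widetilde{\beps}}_2},
\]
in which the scale of the effective noise cancels between numerator and denominator.

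The heart of the argument is to show this self-normalized score is at most $c\,\sigma_X\sqrt{\log p/n}$ with high probability, with no dependence on $\sigma$ or $R$; this is exactly what licenses the choice $\lambda = c_4\sigma_X\sqrt{\log p/n}$. I would condition on $\bZ$ (equivalently on $\widehat{\bX}$). By orthogonality of the conditional expectation, $\E\{\widetilde{\epsilon}_i \mid \bZ\} = 0$, and since the rows of $\bX$ and the entries of $\beps$ are independent, the $\widetilde{\epsilon}_i$ are conditionally independent, mean-zero, and sub-Gaussian with parameter $\lesssim \sigma + \sigma_X R$. Hence, conditionally on $\bZ$, each coordinate $\widehat{\bX}_a^\top\widetilde{\beps} = \sum_i \widehat{X}_{ia}\widetilde{\epsilon}_i$ is a weighted sum of independent sub-Gaussians with proxy variance $\lesssim (\sigma + \sigma_X R)^2\norm{\widehat{\bX}_a}_2^2$; a union bound over the $p$ columns together with $\norm{\widehat{\bX}_a}_2^2 \lesssim n\,(\Sigma_{\widehat{\bX}})_{aa} \lesssim n\sigma_X^2$ gives $\norm{\widehat{\bX}^\top\widetilde{\beps}}_\infty \lesssim (\sigma + \sigma_X R)\sigma_X\sqrt{n\log p}$ on an event of probability $\ge 1 - c_2 p^{-c_3}$. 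For the denominator I would lower-bound $\frac{1}{n}\norm{\widetilde{\beps}}_2^2$ by a constant fraction of its mean via Bernstein's inequality for the sub-exponential variables $\widetilde{\epsilon}_i^2$; this is precisely where the sample-size hypothesis $n \gtrsim (\ln\delta^{-1})(R^2\sigma_X^2 + \sigma^2)^2$ enters, ensuring $\frac{1}{\sqrt{n}}\norm{\widetilde{\beps}}_2 \gtrsim \sigma + \sqrt{1-\alpha}\,\sigma_X R$ with probability $\ge 1-\delta$. Taking the ratio and bounding $\sqrt{1-\alpha}\le 1$ yields the pivotal score bound $\lesssim \sigma_X\sqrt{\log p/n}$.

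The remaining ingredients are more routine. The rows of $\widehat{\bX} = \E\{\bX\mid\bZ\}$ are i.i.d., zero-mean, and sub-Gaussian (a conditional expectation of a sub-Gaussian vector is sub-Gaussian, by conditional Jensen), with population covariance $\Sigma_{\widehat{\bX}}$ whose minimum eigenvalue is at least $c_{\min}$ under \textbf{A3} and whose diagonal is $\lesssim \sigma_X^2$; by the standard restricted-eigenvalue results for sub-Gaussian designs, \textbf{A4} guarantees $\frac{1}{n}\norm{\widehat{\bX}\bv}_2^2 \ge \kappa\norm{\bv}_2^2$ with $\kappa\asymp\lambda_{\min}(\Sigma_{\widehat{\bX}})$ for all $\bv$ in the cone $\{\norm{\bv_{S^c}}_1\le 3\norm{\bv_S}_1\}$, on an event of probability $\ge 1 - c_2 p^{-c_3}$ (the same deviation facts used in the proof of Theorem~\ref{thm:mainres}). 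Intersecting the three high-probability events, I would then run the usual square-root LASSO argument: optimality of $\widehat{\bbe}$ together with $\lambda$ dominating the self-normalized score forces the error $\bh = \widehat{\bbe} - \bbe_0$ into the cone, and convexity of $L$ plus the restricted eigenvalue bound converts the basic inequality into $\norm{\bh}_2 \lesssim \widehat{\sigma}\,\lambda\sqrt{s}/\kappa$, where the residual normalization contributes a factor $\widehat{\sigma} \asymp \frac{1}{\sqrt{n}}\norm{\widetilde{\beps}}_2 \lesssim \sigma + \sigma_X R$. Substituting $\lambda\asymp\sigma_X\sqrt{\log p/n}$ and $\kappa\asymp\lambda_{\min}(\Sigma_{\widehat{\bX}})$ produces the claimed rate, and the total failure probability is $1 - \delta - c_2 p^{-c_3}$.

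I expect the main obstacle to be the control of the self-normalized score, and within it the interplay of numerator and denominator: both are functions of the single random vector $\widetilde{\beps}$ and both couple to $\bZ$ through $\widehat{\bX}$, so the cancellation that delivers pivotality must be made rigorous rather than heuristic. Conditioning on $\bZ$ cleanly decouples the across-$i$ randomness and restores independence, but it replaces the averaged residual scale $\sqrt{1-\alpha}\,\sigma_X R$ by the worst-case conditional scale $\sigma_X R$ in the numerator; verifying that the sample-size condition is exactly strong enough to keep the denominator from underflowing with the required probability $\delta$ — and checking that the residual mismatch is absorbed into universal constants, since $\alpha$ is held fixed — is the delicate part of the argument.
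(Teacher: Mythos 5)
Your architecture is the same as the paper's: the decomposition \eqref{eq:decomp}, the identification of the (sub)gradient $\nabla f(\bbe_0) = -\frac{1}{n}\widehat{\bX}^\top\widetilde{\beps}\big/\big(\tfrac{1}{\sqrt{n}}\norm{\widetilde{\beps}}_2\big)$ as a self-normalized statistic to be dominated by $\lambda \asymp \sigma_X\sqrt{\log p/n}$ (this is Lemma~\ref{lem:lambdasqrt}), the cone containment from optimality and convexity (Lemma~\ref{lem:sqrtlassocone}), the restricted eigenvalue condition for $\widehat{\bX}$ via Fact~\ref{fact:subGaussianConditionalExpectation} and Proposition~\ref{prop:RE_Condition}, the basic-inequality manipulation, and the final substitution $f(\bbe_0) \lesssim \sigma + \sigma_X R$ obtained from norm concentration under the sample-size hypothesis in $\ln\delta^{-1}$ --- exactly the paper's Eq.~\eqref{ineq:fbeta0concentration}. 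Your denominator step (Bernstein for $\widetilde{\epsilon}_i^2$) is equivalent in substance to the paper's use of sub-Gaussian norm concentration.

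The genuine gap is in your control of the score numerator. You condition on $\bZ$ and assert that the residuals $\widetilde{\epsilon}_i = \langle \bX_i - \widehat{\bX}_i, \bbe_0\rangle + \epsilon_i$ are \emph{conditionally} sub-Gaussian with parameter $\lesssim \sigma + \sigma_X R$. Conditional centering is fine, since $\E\{\bX_i - \widehat{\bX}_i \mid \bZ\} = 0$ by definition of $\widehat{\bX}$, but conditional sub-Gaussianity does not follow from \textbf{A1}: the conditional law of the unobserved coordinates of a $\sigma_X^2$ sub-Gaussian vector given the observed ones need not be sub-Gaussian with a parameter uniform in the conditioning value. That statement is true for Gaussian designs but is precisely what \textbf{A1} does not supply; Fact~\ref{fact:subGaussianConditionalExpectation} only gives marginal sub-Gaussianity of the imputed vector $\widehat{\bX}_i$ (via the tower property), not of the conditional fluctuation $\bX_i - \widehat{\bX}_i$ given $\bZ$. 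The paper avoids conditioning altogether: Lemmas~\ref{lem:term1thm1} and~\ref{lem:term2thm1} bound the moment generating functions of the products $\widehat{X}_{ia}\langle \bX_i - \widehat{\bX}_i, \bbe_0/\norm{\bbe_0}_2\rangle$ and $\widehat{X}_{ia}\epsilon_i$ \emph{unconditionally}, averaging over the MCAR pattern $S$, using orthogonality of conditional expectation to center each term, and employing a Herbst-type argument to handle the random support; this yields sub-exponential concentration directly and even recovers the sharper $\sqrt{1-\alpha}$ factor that your worst-case conditional scale discards. To repair your route you would either need an added assumption (Gaussianity, or uniformly sub-Gaussian conditionals) or should replace the conditional Hoeffding step with these unconditional product-MGF bounds. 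One further caution: your claimed lower bound $\tfrac{1}{\sqrt{n}}\norm{\widetilde{\beps}}_2 \gtrsim \sigma + \sqrt{1-\alpha}\,\sigma_X R$ does not follow from \textbf{A1--A4} alone, since generically one only has $\E\,\widetilde{\epsilon}_i^2 \geq \sigma^2$; the paper's Lemma~\ref{lem:lambdasqrt} asserts the analogous lower bound through the mean $\gamma$, so you mirror the paper here, but it is the step on which the cancellation behind pivotality ultimately rests, exactly as you flagged.
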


\section{Imputation by approximate conditional expectation}
\label{sec:ApproximateConditionalExpectation}
A strong assumption in Section~\ref{sec:ConditionalExpectation} is the exact knowledge of
a conditional expectation $\E\left\{\bX_{i,S^c} \mid \bX_{i, S}\right\}$ for any
$S \subseteq [p]$.  In this section, we give two examples in which the exact
conditional expectation is not available, but an approximate version can be
computed.  
Both examples are Gaussian graphical models and our results exploit 
the graphical structure in a crucial way, allowing us to compute conditional expectations using only a small subset of the variables. 

In both examples, we deal with the uncertainty in the same way.  First, we use
\textit{all} of the data to estimate the covariance, and use this estimate
 to compute the conditional expectation. We then
run the LASSO  based on the imputed matrix in order to recover the regression vector $\bbe_0$.  
The Markov structure of these models is exploited algorithmically:
whenever an entry is missing, we need only to consider a small number of
observed nodes to estimate the missing entry.     

Let us note that in both cases, similar results can be shown through sample splitting.
For instance, if the statistician reserves $n/2$ samples for learning
the covariance and uses the remaining $n/2$ samples for regression,
rate-optimality can be shown in a manner similar to that of
Section~\ref{sec:ConditionalExpectation}.  We analyze the more challenging case in which
samples are re-used in order to keep higher fidelity to statistical practice. 

\subsection{Example \#1: AR(1) model}
\label{sec:FiniteStateMarkov}
We consider the autoregressive real-valued stationary process $X_t = \phi
X_{t-1} + Z_t$ where $Z_t \sim_{\text{i.i.d.}} \normal(0,1)$ with unknown
coefficient $\phi$ satisfying $\left\lvert{\phi}\right\rvert < 1$.  We form the rows of
the data matrix $\bX$ by sampling $p$ consecutive points $X_{0},
\dots, X_{p-1}$ from the stationary chain $(X_t)$, independently for
each row. The covariance matrix of each row is $\left(\Sigma_{\bX}\right)_{ij} =
\frac{1}{1 - \phi^2}\phi^{\left\lvert i - j \right\rvert}$.
 
Let $M_{ik}$ be the indicator of whether entry $(i,k)$ is observed or not.  
We find an estimate $\hat{\phi}$ of the true parameter $\phi$
from the observed data:
\begin{align}
\label{eq:ARParamApprox}
\hat{\phi} =
\frac{\frac{1}{\alpha^2 np}\sum_{i=1}^{n}\sum_{a=1}^{p-1}X_{ia}X_{i(a+1)}M_{ia}M_{i(a+1)}}{\frac{1}{\alpha n
p}\sum_{i=1}^{n}\sum_{a=1}^{p-1}X_{ia}^2 M_{ia}}.
\end{align}

 Suppose the entry $(i,k)$ is missing.  By the Markov property satisfied by this model (see
Figure~\ref{fig:graph_mod_AR1})  the conditional expectation $ \E\left\{X_{ik} \mid \bX_{iS}\right\}$ is a function on the closest
observed entries on either side of node $k$.  Using the formula for the conditional expectation of a multivariate Gaussian random variable, we have
\begin{align}
\label{eq:ARhat}
    \E\left\{X_{ik} \mid \bX_{iS}\right\} = \widehat{X}_{ik} = \frac{\phi^{d_1 + d_2}}{1 - \phi^{2(d_1 + d_2)}}
    \left(X_{i,L(k)}\left(\phi^{-d_2} - \phi^{d_2}\right) + X_{i, R(k)} \left(\phi^{-d_1} -
    \phi^{d_1}\right)\right),
\end{align}
where  $d_1 = k - L(k)$ and $d_2 = R(k) -k$ with $L(k)$ and $R(k)$ the positions of the closed observed entries to the left and right of $k$, respectively. 
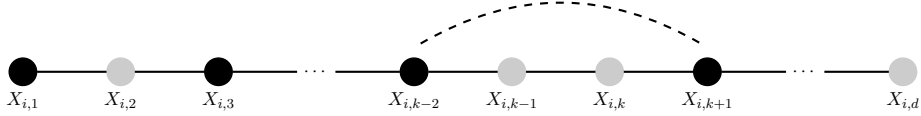
\begin{figure}[t!]
\centering
\begin{tikzpicture} 
\begin{scope}[scale=0.65, transform shape]
\GraphInit[vstyle=Classic]

\begin{scope}[VertexStyle/.append style = {minimum size = 16pt, 
                                           inner sep = 0pt,
                                           color=black,}]
    \Vertex[Lpos=below, L=$X_{i,1}$, x = -5, y=0]{1}
    \Vertex[Lpos=below, L=$X_{i,3}$, x = -1, y=0]{3}
    \Vertex[Lpos=below, L=$X_{i,k-2}$, x = 3, y=0]{k-2}
    \Vertex[Lpos=below, L=$X_{i,k+1}$, x = 9, y=0]{k+1}
\node[draw=none] (ellipsis1) at (1,0) {$\cdots$};
\node[draw=none] (ellipsis2) at (11,0) {$\cdots$};
    \node[draw=none] (sentinel1) at (3,0.5) {};
    \node[draw=none] (sentinel2) at (9,0.5) {};
\end{scope}
\begin{scope}[VertexStyle/.append style = {minimum size = 16pt, 
                                           inner sep = 0pt,
                                           color=gray!40,}]
    \Vertex[Lpos=below, L=$X_{i,2}$, x=-3, y=0]{2}
    \Vertex[Lpos=below, L=$X_{i,k-1}$, x = 5, y=0]{k-1}
    \Vertex[Lpos=below, L=$X_{i,k}$, x = 7, y=0]{k}
    \Vertex[Lpos=below, L=$X_{i,d}$, x = 13, y=0]{d}
\end{scope}
\begin{scope}[VertexStyle/.append style={rectangle,
                                             minimum size = 10pt,
                                             inner sep = 0pt,
                                             color=black,}]
\end{scope}
\begin{scope}[VertexStyle/.append style={rectangle,
                                             minimum size = 10pt,
                                             inner sep = 0pt,
                                             color=gray,}]
\end{scope}
    \Edge[](1)(2)
    \Edge[](2)(3)
    \Edge[](3)(ellipsis1)
    \Edge[](ellipsis1)(k-2)
    \Edge[](k-2)(k-1)
    \Edge[](k-1)(k)
    \Edge[](k)(k+1)
    \Edge[](k+1)(ellipsis2)
    \Edge[](ellipsis2)(d)
    \tikzset{EdgeStyle/.append style = {bend left, dashed}}
    \Edge[color=green, style = {-,bend left}](sentinel1)(sentinel2)
\end{scope}
\end{tikzpicture} 
    \caption{The graphical model for row $i$.  The gray nodes represent missing
    entries whereas the black nodes represent observed entries.  The observed
    entries at
    the end of the dashed path are used to estimate the missing entries
    $X_{i,k-1}, X_{i,k}$.  In this case, $L(k) = L(k-1) = k-2$ and $R(k) =
    R(k-1) = k+1$.}
\label{fig:graph_mod_AR1}
\end{figure}
We plug in the estimate $\hat{\phi}$ in lieu of $\phi$ to approximate conditional expectation:
\begin{align}
\label{eq:ARtilde}
&\widetilde{X}_{ik} = \frac{\hat{\phi}^{d_1 + d_2}}{1 - \hat{\phi}^{2(d_1 + d_2)}}
    \left(X_{i, L(k)}\left(\hat{\phi}^{-d_2} - \hat{\phi}^{d_2}\right) +
    X_{i, R(k)} \left(\hat{\phi}^{-d_1} -
    \hat{\phi}^{d_1}\right)\right).
\end{align}
We repeat this process for every missing entry to create the matrix $\widetilde{\bX}$ and proceed as in the previous section.  The
following result shows that this procedure indeed leads to rate optimal
estimation.


\begin{theorem}
\label{thm:ARres}
Assume \textbf{A1--A4}, the data is MCAR($\alpha$), that the sample size $n
\geq c_1\frac{1}{\alpha^8}s\log{p}$ for a positive constant $c_1$, and that the rows of $\bX$ are generated from the
stationary auto-regressive process described above with $\lvert \phi \rvert <
1$.  Then, there exist positive constants $c_2, c_3, c_4$ such that with
probability at least $1 - c_2p^{-c_3}$.
$\widehat{\bbe}$ as defined in~\eqref{eq:ImputeLasso}, using $\widetilde{\bX}$
in place of $\widehat{\bX}$, with regularization parameter
\begin{align}
    \label{eq:lambdaAR}
    \lambda = c_4\left(\frac{\sigma_X\sigma}{\alpha^2} +
    \frac{\sigma_X^2}{\alpha^4} R\right)\sqrt{\frac{\log{p}}{n}},
\end{align}
satisfies
    \begin{align}\norm{\widehat{\bbe} - \bbe_0}_2
        \lesssim \frac{\left(\frac{\sigma_X\sigma}{\alpha^2} + \frac{\sigma_X^2}{\alpha^4} R\right)}{\lambda_{\min}(\Sigma_{\widehat{\bX}})}\sqrt{\frac{s\log{p}}{n}}.
    \end{align}
\end{theorem}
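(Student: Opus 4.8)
The plan is to treat the feasible design $\widetilde{\bX}$ as a perturbation of the exact conditional-expectation design $\widehat{\bX}$ and to reduce the analysis to the machinery already developed for Theorem~\ref{thm:mainres}. Writing $\bGam = \widetilde{\bX} - \widehat{\bX}$ for the imputation error induced by using $\hat{\phi}$ in place of $\phi$, I decompose the residual at $\bbe_0$ as
\begin{align*}
\by - \widetilde{\bX}\bbe_0 = \beps + (\bX - \widehat{\bX})\bbe_0 - \bGam\bbe_0,
\end{align*}
so that the standard deterministic LASSO guarantee applies once I verify the two usual ingredients: (i) the regularization dominates the effective gradient, $\lambda \gtrsim \frac{1}{n}\norm{\widetilde{\bX}^\top(\by - \widetilde{\bX}\bbe_0)}_\infty$, and (ii) a restricted eigenvalue (RE) condition holds for $\frac{1}{n}\widetilde{\bX}^\top\widetilde{\bX}$. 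The first two terms of the residual are exactly those controlled in the proof of Theorem~\ref{thm:mainres} (yielding the $\sigma_X\sigma + \sigma_X^2\sqrt{1-\alpha}R$ scale), and the RE condition for $\widehat{\bX}$ is likewise inherited from that proof via \textbf{A3--A4}; the entire burden of the present theorem is therefore to control the new object $\bGam$ and its interactions.

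First I would establish that $\hat{\phi}$ concentrates around $\phi$. The estimator \eqref{eq:ARParamApprox} is a ratio whose numerator and denominator are normalized empirical averages with expectations $\phi/(1-\phi^2)$ and $1/(1-\phi^2)$ (using $\E M_{ia}M_{i(a+1)} = \alpha^2$, $\E M_{ia} = \alpha$, and the stationary covariance), so the ratio is a consistent estimator of $\phi$. Since the summands are products of sub-Gaussian coordinates, hence sub-exponential, a Bernstein-type bound using independence across the $n$ rows (and the geometric mixing of the chain within each row) gives, with probability $1 - c p^{-c'}$,
\begin{align*}
|\hat{\phi} - \phi| \;\lesssim\; \frac{1}{\alpha^2}\sqrt{\frac{\log p}{n}} \;=:\; \eta,
\end{align*}
and in particular $\hat{\phi}$ stays in a fixed compact subinterval of $(-1,1)$ on this event; the factors of $\alpha^{-1}$ come from the normalizations by $\alpha$ and $\alpha^2$.

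The crux is the next step, which simultaneously handles the potentially large random gaps $d_1, d_2$ in \eqref{eq:ARhat}--\eqref{eq:ARtilde} and the fact that $\hat{\phi}$ is computed from the same data used for regression (sample reuse). I would prove a purely deterministic sensitivity bound: viewing the imputation coefficients in \eqref{eq:ARhat} as smooth functions of the parameter on a compact subinterval of $(-1,1)$, a mean-value argument gives, uniformly over all gap values,
\begin{align*}
|\bGam_{ik}| \;=\; |\widetilde{X}_{ik} - \widehat{X}_{ik}| \;\leq\; C(\phi)\,|\hat{\phi} - \phi|\,\bigl(|X_{i,L(k)}| + |X_{i,R(k)}|\bigr).
\end{align*}
The uniformity over gaps is where the geometric decay is essential: although $d_1, d_2$ may be as large as order $1/\alpha$, the coefficients and their derivatives carry factors $\phi^{d}$, and the products $d\,\phi^{d}$ remain bounded by a constant depending only on $\phi$. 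This pathwise bound decouples the magnitude $|\hat{\phi}-\phi| \le \eta$ from the data, so I may intersect the event $\{|\hat{\phi}-\phi| \le \eta\}$ with standard concentration events for the Gaussian design $\bX$ without needing independence between $\hat{\phi}$ and $\bX$.

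Finally, I would use the bound on $\bGam$ to control every perturbation term. For the gradient condition (i), I expand $\widetilde{\bX}^\top(\by - \widetilde{\bX}\bbe_0)$ into the exact-CE part plus terms containing $\bGam$ (namely $\frac1n\bGam^\top\beps$, $\frac1n\bGam^\top(\bX-\widehat{\bX})\bbe_0$, and $\frac1n\widetilde{\bX}^\top\bGam\bbe_0$); each is estimated column-wise by combining the entrywise bound on $\bGam$ with sub-Gaussianity of $\beps$ conditionally on $(\bX,\bM)$ and concentration of quadratic forms in the Gaussian $\bX$, followed by a union bound over the $p$ columns. For the RE condition (ii), I write $\norm{\widetilde{\bX}\bv}_2 \ge \norm{\widehat{\bX}\bv}_2 - \norm{\bGam\bv}_2$ and bound $\norm{\bGam\bv}_2$ on the cone $\{\norm{\bv_{S^c}}_1 \le 3\norm{\bv_S}_1\}$, so that the RE constant of $\widehat{\bX}$ survives up to a lower-order correction. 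Tracking the powers of $\alpha$ through these estimates — the $\alpha^{-2}$ from $\eta$ compounding with the $\alpha$-dependence of the imputation weights — is what forces both the enlarged regularization $\lambda$ of \eqref{eq:lambdaAR} and the sample-size requirement $n \gtrsim \alpha^{-8}s\log p$, which guarantees the $\bGam$-terms are dominated by the stated $\lambda$. The claimed error bound then follows from the standard LASSO oracle inequality exactly as in Theorem~\ref{thm:mainres}. The main obstacle is the deterministic sensitivity step, since it is what lets the argument survive both the unbounded gaps and the reuse of samples.
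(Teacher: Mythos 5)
Your overall skeleton coincides with the paper's proof of Theorem~\ref{thm:ARres}: the same decomposition $\widetilde{\bX} = \widehat{\bX} + \bGam$, the same mean-value sensitivity bound $|\widetilde{X}_{ik}-\widehat{X}_{ik}| \le C(\phi)|\hat\phi-\phi|\,(|X_{i,L(k)}|+|X_{i,R(k)}|)$ with derivatives bounded uniformly in the gaps (this is exactly Eq.~\eqref{eq:ARwritedifference} together with Lemma~\ref{lem:usefulARlemmas}), the same decoupling trick to neutralize sample reuse by intersecting the event $\{|\hat\phi-\phi|\le \eta\}$ with concentration events for $\bX$, and an equivalent RE-preservation step (your $\norm{\widetilde{\bX}\bv}_2 \ge \norm{\widehat{\bX}\bv}_2 - \norm{\bGam\bv}_2$ versus the paper's H\"older expansion of the quadratic form — these buy the same thing).

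There is, however, one genuine quantitative gap: your claimed concentration rate $|\hat\phi - \phi| \lesssim \alpha^{-2}\sqrt{\log p / n}$ is too weak by a factor $\sqrt{p}$, and the theorem as stated does not follow from it. The estimator \eqref{eq:ARParamApprox} averages over all $np$ entries, and the paper's Lemma~\ref{lem:ARapprox} proves $|\hat\phi-\phi|\lesssim \alpha^{-2}\sqrt{\log p/(np)}$ via a Hanson--Wright inequality applied to the flattened $np$-dimensional Gaussian vector (conditionally on the mask, with the mask counts $N_M$ handled by bounded differences); a per-row Bernstein bound with $n$ effective samples, as you propose, forfeits the averaging across the $p$ columns. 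The $1/\sqrt{p}$ gain is not cosmetic: in the dominant cross term one has $\frac1n\norm{\widehat{\bX}^\top\bGam\bbe_0}_\infty \lesssim \eta\,\sigma_X^2\norm{\bbe_0}_1 \lesssim \eta\,\sigma_X^2\sqrt{s}\,R$ (cf.\ Lemma~\ref{lem:term1thmAR}), and with the paper's $\eta = \alpha^{-2}\sqrt{\log p/(np)}$ the factor $\sqrt{s}$ is absorbed using $s\le p$, giving $\alpha^{-2}\sigma_X^2 R\sqrt{\log p/n}$, consistent with the $\lambda$ of \eqref{eq:lambdaAR}. With your $\eta = \alpha^{-2}\sqrt{\log p/n}$ the same term is of order $\alpha^{-2}\sigma_X^2 R\sqrt{s\log p/n}$, which exceeds $\lambda/4$ unless $\sqrt{s} \lesssim \alpha^{-2}$; in general you would be forced to inflate $\lambda$ by $\sqrt{s}$ and land on an error rate of order $s\sqrt{\log p}/\sqrt{n}$ (as happens in the sparse-precision Theorem~\ref{thm:GaussianRes}, where no such pooled estimate is available), rather than the optimal $\sqrt{s\log p/n}$ claimed here. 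To close the gap, replace the row-wise Bernstein step with a concentration argument that pools over both indices — e.g., the paper's Hanson--Wright computation, where the Frobenius norm of the relevant $np\times np$ block matrix is shown to be $O(\sqrt{np})$ and its operator norm $O(1)$ using the Toeplitz structure of $\Sigma_{\bX}$.
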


\begin{remark}
    Note that in this case the model is fully explicit and $\Sigma_{\bX}$ is
    Toeplitz.  One can thus compute for
instance $\sigma_X^2 \leq (1 - \phi^2)^{-1}(1 - \phi)^{-1}$.  Computing
$\lambda_{\min}(\Sigma_{\widehat{\bX}})$ is more difficult; we provide in
Appendix~\ref{subsec:verifylambdamin} a lower bound for a restricted range of
$\alpha$, $\phi$.
\end{remark}

The proof of this theorem can be found in appendix~\ref{sec:ProofsApproximateConditionalExpectation}.

\subsection{Example \#2: Gaussian design with sparse precision matrix}
\label{sec:SparsePrecision}
We now generalize the strategy of the previous section to the case of Gaussian rows 
with sparse precision matrices.  We consider a zero mean normal distribution 
with covariance $\bSig$ and precision
matrix $\bOmega = \bSig^{-1}$.  We make the following assumptions:
\begin{itemize}
    \item[\bf C1.] There exist positive constants $0 < \underline{c} <
        \overline{c}$ such that the eigenvalues of $\bSig$ satisfy $\underline{c} \leq \lambda_{\min}(\bSig)
        \leq \lambda_{\max}(\bSig) \leq \overline{c}$.  Additionally, rows of $\bX$ are drawn i.i.d.\ from the distribution $\normal(0,
        \bSig)$.
    \item[\bf C2.]  Each row of $\bOmega$ as at most $d_{\max}$ non-zero
        entries, where $d_{\max}$ satisfies $(1 - \alpha)(d_{\max} - 1) < 1$ and
        the sparsity pattern of $\bOmega$ is known to the statistitian.
    \item[\bf C3.] There exists a constant $C(\alpha, d_{\max}) > 0$ depending only
        on $\alpha, d_{\max}$ such that the sample size $n$ satisfies $n \geq  C(\alpha, d_{\max})\max\left(\frac{\sigma_X^4}{\lambda_{\min}(\Sigma_{\widehat{\bX}})^2},
            \frac{\sigma_X^2}{\lambda_{\min}(\Sigma_{\widehat{\bX}})}\right)\max\left(s^2
        \log{p}, \log ^7{p}\right)$.
 \end{itemize}
\begin{remark}
Although the sparsity pattern is assumed to be known to the statistician, under
the additional technical assumption of \textit{irrepresentability} of the
graphical model \cite{ravikumar2011high}, the sparsity pattern can be found
with high probability using for example the graphical
LASSO \cite{kolar2012estimating,ravikumar2011high}.
\end{remark}

We are interested in computing the conditional expectation of the missing entries $\E \{ \bX_{iS^c} \mid
\bX_{iS} \}$, which in this Gaussian setting is given by the formula $\bSig_{S^c, S} \big(\bSig_{S, S}\big)^{-1} \bX_{iS}$.  We require some definitions.
\begin{definition}
\label{def:sparsitygraph}
Let the matrix $\bA$ with $A_{ij} = \mathbbm{1}\left\{\Omega_{ij} \neq 0\right\}$ denote the adjacency matrix of the graph $G = (V=[p], E)$.  Let $\left(G_i\right)_{i \in [n]}$ be $n$ copies of $G$, where for each $a \in [p]$, vertex $a$ of $G_i$ is ``closed" if $Z_{ia} = \star$ and ``open" otherwise. 
\end{definition}


\begin{definition}
\label{def:markovblanket}
The Markov blanket $S_{(i,a)}$ of the vertex $a$ in graph $G_i$ is the set of
first open nodes encountered by all walks in $G_i$ starting at vertex $a$.
\end{definition}
\begin{remark}
    The assumption $(1 - \alpha)(d_{\max} - 1) < 1$ in \textbf{C2}
    corresponds to the threshold of the Bernoulli site percolation process on the infinite 
    $d_{\max}$-regular tree, and is used to control the size of the Markov
    blanket of a given vertex.
\end{remark}

Now to compute $\E \{ \bX_{iS^c} \mid
\bX_{iS} \}$, we exploit the graphical structure of the model.
If an entry $(i,k)$ is missing, $X_{ik}$ is conditionally
independent of all observed entries not in its Markov blanket, given the latter.  That is, consider a node $X_{i,k}$ and let $S(i,k) \subseteq S$ be the subsets of observed nodes connected to $X_{i,k}$ by paths which contain only missing nodes; see Figure~\ref{fig:graph_mod_precision}.       
It follows from this observation and the conditional independence structure of Gaussian graphical models that 
\[ \widehat{X}_{ik}  =   \E \{X_{ik} \mid \bX_{iS}\} = \bSig_{k, S(i,k)}   \big(\bSig_{S(i,k), S(i,k)}\big)^{-1} \bX_{S(i,k)}.    \]
Now we simply use a plug-in estimator for $ \bSig$ in order to estimate the conditional expectations:
\[ \widetilde{X}_{ik} = \widetilde{\bSig}_{k, S(i,k)}  \big(\widetilde{\bSig}_{S(i,k), S(i,k)}\big)^{-1} \bX_{S(i,k)}.\]
%
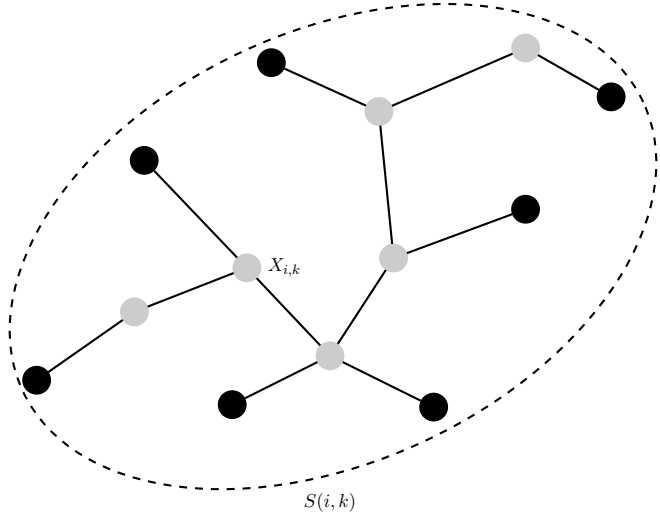
\begin{figure}[t!]
\centering
\begin{tikzpicture} 
\begin{scope}[scale=0.65, transform shape]
\GraphInit[vstyle=Classic]

\begin{scope}[VertexStyle/.append style = {minimum size = 16pt, 
                                           inner sep = 0pt,
                                           color=black,}]
    \Vertex[x=-2,y=-1, NoLabel]{1}
    \Vertex[x=2.12, y=-1.05, NoLabel]{2}
    \Vertex[x=4, y=3, NoLabel]{5}
    \Vertex[x=5.75, y=5.3, NoLabel]{8}
    \Vertex[x=-1.2, y=6, NoLabel]{9}
    \Vertex[x=-3.8, y=4, NoLabel]{11}
    \Vertex[x=-6, y=-0.5, NoLabel]{12}
\end{scope}
\begin{scope}[VertexStyle/.append style = {minimum size = 16pt, 
                                           inner sep = 0pt,
                                           color=gray!40,}]

\Vertex[x=0, y = 0, NoLabel]{0}
    \Vertex[x=1.3, y=2, NoLabel]{3}
    \Vertex[L=$X_{i,k}$, x=-1.7, y=1.8]{k}
    \Vertex[x=1, y=5, NoLabel]{6}
    \Vertex[x=4, y=6.3, NoLabel]{7}
    \Vertex[x=-4, y=0.9, NoLabel]{10}

\end{scope}
\begin{scope}[VertexStyle/.append style={rectangle,
                                             minimum size = 10pt,
                                             inner sep = 0pt,
                                             color=black,}]
\end{scope}
\begin{scope}[VertexStyle/.append style={rectangle,
                                             minimum size = 10pt,
                                             inner sep = 0pt,
                                             color=gray,}]
\end{scope}
    \Edge[](0)(1)
    \Edge[](0)(2)
    \Edge[](0)(3)
    \Edge[](0)(k)
    \Edge[](3)(5)
    \Edge[](3)(6)
    \Edge[](6)(7)
    \Edge[](7)(8)
    \Edge[](6)(9)
    \Edge[](k)(10)
    \Edge[](k)(11)
    \Edge[](10)(12)
    
    \draw[thick,dashed,rotate=25] (1cm,2cm) ellipse (7 and 4.4);
    \node[draw=none] (label) at (0, -3) {$S(i,k)$};
\end{scope}
\end{tikzpicture} 
    \caption{The Markov blanket used to estimate the missing entry $X_{i,k}$.
    Each of the gray nodes in the neighborhood of $X_{i,k}$ denotes a missing
    entry whereas each of the black nodes denotes an observed entry.  
    Each of the values of the observed entries on the boundary are used to
    estimate $X_{i,k}$.}
\label{fig:graph_mod_precision}
\end{figure}
In order to define our estimator $\widetilde{\bSig}$ of the covariance, we let the zero-imputed design matrix $\bX_0$ be such that
\[X_{0, (i,a)} = \begin{cases} 
        X_{ia} & \text{if} \ Z_{ia} \ \text{observed}, \\
            0 & \text{ otherwise}. 
            \end{cases}\]
We then take our estimator $\widetilde{\bSig}$  of the covariance to be the empirical covariance matrix of 
 the zero-imputed design matrix $\bX_0$:
\begin{align*}
    \widetilde{\bSig} = \frac{1}{\alpha^2 n}\sum_{i=1}^{n}\bX_{0,i} \bX_{0,i}^\top -
        \frac{1 - \alpha}{\alpha^2 n}\sum_{i=1}^{n}\diag\left(\bX_{0,i}
        \bX_{0,i}^\top\right).
\end{align*}
This is an unbiased estimator of the covariance matrix and is constructed by modifying the usual empirical covariance to account for the missing data.  
Before stating the main theorem of this section, we summarize the algorithm above:

\vspace{.2cm}
\begin{algorithm}[H]
\SetAlgoLined
\KwData{Matrix $\bZ$, vector $\by$, adjacency matrix for graphical model $\bA$, probability of observed entry $\alpha$, parameter $\lambda$}
\KwResult{Estimated regression vector $\widehat{\bbe}$}
Initialize $\widetilde{\bX} = 0$\;
Compute $\widetilde{\bSig} = \frac{1}{\alpha^2 n}\sum_{i=1}^{n}\bX_{0,i} \bX_{0,i}^\top -
        \frac{1 - \alpha}{\alpha^2 n}\sum_{i=1}^{n}\diag\left(\bX_{0,i}
        \bX_{0,i}^\top\right)$\;
\For{$i \in [n]$}{
    \For{$a \in [p]$}{
        \If{$\bM_{ia} = 1$}{
            Let $\widetilde{X}_{ia} = Z_{ia}$\;
        }
        \Else{
            Let $G_i \leftarrow \bA$ \;
            Color each node in $G_i$ green if observed, red if missing \;
            Let $S(i,k) \leftarrow $ boundary of breadth-first search in $G_i$ started from node $k$ and terminated when each path reaches a green node\;
            Let $\widetilde{X}_{ia} = \widetilde{\bSig}_{k, S(i,k)}  \big(\widetilde{\bSig}_{S(i,k), S(i,k)}\big)^{-1} \bX_{S(i,k)}$\;
        }
    }
}
\KwRet $\widehat{\bbe} \in \argmin_{\bbe \in \mathbb{R}^p} \frac{1}{2n}\norm{\by - \widetilde{\bX}\bbe}_2^2 + \lambda \norm{\bbe}_1$
\caption{Imputation and regression for sparse gaussian graphical models}
\end{algorithm}
\vspace{.2cm}

The strategy outlined above leads to the following
theorem.

\begin{theorem}
    \label{thm:GaussianRes}
Assume \textbf{A2--A3}, \textbf{C1--C3}, the data
is MCAR($\alpha$), and $\sqrt{\frac{\log{2np}}{n}} \leq C(\alpha, d_{\max},
\underline{c}, \overline{c})$ for $C(\alpha, d_{\max},
\underline{c}, \overline{c})$ >0.  Then, there exist positive constants $c_1,c_2,
c_3, C(\alpha, d_{\max})$, such that with probability at least $1 - c_1n^{-1} - c_2p^{c_3}$,
$\widehat{\bbe}$ as defined in ~\eqref{eq:ImputeLasso} with regularization parameter
\[
\lambda = C(\alpha, d_{\max})\left(\sigma + R\right)\sqrt{\frac{s\log{np}}{n}},
\]
satisfies
\begin{align}
\norm{\widehat{\bbe} - \bbe_0}_2
        \leq \frac{C(\alpha, d_{\max})\left(\sigma + R\right)
        }{\lambda_{\min}(\bSig_{\hat{\bX}})}s\sqrt{\frac{\log{2np}}{n}}.
\end{align}
\end{theorem}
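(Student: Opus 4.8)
The plan is to reduce the analysis to a standard LASSO oracle inequality applied to the imputed design $\widetilde{\bX}$, treating the gap between $\widetilde{\bX}$ and the exact conditional expectation $\widehat{\bX}$ as additional noise. Writing the residual of the imputed model as
\[
\widetilde{\beps} \;=\; \by - \widetilde{\bX}\bbe_0 \;=\; \beps \;+\; (\bX - \widehat{\bX})\bbe_0 \;+\; (\widehat{\bX} - \widetilde{\bX})\bbe_0,
\]
I would invoke the deterministic LASSO bound: provided $\lambda \ge 2\norm{\tfrac1n \widetilde{\bX}^\top \widetilde{\beps}}_\infty$ and $\tfrac1n\widetilde{\bX}^\top\widetilde{\bX}$ satisfies a restricted eigenvalue condition with constant $\kappa \asymp \lambda_{\min}(\Sigma_{\widehat{\bX}})$, one has $\norm{\widehat{\bbe} - \bbe_0}_2 \lesssim \sqrt{s}\,\lambda/\kappa$. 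The whole proof then comes down to (i) choosing $\lambda$ large enough to dominate the correlation term, and (ii) transferring the restricted eigenvalue condition from $\widehat{\bX}$ to $\widetilde{\bX}$.

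Two structural ingredients drive everything. First, I would control the size of every Markov blanket $S(i,k)$ uniformly. Since assumption \textbf{C2} places $(1-\alpha)(d_{\max}-1)<1$ exactly at the subcritical threshold of Bernoulli site percolation on the $d_{\max}$-regular tree, the connected cluster of missing nodes reachable from any vertex is exponentially unlikely to be large; a union bound over all $np$ entries gives $\max_{i,k}|S(i,k)| \lesssim \log(np)$ with probability at least $1 - c_1 n^{-1}$. Second, I would establish a uniform deviation bound for the unbiased covariance estimator, $\max_{j,k}|\widetilde{\Sigma}_{jk} - \Sigma_{jk}| \lesssim \sqrt{\log(np)/n}$ (with an $\alpha$-dependent prefactor), which follows from sub-exponential concentration of the products $X_{0,(i,j)}X_{0,(i,k)}$ together with a union bound over the $O(p^2)$ entries. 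These feed a matrix-perturbation argument: on any blanket of size $O(\log(np))$, the eigenvalue bounds in \textbf{C1} keep $\bSig_{S,S}$ well-conditioned, so $\opnorm{\widetilde{\bSig}_{S,S}^{-1} - \bSig_{S,S}^{-1}}$ is small, and one obtains a per-entry bound $|\widetilde{X}_{ik} - \widehat{X}_{ik}| \lesssim \mathrm{poly}(\log(np))\sqrt{\log(np)/n}\,\norm{\bX_{S(i,k)}}_2$, with $\norm{\bX_{S(i,k)}}_2$ controlled by sub-Gaussianity. Accumulating the powers of the blanket size through the inversion and the column-norm concentration is what produces the $\log^7 p$ requirement in \textbf{C3}.

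With these in hand I would bound the correlation term componentwise. The term $\tfrac1n\widetilde{\bX}^\top\beps$ is, conditionally on $(\bX,\bM)$, a weighted sum of the independent noise $\beps$, and concentrates at scale $\sigma\sqrt{\log p/n}$. The term $\tfrac1n\widetilde{\bX}^\top(\bX-\widehat{\bX})\bbe_0$ is handled exactly as in the exact-expectation analysis behind Theorem~\ref{thm:mainres} (using $\E\{\bX-\widehat{\bX}\mid\bZ\}=0$ for the $\widehat{\bX}$ part and the approximation bound above for the cross term), contributing at scale $R\sqrt{\log p/n}$. The genuinely new piece is $\tfrac1n\widetilde{\bX}^\top(\widehat{\bX}-\widetilde{\bX})\bbe_0$: each row of $(\widehat{\bX}-\widetilde{\bX})\bbe_0$ equals $\sum_k(\widehat X_{ik}-\widetilde X_{ik})(\bbe_0)_k$, which I would bound by $\max_k|\widehat X_{ik}-\widetilde X_{ik}|\cdot\norm{\bbe_0}_1 \le \max_k|\widehat X_{ik}-\widetilde X_{ik}|\cdot\sqrt{s}\,R$. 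It is precisely this passage through $\norm{\bbe_0}_1 \le \sqrt{s}R$ that injects the extra factor $\sqrt{s}$, forcing the choice $\lambda \asymp (\sigma + R)\sqrt{s\log(np)/n}$ and hence the $s$ (rather than $\sqrt{s}$) in the final rate, consistent with the $n \gtrsim s^2\log p$ condition in \textbf{C3}.

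Finally, the restricted eigenvalue of $\tfrac1n\widetilde{\bX}^\top\widetilde{\bX}$ I would obtain by writing $\widetilde{\bX} = \widehat{\bX} + (\widetilde{\bX}-\widehat{\bX})$: the Gram matrix of $\widehat{\bX}$ satisfies the restricted eigenvalue condition with constant $\asymp\lambda_{\min}(\Sigma_{\widehat{\bX}})$ by the concentration used for Theorem~\ref{thm:mainres}, while the uniform per-entry bound controls $\tfrac1n\opnorm{(\widetilde{\bX}-\widehat{\bX})^\top(\widetilde{\bX}-\widehat{\bX})}$ and the cross terms on the restricted cone, so the perturbation is negligible under \textbf{C3}. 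Combining the dominated $\lambda$ with this constant in the oracle inequality yields the stated bound. The main obstacle is the dependence created by sample reuse: because $\widetilde{\bSig}$, and hence every imputed entry of $\widetilde{\bX}$, is built from the same data that appears in the residual $\widetilde{\beps}$ and in the Gram matrix, I cannot condition and treat the pieces as independent. The resolution is to make every estimate uniform over all entries and all realizable blanket shapes, which is exactly where the percolation control of blanket sizes and the $\ell_\infty$ covariance concentration become indispensable.
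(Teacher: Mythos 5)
Your proposal follows essentially the same route as the paper's proof: the same reduction to the general LASSO proposition via the decomposition $\widetilde{\bX} = \widehat{\bX} + (\widetilde{\bX}-\widehat{\bX})$, the same subcritical-percolation control of the Markov blanket sizes giving $\max_{i,k}\lvert S(i,k)\rvert \lesssim \log(np)$, the same entrywise and small-submatrix covariance concentration feeding a power-series perturbation of $\widetilde{\bSig}_{S,S}^{-1}$, the same attribution of the extra $\sqrt{s}$ to bounding the approximate-imputation cross terms through $\norm{\bbe_0}_1 \le \sqrt{s}R$, and the same perturbative transfer of the restricted eigenvalue condition from $\widehat{\bX}$ to $\widetilde{\bX}$, with uniformity over entries and blankets resolving the sample-reuse dependence exactly as the paper does. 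The only cosmetic divergences are that you treat $\frac{1}{n}\widetilde{\bX}^\top\beps$ by conditioning on $(\bX,\bM)$ where the paper splits it through $\widehat{\bX}$ and applies Cauchy--Schwarz, and that the $\log^7 p$ requirement in \textbf{C3} arises in the paper specifically from a truncation argument for the heavy-tailed averages $\frac{1}{n}\sum_{i=1}^{n}\bigl(\sqrt{S_{(i,a)}}\sum_{j \in S_{(i,a)}}\lvert X_{ij}\rvert\bigr)^2$ (whose tails decay like $e^{-ct^{1/4}}$), rather than from accumulating blanket-size powers through the matrix inversion as you suggest.
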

\begin{remark}
    Observe the excess multiplicative factor $\sqrt{s}$ in the last display of the theorem. This can be avoided
    using sample-splitting: using the first $n/2$ samples to estimate the
    covariance and the remaining $n/2$ for regression.  This extra
    factor is a byproduct of the analysis of our procedure which reuses  in regression the data already used for covariance estimation. 
\end{remark}

\section{Simulations}
\label{sec:Simulations}
We now provide five numerical examples to support our theoretical findings:
\begin{enumerate}
\item \textbf{Standard Gaussian design: LASSO.}  Here, we elaborate on the experiment to produce Figure~\ref{fig:IdentityCovarianceGaussian}. 
\item \textbf{AR(1) Approximate conditional expectation.}  We generate data according to an AR(1) model and then use our imputed LASSO after estimating the covariance of the model.
\item \textbf{Banded inverse covariance approximate conditional expectation.} We
generate data with a banded inverse covariance and then use the imputed LASSO after
estimating the covariance of the model.
\item \textbf{Semi-synthetic data: gene expression.} We use the \emph{gene expression
    cancer RNA-Seq}
data from~\cite{Dua2019}.  We artificially induce MCAR data, generate a
synthetic regression vector $\bbe_0$, and then perform our imputed LASSO.
\item \textbf{Real data: communities and crime.} We use the \emph{communities and
    crime} data from~\cite{Dua2019}.  We compare the prediction error of the
    LASSO with the prediction error of the imputed LASSO. Although we impose
    MCAR data artificially, we no longer create a linear model; rather, the
    dataset contains responses.   
\end{enumerate}

Our simulations make use of the package {\fontfamily{qcr}\selectfont
scikit-learn} to compute the LASSO estimate.

\subsection{Standard Gaussian design: LASSO}
We first simulate the simple setting in which each entry of the data matrix
$X_{ij} \sim_{i.i.d} \normal(0,1)$.  We then form $\widehat{\bX} = \E \{\bX \mid
\bZ\}$ by $0$ imputation.
That is,
\[\widehat{X}_{ij} = \begin{cases}
      X_{ij} & \text{if}\ Z_{ij} = X_{ij}, \\
      0 & \text{if}\ Z_{ij} = \star.
    \end{cases} \]
We isolate the effective noise caused by missing data by considering the
noiseless setting (i.e., $\sigma = 0$). Additionally, we generate $\bbe_0$ with
square root sparsity, setting each of the first $\lceil \sqrt{p} \rceil$ entries
of $\bbe_0$ to be $1$ and the remaining ones to be $0$.  We set the regularization
parameter to be
\[\lambda =
    \sqrt{\frac{\alpha \left(1 - \alpha\right) \log{p}}{n}}.\]
The results of the simulation, compared with the value $\lambda \sqrt{s}$
are shown in Figure~\ref{fig:IdentityCovarianceGaussian}.

\subsection{AR(1) Approximate conditional expectation}
We consider the AR(1) model of Section~\ref{sec:FiniteStateMarkov}.  Two cases
are simulated: when the parameter $\phi$ is known, and when it needs to be
estimated.  We consider the noiseless additive regime and plot our results in
Figure~\ref{fig:ARGaussian}.  In our simulations, we take
\[\lambda = \frac{1}{\alpha^4}R\sqrt{\frac{\log{p}}{n}}.\]
We simulate three cases of the parameter $\phi = \{0.05, 0.1, 0.15\}$. 
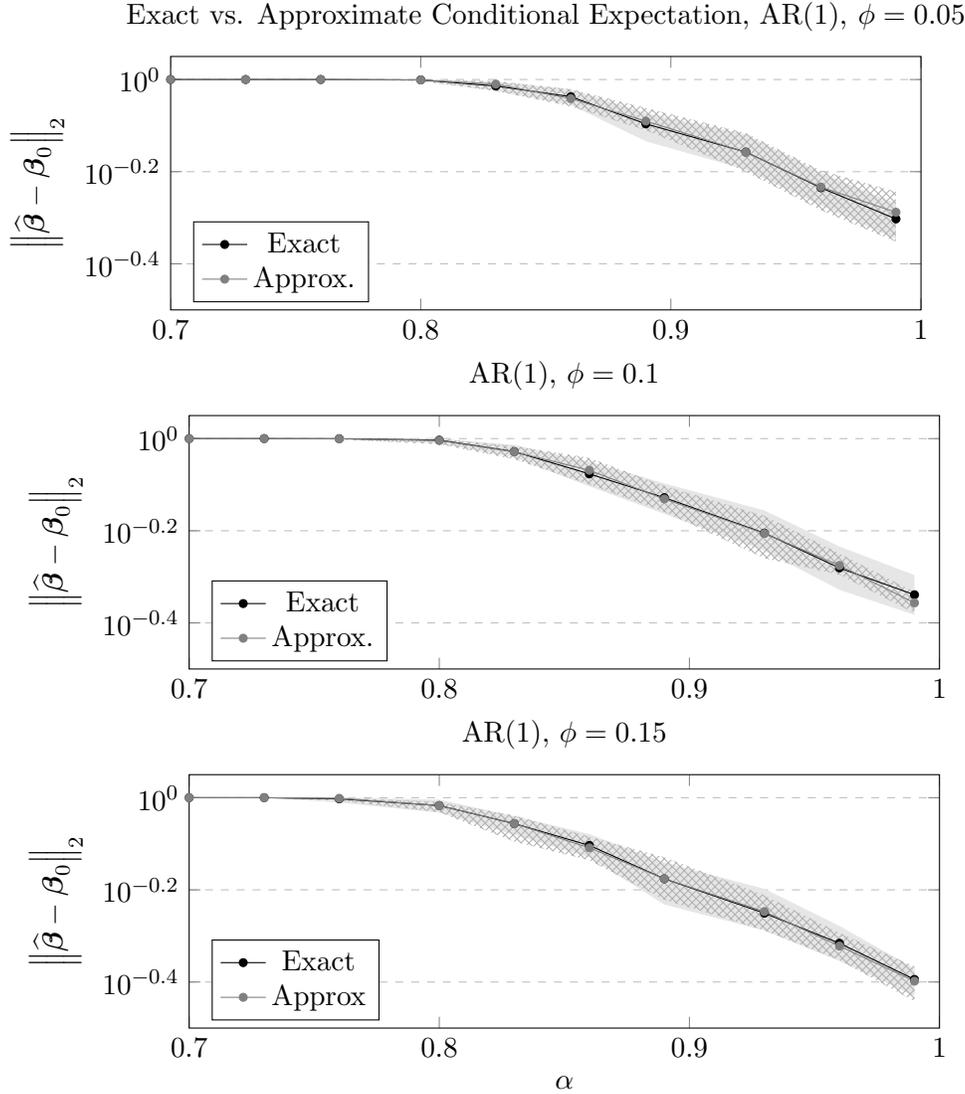
\begin{figure}[t!]
\centering
    \begin{subfigure}[h]{\textwidth}
\centering
\begin{tikzpicture}
\begin{axis}[
    width=0.7\textwidth,
    height=0.3\textwidth,
    title={Exact vs. Approximate Conditional Expectation, AR(1), $\phi = 0.05$},
    ylabel={$\norm{\widehat{\bbe} - \bbe_0}_2$},
    ymode=log,
    xmin=0.7, xmax=1,
    ymin=10^-0.5,
    xtick={0.7, 0.8, 0.9, 1.0},
    legend pos=south west,
    ymajorgrids=true,
    grid style=dashed,
]

\addplot[
    color=black,
    mark=*,
    mark size=1.5pt   
    ]
    table[x=alpha,y=err] {data/ARGaussian0.05.dat};
    \legend{Exact}

\addplot[
    color=black!50,
    mark=*,
    mark size=1.5pt,
    ]
    table[x=alpha,y=apx_err] {data/ARGaussian0.05.dat};
    \addlegendentry{Approx.}

\addplot [name path=upper,draw=none] table[x=alpha,y=max_err]
    {data/ARGaussian0.05.dat};
\addplot [name path=lower,draw=none] table[x=alpha,y=min_err]
    {data/ARGaussian0.05.dat};
\addplot [fill=black!10] fill between[of=upper and lower];

\addplot [name path=upper_apx,draw=none] table[x=alpha,y=apx_max_err]
    {data/ARGaussian0.05.dat};
\addplot [name path=lower_apx,draw=none] table[x=alpha,y=apx_min_err]
    {data/ARGaussian0.05.dat};
\addplot[pattern=crosshatch, 
        pattern color=black!30]  
        fill between[of=upper_apx and lower_apx];

\end{axis}
\end{tikzpicture}
\end{subfigure}

    \begin{subfigure}[h]{\textwidth}
\centering
\begin{tikzpicture}
\begin{axis}[
    width=0.7\textwidth,
    height=0.3\textwidth,
    title={AR(1), $\phi = 0.1$},
    ylabel={$\norm{\widehat{\bbe} - \bbe_0}_2$},
    ymode=log,
    xmin=0.7, xmax=1,
    ymin=10^-0.5,
    xtick={0.7, 0.8, 0.9, 1.0},
    legend pos=south west,
    ymajorgrids=true,
    grid style=dashed,
]

\addplot[
    color=black,
    mark=*,
    mark size=1.5pt   
    ]
    table[x=alpha,y=err] {data/ARGaussian0.10.dat};
    \legend{Exact}

\addplot[
    color=black!50,
    mark=*,
    mark size=1.5pt,
    ]
    table[x=alpha,y=apx_err] {data/ARGaussian0.10.dat};
    \addlegendentry{Approx.}

\addplot [name path=upper,draw=none] table[x=alpha,y=max_err]
    {data/ARGaussian0.10.dat};
\addplot [name path=lower,draw=none] table[x=alpha,y=min_err]
    {data/ARGaussian0.10.dat};
\addplot [fill=black!10] fill between[of=upper and lower];

\addplot [name path=upper_apx,draw=none] table[x=alpha,y=apx_max_err]
    {data/ARGaussian0.10.dat};
\addplot [name path=lower_apx,draw=none] table[x=alpha,y=apx_min_err]
    {data/ARGaussian0.10.dat};
\addplot[pattern=crosshatch, 
        pattern color=black!30]  
        fill between[of=upper_apx and lower_apx];

\end{axis}
\end{tikzpicture}
\end{subfigure}

    \begin{subfigure}[h]{\textwidth}
\centering
\begin{tikzpicture}
\begin{axis}[
    width=0.7\textwidth,
    height=0.3\textwidth,
    title={AR(1), $\phi = 0.15$},
    xlabel={$\alpha$},
    ylabel={$\norm{\widehat{\bbe} - \bbe_0}_2$},
    ymode=log,
    xmin=0.7, xmax=1,
    ymin=10^-0.5,
    xtick={0.7, 0.8, 0.9, 1.0},
    legend pos=south west,
    ymajorgrids=true,
    grid style=dashed,
]

\addplot[
    color=black,
    mark=*,
    mark size=1.5pt   
    ]
    table[x=alpha,y=err] {data/ARGaussian0.15.dat};
    \legend{Exact}

\addplot[
    color=black!50,
    mark=*,
    mark size=1.5pt,
    ]
    table[x=alpha,y=apx_err] {data/ARGaussian0.15.dat};
    \addlegendentry{Approx}

\addplot [name path=upper,draw=none] table[x=alpha,y=max_err]
    {data/ARGaussian0.15.dat};
\addplot [name path=lower,draw=none] table[x=alpha,y=min_err]
    {data/ARGaussian0.15.dat};
\addplot [fill=black!10] fill between[of=upper and lower];

\addplot [name path=upper_apx,draw=none] table[x=alpha,y=apx_max_err]
    {data/ARGaussian0.15.dat};
\addplot [name path=lower_apx,draw=none] table[x=alpha,y=apx_min_err]
    {data/ARGaussian0.15.dat};
\addplot[pattern=crosshatch, 
        pattern color=black!30]  
        fill between[of=upper_apx and lower_apx];

\end{axis}
\end{tikzpicture}
\end{subfigure}
\caption{Estimation error as a function of the density of observed entries $\alpha$.  Each plot
    shows the performance of imputation with exact conditional expectation in
    black compared to the approximated conditional expectation (where the
    parameter $\phi$ is estimated from data) in gray.  This example used
    $n=1000,\ p=1200,\ s=\lceil \sqrt{p} \rceil = 35$ (square root sparsity).  Each
    data point is an average of $10$ trials. The error of the known covariance
    example is shaded in solid between the best and worst error values over $10$
    trials whereas the error of the approximated covariance is crosshatched
    between the best and worst error values.  Each plot corresponds to
 a different value of the parameter $\phi$.}
\label{fig:ARGaussian}
\end{figure}

\subsection{Banded Inverse Covariance}
Here, we simulate two cases: when the exact conditional expectation can be
computed (the covariance is known), and when we need to estimate the covariance
from the data (but the sparsity pattern of the inverse covariance matrix is
known).  Using $\bSig = \bOmega^{-1}$, we take:
\[\Omega_{ij} = \begin{cases}
    \phi^{\lvert i - j \rvert}, & \text{if}\ \lvert i - j \rvert \leq 3 \\
      0, & \text{otherwise},
    \end{cases} \]
Again, we isolate the effective noise caused by missing data by considering the
noiseless setting.  We generate $\bbe_0$ in the same way
as above.  For these simulations, we take $\phi = 0.25$ and we set the
regularization parameter 
\[\lambda = \lambda_{\max}(\Sigma_{\bX})\sqrt{\frac{(1 - \alpha)\log{p}}{n}}\]
We plot the
empirical error as a function of $\alpha$, using the known covariance and the empirical error using the
approximated covariance on the same plot.  The results of the simulation are
shown in Figure~\ref{fig:BandedCovarianceGaussian}, where we simulate $n=1000$
and vary $p = \{600,900,1200\}$.  The two curves show that
even without sample splitting, the quality of estimation is similar between the
setting where $\Sigma_{\bX}$ is known exactly and where it must be approximated.
\begin{figure}[t!]
\centering
    \begin{subfigure}[h]{\textwidth}
        \centering
        \begin{tikzpicture}
\begin{axis}[
    width=0.7\textwidth,
    height=0.3\textwidth,
    title={Exact vs. Approximate Conditional Expectation: Banded, $p=600$},
    ylabel={$\norm{\widehat{\bbe} - \bbe_0}_2$},
    ymode=log,
    xmin=0.5, xmax=1,
    ymin=10^-1.05,
    xtick={0.5, 0.6, 0.7, 0.8, 0.9, 1.0},
    legend pos=south west,
    ymajorgrids=true,
    grid style=dashed,
]

\addplot[
    color=black,
    mark=*,
    mark size=1.5pt   
    ]
    table[x=alpha,y=err] {data/BandedGaussian600.dat};
    \legend{Exact}

\addplot[
    color=black!50,
    mark=*,
    mark size=1.5pt,
    ]
    table[x=alpha,y=apx_err] {data/BandedGaussian600.dat};
    \addlegendentry{Approx.}

\addplot [name path=upper,draw=none] table[x=alpha,y=max_err]
    {data/BandedGaussian600.dat};
\addplot [name path=lower,draw=none] table[x=alpha,y=min_err]
    {data/BandedGaussian600.dat};
\addplot [fill=black!10] fill between[of=upper and lower];

\addplot [name path=upper_apx,draw=none] table[x=alpha,y=apx_max_err]
    {data/BandedGaussian600.dat};
\addplot [name path=lower_apx,draw=none] table[x=alpha,y=apx_min_err]
    {data/BandedGaussian600.dat};
\addplot[pattern=crosshatch, 
        pattern color=black!30]  
        fill between[of=upper_apx and lower_apx];

\end{axis}
\end{tikzpicture}
    \end{subfigure}%
    ~ 

\begin{subfigure}[h]{\textwidth}
       \centering
        \begin{tikzpicture}
\begin{axis}[
    width=0.7\textwidth,
    height=0.3\textwidth,
    title={Banded Covariance, $p=900$},
    ylabel={$\norm{\widehat{\bbe} - \bbe_0}_2$},
    ymode=log,
    xmin=0.5, xmax=1,
    ymin=10^-1.05,
    xtick={0.5, 0.6, 0.7, 0.8, 0.9, 1.0},
    legend pos=south west,
    ymajorgrids=true,
    grid style=dashed,
]

\addplot[
    color=black,
    mark=*,
    mark size=1.5pt   
    ]
    table[x=alpha,y=err] {data/BandedGaussian900.dat};
    \legend{Exact}

\addplot[
    color=black!50,
    mark=*,
    mark size=1.5pt,
    ]
    table[x=alpha,y=apx_err] {data/BandedGaussian900.dat};
    \addlegendentry{Approx.}

\addplot [name path=upper,draw=none] table[x=alpha,y=max_err]
    {data/BandedGaussian900.dat};
\addplot [name path=lower,draw=none] table[x=alpha,y=min_err]
    {data/BandedGaussian900.dat};
\addplot [fill=black!10] fill between[of=upper and lower];

\addplot [name path=upper_apx,draw=none] table[x=alpha,y=apx_max_err]
    {data/BandedGaussian900.dat};
\addplot [name path=lower_apx,draw=none] table[x=alpha,y=apx_min_err]
    {data/BandedGaussian900.dat};
\addplot[pattern=crosshatch, 
        pattern color=black!30]  
        fill between[of=upper_apx and lower_apx];

\end{axis}
\end{tikzpicture}
    \end{subfigure}
    \begin{subfigure}[h]{\textwidth}
       \centering
        \begin{tikzpicture}
\begin{axis}[
    width=0.7\textwidth,
    height=0.3\textwidth,
    title={Banded Covariance, $p=1200$},
    xlabel={$\alpha$},
    ylabel={$\norm{\widehat{\bbe} - \bbe_0}_2$},
    ymode=log,
    xmin=0.5, xmax=1,
    ymin=10^-1.05,
    xtick={0.5, 0.6, 0.7, 0.8, 0.9, 1.0},
    legend pos=south west,
    ymajorgrids=true,
    grid style=dashed,
]

\addplot[
    color=black,
    mark=*,
    mark size=1.5pt   
    ]
    table[x=alpha,y=err] {data/BandedGaussian1200.dat};
    \legend{Exact}

\addplot[
    color=black!50,
    mark=*,
    mark size=1.5pt,
    ]
    table[x=alpha,y=apx_err] {data/BandedGaussian1200.dat};
    \addlegendentry{Approx.}

\addplot [name path=upper,draw=none] table[x=alpha,y=max_err]
    {data/BandedGaussian1200.dat};
\addplot [name path=lower,draw=none] table[x=alpha,y=min_err]
    {data/BandedGaussian1200.dat};
\addplot [fill=black!10] fill between[of=upper and lower];

\addplot [name path=upper_apx,draw=none] table[x=alpha,y=apx_max_err]
    {data/BandedGaussian1200.dat};
\addplot [name path=lower_apx,draw=none] table[x=alpha,y=apx_min_err]
    {data/BandedGaussian1200.dat};
\addplot[pattern=crosshatch, 
        pattern color=black!30]  
        fill between[of=upper_apx and lower_apx];

\end{axis}
\end{tikzpicture}
\end{subfigure}
    \caption{Estimation error as a function of the density of observed
        entries $\alpha$.  Each plot shows the performance with exact conditional
        expectation imputation (in black) versus the performance of approximate
        conditional expectation imputation (in gray) for varying values of the
        dimension $p$.  This example used
    $n=1000,\ s=\lceil \sqrt{p} \rceil$ (square root sparsity) for $p=600$ in
    the top figure, $p=900$ in the middle figure and $p=1200$ in the bottom.  Each
    data point is an average of $10$ trials. The error of the known covariance
    example is shaded in solid between the best and worst error values over $10$
    trials whereas the error of the approximated covariance is crosshatched
    between the best and worst error values.}
\label{fig:BandedCovarianceGaussian}
\end{figure}
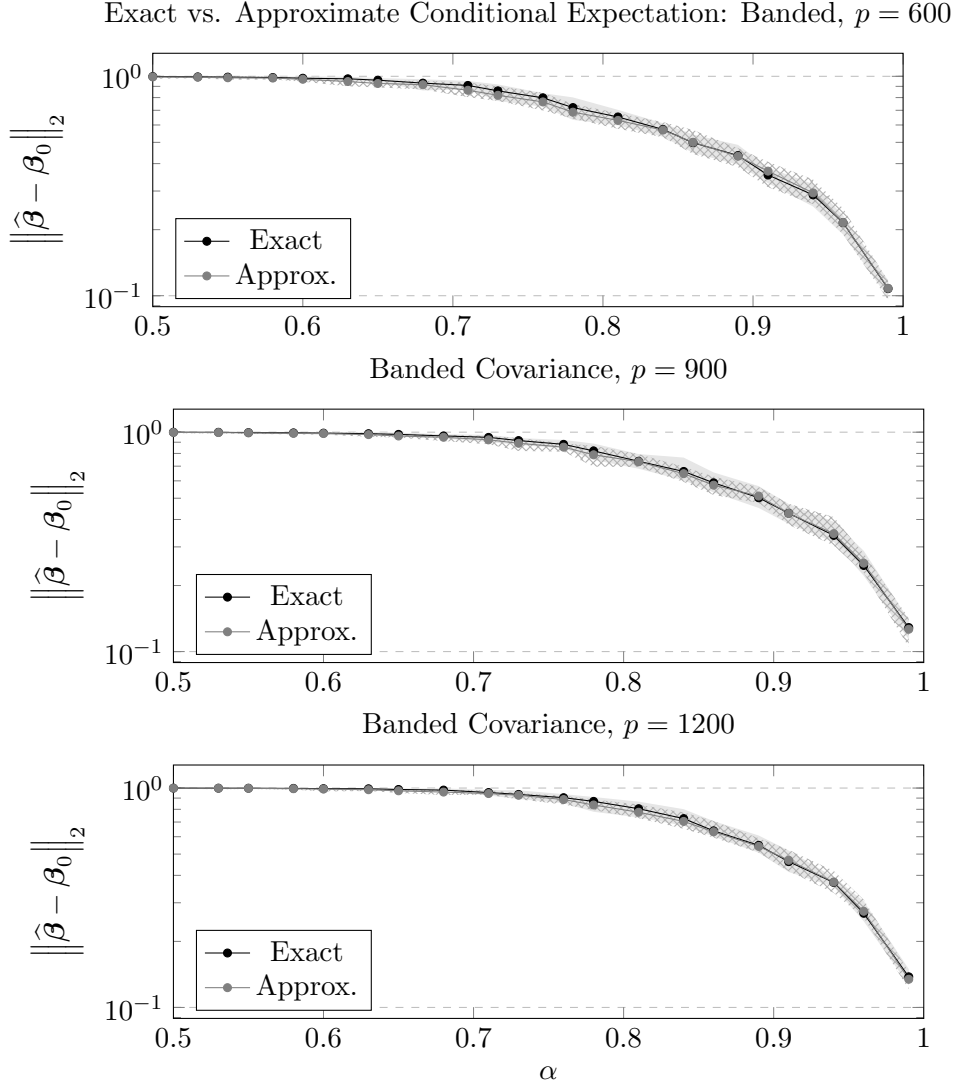

\subsection{Semi-synthetic data: gene expression}
We simulate our imputation procedure on the \emph{gene expression cancer
RNA-Seq} data set from the UCI
repository~\cite{Dua2019}.  The original data set contains $801$ samples and has
dimension $20532$.  Our experiments randomly subsample the columns of the data
and discard columns with small weight: we are left with a random sample of size
$936$ in our simulations.  We then center and normalize the remaining data
matrix and set this to be $\bX$.  We generate a regression vector $\bbe_0$ which
is one on the first $\sqrt{p}$ coordinates and zero everywhere else.  That is,
we let
\[
    (\bbe_0)_i = \begin{cases}
        1 & \text{if}\ i \leq \sqrt{p}, \\
      0 & \text{otherwise},
  \end{cases}
\]
and generate
responses $\by$ according to $\by = \bX \bbe_0$.  Additionally, on the data
matrix $\bX$, we use the graphical LASSO, using the {\fontfamily{qcr}\selectfont
skggm} package~\cite{laska_narayan_2017_830033}, to find the precision matrix
$\bOmega$ and its associated graphical model. Then, for each $\alpha$, we
generate $\bZ$ by deleting each entry of $\bX$ independently with probability $1
- \alpha$.  Accordingly, for each $\alpha$, we find $\widehat{\bX}$ using the
  imputation described in Section~\ref{sec:SparsePrecision} and use the LASSO
  with regularization parameter $\lambda = 0.1 \sqrt{(1 - \alpha)
  \frac{\log{p}}{n}}$.  The results of the simulation are found in
  Figure~\ref{fig:GeneData}.  Notice that even using the approximated graphical
  model and the imputed matrix, the LASSO is able to recover $\bbe_0$ reasonably
  well.  For context, we note that in this simulation, using the ``oracle" LASSO
  with the data matrix $\bX$ achieves error $\norm{\widehat{\bbe} - \bbe_0}_2 =
  0.124$.  
\begin{figure}[t!]
\centering
\begin{tikzpicture}
\begin{axis}[
    width=0.7\textwidth,
    height=0.4\textwidth,
    title={Approximate Imputation Gene Expression Data},
    xlabel={Probability of Observing an Entry ($\alpha$)},
    ylabel={$\norm{\widehat{\bbe} - \bbe_0}_2$},
    ymode=log,
    xmin=0.8, xmax=1,
    xtick={0.8, 0.9, 1.0},
    legend pos=south west,
    ymajorgrids=true,
    grid style=dashed,
]

\addplot[
    color=black,
    mark=*,
    mark size=1.25pt   
    ]
    table[x=alpha,y=err] {data/gene0.75.dat};
    \legend{Empirical Error}

\addplot [name path=upper,draw=none] table[x=alpha,y=max_err]
    {data/gene0.75.dat};
\addplot [name path=lower,draw=none] table[x=alpha,y=min_err]
    {data/gene0.75.dat};
\addplot [fill=black!10] fill between[of=upper and lower];

\end{axis}
\end{tikzpicture}
\caption{Error as a function of the density of observed
        entries $\alpha$.  This example used the
    \emph{gene expression cancer RNA-Seq} data set from the UCI
repository~\cite{Dua2019}.
    $n=801,\ p=936,\ s=\lceil \sqrt{p} \rceil$ (square root sparsity).
    Each data point is an average of 10 trials.  The error is
    shaded between the best and worst error values over the $10$ trials for
    each point.}
\label{fig:GeneData}
\end{figure}
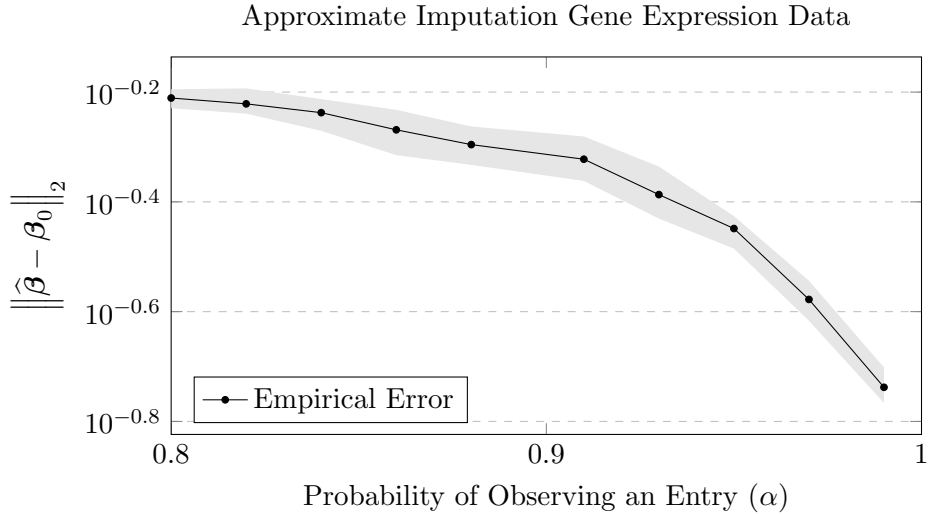

\subsection{Real data: communities and crime}
Whereas in the last section we generated responses $\by$ according to a linear model,
we will now simulate our procedure on a dataset ``Communities and crime" from
the UCI repository~\cite{Dua2019} that contains response variables.   The
original dataset contains $2215$ samples and has dimension $147$.  In order to
isolate the effect of missing data, we will first remove the samples which have
any missing entries.  We additionally remove linearly dependent columns and
are left with a data matrix $\bX$ with $n = 342, p = 123$.  We now perform two
simulations, shown in figure~\ref{fig:CrimeData}:
\begin{enumerate}
    \item We first perform an ``oracle" simulation in which there is no missing
        data.  We vary the regularization parameter $\lambda$ over the interval $(0,
        0.9)$ and perform the following $200$ times.  We randomly take $80\%$ of the
        data for training and leave $20\%$ of the data as holdout.  We plot the
        prediction error on the $20\%$ test set and plot the average value
        as well as the standard error.
    \item For various values of $\alpha$ we perfrom a simulation with missing
        data.  We again vary the regularization parameter over the same
        interval and perform the following $50$ times.  We randomly perform an
        $80/20$ split in the same manner as before.  This time on the training data, we run the graphical
        LASSO to find the sparsity pattern.  We then erase $1 - \alpha$ fraction of
        the training data and run the approximate conditional expectation
        imputed LASSO.  We plot the average prediction error on the $20$ percent test
        set as well as the standard error.
\end{enumerate}
As shown in figure~\ref{fig:CrimeData}, the
performance does not degrade much as a function of $\alpha$. 
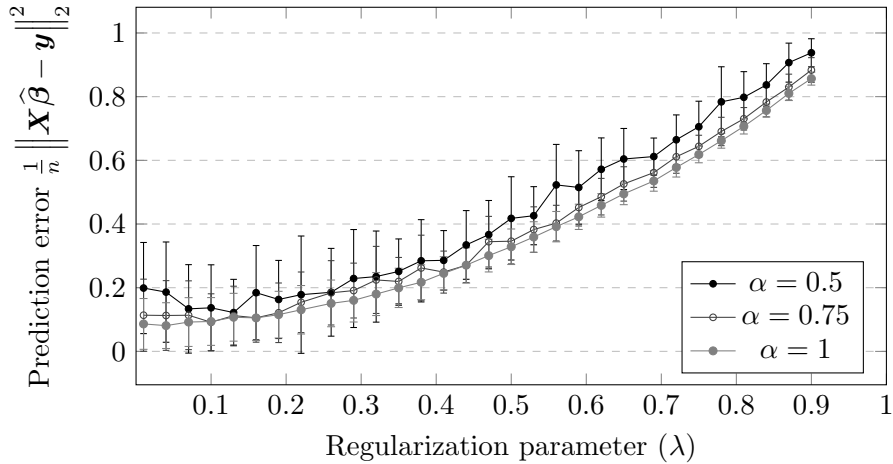
\begin{figure}[t!]
\centering
\begin{tikzpicture}
\begin{axis}[
    width=0.7\textwidth,
    height=0.4\textwidth,
    title={Approximate Imputation Communities and Crime Data},
    xlabel={Regularization parameter ($\lambda$)},
    ylabel={Prediction error $\frac{1}{n}\norm{\bX\widehat{\bbe} - \by}_2^2$},
    xmin=0, xmax=1,
    xtick={0.1, 0.2, 0.3, 0.4, 0.5, 0.6, 0.7, 0.8, 0.9, 1.0},
    legend pos=south east,
    ymajorgrids=true,
    grid style=dashed,
]

\addplot +[
color=black,
    mark=*,
    mark size=1.25pt,
    mark options={black},
error bars/.cd,
error bar style={black},
                    y explicit,
                    y dir=both,
    ]
    table[x=lambda,y=err, y error = std_error,]
                {data/crime_alpha0.5_stderror.dat};
    \legend{$\alpha = 0.5$}

\addplot +[
color=black!75,
    mark=o,
    mark size=1.25pt,
    mark options={black!75},
error bars/.cd,
error bar style={black!75},
                    y explicit,
                    y dir=both,
    ]
    table[x=lambda,y=err, y error = std_error,]
                {data/crime_alpha0.75_stderror.dat};
    \addlegendentry{$\alpha = 0.75$}

\addplot+ [
    color=black!50,
    mark=*,
    mark size=1.5pt,
    mark options={black!50},
    error bars/.cd,
                    y explicit,
                    y dir=both,
    ]
    table[x=lambda,y=err, y error = std_error,]
                {data/crime_no_missing_stderror.dat};
    \addlegendentry{$\alpha = 1$}



\end{axis}
\end{tikzpicture}
\caption{Prediction error for induced missing data as well as the oracle model
    with full data for the ``communities and crime" dataset.  For each
    $\lambda$, the gray closed circles represent the average prediction error
    for the ``oracle" model across $200$ random splits of the data.  The open
    gray circles represent the average prediction error for approximate
    conditional expectation imputation across $50$ random splits of the data when $0.75$-fraction of the data is kept,
    and the black circles when $0.5$-fraction of the data is kept.  The error
    bars give the standard error.}
\label{fig:CrimeData}
\end{figure}

\section{Conclusion}
\label{sec:Conclusion}
We have studied high-dimensional linear regression in the presence of missing data.  In contrast to previous theoretical work in this setting, we focus on the imputation strategy, followed by simple off-the-shelf estimation procedures.  Imputation by conditional expectation is shown to retain the following properties:
\begin{enumerate}
\item \textbf{Rate-optimality.} Obtained broadly with respect to the dimension,
    and with respect to \textit{every} parameter when the covariance of the data
    is the identity matrix.
\item \textbf{Pivotal.} The square-root LASSO retains rate-optimal statistical
    guarantees and is pivotal with respect to the radius of the problem $R$ and
    the noise variance $\sigma$.
\item \textbf{Robust to unknown covariance.} An approximated covariance suffices
    for the purposes of the imputed LASSO when the data comes from a sparse
    gaussian graphical model.
\end{enumerate}

Several potential future directions remain.  For instance, it is unclear what a theoretically principled way to
handle missing data is in, say, the generalized linear model.
Additionally, we have given coarse, non-asymptotic bounds 
for the linear model in the high-dimensional regime. It would be interesting to characterize the exact asymptotic performance of our procedures in the proportional regime.





\bibliographystyle{amsalpha}
\addcontentsline{toc}{section}{References}

\providecommand{\bysame}{\leavevmode\hbox to3em{\hrulefill}\thinspace}
\providecommand{\MR}{\relax\ifhmode\unskip\space\fi MR }
\providecommand{\MRhref}[2]{%
  \href{http://www.ams.org/mathscinet-getitem?mr=#1}{#2}
}
\providecommand{\href}[2]{#2}
\bibliography{ref}



\appendix

\section{Proofs for imputation by conditional expectation}
\label{sec:ProofsConditionalExpectation}
We now prove the main results of section~\ref{sec:ConditionalExpectation}.  We begin by overviewing the proof technique.  We note will explicitly write constants in this section; in subsequent, more complicated sections, we will drop this convention for readability.  The remainder of this section is organized as follows:

\begin{enumerate}
\item \textbf{Proof of theorem~\ref{thm:mainres}.} This is provided in subsection~\ref{subsec:ProofCondExpThm}.  
\item \textbf{Proof of corollary~\ref{cor:IdentityCovariance}.}  This is provided in subsection~\ref{subsec:ProofCorIdentity}.
\item \textbf{Proof of theorem~\ref{thm:mnar}.} This is provided in subsection~\ref{subsec:ProofMNAR}
\end{enumerate}

Our proofs extend the technique first used in~\cite{bickel2009simultaneous}.  We recall briefly the set-up.  We assume the linear model: $\by = \bX \bbe_0 + \beps$.  We observe $\bZ$, an in-exact version of $\bX$ and the response vector $\by$.  The set-up of~\cite{bickel2009simultaneous} assumes exact knowledge of $\bX$ and the response vector $\by$ and analyzes the properties of the LASSO estimator:
\[
\widehat{\bbe}_{\text{Las}} \in \argmin_{\bbe \in \mathbb{R}^p}\left\{\frac{1}{2n}\norm{\by - \bX \bbe}_2^2 + \lambda \norm{\bbe}_1\right\}
\]
We will consider a matrix $\widetilde{\bX}$ which is ``close" to $\bX$ and analyze the properties of:
\[
\widehat{\bbe} \in \argmin_{\bbe \in \mathbb{R}^p}\left\{\frac{1}{2n}\norm{\by - \widetilde{\bX} \bbe}_2^2 + \lambda \norm{\bbe}_1\right\}
\]

In particular, we have the following proposition, whose proof we provide in Appendix ~\ref{sec:GeneralLasso}: 
\begin{proposition} 
\label{prop:GeneralLasso}    
Given $\by = \bX \bbe_0 + \beps$ and $\widetilde{\bX} \in \mathbb{R}^{n \times p}$ and $\lambda>0$, consider the solution to the convex program
\begin{align*}
\widetilde{\bbe} \in \argmin_{\bbe \in \mathbb{R}^p} \left\{\frac{1}{2n}\norm{\by -
\widetilde{\bX}\bbe}_2^2 + \lambda \norm{\bbe}_1\right\}.
\end{align*}
Further, assume
\begin{align*}
&\norm{\frac{1}{n}\widetilde{\bX}^T\left(\widetilde{\bX} -\bX\right)\bbe_0}_{\infty} \leq \frac{\lambda}{4}, 
\hspace{.5cm}
\norm{\frac{1}{n}\widetilde{\bX}^T\beps}_{\infty} \leq \frac{\lambda}{4},\\
&\text{and}~~~~
\inf_{\bw \in \mathcal{C} \cap \mathcal{S}^{p-1}}\frac{1}{n}\norm{\widetilde{\bX}\bw}_2^2 \geq \kappa,
\end{align*} 
where $\mathcal{C}$ is the cone $\mathcal{C} = \left\{\bw \in \mathbb{R}^p: \norm{\bw_{T^c}}_1 \leq
    3\norm{\bw_T}_1\right\}$.
Then $\norm{\widetilde{\bbe} - \bbe_0}_2 \leq \frac{12\lambda\sqrt{s}}{\kappa}$.
\end{proposition}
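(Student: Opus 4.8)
The plan is to run the deterministic Bickel--Ritov--Tsybakov argument, treating the imputation discrepancy $(\widetilde{\bX}-\bX)\bbe_0$ as a second source of noise. First I would set $\bh = \widetilde{\bbe}-\bbe_0$ and, using $\by = \bX\bbe_0+\beps$, introduce the \emph{effective noise} $\widetilde{\beps} = \beps - (\widetilde{\bX}-\bX)\bbe_0$, so that $\by-\widetilde{\bX}\bbe_0 = \widetilde{\beps}$ and $\by-\widetilde{\bX}\widetilde{\bbe} = \widetilde{\beps}-\widetilde{\bX}\bh$. The optimality of $\widetilde{\bbe}$ gives the basic inequality $\frac{1}{2n}\norm{\widetilde{\beps}-\widetilde{\bX}\bh}_2^2 + \lambda\norm{\widetilde{\bbe}}_1 \le \frac{1}{2n}\norm{\widetilde{\beps}}_2^2 + \lambda\norm{\bbe_0}_1$, which after expanding the square and cancelling $\frac{1}{2n}\norm{\widetilde{\beps}}_2^2$ rearranges to
\[
\frac{1}{2n}\norm{\widetilde{\bX}\bh}_2^2 \le \frac{1}{n}\langle \widetilde{\bX}\bh,\, \widetilde{\beps}\rangle + \lambda\big(\norm{\bbe_0}_1 - \norm{\widetilde{\bbe}}_1\big).
\]

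Next I would control the cross term. Rewriting $\frac1n\langle \widetilde{\bX}\bh,\widetilde{\beps}\rangle = \langle \bh,\, \frac1n\widetilde{\bX}^\top\widetilde{\beps}\rangle$ and expanding $\frac1n\widetilde{\bX}^\top\widetilde{\beps} = \frac1n\widetilde{\bX}^\top\beps - \frac1n\widetilde{\bX}^\top(\widetilde{\bX}-\bX)\bbe_0$, the two hypotheses combine through the triangle inequality to give $\norm{\frac1n\widetilde{\bX}^\top\widetilde{\beps}}_\infty \le \lambda/2$; Hölder's inequality then yields $\frac1n\langle \widetilde{\bX}\bh,\widetilde{\beps}\rangle \le \frac{\lambda}{2}\norm{\bh}_1$. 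This is exactly the purpose of splitting the gradient bound into two $\ell_\infty$ assumptions: they let a single Hölder step absorb the genuine noise $\beps$ and the imputation error $(\widetilde{\bX}-\bX)\bbe_0$ simultaneously.

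Then I would exploit sparsity. With $T = \operatorname{supp}(\bbe_0)$, $|T|=s$, decomposability of the $\ell_1$ norm gives $\norm{\bbe_0}_1 - \norm{\widetilde{\bbe}}_1 \le \norm{\bh_T}_1 - \norm{\bh_{T^c}}_1$. Substituting this and $\norm{\bh}_1 = \norm{\bh_T}_1 + \norm{\bh_{T^c}}_1$ into the display above produces
\[
\frac{1}{2n}\norm{\widetilde{\bX}\bh}_2^2 \le \frac{3\lambda}{2}\norm{\bh_T}_1 - \frac{\lambda}{2}\norm{\bh_{T^c}}_1.
\]
Since the left-hand side is nonnegative, this forces $\norm{\bh_{T^c}}_1 \le 3\norm{\bh_T}_1$, i.e.\ $\bh\in\mathcal{C}$, so the restricted-eigenvalue hypothesis applies and $\frac1n\norm{\widetilde{\bX}\bh}_2^2 \ge \kappa\norm{\bh}_2^2$. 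Dropping the negative term and using $\norm{\bh_T}_1 \le \sqrt{s}\,\norm{\bh}_2$ (Cauchy--Schwarz) gives $\frac{\kappa}{2}\norm{\bh}_2^2 \le \frac{3\lambda}{2}\sqrt{s}\,\norm{\bh}_2$, hence $\norm{\bh}_2 \le 3\lambda\sqrt{s}/\kappa$, which is comfortably within the claimed $12\lambda\sqrt{s}/\kappa$.

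Since the proposition is entirely deterministic — all the probabilistic content lives in verifying its three hypotheses, which is deferred to the individual theorem proofs — there is no genuine analytic obstacle here. The only thing demanding care is the bookkeeping that makes the cone membership $\bh\in\mathcal{C}$ emerge and that matches the cone constant ($3$) to the one used in the restricted-eigenvalue hypothesis. The conceptually load-bearing step, rather than a difficulty, is the decomposition of $\widetilde{\beps}$ that allows the imputation error to be handled on the same footing as the statistical noise.
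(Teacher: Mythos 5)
Your proof is correct and follows essentially the same route as the paper's: the basic inequality from optimality, a triangle-plus-H\"older step combining the two $\ell_\infty$ hypotheses to absorb $\beps$ and $(\widetilde{\bX}-\bX)\bbe_0$ at level $\lambda/2$, cone membership of $\bh=\widetilde{\bbe}-\bbe_0$ from nonnegativity of $\frac{1}{2n}\norm{\widetilde{\bX}\bh}_2^2$, and then the restricted-eigenvalue hypothesis with $\norm{\bh_T}_1\le\sqrt{s}\norm{\bh}_2$. Your bookkeeping is in fact slightly tighter: by keeping $\frac{3\lambda}{2}\norm{\bh_T}_1-\frac{\lambda}{2}\norm{\bh_{T^c}}_1$ rather than passing to $\frac{3\lambda}{2}\norm{\bh}_1\le 6\lambda\norm{\bh_T}_1$ as the paper does, you obtain the constant $3$ in place of the stated $12$.
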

\begin{remark}
Observe that taking $\widetilde{\bX} = \bX$ recovers the setting of~\cite{bickel2009simultaneous}.  Of course, $\bX$ is not available in the context of this paper.
\end{remark}

We are now prepared to proceed with the proofs of our main results of section~\ref{sec:ConditionalExpectation}.

\subsection{Proof of theorem ~\ref{thm:mainres}}
\label{subsec:ProofCondExpThm}
We recall the statement for the reader's convenience:
\begin{customthm}{1}
Assume \textbf{A1--A4} and that the data is MCAR($\alpha$) for $\alpha < 1$.  Assume
additionally that $\sqrt{\frac{\log{p}}{n}} \leq c_1\sqrt{1 -
    \alpha}$ for a positive constant $c_1$.
Then, there exist positive constants $c_2, c_3, c_4$ such that with probability
at least $1 - c_2p^{-c_3}$, $\widehat{\bbe}$
as defined in~\eqref{eq:ImputeLasso} with regularization parameter
    \[\lambda = c_4\left(\sigma_X \sigma + \sigma_X^2\sqrt{1 - \alpha}R\right)\sqrt{\frac{\log{p}}{n}},\]
satisfies
    \begin{align}\norm{\widehat{\bbe} - \bbe_0}_2
        \lesssim\frac{\sigma_X \sigma + \sigma_X^2 \sqrt{1 - \alpha}R}{\lambda_{\min}(\Sigma_{\widehat{\bX}})}\sqrt{\frac{s\log{p}}{n}}\, .
    \end{align}
\end{customthm}

\begin{proof}
We will take a constant $A > \sqrt{2}$ and define
\begin{align}
\label{eq:mainlambda}
\lambda = 4A\left(8e\sigma_X \sigma + 16\sqrt{2}e\sigma_X^2\sqrt{1 - \alpha}R\right)\sqrt{\frac{\log{p}}{n}}.
\end{align}
Using this choice, we establish the following lemmas, each of which controls one of the two $\ell_{\infty}$ terms required by proposition~\ref{prop:GeneralLasso}.  The proofs of these lemmas are provided in appendix~\ref{sec:appendixConditionalExpectation}.
\begin{lemma}
\label{lem:term1thm1}
Assuming $\lambda$ as defined in equation~\eqref{eq:mainlambda}, and the assumptions of Theorem~\ref{thm:mainres}, we have
\[
\pr\left\{\norm{\frac{1}{n}\widehat{\bX}^T\left(\widehat{\bX} - \bX\right)\bbe_0}_{\infty} \geq \frac{\lambda}{4}\right\} \leq 2p^{1 - \frac{A^2}{2}}.
\]
\end{lemma}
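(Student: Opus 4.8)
The plan is to prove the bound one coordinate at a time and then take a union bound over the $p$ columns. Writing $\boldsymbol{\Delta} = \bX - \widehat{\bX}$ for the imputation residual, the $a$-th entry of the vector in question is
\[
T_a := \left(\frac{1}{n}\widehat{\bX}^\top(\widehat{\bX}-\bX)\bbe_0\right)_a = -\frac{1}{n}\sum_{i=1}^n \widehat{X}_{ia}\,\langle\boldsymbol{\Delta}_i,\bbe_0\rangle =: -\frac{1}{n}\sum_{i=1}^n U_{ia},
\]
a sum of $n$ independent terms, since the rows of $\bX$ are i.i.d.\ and MCAR deletes entries independently across rows. First I would record that each summand is mean zero: conditioning on $\bZ_i$ and using $\widehat{\bX}_i = \E\{\bX_i\mid\bZ_i\}$ gives $\E\{\langle\boldsymbol{\Delta}_i,\bbe_0\rangle\mid\bZ_i\}=0$, so $\E\{U_{ia}\}=0$ by the tower property. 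This is exactly the orthogonality of conditional expectation invoked in the decomposition~\eqref{eq:decomp}.

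Next I would show that each $U_{ia} = \widehat{X}_{ia}\langle\boldsymbol{\Delta}_i,\bbe_0\rangle$ is sub-exponential. The factor $\widehat{X}_{ia}=\E\{X_{ia}\mid\bZ_i\}$ is $\sigma_X^2$-sub-Gaussian because conditional expectation is a contraction (Jensen applied to the moment generating function: $\E e^{\theta\widehat{X}_{ia}}\le\E e^{\theta X_{ia}}\le e^{\sigma_X^2\theta^2/2}$), and $\langle\boldsymbol{\Delta}_i,\bbe_0\rangle = \langle\bX_i,\bbe_0\rangle-\langle\widehat{\bX}_i,\bbe_0\rangle$ is sub-Gaussian with parameter $\lesssim\sigma_X^2 R^2$, being a difference of sub-Gaussian linear forms with $\norm{\bbe_0}_2\le R$. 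A product of two sub-Gaussians is sub-exponential, so $U_{ia}$ has $\psi_1$-norm $\lesssim \sigma_X^2 R$.

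The crucial step, and the one producing the $\sqrt{1-\alpha}$ factor in $\lambda$, is the variance bound, where the crude $\psi_1$-norm is far too lossy. Instead I would compute $\E\{U_{ia}^2\}$ directly. Conditioning on $\bZ_i$ turns it into $\E\{\widehat{X}_{ia}^2\,\var(\langle\bX_i,\bbe_0\rangle\mid\bZ_i)\}$, and the law of total variance gives $\E\{\var(\langle\bX_i,\bbe_0\rangle\mid\bZ_i)\} = \bbe_0^\top(\Sigma_{\bX}-\Sigma_{\widehat{\bX}})\bbe_0$. Under MCAR the residual $\boldsymbol{\Delta}_i$ is supported on the missing coordinates, and the independence of the deletion mask from $\bX_i$ yields $\bbe_0^\top(\Sigma_{\bX}-\Sigma_{\widehat{\bX}})\bbe_0\lesssim(1-\alpha)\sigma_X^2 R^2$ (with equality $(1-\alpha)\norm{\bbe_0}_2^2$ in the identity-covariance case). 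Combined with the $\sigma_X^2$-sub-Gaussianity of $\widehat{X}_{ia}$ this gives $\var(U_{ia})\lesssim(1-\alpha)\sigma_X^4 R^2$. I expect the main obstacle to be controlling the dependence between $\widehat{X}_{ia}^2$ and the conditional residual variance in the general sub-Gaussian setting; in the Gaussian/identity case the two decouple cleanly, whereas in general this likely requires an auxiliary operator bound of the form $\Sigma_{\bX}-\Sigma_{\widehat{\bX}}\preceq C(1-\alpha)\sigma_X^2\,\bI$ together with sub-Gaussian moment control.

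Finally, I would apply a Bernstein inequality for independent sub-exponential variables with variance proxy $v^2\lesssim(1-\alpha)\sigma_X^4 R^2$ and scale $K\lesssim\sigma_X^2 R$. Setting $t=\lambda/4$ and keeping the $\sigma_X^2\sqrt{1-\alpha}R$ contribution of~\eqref{eq:mainlambda}, the hypothesis $\sqrt{\log p/n}\le c_1\sqrt{1-\alpha}$ is precisely the condition $t\lesssim v^2/K$ placing us in the quadratic (sub-Gaussian) regime of Bernstein, so that $\pr\{|T_a|\ge t\}\le 2\exp(-cnt^2/v^2)$; the explicit constants in~\eqref{eq:mainlambda} are chosen so that this exponent is at least $\tfrac{A^2}{2}\log p$, giving $\pr\{|T_a|\ge\lambda/4\}\le 2p^{-A^2/2}$. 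A union bound over the $p$ coordinates then yields $\pr\{\norm{\tfrac1n\widehat{\bX}^\top(\widehat{\bX}-\bX)\bbe_0}_\infty\ge\lambda/4\}\le 2p^{1-A^2/2}$, as claimed.
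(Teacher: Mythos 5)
Your outline follows the paper's proof in every structural move---reduce to a single coordinate and union bound over $a\in[p]$, use orthogonality of conditional expectation for mean zero, view $U_{ia}=\widehat{X}_{ia}\langle \bX_i-\widehat{\bX}_i,\bbe_0\rangle$ as a product of sub-Gaussians, and land in the quadratic regime of a sub-exponential concentration inequality using $\sqrt{\log p/n}\lesssim\sqrt{1-\alpha}$---but the concluding step has a genuine gap as stated. A bound on $\var(U_{ia})\lesssim(1-\alpha)\sigma_X^4R^2$ together with $\norm{U_{ia}}_{\psi_1}\lesssim\sigma_X^2R$ does \emph{not} license a Bernstein inequality with variance proxy $v^2\asymp(1-\alpha)\sigma_X^4R^2$: the sub-Gaussian portion of a sub-exponential tail is governed by the full moment hierarchy (one needs $\E|U_{ia}|^k\le\frac{k!}{2}v^2K^{k-2}$ for all $k\ge2$, or equivalently $\E e^{\theta U_{ia}}\le e^{\theta^2 v^2}$ for $|\theta|\le 1/K$, which is the hypothesis of the paper's Lemma~\ref{lem:subexpconcentration}), and variance plus $\psi_1$-norm alone only yield this with $v^2\asymp\norm{U_{ia}}_{\psi_1}^2\asymp\sigma_X^4R^2$---losing exactly the $(1-\alpha)$ factor that is the point of the lemma. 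Your second-moment computation is the $k=2$ shadow of the required statement, and it does not propagate to higher moments for free.

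The paper closes this hole by conditioning on the MCAR mask $S$ rather than on $\bZ_i$: conditionally on $S$, the residual inner product $\langle\bX_i-\widehat{\bX}_i,\bbe_0\rangle$ is sub-Gaussian with parameter $4\sigma_X^2\norm{\bbe_{0,S^c}}_2^2$, the product lemma (which uses Cauchy--Schwarz on moments and needs \emph{no} independence between the two factors) gives a conditional MGF bound of the form $e^{\frac12\theta^2c_X^2\norm{\widetilde{\bbe}_{0,S^c}}_2^2}$, and a Herbst-type argument controls the mixture $\E_S\exp\{\frac12\theta^2c_X^2\norm{\widetilde{\bbe}_{0,S^c}}_2^2\}$ using $\E_S\norm{\widetilde{\bbe}_{0,S^c}}_2^2=1-\alpha$ and $\norm{\widetilde{\bbe}_{0,S^c}}_2\le1$ (hence $\E_S\norm{\widetilde{\bbe}_{0,S^c}}_2^4\le1-\alpha$), yielding $\E e^{\theta U_{ia}}\le e^{\theta^2(1-\alpha)c_X^2}$ on an interval $|\theta|\lesssim 1/\sigma_X^2$. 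This is precisely the MGF input your Bernstein step needs. Two consequences for your write-up: first, the repair is cheap in your own framework, since $\norm{\widetilde{\bbe}_{0,S^c}}_2\le1$ implies $\E_S\norm{\widetilde{\bbe}_{0,S^c}}_2^{2k}\le1-\alpha$ for every $k\ge1$, so conditioning on the mask hands you the full Bernstein moment condition with $v^2\asymp(1-\alpha)\sigma_X^4R^2$ and $K\asymp\sigma_X^2R$; second, the auxiliary operator bound $\Sigma_{\bX}-\Sigma_{\widehat{\bX}}\preceq C(1-\alpha)\sigma_X^2\bI$ you flag as a likely obstacle is unnecessary---the dependence between $\widehat{X}_{ia}$ and the conditional residual is harmless once one conditions only on the mask, because the product lemma tolerates arbitrarily dependent factors.
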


\begin{lemma}
\label{lem:term2thm1}
Assuming $\lambda$ as defined in equation~\eqref{eq:mainlambda}, and the assumptions of Theorem~\ref{thm:mainres}, we have
\[
\pr\left\{\norm{\frac{1}{n}\widehat{\bX}^T \beps}_{\infty} \geq \frac{\lambda}{4}\right\} \leq 2p^{1 - \frac{A^2}{2}}.
\]
\end{lemma}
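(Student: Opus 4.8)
The quantity to control is $\|\frac{1}{n}\widehat{\bX}^\top\beps\|_\infty = \max_{a\in[p]}\big|\frac{1}{n}\sum_{i=1}^n \widehat{X}_{ia}\epsilon_i\big|$, so the plan is to bound each coordinate $a$ by $2p^{-A^2/2}$ and then take a union bound over the $p$ columns. The decisive structural fact is that the additive noise $\beps$ is independent of the design $\bX$, hence of $\bZ$ and of $\widehat{\bX}=\E\{\bX\mid\bZ\}$; I would therefore condition on $\bZ$ throughout. A second fact I would record at the outset is that each entry $\widehat{X}_{ia}$ is mean-zero and $\sigma_X^2$-sub-Gaussian: by conditional Jensen and \textbf{A1},
\[
\E e^{\theta\widehat{X}_{ia}} = \E e^{\theta\E\{X_{ia}\mid\bZ_i\}} \le \E\,\E\{e^{\theta X_{ia}}\mid\bZ_i\} = \E e^{\theta X_{ia}} \le e^{\sigma_X^2\theta^2/2}.
\]

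Conditioning on $\bZ$, the $\widehat{X}_{ia}$ become fixed and $\sum_i \widehat{X}_{ia}\epsilon_i$ is a linear combination of the independent, mean-zero, $\sigma^2$-sub-Gaussian variables $\epsilon_i$; it is therefore sub-Gaussian with variance proxy $\sigma^2\sum_i \widehat{X}_{ia}^2$, which yields
\[
\pr\Big\{\Big|\frac{1}{n}\sum_{i=1}^n \widehat{X}_{ia}\epsilon_i\Big|\ge t \,\Big|\,\bZ\Big\}\le 2\exp\Big(-\frac{n^2 t^2}{2\sigma^2\sum_{i}\widehat{X}_{ia}^2}\Big).
\]
To convert the random variance proxy into a deterministic one, I would work on the event $\mathcal{E}=\{\max_a \frac1n\sum_i\widehat{X}_{ia}^2\le \beta\sigma_X^2\}$ for a suitable absolute constant $\beta$. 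Since $\widehat{X}_{ia}^2$ is sub-exponential with $\E\widehat{X}_{ia}^2=(\Sigma_{\widehat{\bX}})_{aa}\le\sigma_X^2$, a Bernstein bound together with the sample-size lower bound from \textbf{A4} (which ensures $n\gtrsim\log p$) makes $\pr\{\mathcal{E}^c\}$ negligible relative to $p^{-A^2/2}$.

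On $\mathcal{E}$ the conditional bound becomes $2\exp(-\frac{nt^2}{2\beta\sigma_X^2\sigma^2})$; choosing $t=\lambda/4\ge 8eA\,\sigma_X\sigma\sqrt{\log p/n}$ (dropping the nonnegative $R$-term in $\lambda$) produces an exponent of at least $\frac{32e^2A^2}{\beta}\log p\ge\frac{A^2}{2}\log p$ as soon as $\beta\le 64e^2$, which the crude second-moment control easily satisfies. A union bound over $a\in[p]$ then gives the claimed $2p^{1-A^2/2}$ after absorbing $\pr\{\mathcal{E}^c\}$, and this is precisely the second $\ell_\infty$ hypothesis required to apply Proposition~\ref{prop:GeneralLasso}. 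The only genuinely delicate step is the uniform second-moment control on $\mathcal{E}$: one must verify that $\frac1n\sum_i\widehat{X}_{ia}^2$ concentrates around $(\Sigma_{\widehat{\bX}})_{aa}$ simultaneously across all $p$ columns with failure probability far below $p^{-A^2/2}$. Because $\beta$ is allowed to be as large as $64e^2$, the required deviation is a constant factor rather than a vanishing one, so the Bernstein estimate has ample slack and \textbf{A4} supplies the needed $n\gtrsim\log p$; matching the precise numerical constants $8e$ and $A^2/2$ is then bookkeeping.
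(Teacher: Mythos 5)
Your proof is correct, but it takes a genuinely different route from the paper. The paper argues unconditionally: it treats each summand $\widehat{X}_{ia}\epsilon_i$ as a product of two sub-Gaussian variables, invokes Lemma~\ref{lem:productsubgaussian} to conclude it is sub-exponential with $\widebar{\theta}=(8e\sigma_X\sigma)^{-1}$ and $\sigma_Z^2 = 64e^2\sigma_X^2\sigma^2$, and then applies the sub-Gaussian regime of Lemma~\ref{lem:subexpconcentration} with $t = 8eA\sigma_X\sigma\sqrt{\log p / n}$, followed by a union bound over $a \in [p]$; the hypothesis $\sqrt{\log p/n}\le A^{-1}$ exists precisely to keep $t$ inside that regime ($t \le \sigma_Z^2\widebar{\theta}$). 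You instead condition on $\bZ$, under which $\sum_i \widehat{X}_{ia}\epsilon_i$ is \emph{exactly} sub-Gaussian with random variance proxy $\sigma^2\sum_i\widehat{X}_{ia}^2$, and separately control the column second moments on an event $\mathcal{E}$ via Bernstein plus a union bound. Both independence uses are legitimate (the paper also relies on $\E[\widehat{X}_{ia}\epsilon_i]=0$, which comes from the same independence of $\beps$ and $(\bX,\bZ)$), and your sub-Gaussianity of $\widehat{X}_{ia}$ via conditional Jensen is exactly the paper's Fact~\ref{fact:subGaussianConditionalExpectation}. What your route buys: the main deviation step has no sub-exponential regime restriction, so the constraint $\sqrt{\log p/n}\lesssim 1$ is only needed (with ample slack, via \textbf{A4}) for the constant-level deviation defining $\mathcal{E}$, and you avoid the product-MGF machinery. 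What it costs: an extra uniform concentration event, and a final bound of $2p^{1-A^2/2}+\pr\{\mathcal{E}^c\}$ rather than $2p^{1-A^2/2}$ literally; to match the stated constant you should take $\beta$ strictly below $64e^2$ so the main exponent has slack to absorb $\pr\{\mathcal{E}^c\}\lesssim p\,e^{-cn}$ (or simply accept a larger constant, which is harmless since Theorem~\ref{thm:mainres} only uses the bound in the form $c_2p^{-c_3}$). You flag this bookkeeping yourself, and it is indeed routine.
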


We now need to control the so-called restricted eigenvalue, that is $\inf_{\bw \in \mathcal{C} \cap \mathcal{S}^{p-1}}\frac{1}{n}\norm{\widehat{\bX}\bw}_2^2$.  To this end, we have the following fact that we will use repeatedly:
\begin{fact}
\label{fact:subGaussianConditionalExpectation}
Assume that the rows of a matrix $\bX$ are $\sigma_X^2$ sub-Gaussian.  Then, for any set $S \subseteq [p]$, the random vector $\widehat{\bX}_i = \E\left\{\bX_i \mid \bX_{S}\right\}$ is sub-Gaussian with
    parameter $\sigma_X^2$.
\end{fact}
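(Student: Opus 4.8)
The plan is to reduce the vector statement to a scalar one using the definition of a sub-Gaussian vector, and then invoke the conditional Jensen inequality. By definition, it suffices to show that $\inp{\bu}{\widehat{\bX}_i}$ is $\sigma_X^2$-sub-Gaussian for every unit vector $\bu \in \mathcal{S}^{p-1}$. Linearity of the conditional expectation gives
\[
\inp{\bu}{\widehat{\bX}_i} = \inp{\bu}{\E\{\bX_i \mid \bX_{i,S}\}} = \E\{\inp{\bu}{\bX_i} \mid \bX_{i,S}\},
\]
so, writing $W \defeq \inp{\bu}{\bX_i}$, the task reduces to showing that $\E\{W \mid \bX_{i,S}\}$ is $\sigma_X^2$-sub-Gaussian, where $W$ itself is $\sigma_X^2$-sub-Gaussian by the definition of a sub-Gaussian vector together with Assumption \textbf{A1}.

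Next I would control the moment generating function directly. Since $x \mapsto e^{\theta x}$ is convex, the conditional Jensen inequality yields, pointwise for every $\theta \in \mathbb{R}$, the bound $e^{\theta \E\{W \mid \bX_{i,S}\}} \leq \E\{e^{\theta W} \mid \bX_{i,S}\}$. Taking expectations and applying the tower property,
\[
\E\{e^{\theta \E\{W \mid \bX_{i,S}\}}\} \leq \E\{\E\{e^{\theta W} \mid \bX_{i,S}\}\} = \E\{e^{\theta W}\} \leq e^{\sigma_X^2 \theta^2 / 2},
\]
where the final inequality is precisely the sub-Gaussian bound for $W$. Because this holds for all $\theta$, the random variable $\E\{W \mid \bX_{i,S}\}$ satisfies the sub-Gaussian moment generating function bound with parameter $\sigma_X^2$. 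I would then verify the centering requirement implicit in the definition: Assumption \textbf{A1} gives $\E\{\bX_i\} = 0$, hence $\E W = 0$, and the tower property gives $\E[\E\{W \mid \bX_{i,S}\}] = \E W = 0$, so the conditional mean is indeed zero-mean. As $\bu$ was arbitrary, this establishes that $\widehat{\bX}_i$ is $\sigma_X^2$-sub-Gaussian.

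I do not anticipate a genuine obstacle here; the argument is a one-line application of conditional Jensen. The only points meriting care are the interchange $\inp{\bu}{\E\{\cdot\}} = \E\{\inp{\bu}{\cdot}\}$, which is just linearity of the conditional expectation, and the direction of the Jensen step (convexity of the exponential pushes the conditioning inside, which is exactly what lets the conditional mean inherit the same sub-Gaussian parameter rather than a worse one). It is also worth emphasizing that nothing in the proof depends on the specific choice of $S$ or on the particular conditioning $\sigma$-algebra: the same computation applies verbatim when conditioning on a coarser field such as $\sigma(\bZ)$, which is why this fact can be reused throughout the missing-data analysis without modification.
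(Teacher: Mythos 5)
Your proof is correct and is essentially identical to the paper's: the paper's one-line argument is exactly conditional Jensen applied to $e^{\lambda\inp{\bu}{\widehat{\bX}_i}}$ followed by the tower property and the sub-Gaussian bound on $\inp{\bu}{\bX_i}$. Your added verification of the zero-mean condition is a fine (if routine) supplement that the paper leaves implicit.
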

\noindent\begin{proofof}{Fact~\ref{fact:subGaussianConditionalExpectation}}
   \begin{align*}
\label{eq:condExpSubGaussian}
\E\left\{e^{\lambda\inp{\bu}{\widehat{\bX}_i}}\right\} \leq
\E\left\{\E\left\{e^{\lambda\inp{\bu}{\bX_i}}\mid \bX_S\right\}\right\} =
\E\left\{e^{\lambda\inp{\bu}{\bX_i}}\right\} \leq e^{\lambda^2\sigma_X^2}.
\end{align*} 
\end{proofof}

We have the following proposition, whose proof is provided in appendix~\ref{sec:GeneralLasso}.
\begin{proposition} 
    \label{prop:RE_Condition}
Let $\bX \in \mathbb{R}^{n \times p}$ be a matrix with i.i.d. rows, each of which is sub-Gaussian with
    parameter $\sigma_X^2$ and has covariance matrix $\Sigma_{\bX}$.  Additionally, let $S \subseteq [p]$ with $\lvert S \rvert = s$.  
    Then  $\bX$ satisfies the
                    restricted eigenvalue condition
    \begin{align*}
        \inf_{\bw \in \mathcal{C} \cap S^{p-1}}\frac{1}{n}\norm{\bX \bw}_2^2
        \geq \frac{\lambda_{\min}\left(\Sigma_{\bX}\right)}{2},
    \end{align*}
    with probability at least 
    \begin{align*}
        1 -
                2\exp\left\{-c n \left(\min\left(\frac{ \lambda_{\min}\left(\Sigma_{\bX}\right)^2}{\sigma_X^4},
                \frac{\lambda_{\min}\left(\Sigma_{\bX}\right)}{\sigma_X^2}\right) - \frac{s \log{p}}{n}\right)\right\},
    \end{align*}
    where $\mathcal{C}$ is the cone $\mathcal{C} = \left\{\bw \in \mathbb{R}^p: \norm{\bw_{S^c}}_1 \leq
    3\norm{\bw_S}_1\right\}$.
\end{proposition}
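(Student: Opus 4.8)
The plan is to establish the bound by combining a pointwise sub-exponential concentration inequality with a discretization (net) argument over the cone. The shape of the claimed probability, namely $1 - 2\exp\{-cn(\min(\lambda_{\min}^2/\sigma_X^4, \lambda_{\min}/\sigma_X^2) - s\log p / n)\}$, already signals the structure: the $\min(\cdots)$ term is the rate of a Bernstein bound applied at a single point, while the $-s\log p/n$ correction is exactly the log-cardinality of a net for $\mathcal{C}\cap S^{p-1}$ divided by $n$, entering through a union bound.

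First I would fix a unit vector $\bw\in S^{p-1}$. Since each row $\bX_i$ is $\sigma_X^2$-sub-Gaussian, the scalar $\langle \bX_i, \bw\rangle$ is $\sigma_X^2$-sub-Gaussian, so $\langle \bX_i,\bw\rangle^2$ is sub-exponential, and $\frac1n\|\bX\bw\|_2^2 = \frac1n\sum_i\langle\bX_i,\bw\rangle^2$ is an average of i.i.d.\ sub-exponential variables with mean $\bw^\top\Sigma_{\bX}\bw$. Bernstein's inequality then yields, for any $t > 0$,
\[
\pr\left\{\frac1n\|\bX\bw\|_2^2 \le \bw^\top\Sigma_{\bX}\bw - t\right\} \le \exp\left\{-cn\min\left(\frac{t^2}{\sigma_X^4}, \frac{t}{\sigma_X^2}\right)\right\}.
\]
Taking $t$ proportional to $\lambda_{\min}(\Sigma_{\bX})$ and using $\bw^\top\Sigma_{\bX}\bw\ge\lambda_{\min}(\Sigma_{\bX})$ reproduces the per-point rate appearing in the exponent.

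Next I would upgrade this to a uniform bound over $\mathcal{C}\cap S^{p-1}$. The key geometric observation is that on the cone, Cauchy--Schwarz gives $\|\bw\|_1 = \|\bw_S\|_1 + \|\bw_{S^c}\|_1 \le 4\|\bw_S\|_1 \le 4\sqrt{s}\|\bw\|_2$, so $\mathcal{C}\cap S^{p-1}\subseteq\{\bw: \|\bw\|_2 = 1, \|\bw\|_1\le 4\sqrt s\}$. This effectively-sparse set admits an $\ell_2$-net $\mathcal{N}$ at a fixed scale with $\log|\mathcal{N}|\lesssim s\log p$. I would union bound the pointwise estimate over $\mathcal{N}$, producing the factor $\exp(cs\log p)$, which combines with the per-point probability to give exactly the claimed exponent.

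The main obstacle is the discretization step: $\bw\mapsto\frac1n\|\bX\bw\|_2^2$ is quadratic, so transferring the lower bound from the net $\mathcal{N}$ to all of $\mathcal{C}\cap S^{p-1}$ requires controlling $|\frac1n\|\bX\bw\|_2^2 - \frac1n\|\bX\bw'\|_2^2|$ for $\bw$ close to its net point $\bw'$. Writing this difference as $\frac1n\langle\bX(\bw-\bw'),\bX(\bw+\bw')\rangle$ and applying Cauchy--Schwarz reduces it to an upper bound on $\frac1n\|\bX\bv\|_2^2$ over the (low-complexity) difference set; this auxiliary restricted-largest-eigenvalue bound holds on a separate high-probability event via the same sub-exponential concentration, and must be chosen fine enough that the resulting slack is a constant fraction of $\lambda_{\min}(\Sigma_{\bX})$. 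Balancing the net scale against this slack is the delicate part; once done, choosing $t = \lambda_{\min}(\Sigma_{\bX})/2$ and absorbing constants yields the stated lower bound $\lambda_{\min}(\Sigma_{\bX})/2$ with the claimed probability.
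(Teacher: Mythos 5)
Your overall skeleton---Bernstein's inequality at a single point, the cone reduction $\mathcal{C}\cap \mathcal{S}^{p-1}\subseteq\{\bw:\norm{\bw}_2=1,\ \norm{\bw}_1\le 4\sqrt{s}\}$, and a net of log-cardinality $\lesssim s\log p$---matches the paper's, but your transfer step from the net to the full set has a genuine gap, and it sits precisely at the point you flag as ``delicate.'' You bound $\lvert\frac{1}{n}\norm{\bX\bw}_2^2-\frac{1}{n}\norm{\bX\bw'}_2^2\rvert\le\frac{1}{n}\norm{\bX(\bw-\bw')}_2\,\norm{\bX(\bw+\bw')}_2$ and assert that the needed uniform upper bound on the difference set ``holds on a separate high-probability event via the same sub-exponential concentration.'' It does not, at least not directly: the difference $\bw-\bw'$ lies in $B_1(8\sqrt{s})\cap B_2(\epsilon)$, which after rescaling by $1/\epsilon$ is $B_1(8\sqrt{s}/\epsilon)\cap B_2(1)$---an effectively-sparse set whose $\ell_1$ radius has been inflated by $1/\epsilon$. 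A uniform upper bound on $\frac{1}{n}\norm{\bX\bv}_2^2$ over this set is itself a restricted-eigenvalue-type statement that requires its own discretization, and if you run the same net-plus-remainder argument there, the residual set inflates again, so the recursion never closes. (For the unit sphere the standard absorption $\sup_{\text{set}}\le\frac{1}{1-2\epsilon}\sup_{\text{net}}$ works because the residuals stay inside a rescaled copy of the same set; for $B_1(\rho)\cap B_2(1)$ they do not.)

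The paper closes this loop with a convexity device, and your argument needs it (or an equivalent) too. Its Step 1 invokes the inclusion $B_1(\sqrt{16s})\cap B_2(1)\subseteq 3\,\mathrm{cl}\left\{\mathrm{conv}\left\{B_0(16s)\cap B_2(1)\right\}\right\}$ (Loh--Wainwright, Lemma 11). Step 2 proves the \emph{two-sided} deviation bound $\sup\lvert\langle\bw,\widebar{\bX}\bw\rangle\rvert\le\delta$ only over exactly sparse unit vectors, where the union bound runs over $\binom{p}{s}$ supports times a $9^{s}$ net inside each fixed $s$-dimensional subspace---there the net-to-sphere transfer is the standard one and does close, yielding the $p^s 9^s$ factor that becomes the $s\log p$ term in your exponent. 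Step 3 then extends to the convex hull by polarization: writing $\bw=\sum_i\alpha_i\bw_i$ and expanding $\alpha_i\alpha_j\langle\bw_i,\widebar{\bX}\bw_j\rangle$ via $\frac{1}{2}\left(\langle\bw_i+\bw_j,\widebar{\bX}(\bw_i+\bw_j)\rangle-\langle\bw_i,\widebar{\bX}\bw_i\rangle-\langle\bw_j,\widebar{\bX}\bw_j\rangle\right)$ degrades the sparse-vector bound only by the constant factor $27$, with no auxiliary upper-eigenvalue event and no net-scale balancing at all. Because the bound is two-sided, the lower restricted eigenvalue follows immediately from the triangle inequality. Alternatively, Maurey's empirical approximation could rescue your direct-net route by giving the upper bound over the inflated $\ell_1$ ball in one shot; either way, some version of the convex-hull or Maurey argument is the missing ingredient, not an optional refinement.
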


Let us now conclude the proof of theorem~\ref{thm:mainres}.  Let $\mathcal{A}_\text{RE}$ denote the event that $\inf_{\bw \in \mathcal{C} \cap S^{p-1}}\frac{1}{n}\norm{\widehat{\bX} \bw}_2^2
        \geq \frac{\lambda_{\min}\left(\Sigma_{\bX}\right)}{2}$ where we take the set $S$ in the assumptions of proposition~\ref{prop:RE_Condition} to be $T$.  Then the assumptions of the theorem, fact~\ref{fact:subGaussianConditionalExpectation} and proposition~\ref{prop:RE_Condition} imply that $\pr\{\mathcal{A}_{\text{RE}}\} \geq 1 - 2p^{-c_3}$, where $c_3$ is a constant.  Additionally, let $\mathcal{A}_{\infty}$ denote the event that the results of lemmas~\ref{lem:term1thm1},~\ref{lem:term2thm1} hold.  Then, on the event $\mathcal{A}_{\infty} \cap \mathcal{A}_{\text{RE}}$, which holds with probability at least $1 - 2p^{c_1}$ with $c_1 = (2 - A^2) \vee c_3$, proposition~\ref{prop:GeneralLasso} with $\lambda$ as in \eqref{eq:mainlambda}, $\kappa = \frac{\lambda_{\min}(\Sigma_{\widehat{\bX}})}{2}$ implies the result.
\end{proof}

\subsection{Proof of corollary ~\ref{cor:IdentityCovariance}}
\label{subsec:ProofCorIdentity}
We recall the statement for the reader's convenience:
\begin{customcorollary}{3.1}
Assume the data is MCAR($\alpha$) for $\alpha < 1$ and $X_{ij} \sim_{iid}
\normal(0,1)$. 
Assume additionally that $\sqrt{\frac{\log{p}}{n}} \leq 
    c_1\sqrt{\alpha(1-\alpha)}$ for a positive constant $c_1$.  Then, there exist positive constants $c_2,
    c_3, c_4$ such that with probability at least $1 - c_2p^{-c_3}$, $\widehat{\bbe}$ as defined in~\eqref{eq:ImputeLasso} with regularization parameter
    \[\lambda =  c_4 \sqrt{\alpha}\left(\sigma + \sqrt{1
    - \alpha}R\right)\sqrt{\frac{\log{p}}{n}},\]
satisfies
    \begin{align}\norm{\widehat{\bbe} - \bbe_0}_2
       \lesssim \frac{\sigma +
        \sqrt{1-\alpha}R}{\sqrt{\alpha}}\sqrt{\frac{s\log{p}}{n}}\, .
        \end{align}
\end{customcorollary}

Before we embark on the proof we note some key  differences between corollary~\ref{cor:IdentityCovariance} and theorem~\ref{thm:mainres}.  First, since the data is i.i.d. $\normal(0,1)$, the imputation matrix $\widehat{\bX}$ is drastically simplified:
\begin{align*}
    \widehat{X}_{ia} = \begin{cases} 
        X_{ia} & \text{if} \ Z_{ia} \ \text{observed}, \\
            0 & \text{ otherwise }. 
            \end{cases}
\end{align*}
Immediately, we are able to see that in this model, $\sigma_X = 1$, $\Sigma_{\widehat{\bX}} = \alpha \bI_{p \times p}$.  Finally, the independence between each entry in the data matrix allows us to get better control in the concentration inequalities we use to prove the $\ell_\infty$ bounds needed for proposition~\ref{prop:GeneralLasso}.  We now proceed to the proof.
\begin{proof}
We set
\begin{align}
\label{eq:lambdacor}
\lambda = 4A\sqrt{\alpha}\left(8\sqrt{2}e\sigma + 32e\sqrt{1 - \alpha}R\right)\sqrt{\frac{\log{p}}{n}}.
\end{align}
with $A > \sqrt{2}$.
The proof will use two supplementary lemmas whose proofs are provided in appendix~\ref{sec:appendixConditionalExpectation}:
\begin{lemma}
\label{lem:term1cor1}
Assuming $\lambda$ as defined in equation~\eqref{eq:lambdacor}, and the assumptions of corollary~\ref{cor:IdentityCovariance}, we have
\[
\pr\left\{\norm{\frac{1}{n}\widehat{\bX}^T\left(\widehat{\bX} - \bX\right)\bbe_0}_{\infty} \geq \frac{\lambda}{4}\right\} \leq 2p^{1 - \frac{A^2}{2}}.
\]
\end{lemma}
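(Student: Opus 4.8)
The plan is to exploit the explicit form of conditional-expectation imputation under identity covariance. Writing $M_{ia}\in\{0,1\}$ for the indicator that entry $(i,a)$ is observed, the imputed matrix is simply $\widehat{X}_{ia}=M_{ia}X_{ia}$, so that $(\bX-\widehat{\bX})_{ia}=(1-M_{ia})X_{ia}$. First I would expand the $b$-th coordinate of the vector in question,
\[
\left[\frac{1}{n}\widehat{\bX}^\top(\widehat{\bX}-\bX)\bbe_0\right]_b
= -\frac{1}{n}\sum_{i=1}^n M_{ib}X_{ib}\sum_{a=1}^p (1-M_{ia})X_{ia}(\bbe_0)_a .
\]
The crucial simplification, special to this corollary, is that the diagonal term $a=b$ vanishes identically because $M_{ib}(1-M_{ib})=0$; hence the inner sum may be restricted to $a\neq b$.

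Next, fixing $b$, I would set $W_{ib}=M_{ib}X_{ib}$ and $U_i=\sum_{a\neq b}(1-M_{ia})X_{ia}(\bbe_0)_a$, so the coordinate equals $-\frac1n\sum_i W_{ib}U_i$, an average of $n$ i.i.d.\ terms. Since $W_{ib}$ depends only on coordinate $b$ while $U_i$ depends only on coordinates $a\neq b$, and all entries and masks are independent, $W_{ib}$ and $U_i$ are \emph{independent} and each is mean zero. This is the key structural point: each summand is a product of two independent, centered, sub-Gaussian random variables, hence sub-exponential. I would then record the two parameters that drive the concentration: the variance $\E[W_{ib}^2U_i^2]=\E[W_{ib}^2]\,\E[U_i^2]=\alpha(1-\alpha)\sum_{a\neq b}(\bbe_0)_a^2\le \alpha(1-\alpha)R^2$, and the sub-exponential norm $\|W_{ib}U_i\|_{\psi_1}\le\|W_{ib}\|_{\psi_2}\|U_i\|_{\psi_2}\lesssim R$ (using that $W_{ib}$ is a thinned standard Gaussian, hence $O(1)$-sub-Gaussian, and $\|U_i\|_{\psi_2}^2\lesssim\sum_a(\bbe_0)_a^2\le R^2$ with \emph{no} extra $(1-\alpha)$ factor). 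Because everything is conditionally Gaussian given the mask, these moment and MGF bounds can be made fully explicit, which is what produces the numerical constants ($8\sqrt2\,e$, $32e$) appearing in $\lambda$.

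I would then apply a Bernstein-type tail bound for averages of i.i.d.\ sub-exponential variables,
\[
\pr\!\left(\left|\tfrac1n\sum_i W_{ib}U_i\right|\ge t\right)
\le 2\exp\!\left(-c\,n\min\Big(\tfrac{t^2}{\alpha(1-\alpha)R^2},\ \tfrac{t}{R}\Big)\right).
\]
Since $\lambda/4\ge 32e A\sqrt{\alpha(1-\alpha)}\,R\sqrt{\log p/n}$, it suffices to bound the tail at $t=32eA\sqrt{\alpha(1-\alpha)}\,R\sqrt{\log p/n}$. For this $t$ the quadratic branch evaluates to $nt^2/(\alpha(1-\alpha)R^2)\asymp A^2\log p$ while the linear branch is $nt/R\asymp A\sqrt{\alpha(1-\alpha)\,n\log p}$, and the hypothesis $\sqrt{\log p/n}\le c_1\sqrt{\alpha(1-\alpha)}$ is \emph{exactly} the condition forcing the quadratic (sub-Gaussian) branch to be the active one. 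After calibrating constants the bound reads $2\exp(-\tfrac{A^2}{2}\log p)=2p^{-A^2/2}$, and a union bound over the $p$ coordinates $b$ yields the claimed $2p^{1-A^2/2}$.

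The main obstacle is bookkeeping rather than conceptual: one must track the sub-exponential norm ($\asymp R$) separately from the variance ($\asymp\alpha(1-\alpha)R^2$), since it is precisely this gap that produces the regime condition appearing in the hypothesis, and then calibrate the Gaussian MGF constants so the exponent comes out to exactly $A^2/2$. A clean way to organize the computation is to condition on the mask $\bM$ first---rendering $W_{ib}$ and $U_i$ centered Gaussians with variances $M_{ib}$ and $\sum_{a\neq b}(1-M_{ia})(\bbe_0)_a^2\le R^2$ respectively---bound the conditional MGF of the product uniformly over these (bounded) conditional variances, and only then take expectation over $\bM$.
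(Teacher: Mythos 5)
Your proposal is correct and is essentially the paper's own argument: the paper likewise reduces each coordinate to an i.i.d.\ average of products $\widehat{X}_{ia}\left\langle \bX_i - \widehat{\bX}_i, \widetilde{\bbe}_0\right\rangle$ (your diagonal cancellation and the independence of $W_{ib}$ and $U_i$ are implicit there), conditions on whether entry $(i,a)$ is observed to write the MGF as $(1-\alpha) + \alpha\,\E\exp\bigl\{\theta X_{ia}\sum_{b\neq a}(X_{ib}-\widehat{X}_{ib})\widetilde{\bbe}_{0,b}\bigr\}$, bounds the inner factor by $e^{C\theta^2(1-\alpha)}$ and applies $1+\alpha(e^x-1)\le e^{2\alpha x}$ to obtain variance parameter $\asymp \alpha(1-\alpha)$, then concludes via the sub-Gaussian branch of sub-exponential concentration plus a union bound over the $p$ coordinates, with the hypothesis $\sqrt{\log p / n}\lesssim\sqrt{\alpha(1-\alpha)}$ playing exactly the role you identify. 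One caution on your closing remark: the $(1-\alpha)$ factor must come from averaging the mask-dependent conditional MGF bound $e^{\theta^2\sigma_{\bM}^2}$ over $\bM$ (the paper does this with a Herbst-type computation of $\E_{\bM}\,e^{\theta^2\sigma_{\bM}^2}$), since bounding the conditional variance uniformly by $R^2$ \emph{before} averaging would only yield variance $\asymp R^2$ and lose the lemma as stated; the uniform bound $\sigma_{\bM}\le R$ should serve only to fix the admissible range $|\theta|\lesssim 1/R$.
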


\begin{lemma}
\label{lem:term2cor1}
Assuming $\lambda$ as defined in equation~\eqref{eq:lambdacor}, and the assumptions of corollary~\ref{cor:IdentityCovariance}, we have
\[
\pr\left\{\norm{\frac{1}{n}\widehat{\bX}^T \beps}_{\infty} \geq \frac{\lambda}{4}\right\} \geq 2p^{1 - \frac{A^2}{2}}.
\]
\end{lemma}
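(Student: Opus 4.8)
The first thing to flag is that the inequality in the displayed statement of Lemma~\ref{lem:term2cor1} is printed with the sign reversed: as worded it asserts $\pr\{\cdots\}\geq 2p^{1-A^2/2}$, but this is a transcription error. The quantity $\norm{\frac{1}{n}\widehat{\bX}^\top\beps}_\infty$ must be \emph{small} in order to verify the second hypothesis of Proposition~\ref{prop:GeneralLasso}, so the content of the lemma — and what I will prove — is the upper tail bound $\pr\{\norm{\frac{1}{n}\widehat{\bX}^\top\beps}_\infty\geq\lambda/4\}\leq 2p^{1-A^2/2}$, exactly paralleling Lemma~\ref{lem:term1cor1}. The plan is a coordinate-wise concentration bound followed by a union bound over the $p$ columns.

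Recall that in the identity-covariance MCAR setting $\widehat{X}_{ia}=M_{ia}X_{ia}$, where $M_{ia}\sim\mathrm{Ber}(\alpha)$ records whether entry $(i,a)$ is observed, and $M$, $\bX$, $\beps$ are mutually independent with $X_{ia}\sim\normal(0,1)$ and $\epsilon_i$ mean-zero and $\sigma^2$-sub-Gaussian. Fix a column $a\in[p]$ and write the $a$-th coordinate as $\frac{1}{n}(\widehat{\bX}^\top\beps)_a=\frac{1}{n}\sum_{i=1}^n W_{ia}$ with $W_{ia}:=M_{ia}X_{ia}\epsilon_i$. The summands are i.i.d.\ across $i$, centered (by independence and $\E[X_{ia}]=0$), and sub-exponential: being a product of a sub-Gaussian variable $\widehat{X}_{ia}$ with $\|\widehat{X}_{ia}\|_{\psi_2}\lesssim 1$ and an independent sub-Gaussian $\epsilon_i$, we have $\|W_{ia}\|_{\psi_1}\lesssim\sigma$. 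Crucially, because each entry is masked independently, $\var(W_{ia})=\E[M_{ia}]\,\E[X_{ia}^2]\,\E[\epsilon_i^2]=\alpha\sigma^2$; it is this sharp variance proxy — linear in $\alpha$ rather than the crude $\sigma^2$ — that produces the $\sqrt{\alpha}$ prefactor in $\lambda$.

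I would then apply a Bernstein inequality for the average of i.i.d.\ sub-exponential variables with variance proxy $\nu^2\asymp\alpha\sigma^2$ and width $b\asymp\sigma$, giving $\pr\{|\frac{1}{n}\sum_i W_{ia}|\geq t\}\leq 2\exp(-\frac{n}{2}\min(t^2/\nu^2,\,t/b))$. Taking $t=\lambda/4$, from \eqref{eq:lambdacor} we have $t\geq 8\sqrt{2}\,eA\sigma\sqrt{\alpha}\sqrt{\log p/n}$ (dropping the nonnegative $R$-term, which only enlarges the threshold). The hypothesis $\sqrt{\log p/n}\leq c_1\sqrt{\alpha(1-\alpha)}$ places us in the quadratic branch $t^2/\nu^2\leq t/b$, so the exponent is $\frac{nt^2}{2\nu^2}\gtrsim\frac{n\cdot A^2\sigma^2\alpha(\log p/n)}{\alpha\sigma^2}=A^2\log p$; the large numerical constant $8\sqrt{2}\,e$ in \eqref{eq:lambdacor} provides more than enough room for this exponent to exceed $\frac{A^2}{2}\log p$, yielding the per-coordinate bound $2p^{-A^2/2}$. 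A union bound over $a\in[p]$ then gives $\pr\{\norm{\frac{1}{n}\widehat{\bX}^\top\beps}_\infty\geq\lambda/4\}\leq 2p^{1-A^2/2}$.

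The main obstacle is not the tail bound itself but securing the \emph{sharp} $\alpha$-dependence. A lazy bound would treat $W_{ia}$ as sub-exponential with width $\sigma$ and use $\sigma^2$ as the variance, losing the factor $\alpha$ and hence the correct $\sqrt{\alpha}$ scaling of $\lambda$ that makes Corollary~\ref{cor:IdentityCovariance} minimax-optimal in $\alpha$. One must therefore separate the second-moment parameter ($\alpha\sigma^2$) from the sub-exponential width ($\sigma$) in the Bernstein bound and verify that the regime assumption keeps us in the quadratic branch where the variance, not the width, controls the rate. A secondary check is that the $R$-term in $\lambda$ only increases the threshold $t$ (pushing into the even faster linear branch), so it cannot spoil the bound; alternatively, one may condition on $\widehat{\bX}$, use exact Gaussianity of $\frac{1}{n}\widehat{\bX}_a^\top\beps$ given $\widehat{\bX}$, and control $\frac{1}{n}\norm{\widehat{\bX}_a}_2^2$ near its mean $\alpha$ on a high-probability event, which reproduces the same constants.
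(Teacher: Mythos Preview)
Your proposal is correct and follows essentially the same route as the paper: a coordinate-wise sub-exponential tail bound on $\frac{1}{n}\sum_i \widehat{X}_{ia}\epsilon_i$ with the sharp $\alpha$-dependent variance proxy, then a union bound over $a\in[p]$. The paper obtains the $\alpha$-sharp sub-exponential parameter by writing the MGF explicitly as the mixture $(1-\alpha)+\alpha\,\E e^{\theta X_{ia}\epsilon_i}$ and applying the elementary inequality $1+\alpha(e^x-1)\le e^{2\alpha x}$ for $x\le 1$, which is precisely what your ``variance proxy $\nu^2\asymp\alpha\sigma^2$, width $b\asymp\sigma$'' encodes; your identification of the typo in the inequality direction is also correct.
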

Exactly as in the proof of theorem~\ref{thm:mainres}, we let $\mathcal{A}_{\text{RE}}$ denote the event that $\inf_{\bw \in \mathcal{C}\cap \mathcal{S}^{p-1}}\frac{1}{n}\norm{\widehat{\bX}\bw}_2^2 \geq \frac{\lambda_{\min}(\Sigma_{\widehat{\bX}})}{2} = \frac{\alpha}{2}$.  Then the assumptions of the corollary, fact~\ref{fact:subGaussianConditionalExpectation} and proposition~\ref{prop:RE_Condition} imply $\pr\left\{\mathcal{A}_{\text{RE}}\right\} \geq 1 - 2p^{-c_3}$.  Letting $\mathcal{A}_{\infty}$ denote the event that the results of lemmas~\ref{lem:term1cor1},~\ref{lem:term2cor1} hold, we immediately see that proposition~\ref{prop:GeneralLasso} holds with probability at least $1 - 2p^{-c_1}$, taking $\lambda$ as in~\eqref{eq:lambdacor}, $\kappa = \frac{\alpha}{2}$ and the result follows immediately.
\end{proof}

\subsection{Proof of theorem~\ref{thm:mnar}}
\label{subsec:ProofMNAR}
We re-state the theorem for the reader's convenience:
\begin{customthm}{3}
Assume that the data is MNAR and \textbf{A1--A4}.  Assume additionally that
$\sqrt{\frac{\log{p}}{n}} \leq c_1$ for a positive constant $c_1$.  Then, there
exist positive constants $c_2, c_3, c_4$ such that with probability at least $1
- c_2p^{-c_3}$, $\widehat{\bbe}$ as defined in Eq.~\eqref{eq:ImputeLasso} with $\widehat{\bX} = \E\left\{ \bX \mid \bZ \right\}$ and regularization parameter
    \[\lambda = c_4\left(\sigma_X\sigma + \sigma_X^2 R\right)\sqrt{\frac{\log{p}}{n}},\]
satisfies
    \begin{align}\norm{\widehat{\bbe} - \bbe_0}_2
        \lesssim \frac{\sigma_X \sigma +
         \sigma_X^2 R}{\lambda_{\min}(\Sigma_{\widehat{\bX}})}\sqrt{\frac{s\log{p}}{n}}.
    \end{align}
\end{customthm}
\begin{proof}
Again, noting the freedom of the universal constant $C$, we take (for $A > \sqrt{2}$):
\begin{align}
\label{eq:lambdamnar}
\lambda = 4A\left(8e\sigma_X \sigma + 480e\sigma_X^2 R\right)\sqrt{\frac{\log{p}}{n}}.
\end{align}
We then use the following two lemmas:
\begin{lemma}
\label{lem:term1mnar}
Assuming $\lambda$ as defined in equation~\eqref{eq:lambdamnar}, and the assumptions of corollary~\ref{thm:mnar}, we have:
\[
\pr\left\{\norm{\frac{1}{n}\widehat{\bX}^T\left(\widehat{\bX} - \bX\right)\bbe_0}_{\infty} \geq \frac{\lambda}{4}\right\} \leq 6p^{1 - \frac{A^2}{2}}.
\]
\end{lemma}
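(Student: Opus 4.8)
The plan is to reduce the $\ell_\infty$ bound to a one-dimensional concentration statement for each coordinate and then take a union bound. Writing the $a$-th coordinate of the vector as
\[
\left[\frac{1}{n}\widehat{\bX}^\top(\widehat{\bX}-\bX)\bbe_0\right]_a = \frac{1}{n}\sum_{i=1}^n \widehat{X}_{ia}\,\inp{\widehat{\bX}_i - \bX_i}{\bbe_0},
\]
I observe that under MNAR the pairs $(\bX_i,\bZ_i)$ are i.i.d.\ across $i$, so for each fixed $a$ this is an average of $n$ i.i.d.\ summands $S_i^{(a)} := \widehat{X}_{ia}\inp{\widehat{\bX}_i - \bX_i}{\bbe_0}$. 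The goal is to show each such average concentrates below scale $\lambda/4$ with a $p^{-A^2/2}$-type tail, after which a union bound over $a \in [p]$ produces the stated $p^{1-A^2/2}$ rate.

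Two properties of $S_i^{(a)}$ drive the argument. First, it is centered: since $\widehat{\bX}_i = \E\{\bX_i \mid \bZ_i\}$ is $\sigma(\bZ_i)$-measurable and $\E\{\widehat{\bX}_i - \bX_i \mid \bZ_i\} = 0$, the tower property gives $\E\{S_i^{(a)}\} = \E\{\widehat{X}_{ia}\inp{\E\{\widehat{\bX}_i - \bX_i \mid \bZ_i\}}{\bbe_0}\} = 0$; this is precisely the orthogonality of conditional expectation flagged in the decomposition~\eqref{eq:decomp}. Second, $S_i^{(a)}$ is sub-exponential: by Fact~\ref{fact:subGaussianConditionalExpectation}, $\widehat{X}_{ia} = \inp{\be_a}{\widehat{\bX}_i}$ is $\sigma_X$-sub-Gaussian, while both $\inp{\widehat{\bX}_i}{\bbe_0}$ and $\inp{\bX_i}{\bbe_0}$ are $\sigma_X R$-sub-Gaussian (using $\norm{\bbe_0}_2 \le R$ from \textbf{A2} and Fact~\ref{fact:subGaussianConditionalExpectation} applied in the direction $\bbe_0$), so $\inp{\widehat{\bX}_i - \bX_i}{\bbe_0}$ is $\sigma_X R$-sub-Gaussian up to a universal constant. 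A product of sub-Gaussians is sub-exponential, giving $\|S_i^{(a)}\|_{\psi_1} \lesssim \sigma_X \cdot \sigma_X R = \sigma_X^2 R$, which is exactly the scale appearing in~\eqref{eq:lambdamnar}.

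With these in hand, I apply Bernstein's inequality to $\frac1n\sum_i S_i^{(a)}$ with sub-exponential parameter $K \asymp \sigma_X^2 R$. Taking $t = \lambda/4 \ge 480 e A\, \sigma_X^2 R\sqrt{\log p / n}$, the hypothesis $\sqrt{\log p/n} \le c_1$ keeps the ratio $t/K$ bounded, so the Gaussian branch $\exp(-cn\, t^2/K^2)$ of Bernstein controls the tail; the explicit prefactor $480e$ is calibrated so that $cn\, t^2/K^2 = \tfrac{A^2}{2}\log p$, yielding a per-coordinate bound of the form $\text{(const)}\cdot p^{-A^2/2}$, and union bounding over the $p$ coordinates gives the claim. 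The constant $6$ (versus the $2$ of the MCAR Lemma~\ref{lem:term1thm1}) reflects that, to obtain a clean MGF/Bernstein bound for the centered product, I expect to split $S_i^{(a)}$ into a small fixed number of sub-exponential tail terms—for instance through a polarization identity expressing the product via squared sub-Gaussians—and to union bound over those as well.

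The main obstacle is less conceptual depth than careful sub-exponential control together with constant tracking. Unlike the MCAR case, where the residual variance of $\inp{\widehat{\bX}_i - \bX_i}{\bbe_0}$ carries a favorable $(1-\alpha)$ factor coming from the independent missingness, an adversarial MNAR pattern forces the cruder estimate $\|\inp{\widehat{\bX}_i - \bX_i}{\bbe_0}\|_{\psi_2} \lesssim \sigma_X R$ with no $\sqrt{1-\alpha}$ gain. This is exactly why the effective noise level degrades from $\sigma_X^2\sqrt{1-\alpha}R$ in Theorem~\ref{thm:mainres} to $\sigma_X^2 R$ here, and the bookkeeping must ensure that the Bernstein exponent lands at $A^2/2$ so that the union bound over the $p$ coordinates closes with the advertised $6p^{1-A^2/2}$.
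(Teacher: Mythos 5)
Your proposal is correct, and its skeleton---center each summand via the orthogonality of conditional expectation, establish a sub-exponential MGF bound at scale $\sigma_X^2 R$, apply Bernstein, and union bound over the $p$ coordinates---is exactly the skeleton of the paper's proof. The one genuine difference is the mechanism for the MGF bound. The paper does \emph{not} treat the product $\widehat{X}_{ia}\langle \widehat{\bX}_i - \bX_i, \bbe_0\rangle$ directly; instead it polarizes, writing $\frac{1}{n}\langle \widehat{\bX}\be_a, (\bX-\widehat{\bX})\widetilde{\bbe}_0\rangle$ as half the difference of three centered squared norms (terms $I., II., III.$ in its proof), controls each via Lemma~\ref{lem:subGaussianSquared} and Lemma~\ref{lem:subexpconcentration}, and union bounds over the three events---this is precisely where the constant $6$ comes from, as you correctly anticipated. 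Your primary route is instead to invoke the zero-mean product bound of Lemma~\ref{lem:productsubgaussian} directly on $S_i^{(a)}$: since that lemma's proof goes through Cauchy--Schwarz on moments and requires no independence between the two factors, it applies verbatim under MNAR (the pairs $(\bX_i,\bZ_i)$ being i.i.d.\ across rows is all that Bernstein needs), with $\widehat{X}_{ia}$ being $\sigma_X^2$-sub-Gaussian by Fact~\ref{fact:subGaussianConditionalExpectation} and $\langle \widehat{\bX}_i-\bX_i,\bbe_0\rangle$ being $4\sigma_X^2 R^2$-sub-Gaussian by Fact~\ref{fact:sumsubgaussian}. This direct route is slightly cleaner and yields a per-coordinate tail $2p^{-A^2/2}$, hence $2p^{1-A^2/2}$ after the union bound, which implies the stated $6p^{1-A^2/2}$ a fortiori; the paper's polarization route buys nothing extra here beyond reusing the squared-sub-Gaussian machinery it needs elsewhere, at the cost of the larger constant. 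Your diagnosis of why no $\sqrt{1-\alpha}$ factor survives under MNAR (the conditional missingness law is arbitrary, so one cannot condition on the pattern and average $\norm{\widetilde{\bbe}_{0,S^c}}_2^2$ as in Lemma~\ref{lem:term1thm1}) also matches the paper's reasoning.
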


\begin{lemma}
\label{lem:term2mnar}
Assuming $\lambda$ as defined in equation~\eqref{eq:lambdamnar}, and the assumptions of corollary~\ref{thm:mnar}, we have:
\[
\pr\left\{\norm{\frac{1}{n}\widehat{\bX}^T \beps}_{\infty} \geq \frac{\lambda}{4}\right\} \leq 2p^{1 - \frac{A^2}{2}}.
\]
\end{lemma}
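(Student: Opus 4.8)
The plan is to reduce the $\ell_\infty$ event to a coordinatewise scalar concentration bound and then union bound over the $p$ columns. Fix a coordinate $a \in [p]$ and write the $a$-th entry of $\frac{1}{n}\widehat{\bX}^T\beps$ as the empirical average $S_a := \frac{1}{n}\sum_{i=1}^n \widehat{X}_{ia}\epsilon_i$. Because the rows of $(\bX,\bZ)$ are i.i.d.\ and the noise $\beps$ is independent of the design with i.i.d.\ $\sigma^2$-sub-Gaussian entries, the summands $\widehat{X}_{ia}\epsilon_i$ are i.i.d.; and conditioning on $\widehat{X}_{ia}$ (a function of $\bZ_i$) together with $\E\{\epsilon_i\}=0$ shows each summand is mean-zero. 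This is the only structural input, and crucially it makes no reference to the missingness mechanism. Consequently the argument here is identical to its MCAR counterpart, Lemma~\ref{lem:term2thm1}.

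Next I would show that each summand is sub-exponential with scale proportional to $\sigma_X\sigma$. By Fact~\ref{fact:subGaussianConditionalExpectation}, applied with the conditioning $\sigma$-algebra generated by $\bZ_i$ rather than a fixed $\bX_S$ (the Jensen argument in its proof is unchanged), the coordinate $\widehat{X}_{ia} = \inp{\be_a}{\widehat{\bX}_i}$ is $\sigma_X^2$-sub-Gaussian; and $\epsilon_i$ is $\sigma^2$-sub-Gaussian and independent of $\widehat{X}_{ia}$. The moment generating function then factors as
\[
\E\{e^{\theta \widehat{X}_{ia}\epsilon_i}\} = \E\{\E\{e^{\theta \widehat{X}_{ia}\epsilon_i} \mid \widehat{X}_{ia}\}\} \leq \E\{e^{\theta^2 \widehat{X}_{ia}^2 \sigma^2/2}\},
\]
and bounding the sub-exponential moment of $\widehat{X}_{ia}^2$ for $|\theta|$ small enough certifies that $\widehat{X}_{ia}\epsilon_i$ is $(\nu^2,b)$-subexponential with $\nu \asymp b \asymp \sigma_X\sigma$.

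I would then apply Bernstein's inequality to the average $S_a$, namely
\[
\pr\{|S_a| \geq t\} \leq 2\exp\left(-\frac{n}{2}\min\left(\frac{t^2}{\nu^2},\frac{t}{b}\right)\right),
\]
and union bound over the $p$ coordinates. Taking $t = \lambda/4 \geq 8Ae\,\sigma_X\sigma\sqrt{\tfrac{\log p}{n}}$ from the definition \eqref{eq:lambdamnar} of $\lambda$, the hypothesis $\sqrt{\log p/n} \leq c_1$ places $t$ in the quadratic (sub-Gaussian) regime of Bernstein, so that the exponent scales like $A^2\log p$; the $8e$ constant in $\lambda$ is calibrated so that, after the union bound, the total probability is at most $2p^{1-A^2/2}$, as claimed. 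Together with Lemma~\ref{lem:term1mnar} and the restricted-eigenvalue bound (Fact~\ref{fact:subGaussianConditionalExpectation} plus Proposition~\ref{prop:RE_Condition}), this feeds Proposition~\ref{prop:GeneralLasso} to close the proof of Theorem~\ref{thm:mnar}.

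I do not expect a genuine obstacle in this lemma: the noise term never interacts with the pattern of missingness, so the entire content is the standard "product of independent sub-Gaussians is sub-exponential" estimate followed by Bernstein, with the only care needed being the bookkeeping of the sub-exponential scale to land on the stated exponent $p^{1-A^2/2}$. The genuinely mechanism-dependent step is instead the companion Lemma~\ref{lem:term1mnar}, where the arbitrary MNAR pattern forces the weaker prefactor $\sigma_X^2 R$ in place of the MCAR factor $\sigma_X^2\sqrt{1-\alpha}R$.
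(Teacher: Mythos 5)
Your proposal is correct and takes essentially the same approach as the paper: the paper's entire proof of this lemma is the observation that it is word-for-word the proof of the MCAR counterpart, Lemma~\ref{lem:term2thm1} (the noise term is blind to the missingness mechanism), followed there by sub-exponentiality of $\widehat{X}_{ia}\epsilon_i$, the concentration bound of Lemma~\ref{lem:subexpconcentration} in the quadratic regime, and a union bound over $a\in[p]$, exactly as you lay out. The only cosmetic difference is that you certify sub-exponentiality by conditioning on the design and using independence of $\beps$, whereas the paper invokes Lemma~\ref{lem:productsubgaussian}, which needs only that the product is mean-zero and both factors sub-Gaussian; both routes give the same scale $\asymp \sigma_X\sigma$ and the same final bound $2p^{1-A^2/2}$.
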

Using these two lemmas, the theorem statement follows exactly as the proof of theorem~\ref{thm:mainres}.
\end{proof}

\section{Proofs for imputation by approximate conditional expectation}
\label{sec:ProofsApproximateConditionalExpectation}
We now prove the main results of section~\ref{sec:ApproximateConditionalExpectation}.  We will first describe the proof technique; the remainder of the section contains:
\begin{enumerate}
\item \textbf{Proof of theorem~\ref{thm:ARres}.}  This is provided in subsection~\ref{subsec:ARproof}.
\item \textbf{Proof of theorem~\ref{thm:GaussianRes}.}  This is provided in subsection~\ref{subsec:proofGaussianRes}
\end{enumerate}

\vspace{1mm}
\noindent \textbf{Proof strategy.}  Recall that we observe the pair $(\by, \bZ)$ where $\by = \bX \bbe_0 + \beps$.  The proofs of appendix~\ref{sec:ProofsConditionalExpectation} used $\widehat{\bX} = \E\left\{ \bX \mid \bZ \right\}$ as a proxy for $\bX$ in the LASSO estimator given in~\eqref{eq:ImputeLasso}.  The proof then followed by validating the assumptions of proposition~\ref{prop:GeneralLasso} (repeated here for convenience):
\begin{customproposition}{A.1}
Given $\by = \bX \bbe_0 + \beps$ and $\widetilde{\bX} \in \mathbb{R}^{n \times p}$ and $\lambda>0$, consider the solution to the convex program
\begin{align*}
\widetilde{\bbe} \in \argmin_{\bbe \in \mathbb{R}^p} \left\{\frac{1}{2n}\norm{\by -
\widetilde{\bX}\bbe}_2^2 + \lambda \norm{\bbe}_1\right\}.
\end{align*}
Further, assume
\begin{align*}
&\norm{\frac{1}{n}\widetilde{\bX}^T\left(\widetilde{\bX} -\bX\right)\bbe_0}_{\infty} \leq \frac{\lambda}{4}, 
\hspace{.5cm}
\norm{\frac{1}{n}\widetilde{\bX}^T\beps}_{\infty} \leq \frac{\lambda}{4},\\
&\text{and}~~~~
\inf_{\bw \in \mathcal{C} \cap \mathcal{S}^{p-1}}\frac{1}{n}\norm{\widetilde{\bX}\bw}_2^2 \geq \kappa,
\end{align*} 
where $\mathcal{C}$ is the cone $\mathcal{C} = \left\{\bw \in \mathbb{R}^p: \norm{\bw_{S^c}}_1 \leq
    3\norm{\bw_S}_1\right\}$.
Then $\norm{\widetilde{\bbe} - \bbe_0}_2 \leq \frac{12\lambda\sqrt{s}}{\kappa}$.
\end{customproposition}
We will henceforth use $\widetilde{\bX}$ to denote the approximate conditional expectation and $\widehat{\bX}$ to denote the true conditional expectation.  There are two key differences in carrying out the steps required for proposition~\ref{prop:GeneralLasso}:
\begin{enumerate}
	\item Since $\widetilde{\bX}$ is not the true conditional expectation, it loses important properties of $\widehat{\bX}$ such as orthogonality and sub-Gaussianity
	\item $\widetilde{\bX}$ is computed using \textit{all} of the data, and thus its rows are not independent! This lies in stark contrast to $\widehat{\bX}$ which retains independence of the rows.  
\end{enumerate}
The reader may recall that in appendix~\ref{sec:ProofsConditionalExpectation}, we used heavily the independence of the rows of $\widehat{\bX}$ to invoke standard concentration inequalities.  The main technical challenge of this section is working around the fact that the rows of $\widetilde{\bX}$ are no longer independent.  We work around this in a simple way, by writing $\widetilde{\bX} = \underbrace{(\widetilde{\bX} - \widehat{\bX})}_{A.} + \underbrace{\widehat{\bX}}_{B.}$.  The strategy is then to decouple these two terms, argue that term $A.$ is ``small" in an appropriate sense and inherit the analysis of appendix~\ref{sec:ProofsConditionalExpectation} for term $B.$

\subsection{Proof of theorem~\ref{thm:ARres}}
\label{subsec:proofsAR}
\label{subsec:ARproof}
We re-state the theorem here for convenience:
\begin{customthm}{5}
Assume \textbf{A1--A4}, the data is MCAR($\alpha$), that the sample size $n
\geq c_1\frac{1}{\alpha^8}s\log{p}$ for a positive constant $c_1$, and that the rows of $\bX$ are generated from the
stationary auto-regressive process described above with $\lvert \phi \rvert <
1$.  Then, there exist positive constants $c_2, c_3, c_4$ such that with
probability at least $1 - c_2p^{-c_3}$.
$\widehat{\bbe}$ as defined in~\eqref{eq:ImputeLasso}, using $\widetilde{\bX}$
in place of $\widehat{\bX}$, with regularization parameter
\begin{align}
    \label{eq:lambdaAR}
    \lambda = c_4\left(\frac{\sigma_X\sigma}{\alpha^2} +
    \frac{\sigma_X^2}{\alpha^4} R\right)\sqrt{\frac{\log{p}}{n}},
\end{align}
satisfies:
    \begin{align}\norm{\widehat{\bbe} - \bbe_0}_2
        \lesssim \frac{\left(\frac{\sigma_X\sigma}{\alpha^2} + \frac{\sigma_X^2}{\alpha^4} R\right)}{\lambda_{\min}(\Sigma_{\widehat{\bX}})}\sqrt{\frac{s\log{p}}{n}}.
    \end{align}
\end{customthm}

\begin{proof}
We begin by showing $\norm{\frac{1}{n}\widetilde{\bX}^T\left(\widetilde{\bX} -\bX\right)\bbe_0}_{\infty} \leq \frac{\lambda}{4}$ with high probability.  The triangular inequality implies:
\begin{align*}
\norm{\frac{1}{n}\widetilde{\bX}^{T}\left(\widetilde{\bX} -
\bX\right)\bbe_0}_{\infty} &\leq  
\norm{\frac{1}{n}\widehat{\bX}^T\left(\widetilde{\bX} -
\widehat{\bX}\right)\bbe_0}_{\infty} + \norm{\frac{1}{n}\left(\widetilde{\bX} -
\widehat{\bX}\right)^T\left(\widetilde{\bX} - \widehat{\bX}\right)\bbe_0}_{\infty}\\
&+ \norm{\frac{1}{n}\left(\widetilde{\bX} -
\widehat{\bX}\right)^T\left(\widehat{\bX} - \bX\right)\bbe_0}_{\infty}
 + 
\norm{\frac{1}{n}\widehat{\bX}^T\left(\widehat{\bX} - \bX\right)\bbe_0}_{\infty}.
\end{align*}
Recall that our strategy is to prove that terms involving $\widetilde{\bX} - \widehat{\bX}$ are small.  It is thus useful to recall the construction of $\widetilde{\bX}$.  The true covariance matrix is $\left(\Sigma_{\bX}\right)_{ij} = \frac{1}{1 - \phi^2}\phi^{\lvert i - j \rvert}$.  We then take $\widehat{X}_{ik}$ as in equation~\eqref{eq:ARhat} and $\widetilde{X}_{ik}$ as in equation~\eqref{eq:ARtilde}:
\begin{align*}
    \widehat{X}_{ik} &= \frac{\phi^{d_1 + d_2}}{1 - \phi^{2(d_1 + d_2)}}
    \left(X_{i,L(k)}\left(\phi^{-d_2} - \phi^{d_2}\right) + X_{i, R(k)} \left(\phi^{-d_1} -
    \phi^{d_1}\right)\right) \\
    \widetilde{X}_{ik} &= \frac{\hat{\phi}^{d_1 + d_2}}{1 - \hat{\phi}^{2(d_1 + d_2)}}
    \left(X_{i, L(k)}\left(\hat{\phi}^{-d_2} - \hat{\phi}^{d_2}\right) +
    X_{i, R(k)} \left(\hat{\phi}^{-d_1} -
    \hat{\phi}^{d_1}\right)\right),
\end{align*}
where  $d_1 = k - L(k)$ and $d_2 = R(k) -k$ with $L(k)$ and $R(k)$ the positions of the closed observed entries to the left and right of $k$, respectively, and we have defined $\widehat{\phi}$ as in equation~\ref{eq:ARParamApprox}:
\begin{align*}
\widehat{\phi} =
\frac{\frac{1}{\alpha^2 np}\sum_{i=1}^{n}\sum_{a=1}^{p-1}X_{ia}X_{i(a+1)}M_{ia}M_{i(a+1)}}{\frac{1}{\alpha n
p}\sum_{i=1}^{n}\sum_{a=1}^{p-1}X_{ia}^2 M_{ia}}.
\end{align*}
The following lemma, whose proof can be found in ~\ref{subsec:proofARapprox}, is the key ingredient to showing $\widetilde{\bX} - \widehat{\bX}$ is ``small".
\begin{lemma}
\label{lem:ARapprox}
Define $\hat{\phi}$ as in ~\eqref{eq:ARParamApprox}.  Then under the assumptions of theorem~\ref{thm:ARres},
\[\pr\left\{\left\lvert \hat{\phi} - \phi\right\rvert \geq \frac{4}{\alpha^2}\sqrt{\frac{\log{p}}{np}}\right\} \leq c p^{-C}.\]
where $c, C > 0$ are universal constants.
\end{lemma}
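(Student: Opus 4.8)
The plan is to view $\hat{\phi}$ as a ratio of two empirical averages, both of which concentrate around means whose ratio is exactly $\phi$. Write $N$ and $D$ for the numerator and denominator of the estimator in \eqref{eq:ARParamApprox}. Using stationarity, $\E[X_{ia}X_{i(a+1)}] = \phi/(1-\phi^2)$ and $\E[X_{ia}^2] = 1/(1-\phi^2)$, while the independence of the mask gives $\E[M_{ia}M_{i(a+1)}] = \alpha^2$ and $\E[M_{ia}] = \alpha$; hence $\E N = \tfrac{p-1}{p}\tfrac{\phi}{1-\phi^2}$ and $\E D = \tfrac{p-1}{p}\tfrac{1}{1-\phi^2}$, so that $\E N/\E D = \phi$ exactly (the $(p-1)/p$ factors cancel). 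First I would record the algebraic identity
\[
\hat{\phi} - \phi \;=\; \frac{(N - \E N)\,\E D \;-\; \E N\,(D - \E D)}{D\,\E D},
\]
which reduces the claim to three ingredients: an upper bound on $|N - \E N|$, an upper bound on $|D - \E D|$, and a lower bound on $D$ (so the ratio's denominator stays bounded away from zero).

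The core of the argument is the concentration of $N$. Writing $N = \frac1n\sum_{i=1}^n S_i$ with $S_i = \frac{1}{\alpha^2 p}\sum_{a} X_{ia}X_{i(a+1)}M_{ia}M_{i(a+1)}$, the rows are i.i.d., so the real work is understanding a single $S_i$. Each summand is a product of two sub-Gaussians (hence sub-exponential) times a bounded indicator, so it is sub-exponential with an $O(1)$ parameter. The crucial point is that, because $\Sigma_{\bX}$ is Toeplitz with entries $\phi^{|i-j|}/(1-\phi^2)$, the covariance between the $a$-th and $a'$-th summand decays geometrically in $|a-a'|$ (the mask contributes only bounded $\alpha$-powers). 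Summing these geometrically decaying covariances shows $\var\big(\sum_a X_{ia}X_{i(a+1)}M_{ia}M_{i(a+1)}\big) = O(\alpha^2 p)$, i.e.\ $\var(S_i) = O(1/(\alpha^2 p))$. The cleanest rigorous route to an \emph{exponential} tail for $S_i$, which handles the within-row dependence automatically, is to condition on the mask $M$ and view $\sum_a X_{ia}X_{i(a+1)}M_{ia}M_{i(a+1)}$ as a quadratic form $\bX_i^\top B^{(i)}\bX_i$ in the Gaussian row, where $B^{(i)}$ carries the masked entries on the first off-diagonal. Hanson--Wright then applies with Frobenius parameter $O(\alpha^2 p)$ and operator parameter $O(1)$; a separate Bernstein/bounded-difference bound over the independent Bernoullis controls the residual fluctuation of the mask-dependent conditional mean. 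Either way, $S_i - \E S_i$ is $(\nu^2,b)$-sub-exponential with $\nu^2, b = O(1/(\alpha^2 p))$, and averaging over the $n$ independent rows, Bernstein's inequality yields $|N - \E N| \lesssim \tfrac{1}{\alpha}\sqrt{\tfrac{\log p}{np}}$ with probability at least $1 - cp^{-C}$.

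The denominator is handled by the same machinery but is easier: $X_{ia}^2 M_{ia}$ is sub-exponential, its within-row covariances again decay geometrically, and one obtains $|D - \E D| \lesssim \alpha^{-1/2}\sqrt{\tfrac{\log p}{np}}$ with probability $\ge 1 - cp^{-C}$. Under the hypothesis $n \ge c_1\alpha^{-8}s\log p$ this deviation is far smaller than $\E D \asymp (1-\phi^2)^{-1}$, so on the same event $D \ge \tfrac12 \E D \gtrsim (1-\phi^2)^{-1}$. Substituting these three bounds into the displayed identity, using $\E N \asymp \phi/(1-\phi^2)$ and $D\,\E D \gtrsim (1-\phi^2)^{-2}$, gives $|\hat{\phi}-\phi| \lesssim \tfrac{1}{\alpha}\sqrt{\tfrac{\log p}{np}}$; since $\alpha < 1$ this is in particular bounded by $\tfrac{4}{\alpha^2}\sqrt{\tfrac{\log p}{np}}$ once the universal constants are tracked, which is the claim.

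I expect the main obstacle to be precisely the within-row dependence together with the random mask. The geometric AR mixing is exactly what keeps the per-row variance at $O(p)$ rather than $O(p^2)$, and thereby recovers the optimal $1/\sqrt{np}$ rate that reflects averaging over roughly $np$ effectively independent terms; a naive triangle-inequality bound on the sub-exponential norm of $S_i$ would give an $O(1/\alpha^2)$ scale and fail the linear term of Bernstein unless $n \gtrsim p\log p$, which does not hold in the high-dimensional regime. Establishing an exponential (rather than merely polynomial, Chebyshev-type) tail under this dependence — via Hanson--Wright on the masked quadratic form, then concentrating the mask-dependent mean — is the technical crux of the lemma.
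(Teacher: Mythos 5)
Your proposal is correct and follows essentially the same route as the paper: condition on the mask, write the masked AR sums as Gaussian quadratic forms, apply Hanson--Wright with the Toeplitz structure of $\Sigma_{\bX}$ controlling the Frobenius and operator norms, handle the mask-dependent conditional mean by a bounded-differences bound on $\sum_{i,a} M_{ia}M_{i(a+1)}$, and finish with the ratio decomposition. The only deviation is cosmetic: you claim the sharper per-row Frobenius parameter $O(\alpha^2 p)$ (which requires restricting to a typical-mask event), whereas the paper uses the cruder deterministic bound $O(p)$ per row, which already suffices for the stated $\frac{4}{\alpha^2}\sqrt{\frac{\log p}{np}}$ rate.
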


\vspace{1mm}
\noindent \textbf{Step 1: Control $\norm{\frac{1}{n}\widetilde{\bX}^T\left(\widetilde{\bX} -\bX\right)\bbe_0}_{\infty}$.}
\vspace{1mm}
We require the following four lemmas, whose proofs we defer to appendix~\ref{sec:appendixARtechnicallemmas}.
\begin{lemma}
\label{lem:term1thmAR}
Under the assumptions of theorem~\ref{thm:ARres}, for any $\bu \in \mathbb{R}^{p}$, we have:
\[
\pr\left\{\norm{\frac{1}{n}\widehat{\bX}^T\left(\widetilde{\bX} - \widehat{\bX}\right)\bu}_{\infty} \geq \frac{C\sigma_X^2}{\alpha^2}\norm{\bu}_1\sqrt{\frac{\log{p}}{np}}\right\} \leq c_0p^{-c_1},
\]
where $C, c_0, c_1$ denote universal constants.
\end{lemma}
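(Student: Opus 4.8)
The plan is to write out a single coordinate of the vector, bound the increment $\widetilde{\bX}-\widehat{\bX}$ pointwise on a high-probability event, and then reduce to a concentration statement for a sum of independent sub-exponential variables. Writing the $a$-th coordinate,
\[
\Big[\tfrac{1}{n}\widehat{\bX}^T(\widetilde{\bX}-\widehat{\bX})\bu\Big]_a
=\frac{1}{n}\sum_{i=1}^{n}\widehat{X}_{ia}\sum_{b=1}^{p}\big(\widetilde{X}_{ib}-\widehat{X}_{ib}\big)u_b,
\]
so it suffices to control this for each $a$ and union bound over $a\in[p]$. The difficulty is that $\widetilde{\bX}$ depends on $\hat{\phi}$, which is a function of \emph{all} rows, so the summands are not independent; I handle this by first conditioning on the event $\mathcal{E}=\{|\hat{\phi}-\phi|\le \tfrac{4}{\alpha^2}\sqrt{\log p/(np)}\}$, which holds with probability $\ge 1-cp^{-C}$ by Lemma~\ref{lem:ARapprox}. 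On $\mathcal{E}$ the deviation $|\hat{\phi}-\phi|$ is a deterministic quantity, removing the need to decouple $\hat{\phi}$ from the data.

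The key structural fact is that \eqref{eq:ARhat} exhibits $\widehat{X}_{ik}$ as $a_{d_1,d_2}(\phi)X_{i,L(k)}+b_{d_1,d_2}(\phi)X_{i,R(k)}$ for explicit coefficients, while $\widetilde{X}_{ik}$ is the same expression evaluated at $\hat{\phi}$. A mean-value expansion in the parameter then yields
\[
\big|\widetilde{X}_{ib}-\widehat{X}_{ib}\big|
\le |\hat{\phi}-\phi|\;\sup_{|\psi|\le\rho}\big(|a_{d_1,d_2}'(\psi)|+|b_{d_1,d_2}'(\psi)|\big)\big(|X_{i,L(b)}|+|X_{i,R(b)}|\big),
\]
where $\rho=(1+|\phi|)/2<1$ bounds the intermediate value on $\mathcal{E}$ for $n$ large. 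The one genuinely analytic step is to show that the coefficient derivatives are bounded by a constant $K$ \emph{uniformly over all distances} $d_1,d_2\ge 1$; this holds because each coefficient is a ratio whose denominator $1-\psi^{2(d_1+d_2)}$ is bounded away from $0$ for $|\psi|\le\rho$ and whose numerator and its derivative decay geometrically in $d_1,d_2$, so the apparent $d_1,d_2$ dependence is absorbed (the resulting constant is controlled by $(1-|\phi|)^{-1}$, hence by $\sigma_X$).

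Combining these gives, on $\mathcal{E}$, the deterministic pointwise bound $|\widetilde{X}_{ib}-\widehat{X}_{ib}|\le \tfrac{4K}{\alpha^2}\sqrt{\log p/(np)}\,(|X_{i,L(b)}|+|X_{i,R(b)}|)$, and hence
\[
\Big|\Big[\tfrac{1}{n}\widehat{\bX}^T(\widetilde{\bX}-\widehat{\bX})\bu\Big]_a\Big|
\le \frac{4K}{\alpha^2}\sqrt{\frac{\log p}{np}}\,\norm{\bu}_1\max_{b}A_{ab},
\qquad
A_{ab}:=\frac{1}{n}\sum_{i=1}^{n}|\widehat{X}_{ia}|\big(|X_{i,L(b)}|+|X_{i,R(b)}|\big).
\]
Now I would use the splitting $\pr(\,\cdot\ge t\,)\le \pr(\mathcal{E}^c)+\pr(\{\cdot\ge t\}\cap\mathcal{E})$: on the second event the coordinate is dominated by the $A_{ab}$, whose probability I bound \emph{without} conditioning on $\mathcal{E}$. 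For fixed $a,b$ the quantity $A_{ab}$ is an average of i.i.d.\ terms (each a function of row $i$ only, using independence of rows and Fact~\ref{fact:subGaussianConditionalExpectation}), each a product of sub-Gaussians and hence sub-exponential with mean $\lesssim\sigma_X^2$; Bernstein's inequality together with a union bound over the $p^2$ pairs $(a,b)$ shows $\max_{a,b}A_{ab}\lesssim\sigma_X^2$ with probability $\ge 1-c_0p^{-c_1}$, using $n\gtrsim\log p$ (which follows from the sample-size assumption). Plugging this into the display and taking a final union bound over $a$ yields the claim. The main obstacle is thus twofold, and both parts are addressed above: (i) the uniform-in-$(d_1,d_2)$ bound on the derivatives of the interpolation coefficients, which prevents the geometrically growing exponents from producing a distance-dependent constant; and (ii) the non-independence created by $\hat{\phi}$, which the deterministic-bound-on-$\mathcal{E}$ together with the intersection/splitting argument neutralizes, so that all concentration is applied to genuinely independent summands.
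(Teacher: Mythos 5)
Your proposal matches the paper's proof essentially step for step: the same mean-value expansion in the parameter $\phi$ (the paper's Eq.~\eqref{eq:ARwritedifference}), the same uniform-in-$(d_1,d_2)$ bound on the coefficient derivatives (Lemma~\ref{lem:usefulARlemmas}~i.), the same reduction to $\norm{\bu}_1$ times $\max_{a,b}$ of empirical averages of $|\widehat{X}_{ia}X_{i,L(b)}|$ controlled by Bernstein plus a union bound over $p^2$ pairs (Lemma~\ref{lem:usefulARlemmas}~iii.), and the same intersection with the event $|\hat{\phi}-\phi|\le\frac{4}{\alpha^2}\sqrt{\log p/(np)}$ from Lemma~\ref{lem:ARapprox}. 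The only cosmetic difference is your phrasing that $|\hat\phi-\phi|$ is ``deterministic'' on $\mathcal{E}$ (it is random but deterministically bounded, which is all your argument uses), so the proof is correct as proposed.
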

The proof of the above lemma yields the identitical corollary:
\begin{corollary}
\label{cor:term1thmAR}
Under the assumptions of theorem~\ref{thm:ARres}, for any $\bu \in \mathbb{R}^{p}$, we have:
\[
\pr\left\{\norm{\frac{1}{n}\left(\widetilde{\bX} - \widehat{\bX}\right)^T\widehat{\bX}\bu}_{\infty} \geq \frac{C\sigma_X^2}{\alpha^2}\norm{\bu}_1\sqrt{\frac{\log{p}}{np}}\right\} \leq c_0p^{-c_1}
\]
where $C, c_0, c_1$ denote universal constants.
\end{corollary}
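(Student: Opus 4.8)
The plan is to observe that Corollary~\ref{cor:term1thmAR} is nothing but Lemma~\ref{lem:term1thmAR} with the roles of the two matrix factors interchanged, and then to verify that the argument proving the lemma is symmetric in these factors. Writing out the $a$-th coordinate, the lemma controls $\frac{1}{n}\sum_{i,b}\widehat{X}_{ia}(\widetilde{X}_{ib}-\widehat{X}_{ib})u_b$, whereas the corollary concerns $\frac{1}{n}\sum_{i,b}(\widetilde{X}_{ia}-\widehat{X}_{ia})\widehat{X}_{ib}u_b$. In both cases we face a bilinear form in the ``difference'' matrix $\widetilde{\bX}-\widehat{\bX}$ and the exact conditional expectation $\widehat{\bX}$; the only change is which of the two factors carries the free coordinate $a$ and which is contracted against $\bu$. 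Since the proof treats the two factors on equal footing, the identical bound follows.

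Concretely, I would first peel off the $\ell_1$ norm of $\bu$ by H\"older: it suffices to bound $\max_{a,b}\left\lvert\frac{1}{n}\sum_{i=1}^n(\widetilde{X}_{ia}-\widehat{X}_{ia})\widehat{X}_{ib}\right\rvert$, since $\lvert[\cdot]_a\rvert\le\norm{\bu}_1\max_b\lvert\frac1n\sum_i(\widetilde{X}_{ia}-\widehat{X}_{ia})\widehat{X}_{ib}\rvert$. The next step is a pointwise Lipschitz control of the difference term: comparing the explicit formulas \eqref{eq:ARhat} and \eqref{eq:ARtilde}, each entry $\widetilde{X}_{ik}$ is obtained from $\widehat{X}_{ik}$ by replacing $\phi$ with $\hat\phi$ in a map that is linear in $(X_{i,L(k)},X_{i,R(k)})$ and smooth in $\phi$ on $\{\lvert\phi\rvert\le\phi_0<1\}$. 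A mean-value bound then gives $\lvert\widetilde{X}_{ik}-\widehat{X}_{ik}\rvert\lesssim\lvert\hat\phi-\phi\rvert\,(\lvert X_{i,L(k)}\rvert+\lvert X_{i,R(k)}\rvert)$, and Lemma~\ref{lem:ARapprox} supplies $\lvert\hat\phi-\phi\rvert\lesssim\alpha^{-2}\sqrt{\log p/(np)}$ on an event of probability at least $1-cp^{-C}$.

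It then remains to control, for each fixed pair $(a,b)$, the average $\frac1n\sum_i(\lvert X_{i,L(a)}\rvert+\lvert X_{i,R(a)}\rvert)\lvert\widehat{X}_{ib}\rvert$, which concentrates around an $O(\sigma_X^2)$ constant: the rows of $\widehat{\bX}$ are $\sigma_X^2$-sub-Gaussian by Fact~\ref{fact:subGaussianConditionalExpectation}, so each summand is sub-exponential, and a Bernstein bound followed by a union bound over the $p^2$ pairs $(a,b)$ (costing only a $\log p$ factor) yields the claim. Multiplying the $\alpha^{-2}\sqrt{\log p/(np)}$ bound on $\lvert\hat\phi-\phi\rvert$ by this $O(\sigma_X^2)$ constant and by $\norm{\bu}_1$ reproduces the stated rate $\frac{C\sigma_X^2}{\alpha^2}\norm{\bu}_1\sqrt{\log p/(np)}$.

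The main obstacle, and the reason the result is phrased as a corollary of the lemma rather than an independent computation, is that $\hat\phi$ is built from the entire data matrix, so the summands $(\widetilde{X}_{ia}-\widehat{X}_{ia})\widehat{X}_{ib}$ are not independent across rows $i$ and the naive concentration step is unavailable. The clean way around this is to decouple by conditioning on the good event of Lemma~\ref{lem:ARapprox} and treating $\hat\phi-\phi$ as a deterministic perturbation $\delta$ with $\lvert\delta\rvert\le\alpha^{-2}\sqrt{\log p/(np)}$: one proves the Bernstein bound uniformly over this range of $\delta$, so that the rows are again independent once $\delta$ is frozen. A secondary technical point is that the Lipschitz constant of the imputation map degrades with the gap sizes $d_1,d_2$ through factors such as $\phi^{-d_j}$; these runs of consecutive missing entries are geometric and are bounded by $O(\log p)$ with high probability, which is where the extra powers of $\alpha^{-1}$ ultimately enter. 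Because none of these steps distinguish the free index $a$ from the contracted index $b$, the argument carries over verbatim from Lemma~\ref{lem:term1thmAR}, establishing Corollary~\ref{cor:term1thmAR}.
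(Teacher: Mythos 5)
Your overall route is the same as the paper's: Corollary~\ref{cor:term1thmAR} is obtained there exactly as you say, by noting that the proof of Lemma~\ref{lem:term1thmAR} treats the two matrix factors symmetrically, and your concrete steps --- peeling off $\norm{\bu}_1$, the mean-value factorization \eqref{eq:ARwritedifference}, Lemma~\ref{lem:ARapprox} for $\lvert\hat{\phi}-\phi\rvert$, and a Bernstein bound with a union bound over the $p^2$ pairs $(a,b)$ (Lemma~\ref{lem:usefulARlemmas}, part iii.) --- are the same ingredients in the same order.

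Two corrections to your commentary, however. First, your ``secondary technical point'' is mistaken: the derivative factors $f_{L(b)}'(\xi)$, $f_{R(b)}'(\xi)$ are bounded by a \emph{universal} constant uniformly in the gap sizes $d_1,d_2$ (Lemma~\ref{lem:usefulARlemmas}, part i.). After simplification, $f_L(\phi)=\phi^{d_1}\left(1-\phi^{2d_2}\right)/\left(1-\phi^{2(d_1+d_2)}\right)$ contains no negative powers of $\phi$, so there is no degradation to patch. This matters quantitatively: had the Lipschitz constant genuinely grown like $\lvert\phi\rvert^{-d_j}$ with runs of length $d_j\asymp\log p$, your $O(\log p)$ run-length argument would introduce a \emph{polynomial-in-$p$} factor, not ``extra powers of $\alpha^{-1}$,'' and the stated rate would fail. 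The $\alpha^{-2}$ here comes solely from the $\alpha^{-2}$ and $\alpha^{-1}$ normalizations in the estimator $\hat{\phi}$ via Lemma~\ref{lem:ARapprox}, and the higher powers of $\alpha^{-1}$ in Theorem~\ref{thm:ARres} from the squared difference in Lemma~\ref{lem:term2thmAR}. Second, your decoupling device --- conditioning on the good event and freezing $\delta=\hat{\phi}-\phi$, with a Bernstein bound uniform over $\delta$ --- is both unnecessary and, as phrased, slightly off: conditioning on an event defined through $\hat{\phi}$ perturbs the joint law of the rows, so independence is not literally restored. The factorization already does the work: once $\lvert f'(\xi)\rvert$ is replaced by its deterministic bound $C$, the surviving random average $\max_{a,b}\frac{1}{n}\sum_{i=1}^{n}\lvert X_{i,L(a)}\rvert\,\lvert\widehat{X}_{ib}\rvert$ involves no $\hat{\phi}$ whatsoever, so one simply intersects two high-probability events (the events $\mathcal{A}_1$ and $\mathcal{A}_2$ in the proof of Lemma~\ref{lem:term1thmAR}). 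With these repairs your argument coincides with the paper's.
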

\begin{lemma}
\label{lem:term2thmAR}
Under the assumptions of theorem~\ref{thm:ARres}, we have:
\[
\pr\left\{\norm{\frac{1}{n}\left(\widetilde{\bX} - \widehat{\bX}\right)^T\left(\widetilde{\bX} - \widehat{\bX}\right)\bu}_{\infty} \geq C\frac{\sigma_X^2}{\alpha^4}\norm{\bu}_1\frac{\log{p}}{np}\right\} \leq c_0p^{-c_1},
\]
where $C, c_0, c_1$ denote universal constants.
\end{lemma}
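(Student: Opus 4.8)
The plan is to exploit the fact that both $\widehat{X}_{ik}$ and $\widetilde{X}_{ik}$ are the \emph{same} explicit function of the observed neighbors $X_{i,L(k)},X_{i,R(k)}$, evaluated at $\phi$ and $\hat\phi$ respectively, so that the whole matrix $\widetilde{\bX}-\widehat{\bX}$ is governed by the scalar deviation $|\hat\phi-\phi|$ that Lemma~\ref{lem:ARapprox} controls. I would work throughout on the event $\mathcal{E}=\{|\hat\phi-\phi|\le \tfrac{4}{\alpha^2}\sqrt{\log p/(np)}\}$, which holds with probability at least $1-cp^{-C}$; the sample-size hypothesis $n\ge c_1\alpha^{-8}s\log p$ additionally forces $\hat\phi$ into a fixed compact subset of $(-1,1)$, say $|\hat\phi|\le (1+|\phi|)/2$, on $\mathcal{E}$.

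The first and main step is an entrywise Lipschitz bound $|\widetilde{X}_{ik}-\widehat{X}_{ik}|\le |\hat\phi-\phi|\,h_{ik}$, where $h_{ik}$ is a linear combination of $|X_{i,L(k)}|$ and $|X_{i,R(k)}|$ with coefficients bounded by a constant $C(\phi)$. Writing $\widehat{X}_{ik}=g_{i,k}(\phi)$ and $\widetilde{X}_{ik}=g_{i,k}(\hat\phi)$ for the map $g_{i,k}$ given by \eqref{eq:ARhat}, the mean value theorem yields $\widetilde{X}_{ik}-\widehat{X}_{ik}=g_{i,k}'(\xi)(\hat\phi-\phi)$ for some $\xi$ between $\phi$ and $\hat\phi$. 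The one genuine difficulty is that the gaps $d_1,d_2$ are random and can be as large as $p$, so a priori $g_{i,k}'$ might blow up. The point is that, after cancellation, the coefficients of $g_{i,k}$ are combinations of powers such as $\xi^{d_1}$ and $\xi^{d_1+2d_2}$ divided by $(1-\xi^{2(d_1+d_2)})$, and differentiation only brings down factors like $d_1\xi^{d_1-1}$. Since $\sup_{d\ge 1} d\,t^{d-1}<\infty$ uniformly for $|t|\le (1+|\phi|)/2$, every coefficient of $g_{i,k}'(\xi)$ is bounded by some $C(\phi)$ uniformly over the gap configuration on $\mathcal{E}$, so $h_{ik}=C(\phi)(|X_{i,L(k)}|+|X_{i,R(k)}|)$ works; conditionally on the missingness mask each $h_{ik}$ is sub-Gaussian with parameter $\lesssim\sigma_X^2$.

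With this envelope in hand, I would bound the $j$-th coordinate of the target vector by $|\hat\phi-\phi|^2\sum_{k}|u_k|\,\big(\tfrac1n\sum_i h_{ij}h_{ik}\big)\le |\hat\phi-\phi|^2\,\norm{\bu}_1\,\max_{j,k}\big|\tfrac1n\sum_i h_{ij}h_{ik}\big|$, leaving the task of showing $\max_{j,k}\tfrac1n\sum_i h_{ij}h_{ik}\lesssim\sigma_X^2$ with high probability. Because the pairs $(\bX_i,\bM_i)$ are i.i.d.\ across rows, for fixed $j,k$ the summands $h_{ij}h_{ik}$ are i.i.d.\ sub-exponential (a product of sub-Gaussians) with mean $O(\sigma_X^2)$ by Cauchy--Schwarz; Bernstein's inequality with a union bound over the $p^2$ pairs controls the maximum, the deviation term being lower order under $n\ge c_1\alpha^{-8}s\log p$.

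Finally I would intersect this event with $\mathcal{E}$ and substitute $|\hat\phi-\phi|^2\le 16\alpha^{-4}\log p/(np)$, giving $\norm{\tfrac1n(\widetilde{\bX}-\widehat{\bX})^T(\widetilde{\bX}-\widehat{\bX})\bu}_\infty\le C\tfrac{\sigma_X^2}{\alpha^4}\norm{\bu}_1\tfrac{\log p}{np}$, which is the claim, with failure probability the sum of the two $p^{-c}$ terms. The only delicate point is the uniform-in-$(d_1,d_2)$ derivative bound of the second paragraph; everything else is a routine Lipschitz-plus-concentration argument that parallels the proof of Lemma~\ref{lem:term1thmAR}.
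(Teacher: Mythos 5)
Your proposal is correct and follows essentially the same route as the paper: the mean value theorem factors out $(\hat\phi-\phi)^2$, the derivative factors $f'_{L(k)},f'_{R(k)}$ are bounded uniformly over the random gaps (the paper's Lemma~\ref{lem:usefulARlemmas}~i., which it dismisses as a ``tedious calculation''; you usefully spell out the $\sup_d d\,t^{d-1}<\infty$ argument and the need to confine $\hat\phi$ to a compact subset of $(-1,1)$ on the good event), and the remaining $\max_{a,b}$ of empirical averages of products of sub-Gaussians is handled by Bernstein plus a union bound over $p^2$ pairs, exactly as in Lemma~\ref{lem:usefulARlemmas}~iii. Intersecting with the event of Lemma~\ref{lem:ARapprox} and substituting $|\hat\phi-\phi|^2\lesssim \alpha^{-4}\log p/(np)$ matches the paper's conclusion.
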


\begin{lemma}
\label{lem:term3thmAR}
Under the assumptions of theorem~\ref{thm:ARres}, for any $\bu \in \mathbb{R}^{p}$, we have:
\[
\pr\left\{\norm{\frac{1}{n}\left(\widetilde{\bX} - \widehat{\bX}\right)^T\left(\widehat{\bX} - \bX\right)\bu}_{\infty} \geq C\frac{\sigma_X^2}{\alpha^2}\norm{\bu}_1 \sqrt{\frac{\log{p}}{np}}\right\} \leq c_0p^{-c_1},
\]
where $C, c_0, c_1$ denote universal constants.
\end{lemma}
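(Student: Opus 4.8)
The plan is to follow the decomposition strategy already used for Lemma~\ref{lem:term1thmAR} and Corollary~\ref{cor:term1thmAR}, observing that the present quantity is structurally identical to those once the role of $\widehat{\bX}$ is played by the conditional-expectation error $\widehat{\bX}-\bX$. Write $\Delta_{ia} = \widetilde{X}_{ia} - \widehat{X}_{ia}$ and $E_{ia} = \widehat{X}_{ia} - X_{ia}$; both vanish on observed entries, so the sums below effectively range over missing coordinates. Expanding the $a$-th coordinate of $\frac1n(\widetilde{\bX}-\widehat{\bX})^T(\widehat{\bX}-\bX)\bu$ as $\sum_b u_b\big(\frac1n\sum_{i=1}^n \Delta_{ia}E_{ib}\big)$ and bounding it by $\norm{\bu}_1 \max_b |\frac1n\sum_i \Delta_{ia}E_{ib}|$ reduces the claim to showing, uniformly over pairs $(a,b)\in[p]^2$, that $|\frac1n\sum_i \Delta_{ia}E_{ib}| \lesssim \frac{\sigma_X^2}{\alpha^2}\sqrt{\frac{\log p}{np}}$ with probability at least $1-c_0p^{-c_1}$; a union bound over the $p^2$ pairs then gives the lemma.

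First I would condition on the high-probability event $\mathcal{G} = \{|\widehat{\phi}-\phi|\le \frac{4}{\alpha^2}\sqrt{\frac{\log p}{np}}\}$ furnished by Lemma~\ref{lem:ARapprox}. Viewing the formula~\eqref{eq:ARhat} as a map $\psi\mapsto \widehat{X}_{ia}(\psi)$, so that $\widehat{X}_{ia}=\widehat{X}_{ia}(\phi)$ and $\widetilde{X}_{ia}=\widehat{X}_{ia}(\widehat{\phi})$, the mean-value theorem gives, on $\mathcal{G}$, the estimate $|\Delta_{ia}| \le |\widehat{\phi}-\phi|\cdot \sup_{\psi\in B}|\partial_\psi \widehat{X}_{ia}(\psi)|$, where $B$ is the $O(\alpha^{-2}\sqrt{\log p/np})$-ball around $\phi$. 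This is precisely the Lipschitz bound underlying Lemma~\ref{lem:term1thmAR}: the supremum is a linear combination of $|X_{i,L(a)}|$ and $|X_{i,R(a)}|$ with $O(1)$ coefficients. The crucial feature of this step is that, having passed to the supremum over $B$, the bound no longer depends on $\widehat{\phi}$; it is a function of $(\bX,\bM)$ alone, which decouples $\widehat{\phi}$ from the remaining average.

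It then remains to control the $\widehat{\phi}$-free average $\frac1n\sum_i \big(|X_{i,L(a)}|+|X_{i,R(a)}|\big)\,|E_{ib}|$. By Fact~\ref{fact:subGaussianConditionalExpectation}, $\widehat{X}_{ib}=\E\{X_{ib}\mid \bX_{iS}\}$ is $\sigma_X^2$-sub-Gaussian, so $E_{ib}=\widehat{X}_{ib}-X_{ib}$ is sub-Gaussian with parameter $\lesssim \sigma_X^2$; likewise each observed entry $X_{i,L(a)}$ is $\sigma_X^2$-sub-Gaussian. Across $i$ the pairs $(\bX_i,\bM_i)$ are i.i.d., so the summands are i.i.d.\ products of sub-Gaussians, hence sub-exponential with parameter $\lesssim \sigma_X^2$ uniformly in the random gap lengths (since $\var(E_{ib}\mid \bM_i)\le \sigma_X^2$ irrespective of $d_1,d_2$). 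A Bernstein bound then yields $\frac1n\sum_i(|X_{i,L(a)}|+|X_{i,R(a)}|)|E_{ib}| \lesssim \sigma_X^2$ with probability $1-c_0p^{-c_1}$, the $\sqrt{\log p/n}$ fluctuation being dominated by the $O(\sigma_X^2)$ mean under the sample-size hypothesis of Theorem~\ref{thm:ARres}. Multiplying by $|\widehat{\phi}-\phi|\le \frac{4}{\alpha^2}\sqrt{\log p/np}$ on $\mathcal{G}$ produces the desired bound for each pair $(a,b)$.

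I expect the main obstacle to be exactly the decoupling described above: $\widehat{\phi}$ is computed from all $n$ rows and therefore correlates with every $\Delta_{ia}$, so one cannot treat it as independent of the empirical average $\frac1n\sum_i(\cdots)|E_{ib}|$. Passing to the uniform Lipschitz constant over the ball $B$ — rather than the pointwise derivative at $\widehat{\phi}$ — is what severs this dependence, at the cost of checking that the geometric coefficients in~\eqref{eq:ARhat} and their $\psi$-derivatives stay bounded on $B$ uniformly in the (random, possibly $\Theta(\log p)$) gap lengths $d_1,d_2$; this holds because $\sup_{d\ge 1}\sup_{\psi\in B} d|\psi|^{d-1}<\infty$ for $|\phi|<1$. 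With this in hand the factor $\alpha^{-2}$ enters solely through the estimate of $|\widehat{\phi}-\phi|$ from Lemma~\ref{lem:ARapprox}, exactly as in Lemma~\ref{lem:term1thmAR}, so the only genuinely new verification beyond those earlier arguments is that $\widehat{\bX}-\bX$ inherits the $\sigma_X^2$-sub-Gaussian tail that $\widehat{\bX}$ enjoyed there.
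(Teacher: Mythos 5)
Your proposal is correct and follows essentially the same route as the paper: the mean-value expansion~\eqref{eq:ARwritedifference} with a uniform bound on the derivatives (your sup-over-the-ball Lipschitz step is exactly the content of Lemma~\ref{lem:usefulARlemmas}(i), and it resolves the $\widehat{\phi}$-decoupling issue in the same way), the factor $\lvert\widehat{\phi}-\phi\rvert$ controlled by Lemma~\ref{lem:ARapprox}, and a Bernstein bound with a union bound over the $p^2$ pairs $(a,b)$ for the remaining $\widehat{\phi}$-free averages, which is Lemma~\ref{lem:usefulARlemmas}(iii). The only cosmetic difference is that you keep $\widehat{X}_{ib}-X_{ib}$ intact as a single $4\sigma_X^2$-sub-Gaussian variable, while the paper splits it by the triangle inequality into $\lvert\widehat{X}_{ib}\rvert+\lvert X_{ib}\rvert$ and bounds four product-averages; both are handled by the identical sub-exponential machinery.
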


\begin{lemma}
\label{lem:term4thmAR}
Assuming $\lambda$ as defined in equation~\eqref{eq:lambdaAR}, and the assumptions of theorem~\ref{thm:ARres}, we have, we have:
\[
\pr\left\{\norm{\frac{1}{n}\widehat{\bX}^T\left(\widehat{\bX} - \bX\right)\bbe_0}_{\infty} \geq C\sigma_X^2R\sqrt{\frac{\log{p}}{n}}\right\} \leq  c_0p^{-c_1},
\]
where $c_0, c_1$ denote universal constants.
\end{lemma}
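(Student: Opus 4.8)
The plan is to recognize that this is the ``clean'' term $B$ in the decomposition $\widetilde{\bX} = (\widetilde{\bX} - \widehat{\bX}) + \widehat{\bX}$: it involves only the \emph{exact} conditional expectation $\widehat{\bX} = \E\{\bX \mid \bZ\}$, whose rows are independent, so the argument is essentially identical to the one behind Lemma~\ref{lem:term1thm1} and does not touch any AR-specific estimation error. I would work coordinate-wise: fixing $a \in [p]$, the $a$-th entry of $\frac{1}{n}\widehat{\bX}^T(\widehat{\bX} - \bX)\bbe_0$ equals $\frac{1}{n}\sum_{i=1}^n W_{ia}$ with $W_{ia} = \widehat{X}_{ia}\,\inp{\widehat{\bX}_i - \bX_i}{\bbe_0}$. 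Since the rows of $\bX$, and hence of $\widehat{\bX}$, are i.i.d., the summands $\{W_{ia}\}_{i}$ are independent across $i$.

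The structural fact that makes the bound work is the mean-zero property coming from orthogonality of the conditional expectation. Conditioning on $\bZ$ and using that $\widehat{\bX}_i$ is $\bZ$-measurable while $\E\{\bX_i \mid \bZ\} = \widehat{\bX}_i$, we obtain $\E\{W_{ia} \mid \bZ\} = \widehat{X}_{ia}\inp{\widehat{\bX}_i - \E\{\bX_i \mid \bZ\}}{\bbe_0} = 0$, so $\E W_{ia} = 0$ by the tower property. Next I would control the tails of each $W_{ia}$. By Fact~\ref{fact:subGaussianConditionalExpectation}, $\widehat{X}_{ia}$ is sub-Gaussian with parameter $\sigma_X^2$ (equivalently $\psi_2$-norm $\lesssim \sigma_X$); since $\bX_i$ and $\widehat{\bX}_i$ are each sub-Gaussian with parameter $\sigma_X^2$, their difference is sub-Gaussian and $\inp{\widehat{\bX}_i - \bX_i}{\bbe_0}$ has $\psi_2$-norm $\lesssim \sigma_X\norm{\bbe_0}_2 \le \sigma_X R$. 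A product of two (possibly dependent) sub-Gaussians is sub-exponential with $\psi_1$-norm bounded by the product of the $\psi_2$-norms, so $W_{ia}$ is sub-exponential with parameter $K \lesssim \sigma_X^2 R$.

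With independence and the mean-zero property in hand, I would apply Bernstein's inequality to $\frac{1}{n}\sum_i W_{ia}$, giving a bound of the form $2\exp\!\big(-c\,n\min(t^2/K^2,\,t/K)\big)$. Setting $t = C\sigma_X^2 R\sqrt{\log p / n}$ and invoking the sample-size hypothesis $n \geq c_1\alpha^{-8}s\log p$ (which forces $\sqrt{\log p/n}$ small, so that $t/K \le 1$ and the minimum is the quadratic term $t^2/K^2$), the exponent becomes $-c'C^2\log p$, hence a per-coordinate probability $\le 2p^{-c'C^2}$. A union bound over the $p$ coordinates then yields $\pr\{\norm{\frac{1}{n}\widehat{\bX}^T(\widehat{\bX}-\bX)\bbe_0}_\infty \ge t\} \le 2p^{1-c'C^2} \le c_0 p^{-c_1}$ for $C$ large enough, which is the claim.

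The only genuinely delicate point is the product-of-sub-Gaussians tail estimate in the third step, where the two factors share the row index $i$ and so cannot be assumed independent; this is handled by the standard inequality $\norm{UV}_{\psi_1} \le \norm{U}_{\psi_2}\norm{V}_{\psi_2}$, combined with the conditional mean-zero computation that legitimizes Bernstein. The rest is bookkeeping: propagating the sub-exponential scale $\sigma_X^2 R$ through Bernstein and checking that the sample-size assumption keeps us in the quadratic regime.
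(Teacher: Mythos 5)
Your proposal is correct and matches the paper's route: the paper proves this lemma simply by citing Lemma~\ref{lem:term1thm1}, whose proof is exactly your coordinate-wise argument --- mean zero via orthogonality of the conditional expectation, sub-exponential tails for the product $\widehat{X}_{ia}\inp{\widehat{\bX}_i - \bX_i}{\bbe_0}$, then Bernstein and a union bound over $a \in [p]$. The only difference is that Lemma~\ref{lem:term1thm1} does extra work (a Herbst-type MGF argument over the random missingness pattern) to extract the sharper $\sqrt{1-\alpha}$ factor, which your coarser bound $\norm{UV}_{\psi_1} \lesssim \sigma_X^2 R$ legitimately skips since the stated bound here carries no $\alpha$-dependence.
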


In lemmas~\ref{lem:term1thmAR} - ~\ref{lem:term3thmAR}, let $\bu = \bbe_0$ and notice that $\norm{\bbe_0}_1 \leq \sqrt{s}R$ by Cauchy Schwarz.  Lemma~\ref{lem:term1thmAR} and the fact that $s \leq p$ then imply:
\[
\pr\left\{\norm{\frac{1}{n}\widehat{\bX}^T\left(\widetilde{\bX} - \widehat{\bX}\right)\bbe_0}_{\infty} \geq \frac{C\sigma_X^2}{\alpha^2}R\sqrt{\frac{\log{p}}{n}}\right\} \leq c_0p^{-c_1},
\]
Similarly, lemma~\ref{lem:term3thmAR} implies
\[
\pr\left\{\norm{\frac{1}{n}\left(\widetilde{\bX} - \widehat{\bX}\right)^T\left(\widehat{\bX} - \bX\right)\bbe_0}_{\infty} \geq C\frac{\sigma_X^2}{\alpha^2}R \sqrt{\frac{\log{p}}{n}}\right\} \leq c_0p^{-c_1},
\]
lemma~\ref{lem:term2thmAR} and the assumption that $\sqrt{\frac{\log{p}}{n}} <
1$ (this happens as long as $c_{1}$ in the theorem statement is small enough) yields:
\[
\pr\left\{\norm{\frac{1}{n}\left(\widetilde{\bX} - \widehat{\bX}\right)^T\left(\widetilde{\bX} - \widehat{\bX}\right)\bbe_0}_{\infty} \geq C\frac{\sigma_X^2}{\alpha^4}R\frac{\log{p}}{n}\right\} \leq c_0p^{-c_1}.
\]
Picking the universal constant $C$ in ~\eqref{eq:lambdaAR} large enough implies that each of the preceeding events, as well as that of lemma~\ref{lem:term4thmAR} hold with probability at least $1 - c_0 p^{-c_1}$ (where we remind the reader that universal constants $c_0, c_1$ may change line to line).  This and noting that $C\left(\frac{\sigma_X\sigma}{\alpha^2} + \frac{\sigma_X^2}{\alpha^4} R\right)\sqrt{\frac{\log{p}}{n}} \geq C\frac{\sigma_X^2}{\alpha^4} R\sqrt{\frac{\log{p}}{n}}$ imply immediately that $\pr\left\{\norm{\frac{1}{n}\widetilde{\bX}^T\left(\widetilde{\bX} -\bX\right)\bbe_0}_{\infty} \leq \frac{\lambda}{4}\right\} \geq 1 - c_0p^{-c_1}$.  

\vspace{1mm}
\noindent \textbf{Step 2: Control $\norm{\frac{1}{n}\widetilde{\bX}^T\beps}_{\infty} \leq \frac{\lambda}{4}$} 
\vspace{1mm}
We now tackle showing $\norm{\frac{1}{n}\widetilde{\bX}^T\beps}_{\infty} \leq \frac{\lambda}{4}$.  To this end, the triangular inequality implies $\norm{\frac{1}{n}\widetilde{\bX}^T\beps}_{\infty} \leq \norm{\frac{1}{n}\left(\widetilde{\bX} - \widehat{\bX}\right)^T\beps}_{\infty} + \norm{\frac{1}{n}\widehat{\bX}^T\beps}_{\infty}$.  We will use the following two lemmas:

\begin{lemma}
\label{lem:term5thmAR}
Assuming $\lambda$ as defined in equation~\eqref{eq:lambdaAR}, and the assumptions of theorem~\ref{thm:ARres}, we have:
\[
\pr\left\{\norm{\frac{1}{n}\left(\widetilde{\bX} - \widehat{\bX}\right)^T\beps}_{\infty} \geq C\frac{\sigma_X\sigma}{\alpha^2}\sqrt{\frac{\log{p}}{n}}\right\} \leq c_0p^{-c_1},
\]
where $C, c_0, c_1$ are universal constants.
\end{lemma}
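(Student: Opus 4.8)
The plan is to exploit the fact that in the model $\by = \bX\bbe_0 + \beps$ the noise $\beps$ is independent of the covariates, while the matrix $\widetilde{\bX} - \widehat{\bX}$ is a deterministic function of $\bX$ and the missingness pattern $\bM$ alone: the estimator $\hat{\phi}$ in~\eqref{eq:ARParamApprox}, and both $\widehat{X}_{ik}$ in~\eqref{eq:ARhat} and $\widetilde{X}_{ik}$ in~\eqref{eq:ARtilde}, involve only the $X_{ij}$'s and $M_{ij}$'s. Hence, letting $\mathcal{F}$ be the $\sigma$-algebra generated by $(\bX,\bM)$, the $a$-th coordinate
\[
W_a := \Big(\tfrac{1}{n}(\widetilde{\bX} - \widehat{\bX})^T\beps\Big)_a = \frac{1}{n}\sum_{i=1}^n \big(\widetilde{X}_{ia} - \widehat{X}_{ia}\big)\epsilon_i
\]
is, conditionally on $\mathcal{F}$, a fixed linear combination of the independent $\sigma^2$-sub-Gaussian noises $\epsilon_i$. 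It is therefore sub-Gaussian with variance proxy $\frac{\sigma^2}{n}V_a$, where $V_a := \frac{1}{n}\norm{(\widetilde{\bX} - \widehat{\bX})_a}_2^2$ is the empirical variance of the $a$-th column difference, so that $\pr\{|W_a| \geq t \mid \mathcal{F}\} \leq 2\exp(-\tfrac{n t^2}{2\sigma^2 V_a})$ for every $t>0$.

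The remaining task, and the main obstacle, is to produce an $\mathcal{F}$-measurable bound $V_a \leq \bar V$ holding simultaneously over all $a \in [p]$ with high probability. Only rows $i$ where entry $(i,a)$ is missing contribute (otherwise $\widetilde{X}_{ia} = \widehat{X}_{ia} = X_{ia}$), and on those rows $\widetilde{X}_{ia} - \widehat{X}_{ia}$ is precisely the difference of the conditional-mean map of~\eqref{eq:ARhat}–\eqref{eq:ARtilde} evaluated at $\hat{\phi}$ versus $\phi$ — exactly the object already controlled in the proof of Lemma~\ref{lem:term1thmAR}. On the event of Lemma~\ref{lem:ARapprox}, where $|\hat{\phi} - \phi| \lesssim \frac{1}{\alpha^2}\sqrt{\frac{\log p}{np}}$, a first-order expansion in the parameter bounds $|\widetilde{X}_{ia} - \widehat{X}_{ia}| \lesssim |\hat{\phi} - \phi|\,|h_{ia}|$ for an explicit factor $h_{ia}$ that is a bounded combination of $X_{i,L(a)}, X_{i,R(a)}$; the delicate point is that the factors $\phi^{\pm d_1}, \phi^{\pm d_2}$ attached to the random gaps to the nearest observed neighbours stay controlled, since the AR$(1)$ regression coefficients are at most one in absolute value and their $\phi$-derivatives are tamed by the geometric tail of the gaps under MCAR$(\alpha)$. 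Averaging squares and invoking the same sub-exponential concentration for $\frac{1}{n}\sum_i h_{ia}^2$ used in the proofs of Lemmas~\ref{lem:term1thmAR}–\ref{lem:term3thmAR} then yields $\bar V \lesssim \frac{\sigma_X^2}{\alpha^{c}}\frac{\log p}{np}$ (a shrinking quantity) with probability at least $1 - c_0 p^{-c_1}$.

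Finally I would combine the two ingredients by splitting on the good event $\mathcal{E} := \{V_a \leq \bar V \ \text{for all } a\}$: for each fixed $a$,
\[
\pr\{|W_a| \geq t\} \leq \pr(\mathcal{E}^c) + \E\big[\mathbbm{1}_{\mathcal{E}}\,\pr\{|W_a| \geq t \mid \mathcal{F}\}\big] \leq c_0 p^{-c_1} + 2\exp\!\Big(-\frac{n t^2}{2\sigma^2 \bar V}\Big),
\]
since on $\mathcal{E}$ the conditional variance proxy is at most $\frac{\sigma^2\bar V}{n}$. Choosing $t = C\frac{\sigma_X\sigma}{\alpha^2}\sqrt{\frac{\log p}{n}}$ and substituting $\bar V \lesssim \frac{\sigma_X^2}{\alpha^{c}}\frac{\log p}{np}$ drives the exponent to order $p\log p$, far beyond the $\log p$ needed to absorb a union bound over the $p$ coordinates; indeed the stated threshold is much larger than what the shrinking $\bar V$ demands, so the bound holds with substantial room. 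This establishes the claim, the genuine difficulty being concentrated entirely in the uniform control of $V_a$, which is inherited almost verbatim from the column-wise estimates developed for Lemmas~\ref{lem:term1thmAR}–\ref{lem:term3thmAR}.
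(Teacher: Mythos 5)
Your proof is correct, and it rests on the same two pillars as the paper's argument --- the mean-value decomposition~\eqref{eq:ARwritedifference} with the uniform derivative bound of Lemma~\ref{lem:usefulARlemmas}, part i., and the rate $\lvert \hat{\phi} - \phi \rvert \lesssim \alpha^{-2}\sqrt{\log{p}/(np)}$ from Lemma~\ref{lem:ARapprox} --- but it handles the noise by a genuinely different mechanism. The paper never conditions: it applies the triangle inequality to obtain $C\lvert\hat{\phi}-\phi\rvert\max_{a}\frac{1}{n}\sum_{i}\left(\lvert X_{i,L(a)}\epsilon_i\rvert + \lvert X_{i,R(a)}\epsilon_i\rvert\right)$ and controls the averages of \emph{absolute} products via Lemma~\ref{lem:absvalprodsubgaussian} and Bernstein's inequality (Lemma~\ref{lem:bernstein}), yielding a bound of order $\frac{\sigma_X\sigma}{\alpha^2}\sqrt{\log{p}/(np)}$. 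You instead condition on $\mathcal{F}=\sigma(\bX,\bM)$ and treat each coordinate as a sub-Gaussian linear form in $\beps$ with variance proxy $\sigma^2 V_a/n$; since your uniform bound $\bar{V}\lesssim \frac{\sigma_X^2}{\alpha^4}\frac{\log{p}}{np}$ retains the factor $\lvert\hat{\phi}-\phi\rvert^2$, you exploit sign cancellation among the $\epsilon_i$ that the paper's absolute-value bound discards, and in fact obtain a sharper estimate: at the stated threshold your tail exponent is of order $np$ (not $p\log{p}$ as you wrote --- a harmless slip that only understates your margin). The trade-off is that your route requires $\beps$ to be independent of $(\bX,\bM)$ and the $\epsilon_i$ mutually independent, assumptions the paper uses implicitly elsewhere but avoids here: its product argument needs only sub-Gaussianity of each $\epsilon_i$ and $\E\lvert X\epsilon\rvert \leq \sigma_X\sigma$ (Fact~\ref{fact:expectationsubgaussianproduct}), valid under arbitrary dependence between noise and design. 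Both arguments overshoot the claimed rate $\sqrt{\log{p}/n}$ comfortably, since that threshold is chosen only to match the regularization parameter $\lambda$ in~\eqref{eq:lambdaAR}; your uniform control of $V_a$, as you correctly note, is inherited from the same column-wise machinery underlying Lemmas~\ref{lem:term1thmAR}--\ref{lem:term3thmAR}.
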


\begin{lemma}
\label{lem:term6thmAR}
Assuming $\lambda$ as defined in equation~\eqref{eq:lambdaAR}, and the assumptions of theorem~\ref{thm:ARres}, we have:
\[
\pr\left\{\norm{\frac{1}{n}\widehat{\bX}^T\beps}_{\infty} \leq C\sigma_X\sigma\sqrt{\frac{\log{p}}{n}}\right\} \geq c_0 p^{-c_1},
\]
where $C, c_0, c_1$ are universal constants.
\end{lemma}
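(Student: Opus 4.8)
The plan is to recognize that in this lemma $\widehat{\bX} = \E\{\bX \mid \bZ\}$ is the \emph{true} conditional expectation, so the statement is not actually specific to the AR(1) model: its rows are i.i.d.\ and, by Fact~\ref{fact:subGaussianConditionalExpectation}, each is $\sigma_X^2$-sub-Gaussian. The bound is therefore the analogue of Lemma~\ref{lem:term2thm1}, and I would reproduce that argument verbatim. Fix a column $a \in [p]$ and write $\frac{1}{n}(\widehat{\bX}^T\beps)_a = \frac{1}{n}\sum_{i=1}^n \widehat{X}_{ia}\epsilon_i$. The structural fact I would exploit is that $\widehat{X}_{ia}$ is a measurable function of $\bZ_i$, whereas $\epsilon_i$ is independent of $\bX$ (hence of $\bZ$ and of $\widehat{\bX}$) and independent across rows.

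First I would condition on the design. Conditionally on $\widehat{\bX}$, the summands $\widehat{X}_{ia}\epsilon_i$ are independent, mean-zero, and $\sigma^2\widehat{X}_{ia}^2$-sub-Gaussian, so the conditional variance proxy of the average is $\frac{\sigma^2}{n^2}\sum_i \widehat{X}_{ia}^2$ and the standard sub-Gaussian tail bound gives
\[
\pr\left\{\left|\frac{1}{n}\sum_{i=1}^n \widehat{X}_{ia}\epsilon_i\right| \geq t \;\Big|\; \widehat{\bX}\right\} \leq 2\exp\left(-\frac{n t^2}{2\sigma^2 \widehat{S}_a}\right), \qquad \widehat{S}_a := \frac{1}{n}\sum_{i=1}^n \widehat{X}_{ia}^2.
\]
Next I would control the empirical column norm $\widehat{S}_a$. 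Since $\widehat{X}_{ia} = \inp{\be_a}{\widehat{\bX}_i}$ is $\sigma_X^2$-sub-Gaussian with $\E \widehat{X}_{ia}^2 = (\Sigma_{\widehat{\bX}})_{aa} \leq \sigma_X^2$, each $\widehat{X}_{ia}^2$ is sub-exponential, and a one-sided Bernstein bound yields $\widehat{S}_a \leq 2\sigma_X^2$ on an event $\mathcal{E}_a$ of probability at least $1 - 2p^{-c}$, provided $n \gtrsim \log p$ (which is guaranteed by the sample-size hypothesis of Theorem~\ref{thm:ARres}).

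Finally I would combine the two steps and take a union bound. On $\mathcal{E}_a$, choosing $t = C\sigma_X\sigma\sqrt{\frac{\log p}{n}}$ makes the conditional tail at most $2p^{-A^2/2}$ for a suitable constant $A > \sqrt{2}$; intersecting with $\mathcal{E}_a$ and unioning over the $p$ columns gives $\norm{\frac{1}{n}\widehat{\bX}^T\beps}_{\infty} \leq C\sigma_X\sigma\sqrt{\frac{\log p}{n}}$ with probability at least $1 - c_0 p^{-c_1}$, as claimed (the displayed inequality directions in the statement being an evident transcription of the complementary tail event).

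I do not expect a genuine obstacle here: because $\widehat{\bX}$ retains both row-independence and sub-Gaussianity, the estimate is routine. The only point requiring a little care is cleanly separating the two sources of randomness—the design weights $\widehat{X}_{ia}$ and the noise $\epsilon_i$—and conditioning on $\widehat{\bX}$ first does exactly this, leaving the noise to supply a sub-Gaussian tail and the design to enter only through the well-concentrated quantity $\widehat{S}_a$. As an alternative that avoids conditioning altogether, one may note that the products $\widehat{X}_{ia}\epsilon_i$ are i.i.d.\ sub-exponential with norm $\lesssim \sigma_X\sigma$ and apply Bernstein's inequality directly, with the quadratic regime active once $\sqrt{\frac{\log p}{n}}$ is small; this yields the identical conclusion.
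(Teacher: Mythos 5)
Your proposal is correct, and notably your closing ``alternative'' is in fact the paper's entire proof: the paper disposes of this lemma in one line (``This is immediate from lemma~\ref{lem:term2thm1}''), since $\widehat{\bX}$ here is the \emph{true} conditional expectation with i.i.d.\ $\sigma_X^2$-sub-Gaussian rows, and Lemma~\ref{lem:term2thm1} handles $\norm{\tfrac{1}{n}\widehat{\bX}^T\beps}_\infty$ by observing that each product $\widehat{X}_{ia}\epsilon_i$ is zero-mean sub-exponential (Lemma~\ref{lem:productsubgaussian}) and invoking the sub-Gaussian-regime concentration of Lemma~\ref{lem:subexpconcentration} plus a union bound over the $p$ columns. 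Your primary route --- conditioning on $\widehat{\bX}$ so that the noise supplies a $\sigma^2\widehat{S}_a$-sub-Gaussian conditional tail, then controlling $\widehat{S}_a = \tfrac{1}{n}\sum_i \widehat{X}_{ia}^2$ by a one-sided Bernstein bound --- is a genuinely different decomposition of the randomness. What it buys: it requires only conditional sub-Gaussianity of the noise (so it would survive, e.g., heteroscedastic noise independent of the design) and sidesteps the MGF computation for the product of two sub-Gaussians; what it costs: an extra concentration step and union bound for $\max_a \widehat{S}_a$, hence the mild requirement $n \gtrsim \log p$, which the hypotheses of Theorem~\ref{thm:ARres} supply. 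Both routes rest on the independence of $\beps$ from $(\bX,\bZ)$ --- used implicitly by the paper as well, to make the products mean zero --- and you correctly diagnosed the inequality directions in the displayed statement as a transcription error for the complementary event holding with probability at least $1 - c_0p^{-c_1}$.
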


Picking the universal constant $C$ in the definition of $\lambda$~\eqref{eq:lambdaAR}, noting that $C\left(\frac{\sigma_X\sigma}{\alpha^2} + \frac{\sigma_X^2}{\alpha^4} R\right)\sqrt{\frac{\log{p}}{n}} \geq C\frac{\sigma_X\sigma}{\alpha^2}\sqrt{\frac{\log{p}}{n}}$, and lemmas~\ref{lem:term5thmAR}, ~\ref{lem:term6thmAR} imply immediately $\norm{\frac{1}{n}\widetilde{\bX}^T\beps}_{\infty} \leq \frac{\lambda}{4}$ with probability at least $1 - c_0 p^{-c_1}$.  

\vspace{1mm}
\noindent \textbf{Step 3: Control $\inf_{\bw \in \mathcal{C} \cap \mathcal{S}^{p-1}} \frac{1}{n}\norm{\widetilde{\bX}\bw}_2^2$}
\vspace{1mm}

We would now like to analyze $\inf_{\bw \in \mathcal{C} \cap \mathcal{S}^{p-1}} \frac{1}{n}\norm{\widetilde{\bX}\bw}_2^2$.  Recalling the useful relation $\widetilde{\bX} = \left(\widetilde{\bX} - \widehat{\bX}\right) + \widehat{\bX}$ and $\frac{1}{n}\norm{\widetilde{\bX}\bw}_2^2 = \left \langle \bw, \frac{1}{n}\widetilde{\bX}^T\widetilde{\bX} \bw\right\rangle$, we note that:
\begin{align*}
\left \langle \bw, \frac{1}{n}\widetilde{\bX}^T\widetilde{\bX} \bw\right\rangle &= \left \langle \bw, \frac{1}{n}\left(\widetilde{\bX} - \widehat{\bX}\right)^T\left(\widetilde{\bX} - \widehat{\bX}\right) \bw\right\rangle + \left \langle \bw, \frac{1}{n}\left(\widetilde{\bX} - \widehat{\bX}\right)^T\widehat{\bX} \bw\right\rangle \\
&+ \left \langle \bw, \frac{1}{n}\widehat{\bX}^T\left(\widetilde{\bX} - \widehat{\bX}\right) \bw\right\rangle + \left \langle \bw, \frac{1}{n}\widehat{\bX}^T\widehat{\bX} \bw\right\rangle.
\end{align*}
Notice that by H{\"o}lder's inequality, for any matrix $\bA$, $\left \langle \bw, \bA \bw \right \rangle \geq -\norm{\bw}_1 \norm{\bA \bw}_{\infty}$.  This then implies that:
\begin{align*}
\left \langle \bw, \frac{1}{n}\widetilde{\bX}^T\widetilde{\bX} \bw\right\rangle &\geq -\norm{\bw}_1 \norm{\frac{1}{n}\left(\widetilde{\bX} - \widehat{\bX}\right)^T\left(\widetilde{\bX} - \widehat{\bX}\right) \bw}_{\infty} - \norm{\bw}_1\norm{\frac{1}{n}\left(\widetilde{\bX} - \widehat{\bX}\right)^T\widehat{\bX} \bw}_{\infty} \\
& - \norm{\bw}_1 \norm{\frac{1}{n}\widehat{\bX}^T\left(\widetilde{\bX} - \widehat{\bX}\right) \bw}_{\infty} + \frac{1}{n}\norm{\widehat{\bX}\bw}_2^2.
\end{align*}
We will lower bound each of these four terms, starting with the last (and recalling that $\bw \in \mathcal{C} \cap \mathcal{S}^{p-1}$).

\begin{enumerate}
\item $\frac{1}{n}\norm{\widehat{\bX}\bw}_2^2 \geq \inf_{\bw \in \mathcal{C} \cap \mathcal{S}^{p-1}} \frac{1}{n}\norm{\widehat{\bX}\bw}_2^2$.  
Let $\mathcal{A}_{RE}$ denote the event that $\inf_{\bw \in \mathcal{C} \cap \mathcal{S}^{p-1}} \frac{1}{n}\norm{\widehat{\bX}\bw}_2^2 \geq \frac{\lambda_{\min}(\Sigma_{\widehat{\bX}})}{2}$.  Then the assumptions of the theorem, fact~\ref{fact:subGaussianConditionalExpectation}, and proposition~\ref{prop:RE_Condition} imply that $\pr\left\{\mathcal{A}_{RE}\right\} \geq 1 - c_0p^{-c_1}$.
\item $-\norm{\bw}_1 \norm{\frac{1}{n}\left(\widetilde{\bX} - \widehat{\bX}\right)^T\left(\widetilde{\bX} - \widehat{\bX}\right) \bw}_{\infty}$.  Recall that since $\bw \in \mathcal{C} \cap \mathcal{S}^{p-1}$, Cauchy Schwarz implies $\norm{\bw}_1 \leq 4\sqrt{s}$.  Additionally, taking $\bu$ as $\bw$ in lemma~\ref{lem:term3thmAR} gives:
\[
\pr\left\{\norm{\frac{1}{n}\left(\widetilde{\bX} - \widehat{\bX}\right)^T\left(\widetilde{\bX} - \widehat{\bX}\right)\bw}_{\infty} \geq C\frac{\sigma_X^2}{\alpha^4}\frac{s\log{p}}{np}\right\} \leq c_0p^{-c_1}.
\]
Noting that $s \leq p$ and using the fact that $\sqrt{\frac{\log{p}}{n}} < 1$, we see that with probability at least $1 - c_0 p^{-c_1}$, 
\[
-\norm{\bw}_1 \norm{\frac{1}{n}\left(\widetilde{\bX} - \widehat{\bX}\right)^T\left(\widetilde{\bX} - \widehat{\bX}\right) \bw}_{\infty} \geq -C \frac{\sigma_X^2}{\alpha^4}\sqrt{\frac{s\log{p}}{n}}.
\]

\item $- \norm{\bw}_1\norm{\frac{1}{n}\left(\widetilde{\bX} - \widehat{\bX}\right)^T\widehat{\bX} \bw}_{\infty}$.  Taking $\bu$ as $\bw$ in cor~\ref{cor:term1thmAR}, using $\norm{\bw}_1 \leq 4\sqrt{s}$ and $s \leq p$, we see that with probability at least $1 - c_0 p^{-c_1}$,
\[
- \norm{\bw}_1\norm{\frac{1}{n}\left(\widetilde{\bX} - \widehat{\bX}\right)^T\widehat{\bX} \bw}_{\infty} \geq -C \frac{\sigma_X^2}{\alpha^2}\sqrt{\frac{s\log{p}}{n}}.
\]
\item $- \norm{\bw}_1 \norm{\frac{1}{n}\widehat{\bX}^T\left(\widetilde{\bX} - \widehat{\bX}\right) \bw}_{\infty}$.  Taking $\bu$ as $\bw$ in lemma~\ref{lem:term1thmAR}, using $\norm{\bw}_1 \leq 4\sqrt{s}$ and $s \leq p$, we see that with probability at least $1 - c_0 p^{-c_1}$, 
\[
- \norm{\bw}_1\norm{\frac{1}{n}\widehat{\bX}^T\left(\widetilde{\bX} - \widehat{\bX}\right) \bw}_{\infty} \geq -C \frac{\sigma_X^2}{\alpha^2}\sqrt{\frac{s\log{p}}{n}}.
\]
\end{enumerate}
Now, under the assumption that $n > C \frac{\sigma_X^4}{\alpha^8}\left(\frac{1}{\lambda_{\min}(\Sigma_{\widehat{\bX}})}\right)^2 s\log{p}$ for sufficiently large $C$, we have shown that with probability at least $1 - c_0 p^{-c_1}$, $\inf_{\bw \in \mathcal{C} \cap \mathcal{S}^{p-1}} \frac{1}{n}\norm{\widetilde{\bX}\bw}_2^2 \geq \frac{\lambda_{\min}(\Sigma_{\widehat{\bX}})}{4}$.  To conclude, combine the results of steps 1-3 and invoke proposition~\ref{prop:GeneralLasso} with $\lambda$ as in ~\eqref{eq:lambdaAR} and $\kappa = \frac{\lambda_{\min}(\Sigma_{\widehat{\bX}})}{4}$.
\end{proof}

\subsection{Proof of theorem~\ref{thm:GaussianRes}}
\label{subsec:proofGaussianRes}
We re-state the theorem here for convenience:
\begin{customthm}{6}
Assume \textbf{C1--C3}, \textbf{A2--A3}, the data
is MCAR($\alpha$), and that $\sqrt{\frac{\log{2np}}{n}} \leq C(\alpha, d_{\max},
\underline{c}, \overline{c})$ for positive constant $C(\alpha, d_{\max},
\underline{c}, \overline{c})$.  Then, there exist positive constants $c_1,c_2,
c_3, C(\alpha, d_{\max})$, such that with probability at least $1 - c_1n^{-1} - c_2p^{c_3}$,
$\widehat{\bbe}$ as defined in ~\eqref{eq:ImputeLasso} with regularization parameter
\[
\lambda = C(\alpha, d_{\max})\left(\sigma + R\right)\sqrt{\frac{s\log{np}}{n}},
\]
satisfies
\begin{align}
\norm{\widehat{\bbe} - \bbe_0}_2
        \leq \frac{C(\alpha, d_{\max})\left(\sigma + R\right)
        }{\lambda_{\min}(\bSig_{\hat{\bX}})}s\sqrt{\frac{\log{2np}}{n}}.
\end{align}
\end{customthm}

\begin{proof}
The strategy is then largely the same as Theorem~\ref{thm:ARres}.  We will rely on the simple equality $\widetilde{\bX} = \widehat{\bX} + \left(\widetilde{\bX} - \widehat{\bX}\right)$.  We will be able to control the difference $\widetilde{X}_{ia} - \widehat{X}_{ia}$.  Recall the Markov blanket $S_{(i,a)}$~\eqref{def:markovblanket}.  Since each row $\bX_i$ is multivariate Gaussian, we have:
\[
\widetilde{X}_{ia} - \widehat{X}_{ia} = \left(\bSig_{a, S_{(i,a)}}\bSig_{S_{(i,a)},
        S_{(i,a)}}^{-1} -  \widetilde{\bSig}_{b, S_{(i,a)}}\widetilde{\bSig}_{S_{(i,a)},
        S_{(i,a)}}^{-1}\right)\bX_{S(i,a)}.
\]

\vspace{1mm}
\noindent \textbf{Step 1: Control $\norm{\frac{1}{n}\widetilde{\bX}^T\left(\widetilde{\bX} -\bX\right)\bbe_0}_{\infty}$.}
\vspace{1mm}
Recall that by the triangular inequality:
\begin{align*}
\norm{\frac{1}{n}\widetilde{\bX}^{T}\left(\widetilde{\bX} -
\bX\right)\bbe_0}_{\infty} &\leq  
\norm{\frac{1}{n}\widehat{\bX}^T\left(\widetilde{\bX} -
\widehat{\bX}\right)\bbe_0}_{\infty} + \norm{\frac{1}{n}\left(\widetilde{\bX} -
\widehat{\bX}\right)^T\left(\widetilde{\bX} - \widehat{\bX}\right)\bbe_0}_{\infty}\\
&+ \norm{\frac{1}{n}\left(\widetilde{\bX} -
\widehat{\bX}\right)^T\left(\widehat{\bX} - \bX\right)\bbe_0}_{\infty}
 + 
\norm{\frac{1}{n}\widehat{\bX}^T\left(\widehat{\bX} - \bX\right)\bbe_0}_{\infty}.
\end{align*}
We require the following four lemmas, whose proofs we defer to appendix~\ref{sec:appendixboundedtechincallemmas}.

\begin{lemma}
\label{lem:term1thmGaussian}
Under the assumptions of theorem~\ref{thm:GaussianRes}, for any $\bu \in \mathbb{R}^{p}$, we have:
\[
\pr\left\{\norm{\frac{1}{n}\widehat{\bX}^T\left(\widetilde{\bX} - \widehat{\bX}\right)\bu}_{\infty} \geq C(\alpha, d_{\max})\norm{\bu}_1\sqrt{\frac{\log{np}}{n}}\right\} \leq n^{-1} + c_0p^{-c_1},
\]
where $c_0, c_1$ denote universal constants and $C(\alpha, d_{\max})$ a constant depending only on $\alpha, d_{\max}$.
\end{lemma}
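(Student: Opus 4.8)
The plan is to validate, for this approximate-imputation case, the first $\ell_\infty$ hypothesis of Proposition~\ref{prop:GeneralLasso} by transferring control from the intractable object $\widetilde{\bX}$ to the tractable one $\widehat{\bX}$, paying only for the covariance-estimation error $\widetilde{\bSig} - \bSig$. First I would reduce to a scalar bound: by H\"older's inequality, $\norm{\frac{1}{n}\widehat{\bX}^T(\widetilde{\bX} - \widehat{\bX})\bu}_{\infty} \leq \norm{\bu}_1 \max_{j,k}\big|\frac{1}{n}\sum_{i=1}^n \widehat{X}_{ij}(\widetilde{X}_{ik} - \widehat{X}_{ik})\big|$, so it suffices to bound the displayed maximum over the $p^2$ index pairs by $C(\alpha,d_{\max})\sqrt{\log(np)/n}$. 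I would then substitute the explicit form recalled just above the lemma, writing $\widetilde{X}_{ik} - \widehat{X}_{ik} = \Delta_{i,k}^T \bX_{i S_{(i,k)}}$ with coefficient-error vector $\Delta_{i,k} = \big(\bSig_{k,S}\bSig_{S,S}^{-1} - \widetilde{\bSig}_{k,S}\widetilde{\bSig}_{S,S}^{-1}\big)^T$, where $S = S_{(i,k)}$.

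Next I would set up two high-probability good events. The first controls the plug-in covariance entrywise: since $\widetilde{\bSig}$ is an unbiased estimator built from products $X_{0,(i,a)}X_{0,(i,b)}$ of (sub-)Gaussian coordinates, each entry is an average of sub-exponential variables, so Bernstein's inequality together with a union bound over the $O(p^2)$ entries gives $\norm{\widetilde{\bSig} - \bSig}_{\infty} \leq C(\alpha)\sqrt{\log(np)/n}$ with probability at least $1 - c_0 p^{-c_1}$. The second controls the geometry: by the remark following Definition~\ref{def:markovblanket}, the condition $(1-\alpha)(d_{\max}-1) < 1$ places the site-percolation process in its subcritical phase on the $d_{\max}$-regular tree, so the cluster reached by the breadth-first search defining $S_{(i,k)}$ has an exponential tail. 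This yields both that $\E|S_{(i,k)}|$ is a constant depending only on $\alpha, d_{\max}$ and, by a union bound over all $np$ vertices, that $\max_{i,k}|S_{(i,k)}| \lesssim \log(np)$ with probability at least $1 - n^{-1}$; the latter failure probability is the source of the $n^{-1}$ term in the statement.

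On the intersection of these events I would bound the coefficient error by a matrix-perturbation argument: using the identity $A^{-1} - B^{-1} = A^{-1}(B-A)B^{-1}$ for $A = \widetilde{\bSig}_{S,S}$, $B = \bSig_{S,S}$, and the fact that every principal submatrix inherits $\lambda_{\min}(\bSig_{S,S}) \geq \underline{c}$ (and $\lambda_{\max} \leq \overline{c}$) from \textbf{C1}, I obtain $\norm{\Delta_{i,k}}_2 \leq C(\underline{c},\overline{c})\,\psi(|S|)\,\norm{\widetilde{\bSig}-\bSig}_{\infty}$ for a polynomial factor $\psi$ in the blanket size. Thus, on the good event, $\widetilde{X}_{ik}-\widehat{X}_{ik}$ is a bounded linear functional of the Gaussian vector $\bX_{iS}$ whose coefficients are of order $\sqrt{\log(np)/n}$.

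The hard part is the dependence: $\widetilde{\bSig}$, and hence every $\Delta_{i,k}$, is computed from \emph{all} rows, so it is correlated with the very row $i$ appearing in $\widehat{X}_{ij}(\widetilde{X}_{ik}-\widehat{X}_{ik})$, and the summands are not independent. I would break this correlation with a leave-one-out device: replace $\widetilde{\bSig}$ by the estimator $\widetilde{\bSig}^{(-i)}$ that omits row $i$, note that $\widetilde{\bSig} - \widetilde{\bSig}^{(-i)}$ has entries of order $\norm{\bX_{0,i}}_2^2/n$ and so perturbs $\Delta_{i,k}$ only at negligible lower order, and work with the resulting $\Delta_{i,k}^{(-i)}$, which is independent of row $i$. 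Conditioning on $\widetilde{\bSig}^{(-i)}$ and on the missingness pattern (which fixes $S_{(i,k)}$), each summand $\widehat{X}_{ij}\,\big(\Delta_{i,k}^{(-i)}\big)^{T}\bX_{iS}$ is a product of jointly Gaussian coordinates, hence sub-exponential with parameter $\lesssim \norm{\Delta_{i,k}^{(-i)}}_2$, and its conditional mean is also $\lesssim \norm{\Delta_{i,k}^{(-i)}}_2$. A Bernstein bound over $i$ then shows the average concentrates at scale $\norm{\Delta_{i,k}}_2 \asymp C(\alpha,d_{\max})\sqrt{\log(np)/n}$, where the typical blanket size is $O_{\alpha,d_{\max}}(1)$; the rare large blankets (up to $\log(np)$) contribute only through the $n^{-1}$ event and are absorbed by the sample-size condition \textbf{C3} (this is what the $\log^7 p$ factor there buys). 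A final union bound over the $p^2$ pairs $(j,k)$ yields the claim with probability $1 - n^{-1} - c_0 p^{-c_1}$. The principal difficulty throughout is carrying out this decoupling while simultaneously controlling all $np$ Markov blankets uniformly.
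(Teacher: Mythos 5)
Your skeleton---writing $\widetilde{X}_{ik}-\widehat{X}_{ik} = \big(\bSig_{k,S}\bSig_{S,S}^{-1}-\widetilde{\bSig}_{k,S}\widetilde{\bSig}_{S,S}^{-1}\big)\bX_{iS}$, setting up good events for the covariance error and the blanket sizes, and bounding the coefficient error by submatrix perturbation using $\lambda_{\min}(\bSig_{S,S})\geq\underline{c}$---matches the paper's, and you correctly identify data reuse as the crux. But your decoupling step has a genuine gap: after the leave-one-out substitution, the summands $\widehat{X}_{ij}\big(\Delta^{(-i)}_{i,k}\big)^{T}\bX_{iS_{(i,k)}}$ are \emph{still not independent} across $i$, since each coefficient vector $\Delta^{(-i)}_{i,k}$ is built from all the other rows, so row $i'$ appears both in its own summand and in every $\Delta^{(-i)}$ with $i\neq i'$. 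Plain Bernstein therefore does not apply; one would need a martingale inequality or a different route. Moreover, as you yourself note, the conditional mean of each summand is already of order $\norm{\Delta^{(-i)}_{i,k}}_2$, which is the target scale---so concentration buys nothing beyond a uniform bound, and the entire leave-one-out device is superfluous. The paper instead bounds $\norm{\bSig_{b,S}\bSig_{S,S}^{-1}-\widetilde{\bSig}_{b,S}\widetilde{\bSig}_{S,S}^{-1}}_{\infty}\lesssim\sqrt{\lvert S_{(i,b)}\rvert}\sqrt{\log(2np)/n}$ uniformly on the good event (Lemma~\ref{lem:differencecontrol}) and pulls this out, leaving an average of functions of the \emph{independent} rows.

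A second, structural problem is your reduction to $\max_{j,k}$ over $p^{2}$ pairs: the residual averages carry powers of the blanket sizes $S_{(i,k)}$, which have only sub-exponential tails, so quantities like $\frac{1}{n}\sum_i \sqrt{S_{(i,k)}}\sum_{j\in S_{(i,k)}}\lvert X_{ij}\rvert\,\lvert\widehat{X}_{ij}\rvert$ admit at best stretched-exponential concentration (the paper needs the truncation Lemma~\ref{lem:truncation}, and the $\log^{7}p$ in \textbf{C3}, to survive a max over only $p$ indices in Lemma~\ref{lem:maxmarkovboundarycontrol}); Chebyshev-type control with failure probability $\asymp n^{-1}$ cannot survive a $p^{2}$-fold union bound. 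Note also that you cannot use the worst-case bound $\max_{i,k}\lvert S_{(i,k)}\rvert\lesssim\log(np)$ multiplicatively inside $\psi(\lvert S\rvert)$, since the resulting extra polylogarithmic factors cannot be hidden in $C(\alpha,d_{\max})$ and would weaken the claimed rate; blanket sizes must be handled in moments (Lemmas~\ref{lem:markovblanketmoments} and~\ref{lem:moments}). The paper's architecture dodges both issues at once: Cauchy--Schwarz over the row index yields $\max_a\big(\frac{1}{n}\sum_i\widehat{X}_{ia}^2\big)^{1/2}\cdot\big(\frac{1}{n}\sum_i\big(\sum_b\lvert u_b\rvert(\widetilde{X}_{ib}-\widehat{X}_{ib})\big)^2\big)^{1/2}$, so the maximum hits only the sub-exponential quantity $\frac{1}{n}\sum_i\widehat{X}_{ia}^2$, while the blanket-heavy factor appears once, with no max over column pairs, and a single application of Chebyshev (Lemma~\ref{lem:ucontrol}) controls it---that Chebyshev step, not the max-blanket event (whose failure probability is $(2np)^{-1}$), is the actual source of the $n^{-1}$ term in the statement. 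Finally, a small slip: the leave-one-out update to $\widetilde{\bSig}$ has entries of order $\norm{\bX_{0,i}}_{\infty}^{2}/n$, not $\norm{\bX_{0,i}}_{2}^{2}/n\asymp p/n$, which in the regime $p\gg n$ would not be negligible.
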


\begin{lemma}
\label{lem:term2thmGaussian}
Under the assumptions of theorem~\ref{thm:GaussianRes}, we have:
\[
\pr\left\{\norm{\frac{1}{n}\left(\widetilde{\bX} - \widehat{\bX}\right)^T\left(\widetilde{\bX} - \widehat{\bX}\right)\bu}_{\infty} \geq C(\alpha, d_{\max})\norm{\bu}_1\frac{\log{np}}{n}\right\} \leq n^{-1} + c_0p^{-c_1},
\]
where $c_0, c_1$ denote universal constants and $C(\alpha, d_{\max})$ a constant depending only on $\alpha, d_{\max}$.
\end{lemma}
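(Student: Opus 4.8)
The plan is to carry out the decomposition $\widetilde{\bX} = \widehat{\bX} + (\widetilde{\bX}-\widehat{\bX})$ only implicitly here, since the statement already isolates the ``$A.$'' term, and to reduce the $\ell_\infty$ bound on a quadratic form to an entrywise estimate. Writing $\Delta_{ic} := \widetilde{X}_{ic}-\widehat{X}_{ic}$ (with $\Delta_{ic}=0$ whenever entry $(i,c)$ is observed), the $b$-th coordinate of the target vector is
\[
\Big(\tfrac1n(\widetilde{\bX}-\widehat{\bX})^{\top}(\widetilde{\bX}-\widehat{\bX})\bu\Big)_b = \sum_{c} u_c\,\Big(\tfrac1n\sum_{i=1}^n \Delta_{ib}\Delta_{ic}\Big),
\]
so by H\"older's inequality it suffices to bound $\max_{b,c}\big|\tfrac1n\sum_i \Delta_{ib}\Delta_{ic}\big|$ by $C(\alpha,d_{\max})\tfrac{\log np}{n}$ on a high-probability event, after which the claim follows with the extra factor $\norm{\bu}_1$.

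For the per-entry differences I would use the Gaussian conditional-mean formula recorded above: for a missing entry $(i,c)$ with Markov blanket $S:=S_{(i,c)}$ (Definition~\ref{def:markovblanket}),
\[
\Delta_{ic} = \big(\bSig_{c,S}\bSig_{S,S}^{-1} - \widetilde{\bSig}_{c,S}\widetilde{\bSig}_{S,S}^{-1}\big)\bX_{iS} =: \boldsymbol{d}_{(i,c)}^{\top}\bX_{iS}.
\]
First I would invoke the covariance-concentration estimate already used in Lemma~\ref{lem:term1thmGaussian} and proved in appendix~\ref{sec:appendixboundedtechincallemmas}: on an event of probability at least $1-n^{-1}-c_0p^{-c_1}$ one has $\norm{\widetilde{\bSig}-\bSig}_{\max}\le\varepsilon:=C(\alpha)\sqrt{\log(np)/n}$. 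Combining this with the well-conditioning of $\bSig$ and of its principal submatrices guaranteed by \textbf{C1}, a perturbation-of-inverse argument yields the coefficient bound $\norm{\boldsymbol{d}_{(i,c)}}_2\le C(\alpha,d_{\max})\sqrt{|S_{(i,c)}|}\,\varepsilon$ for every $(i,c)$ simultaneously on this event.

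The main obstacle, flagged in the proof strategy, is that $\boldsymbol{d}_{(i,c)}$ depends on $\widetilde{\bSig}$, hence on the entire data set, so the summands $\Delta_{ib}\Delta_{ic}$ are not independent across $i$. I resolve this by factoring out only the deterministic quantity $\varepsilon$ (which bounds $\norm{\widetilde\bSig-\bSig}_{\max}$ uniformly on the good event) and leaving everything else inside the row-sum:
\[
\Big|\tfrac1n\sum_i\Delta_{ib}\Delta_{ic}\Big| \le C(\alpha,d_{\max})\,\varepsilon^2\,\tfrac1n\sum_i \sqrt{|S_{(i,b)}|\,|S_{(i,c)}|}\,\norm{\bX_{iS_{(i,b)}}}_2\,\norm{\bX_{iS_{(i,c)}}}_2.
\]
The remaining average is a sum over $i$ of terms depending only on row $i$ --- the blanket $S_{(i,\cdot)}$ is a function of the missingness pattern of row $i$ and $\bX_{iS}$ of its values --- so these summands are genuinely independent across $i$. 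By Cauchy--Schwarz in $i$, and using that the subcritical percolation condition $(1-\alpha)(d_{\max}-1)<1$ of \textbf{C2} makes all moments of the blanket size $O(1)$, each factor $\tfrac1n\sum_i |S_{(i,b)}|\norm{\bX_{iS_{(i,b)}}}_2^2$ has mean $O(1)$ with sub-exponential tails and concentrates below a constant $C(\alpha,d_{\max})$ uniformly over $b\in[p]$, except on an event of probability at most $n^{-1}+c_0p^{-c_1}$.

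On the intersection of these events one then obtains $\max_{b,c}\big|\tfrac1n\sum_i\Delta_{ib}\Delta_{ic}\big| \le C(\alpha,d_{\max})\varepsilon^2 = C(\alpha,d_{\max})\tfrac{\log np}{n}$, and the H\"older step yields the claim. The one point requiring care is to keep a \emph{single} power of $\log(np)$: both logarithms must be carried by $\varepsilon^2$ through the covariance-estimation rate, while the blanket sizes and Gaussian magnitudes are handled by \emph{averaging} --- contributing only the constant $C(\alpha,d_{\max})$ --- rather than by a worst-case union bound over rows, which would have cost an extra logarithmic factor. A union bound over the events above gives the stated failure probability $n^{-1}+c_0p^{-c_1}$.
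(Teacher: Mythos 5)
Your proposal is correct in substance and rests on the same two pillars as the paper's argument --- a uniform coefficient bound derived from covariance concentration (the paper's Lemma~\ref{lem:differencecontrol}) and uniform-over-columns control of the blanket-weighted Gaussian masses (Lemma~\ref{lem:maxmarkovboundarycontrol}) --- but your decomposition is genuinely different. The paper never passes to $\max_{b,c}$ of the Gram-matrix entries; it applies Cauchy--Schwarz over the row index once, splitting the target into $\max_a\bigl(\frac1n\sum_i(\widetilde X_{ia}-\widehat X_{ia})^2\bigr)^{1/2}$ times the $\bu$-weighted factor $\bigl(\frac1n\sum_i(\sum_b(\widetilde X_{ib}-\widehat X_{ib})u_b)^2\bigr)^{1/2}$, and imports the second factor wholesale from the proof of Lemma~\ref{lem:term1thmGaussian}, where it is controlled by a one-shot Chebyshev bound (Lemma~\ref{lem:ucontrol}, which is where the $n^{-1}$ in the failure probability originates) rather than by any union bound over columns. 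Your route --- H\"older first, then a per-pair Cauchy--Schwarz in $i$ --- is more symmetric and dispenses with Lemma~\ref{lem:ucontrol} entirely, at the price of needing the uniform (union-bounded over $[p]$) concentration for both factors. Either way one lands on $C(\alpha,d_{\max})\norm{\bu}_1\varepsilon^2$ with $\varepsilon^2 \asymp \log(np)/n$, and your observation that the single power of $\log(np)$ must be carried by $\varepsilon^2$, with the blankets handled by averaging rather than worst-case row bounds, is exactly the point of the paper's bookkeeping.

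Two steps in your write-up are looser than they should be, though neither is fatal. First, from max-norm control of $\widetilde{\bSig}-\bSig$ alone, a perturbation-of-inverse argument produces extra polynomial factors of $\lvert S_{(i,c)}\rvert$ beyond your claimed $\sqrt{\lvert S_{(i,c)}\rvert}$; the paper instead uses operator-norm concentration on submatrices (Lemma~\ref{lem:operatorMissing}, part (i)), union-bounded over the roughly $np$ blankets, together with the bound $\lvert S_{(i,b)}\rvert \lesssim \log(2np)$ from Lemma~\ref{lem:generatingFunctionBoundary}. Since all moments of the blanket size are $O(1)$ under $(1-\alpha)(d_{\max}-1)<1$, the extra powers are absorbed by your averaging step, so this only changes constants. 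Second, the summands $\lvert S_{(i,b)}\rvert\,\norm{\bX_{iS_{(i,b)}}}_2^2$ are \emph{not} sub-exponential: the blanket size is sub-exponential and the Gaussian mass scales with it, so the product has stretched-exponential tails of the form $e^{-ct^{\gamma}}$ with $\gamma<1$. Plain Bernstein does not apply; this is precisely why the paper proves the truncation bound Lemma~\ref{lem:truncation} and why assumption \textbf{C3} demands $n \gtrsim \log^7 p$. Your uniform-over-$b$ concentration claim is true, but it is the content of Lemma~\ref{lem:maxmarkovboundarycontrol}, obtained by truncation, not by a standard sub-exponential tail bound.
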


\begin{lemma}
\label{lem:term3thmGaussian}
Under the assumptions of theorem~\ref{thm:GaussianRes}, for any $\bu \in \mathbb{R}^{p}$, we have:
\[
\pr\left\{\norm{\frac{1}{n}\left(\widetilde{\bX} - \widehat{\bX}\right)^T\left(\widehat{\bX} - \bX\right)\bu}_{\infty} \geq C(\alpha, d_{\max})\norm{\bu}_1 \sqrt{\frac{\log{np}}{n}}\right\} \leq n^{-1} + c_0p^{-1},
\]
where $c_0, c_1$ denote universal constants and $C(\alpha, d_{\max})$ a constant depending only on $\alpha, d_{\max}$.
\end{lemma}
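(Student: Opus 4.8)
The plan is to bound the $\ell_\infty$ norm one coordinate at a time and, for each fixed column $a\in[p]$, to split the relevant inner product by Cauchy--Schwarz so as to isolate the two sources of error. Writing $R_i := \inp{\widehat{\bX}_i - \bX_i}{\bu} = \sum_b u_b(\widehat{X}_{ib}-X_{ib})$ for the conditional-expectation residual of row $i$ contracted against $\bu$, the $a$-th coordinate of the vector in question is
\[
T_a := \frac1n\sum_{i=1}^n \big(\widetilde{X}_{ia}-\widehat{X}_{ia}\big)R_i,
\qquad
|T_a| \le \Big(\frac1n\sum_{i=1}^n\big(\widetilde{X}_{ia}-\widehat{X}_{ia}\big)^2\Big)^{1/2}\Big(\frac1n\sum_{i=1}^n R_i^2\Big)^{1/2}.
\]
The point of this splitting is that it sidesteps the central difficulty of this section --- the dependence of $\widetilde{\bSig}$, hence of $\widetilde{\bX}-\widehat{\bX}$, on all rows including $i$. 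Indeed, the first factor will be controlled by a \emph{deterministic} pointwise bound valid on the event that $\widetilde{\bSig}$ is close to $\bSig$, so no independence across rows is needed; and the second factor involves only the true conditional expectation $\widehat{\bX}$ and $\bX$, whose rows \emph{are} independent.

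For the first factor, I would first establish (as a standalone concentration step) that the unbiased plug-in estimator satisfies $\norm{\widetilde{\bSig}-\bSig}_\infty\le \epsilon$ entrywise with $\epsilon \asymp C(\alpha,d_{\max})\sqrt{\log(np)/n}$ on an event $\mathcal{E}_{\bSig}$ of probability at least $1-n^{-1}-c_0p^{-c_1}$; the sub-exponential concentration of the summands defining $\widetilde{\bSig}$ gives this, with the $\log(np)$ and the $n^{-1}$ arising from a union bound over the $O(np)$ blanket submatrices. On $\mathcal{E}_{\bSig}$, the resolvent identity $\bA^{-1}-\bB^{-1}=\bA^{-1}(\bB-\bA)\bB^{-1}$ applied to $\widetilde{\bSig}_{S_{(i,a)},S_{(i,a)}}$ versus $\bSig_{S_{(i,a)},S_{(i,a)}}$, together with the well-conditioning from \textbf{C1} ($\underline{c}\le\lambda_{\min}(\bSig)\le\lambda_{\max}(\bSig)\le\overline{c}$) and the bound on the blanket size coming from \textbf{C2}, yields a deterministic perturbation estimate
\[
\big|\widetilde{X}_{ia}-\widehat{X}_{ia}\big| \;\le\; C(\alpha,d_{\max})\,\epsilon\,\norm{\bX_{i,S_{(i,a)}}}_2 .
\]
Summing the square over $i$ and using that $\tfrac1n\sum_i\norm{\bX_{i,S_{(i,a)}}}_2^2 \le C(\alpha,d_{\max})$ with high probability --- which requires the subcritical percolation bound of \textbf{C2}, $(1-\alpha)(d_{\max}-1)<1$, to guarantee the blankets have finite expected size and are uniformly of size $O(\log np)$ with high probability --- gives $\tfrac1n\sum_i(\widetilde{X}_{ia}-\widehat{X}_{ia})^2 \le C(\alpha,d_{\max})\,\epsilon^2$.

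For the second factor, the summands $R_i^2=\inp{\widehat{\bX}_i-\bX_i}{\bu}^2$ are independent across $i$, and conditionally on the observed entries of row $i$ each $R_i$ is a centered Gaussian whose variance (a Schur complement of $\bSig$) is at most $\lambda_{\max}(\bSig)\norm{\bu}_2^2\le\overline{c}\,\norm{\bu}_2^2$. A sub-exponential (Bernstein) concentration then gives $\tfrac1n\sum_i R_i^2 \le C\overline{c}\,\norm{\bu}_2^2$ on a high-probability event. Combining the two factors through Cauchy--Schwarz, taking a union bound over $a\in[p]$, and using $\norm{\bu}_2\le\norm{\bu}_1$ produces the claimed bound $\norm{\tfrac1n(\widetilde{\bX}-\widehat{\bX})^\top(\widehat{\bX}-\bX)\bu}_\infty \le C(\alpha,d_{\max})\norm{\bu}_1\sqrt{\log(np)/n}$, since $\epsilon\asymp\sqrt{\log(np)/n}$.

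The main obstacle is the first factor: proving the covariance estimate $\norm{\widetilde{\bSig}-\bSig}\le\epsilon$ at rate $\sqrt{\log(np)/n}$ \emph{simultaneously} over all the data-dependent blanket submatrices, and converting it into a stable bound on the imputation map $\bSig_{a,S}(\bSig_{S,S})^{-1}\mapsto\widetilde{\bSig}_{a,S}(\widetilde{\bSig}_{S,S})^{-1}$ uniformly over blankets. This in turn hinges on controlling the random geometry of the Markov blankets $S_{(i,a)}$ through the subcritical site-percolation condition \textbf{C2}. I would also note that the Cauchy--Schwarz split is deliberately lossy --- it discards the conditional mean-zero cancellation of $R_i$ --- but the resulting bound is still of the order needed, and the slack is precisely what the $s^2\log p$ term in \textbf{C3} is there to absorb when this lemma is invoked in the restricted-eigenvalue step.
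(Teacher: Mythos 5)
Your proposal is correct and follows essentially the same route as the paper's proof: the coordinate-wise Cauchy--Schwarz split into $\max_a\sqrt{\tfrac1n\sum_i(\widetilde{X}_{ia}-\widehat{X}_{ia})^2}$ times $\sqrt{\tfrac1n\sum_i\langle\widehat{\bX}_i-\bX_i,\bu\rangle^2}$, with the first factor controlled deterministically via covariance concentration uniform over the Markov blankets (the paper's Lemmas~\ref{lem:differencecontrol} and~\ref{lem:maxmarkovboundarycontrol}, which implement your resolvent/perturbation step and blanket-size control) and the second factor handled by independent-row sub-exponential concentration. The only cosmetic difference is that you bound the second factor by the conditional Gaussian variance $\overline{c}\,\norm{\bu}_2^2$ and then use $\norm{\bu}_2\le\norm{\bu}_1$, whereas the paper bounds $\E\langle\widehat{\bX}_i-\bX_i,\widetilde{\bu}\rangle^2\le 4\norm{\bSig}_{op}\norm{\widetilde{\bu}}_1^2$ directly.
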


The proof of the above lemma yields the identitical corollary:
\begin{corollary}
\label{cor:term1thmGaussian}
Under the assumptions of theorem~\ref{thm:GaussianRes}, for any $\bu \in \mathbb{R}^{p}$, we have:
\[
\pr\left\{\norm{\frac{1}{n}\left(\widetilde{\bX} - \widehat{\bX}\right)^T\widehat{\bX}\bu}_{\infty} \geq C(\alpha, d_{\max})\norm{\bu}_1\sqrt{\frac{\log{np}}{n}}\right\} \leq n^{-1} + c_0p^{-1},
\]
where $c_0, c_1$ denote universal constants and $C(\alpha, d_{\max})$ a constant depending only on $\alpha, d_{\max}$.
\end{corollary}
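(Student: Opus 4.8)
The plan is to obtain the corollary by re-running the proof of Lemma~\ref{lem:term3thmGaussian} verbatim, with the factor $\widehat{\bX}-\bX$ replaced throughout by $\widehat{\bX}$. To see that this substitution is legitimate I would first isolate exactly which properties of the right-hand factor that proof uses. Inspecting the argument, the matrix $\widehat{\bX}-\bX$ enters only through two structural facts: (i) its $n$ rows are mutually independent, because the $i$-th row is a measurable function of the pair $(\bX_i,\bZ_i)$ alone; and (ii) each row is sub-Gaussian with parameter a fixed multiple of $\sigma_X^2$. Property (ii) for $\widehat{\bX}-\bX$ is itself a consequence of Fact~\ref{fact:subGaussianConditionalExpectation} together with \textbf{A1}, since $\bX_i$ and $\widehat{\bX}_i=\E\{\bX_i\mid\bZ_i\}$ are both $\sigma_X^2$-sub-Gaussian and hence their difference is $O(\sigma_X^2)$-sub-Gaussian. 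But Fact~\ref{fact:subGaussianConditionalExpectation} states even more directly that $\widehat{\bX}$ has independent rows that are $\sigma_X^2$-sub-Gaussian. Thus $\widehat{\bX}$ satisfies (i) and (ii) with the same parameters as $\widehat{\bX}-\bX$, and every concentration estimate in the proof of Lemma~\ref{lem:term3thmGaussian} transfers unchanged, producing the stated bound and the identical failure probability $n^{-1}+c_0p^{-1}$.

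For the reduction to be meaningful I would also lay out the skeleton of the proof it inherits. Writing $\boldsymbol{D}=\widetilde{\bX}-\widehat{\bX}$, the $b$-th coordinate of $\frac1n\boldsymbol{D}^{T}\widehat{\bX}\bu$ is $\frac1n\sum_{i=1}^n D_{ib}\inp{\widehat{\bX}_i}{\bu}$, where $D_{ib}=0$ whenever entry $(i,b)$ is observed, and for a missing entry $D_{ib}$ equals the difference of the true and plug-in conditional-expectation coefficients applied to $\bX_{i,S_{(i,b)}}$, with $S_{(i,b)}$ the Markov blanket of Definition~\ref{def:markovblanket}. The engine is a deterministic ``good event'': by standard covariance concentration for the unbiased zero-imputed estimator one has $\norm{\widetilde{\bSig}-\bSig}_{\max}\lesssim\sqrt{\log(np)/n}$, and by the subcritical site-percolation bound built into \textbf{C2} every blanket obeys $|S_{(i,b)}|\le K=O(\log(np))$ after a union bound over the $np$ entries. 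On this event the well-conditioning of $\bSig$ from \textbf{C1} together with a matrix-perturbation estimate for the map $\bSig\mapsto\bSig_{b,S}\bSig_{S,S}^{-1}$ yields $|D_{ib}|\lesssim C(\alpha,d_{\max})\sqrt{\log(np)/n}\,\norm{\bX_{i,S_{(i,b)}}}$.

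It then remains to control $\frac1n\sum_i \norm{\bX_{i,S_{(i,b)}}}\,|\inp{\widehat{\bX}_i}{\bu}|$. Here I would condition on the missingness pattern, which under MCAR is independent of $\bX$ and freezes all the blankets; the summands are then independent across $i$ and sub-exponential, being products of two sub-Gaussian-type factors supported on $O(\log(np))$ coordinates, so a Bernstein inequality controls the sum at scale $O(\norm{\bu}_1)$. Taking a union bound over the $p$ output coordinates $b$ and over the at most $\mathrm{poly}(np)$ admissible blanket shapes (a connected-subgraph count in a degree-$d_{\max}$ graph, polynomial once $K=O(\log(np))$), and multiplying by the $\sqrt{\log(np)/n}$ factor extracted from $\boldsymbol{D}$, gives the claimed bound; the generous sample-size condition \textbf{C3} is exactly what absorbs the surplus logarithmic factors generated in this step.

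The main obstacle, and the reason this family of estimates is harder than its analogues in Appendix~\ref{sec:ProofsConditionalExpectation}, is the dependence introduced by $\widetilde{\bSig}$: the coefficients defining each $D_{ib}$ are computed from \emph{all} rows and are therefore coupled to the very rows $\widehat{\bX}_i$ appearing in the other factor. The good-event device is precisely what decouples the ``smallness'' of $\boldsymbol{D}$ from the concentration of the bilinear sum, and tracking the sub-exponential tails so that the final rate is $\sqrt{\log(np)/n}$ rather than a larger power of the logarithm is the one delicate point. For the corollary there is no new coupling beyond what the lemma already handles, since $\widehat{\bX}_i$ is a function of $(\bX_i,\bZ_i)$ exactly as $\widehat{\bX}_i-\bX_i$ was; confirming this interchangeability is the only thing that must be verified, and it holds by Fact~\ref{fact:subGaussianConditionalExpectation}.
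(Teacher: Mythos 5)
Your proposal is correct and takes essentially the same approach as the paper, which obtains this corollary exactly by noting that the proof of Lemma~\ref{lem:term3thmGaussian} goes through verbatim with the factor $\widehat{\bX}-\bX$ replaced by $\widehat{\bX}$: the only properties of that factor the argument uses are row-wise independence (each row being a function of $(\bX_i,\bZ_i)$ alone) and sub-Gaussianity of the rows, both of which $\widehat{\bX}$ inherits via Fact~\ref{fact:subGaussianConditionalExpectation}. One cosmetic remark: the paper's proof of the parent lemma decouples the two factors by Cauchy--Schwarz across rows, bounding $\max_a\bigl(\frac{1}{n}\sum_{i}(\widetilde{X}_{ia}-\widehat{X}_{ia})^2\bigr)^{1/2}$ via Lemmas~\ref{lem:differencecontrol} and~\ref{lem:maxmarkovboundarycontrol} and applying Bernstein to the quadratic form $\frac{1}{n}\sum_i\langle\cdot,\widetilde{\bu}\rangle^2$, rather than your entrywise conditioning-on-the-missingness-pattern route with a union bound over blanket shapes (which is unnecessary once the pattern is fixed), but this does not affect the validity of your reduction.
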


The following lemma follows by lemma~\ref{lem:term4thmAR}.
\begin{lemma}
\label{lem:term4thmGaussian}
Under the assumptions of theorem~\ref{thm:GaussianRes}, we have, we have:
\[
\pr\left\{\norm{\frac{1}{n}\widehat{\bX}^T\left(\widehat{\bX} - \bX\right)\bbe_0}_{\infty} \geq C\sigma_X^2R\sqrt{\frac{\log{p}}{n}}\right\} \leq  c_0p^{-c_1},
\]
where $c_0, c_1$ denote universal constants.
\end{lemma}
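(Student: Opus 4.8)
The plan is to exploit the fact that the quantity in question, $\frac{1}{n}\widehat{\bX}^T(\widehat{\bX}-\bX)\bbe_0$, involves only the \emph{exact} conditional expectation $\widehat{\bX}=\E\{\bX\mid\bZ\}$ and the true design $\bX$; the approximation $\widetilde{\bX}$ never enters. Consequently none of the graphical-model structure specific to this section is needed, and the bound should follow from exactly the same argument used for the AR(1) case in Lemma~\ref{lem:term4thmAR}. First I would record the two properties that drive the estimate: (i) by Fact~\ref{fact:subGaussianConditionalExpectation} the rows of $\widehat{\bX}$ are i.i.d.\ and $\sigma_X^2$-sub-Gaussian; and (ii) for every pair of coordinates $(a,b)$ one has $\E\{\widehat{X}_{ia}(\widehat{X}_{ib}-X_{ib})\mid\bZ\}=0$, since $\widehat{X}_{ia}$ is $\bZ$-measurable while $\widehat{X}_{ib}-X_{ib}=\E\{X_{ib}\mid\bZ\}-X_{ib}$ has vanishing conditional mean. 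Under \textbf{C1} the Gaussian design satisfies $\sigma_X^2\le\overline{c}$, so assumption \textbf{A1} holds and the hypotheses of Lemma~\ref{lem:term4thmAR} are in force.

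Next I would fix a coordinate $a\in[p]$ and write the $a$-th entry of the vector as $\frac{1}{n}\sum_{i=1}^n W_i^{(a)}$ with $W_i^{(a)}=\widehat{X}_{ia}\,(\widehat{\bX}_i-\bX_i)^T\bbe_0$. These summands are i.i.d.\ across $i$ and, by property (ii), mean zero. Property (i) together with $\norm{\bbe_0}_2\le R$ shows that $(\widehat{\bX}_i-\bX_i)^T\bbe_0$ is $C\sigma_X^2R^2$-sub-Gaussian, so each $W_i^{(a)}$ is a product of a $\sigma_X^2$-sub-Gaussian and a $C\sigma_X^2R^2$-sub-Gaussian variable and is therefore $(C\sigma_X^2R)$-sub-exponential. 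Applying Bernstein's inequality to the i.i.d.\ sum and choosing the threshold $t=C\sigma_X^2R\sqrt{\frac{\log p}{n}}$—using $\sqrt{\frac{\log p}{n}}\lesssim 1$ so that the sub-Gaussian branch of the Bernstein bound is active—gives a per-coordinate tail of at most $2p^{-c}$. A union bound over the $p$ coordinates then yields the stated bound $\pr\{\norm{\frac{1}{n}\widehat{\bX}^T(\widehat{\bX}-\bX)\bbe_0}_\infty\ge C\sigma_X^2R\sqrt{\log p/n}\}\le c_0p^{-c_1}$.

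Since this is verbatim the conclusion of Lemma~\ref{lem:term4thmAR}, and that lemma's proof uses only \textbf{A1--A4} together with the defining orthogonality and sub-Gaussianity of the exact conditional expectation—never the autoregressive structure—the argument transfers unchanged. I do not expect a genuine obstacle here: the entire difficulty of this section, namely decoupling $\widetilde{\bX}$ from $\widehat{\bX}$ to handle the dependence introduced by reusing the data, is simply absent from this particular cross term. The only point requiring a moment's care is bookkeeping on the sub-Gaussian constant: one must confirm that the $\sigma_X$ appearing in Lemma~\ref{lem:term4thmAR} is the same quantity here, which holds because \textbf{C1} bounds it by $\sqrt{\overline{c}}$ and the universal constant in the displayed bound can absorb this factor.
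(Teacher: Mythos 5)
Your proposal is correct and takes essentially the same route as the paper, which disposes of this lemma in one line by noting it is identical to Lemma~\ref{lem:term4thmAR} and hence follows immediately from Lemma~\ref{lem:term1thm1}: as you say, the cross term involves only the exact conditional expectation $\widehat{\bX}$, so the orthogonality-plus-sub-Gaussianity Bernstein argument for the MCAR case transfers verbatim, with \textbf{C1} supplying $\sigma_X^2 \le \overline{c}$ in place of \textbf{A1}. Your direct product-of-sub-Gaussians bound is slightly coarser than the Herbst-type moment-generating-function computation in the proof of Lemma~\ref{lem:term1thm1} (which extracts the sharper $\sqrt{1-\alpha}$ factor), but since the stated bound omits that factor, your argument fully suffices.
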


Taking $\bu = \bbe_0$ and noting $\norm{\bbe_0}_1 \leq R\sqrt{s}$, for $C(\alpha, d_{\max})$ large enough, we have:
\[
\norm{\frac{1}{n}\widetilde{\bX}^T\left(\widetilde{\bX} -\bX\right)\bbe_0}_{\infty} \leq C(\alpha, d_{\max})R\sqrt{\frac{s\log{np}}{n}},
\]
with probability at least $1 - c_0p^{-c_1}$.

\vspace{1mm}
\noindent \textbf{Step 2: Control $\norm{\frac{1}{n}\widetilde{\bX}^T\beps}_{\infty}$.	} 
\vspace{1mm}
We now tackle showing $\norm{\frac{1}{n}\widetilde{\bX}^T\beps}_{\infty} \leq \frac{\lambda}{4}$.  To this end, the triangular inequality implies $\norm{\frac{1}{n}\widetilde{\bX}^T\beps}_{\infty} \leq \norm{\frac{1}{n}\left(\widetilde{\bX} - \widehat{\bX}\right)^T\beps}_{\infty} + \norm{\frac{1}{n}\widehat{\bX}^T\beps}_{\infty}$.  We will use the following two lemmas:

\begin{lemma}
\label{lem:term5thmGaussian}
Under the assumptions of theorem~\ref{thm:GaussianRes}, we have:
\[
\pr\left\{\norm{\frac{1}{n}\left(\widetilde{\bX} - \widehat{\bX}\right)^T\beps}_{\infty} \geq C(\alpha, d_{\max})\sigma\sqrt{\frac{\log{np}}{n}}\right\} \leq c_0n^{-1} + c_1p^{-c_2},
\]
where $c_0, c_1$ denote universal constants and $C(\alpha, d_{\max})$ a constant depending only on $\alpha, d_{\max}$.
\end{lemma}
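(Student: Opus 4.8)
The plan is to exploit the single structural feature that distinguishes this cross term from the row-dependent terms treated in Step~1: although the rows of $\widetilde{\bX}$ are dependent, the matrix $\widetilde{\bX}-\widehat{\bX}$ is a deterministic function of $(\bX,\bM)$ and is therefore \emph{independent} of the regression noise $\beps$. I would first condition on $(\bX,\bM)$ and regard the entries $w^{(a)}_i := \widetilde{X}_{ia}-\widehat{X}_{ia}$ as fixed weights. For each coordinate $a\in[p]$, the quantity $\bigl[\tfrac1n(\widetilde{\bX}-\widehat{\bX})^{T}\beps\bigr]_a=\tfrac1n\sum_{i=1}^{n}w^{(a)}_i\epsilon_i$ is then a weighted sum of independent, mean-zero, $\sigma^2$-sub-Gaussian variables, hence conditionally sub-Gaussian with variance proxy $\tfrac{\sigma^2}{n}V_a$, where $V_a:=\tfrac1n\norm{(\widetilde{\bX}-\widehat{\bX})_a}_2^2$ is the average squared column error. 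The entire dependence among the rows of $\widetilde{\bX}$ is thus frozen by the conditioning and never interacts with the noise.

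Next I would isolate a master good event $\mathcal{G}$, measurable with respect to $(\bX,\bM)$, on which $\max_{a\in[p]}V_a\le B(\alpha,d_{\max})$ for a constant depending only on $\alpha,d_{\max}$. On $\mathcal{G}$ the conditional sub-Gaussian tail gives, for every $a$, $\pr\{|\tfrac1n\sum_i w^{(a)}_i\epsilon_i|\ge t \mid \bX,\bM\}\le 2\exp(-\tfrac{nt^2}{2\sigma^2 B})$; choosing $t=C(\alpha,d_{\max})\sigma\sqrt{\tfrac{\log np}{n}}$ and union-bounding over the $p$ columns produces $2p\,(np)^{-C^2/(2B)}\le c_1 p^{-c_2}$ once $C$ is taken large enough relative to $\sqrt{B}$. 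The essential point is that $\mathcal{G}$ controls all columns at once, so this union bound over $a$ is paid purely in the $\beps$-randomness and leaves the probability of $\mathcal{G}$ untouched.

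It remains to verify that $\mathcal{G}$ holds with probability at least $1-c_0 n^{-1}-c_1 p^{-c_2}$, which is the origin of the $c_0 n^{-1}$ term in the statement. The bound on a single $V_a$ is exactly the diagonal content of Lemma~\ref{lem:term2thmGaussian}: taking $\bu=\be_a$ (so $\norm{\be_a}_1=1$), the $\ell_\infty$ bound there dominates the $a$-th diagonal entry and yields $V_a\le C(\alpha,d_{\max})\tfrac{\log np}{n}$, which is $\le B$ under the sample-size hypothesis of Theorem~\ref{thm:GaussianRes}. Crucially, the good event underlying that lemma---concentration of the plug-in covariance $\widetilde{\bSig}$ on the relevant small submatrices, together with the uniform bound on Markov-blanket sizes coming from subcriticality of the site-percolation process in assumption~\textbf{C2}---is a single $(\bX,\bM)$-measurable event that simultaneously forces $V_a\le B$ for \emph{all} $a$. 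I would extract precisely this event as $\mathcal{G}$, reusing the covariance-estimation and blanket-size estimates already assembled for Lemmas~\ref{lem:term1thmGaussian}--\ref{lem:term3thmGaussian} in appendix~\ref{sec:appendixboundedtechincallemmas}.

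The main obstacle I anticipate is the bookkeeping of the two sources of randomness so that the column union bound is charged only to $p^{-c_2}$ and not to the $n^{-1}$ factor: a naive application of Lemma~\ref{lem:term2thmGaussian} separately for each $a$, followed by a union bound, would generate a useless $p\cdot n^{-1}$ term in the regime $p\gg n$. The resolution is to phrase the column-norm control as the single data event $\mathcal{G}$ rather than as $p$ separate statements, after which the noise contribution is dispatched entirely by the conditional sub-Gaussian estimate. With this organization the linear dependence on $\sigma$ in the target bound appears automatically through the variance proxy $\sigma^2 V_a/n$, and the row-dependence of $\widetilde{\bX}$---the central difficulty of this section---plays no role because $\beps\perp(\widetilde{\bX}-\widehat{\bX})$.
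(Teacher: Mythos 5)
Your proof is correct, but it takes a genuinely different route from the paper's. The paper's argument is a one-line deterministic Cauchy--Schwarz over the sample index: it bounds $\norm{\frac{1}{n}(\widetilde{\bX}-\widehat{\bX})^{T}\beps}_{\infty} \leq \max_{a}\bigl(\frac{1}{n}\sum_{i}(\widetilde{X}_{ia}-\widehat{X}_{ia})^{2}\bigr)^{1/2}\bigl(\frac{1}{n}\sum_{i}\epsilon_{i}^{2}\bigr)^{1/2}$, controls the first factor by the uniform column-norm estimate already proved inside Lemma~\ref{lem:term2thmGaussian} (via Lemmas~\ref{lem:differencecontrol} and~\ref{lem:maxmarkovboundarycontrol}), and the second by the event $\frac{1}{n}\sum_{i}\epsilon_{i}^{2}\leq 2\sigma^{2}$, which holds with probability $1-e^{-Cn}$. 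You instead condition on $(\bX,\bM)$, observe that $\widetilde{\bX}-\widehat{\bX}$ is $(\bX,\bM)$-measurable and hence independent of $\beps$, and run a conditional sub-Gaussian tail bound with variance proxy $\sigma^{2}V_{a}/n$, union-bounded over the $p$ columns on a single uniform good event $\mathcal{G}$; your identification of $\mathcal{G}$ with the event underlying the first factor of Lemma~\ref{lem:term2thmGaussian} is exactly right, and your observation that applying that lemma separately per column would produce a fatal $p\cdot n^{-1}$ term is the correct bookkeeping. The trade-offs: the paper's route is shorter, requires no conditioning or union bound in the noise, and in fact never uses independence of $\beps$ from the design (Cauchy--Schwarz is deterministic and the two factors live on separate events), whereas your route explicitly requires $\beps\perp(\bX,\bM)$ --- implicit in the model but worth stating. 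In exchange, your route is sharper in principle: since the good event actually gives $\max_{a}V_{a}\lesssim \frac{\log np}{n}$ rather than merely a constant, your conditional tail would support the choice $t\asymp \sigma\frac{\log np}{n}$, improving on the stated $\sigma\sqrt{\frac{\log np}{n}}$; the paper's Cauchy--Schwarz cannot see this because it pays the full factor $\norm{\beps}_{2}/\sqrt{n}\approx\sigma$ and exploits no cancellation in the noise. You use only the weaker constant bound $B$, so your final rate matches the lemma as stated.
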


The following lemma follows by lemma~\ref{lem:term6thmAR}.
\begin{lemma}
\label{lem:term6thmGaussian}
Under the assumptions of theorem~\ref{thm:GaussianRes}, we have:
\[
\pr\left\{\norm{\frac{1}{n}\widehat{\bX}^T\beps}_{\infty} \leq C\sigma\sqrt{\frac{\log{p}}{n}}\right\} \geq c_0 p^{-c_1},
\]
where $C, c_0, c_1$ denote universal constants.
\end{lemma}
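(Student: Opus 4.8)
The plan is to observe that this lemma concerns only the \emph{true} conditional expectation $\widehat{\bX} = \E\{\bX \mid \bZ\}$, and not the data-reused estimate $\widetilde{\bX}$, so none of the row-dependence complications flagged at the start of Appendix~\ref{sec:ProofsApproximateConditionalExpectation} arise here. Since the rows of $\bX$ are i.i.d.\ and the missingness pattern is row-wise i.i.d.\ under MCAR($\alpha$), the imputed rows $\widehat{\bX}_i = \E\{\bX_i \mid \bZ_i\}$ are i.i.d.\ as well, and by Fact~\ref{fact:subGaussianConditionalExpectation} each is sub-Gaussian with parameter $\sigma_X^2$. This is precisely the setting of Lemma~\ref{lem:term6thmAR}, whose argument applies verbatim; the only model-specific input is the value of $\sigma_X$.

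Concretely, I would fix a column $a \in [p]$ and write the $a$-th coordinate of $\frac{1}{n}\widehat{\bX}^T\beps$ as $\frac{1}{n}\sum_{i=1}^{n} \widehat{X}_{ia}\epsilon_i$. Because $\beps$ is independent of $(\bX,\bZ)$, each summand $\widehat{X}_{ia}\epsilon_i$ is a product of two independent mean-zero sub-Gaussian random variables, hence sub-exponential with parameter of order $\sigma_X\sigma$, and the $n$ summands are i.i.d. Applying Bernstein's inequality to a single column and then taking a union bound over the $p$ columns yields
\[
\pr\left\{\norm{\frac{1}{n}\widehat{\bX}^T\beps}_{\infty} \geq C\sigma_X\sigma\sqrt{\frac{\log p}{n}}\right\} \leq c_0 p^{-c_1},
\]
for universal constants $C, c_0, c_1$, which is the conclusion of Lemma~\ref{lem:term6thmAR}.

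To finish, I would invoke assumption \textbf{C1}: since the covariance $\bSig$ has eigenvalues bounded above by $\overline{c}$, the rows are sub-Gaussian with $\sigma_X^2 \lesssim \overline{c}$, a universal constant. Absorbing this factor into $C$ replaces $\sigma_X\sigma$ by $C\sigma$ and gives the stated bound. I expect essentially no obstacle here: the whole purpose of the decomposition $\widetilde{\bX} = \widehat{\bX} + (\widetilde{\bX} - \widehat{\bX})$ is that the term involving $\widehat{\bX}$ is controlled by the independent-row concentration already developed in Appendix~\ref{sec:ProofsConditionalExpectation}, and this lemma is merely the noise cross-term for that piece. The only point requiring care is confirming that $\sigma_X$ is a constant in the Gaussian model, which follows immediately from \textbf{C1}.
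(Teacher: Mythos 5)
Your proposal is correct and follows essentially the same route as the paper, which proves this lemma by chaining back through Lemma~\ref{lem:term6thmAR} to Lemma~\ref{lem:term2thm1}: there, $\widehat{X}_{ia}\epsilon_i$ is shown to be sub-exponential via the product-of-sub-Gaussians bound (Lemma~\ref{lem:productsubgaussian}), concentrated with Bernstein's inequality (Lemma~\ref{lem:subexpconcentration}), and a union bound is taken over the $p$ columns, with \textbf{C1} absorbing $\sigma_X \lesssim \sqrt{\overline{c}}$ into the universal constant exactly as you do. You also implicitly (and correctly) read the lemma's statement in its intended form, $\pr\{\norm{\frac{1}{n}\widehat{\bX}^T\beps}_{\infty} \geq C\sigma\sqrt{\log p / n}\} \leq c_0 p^{-c_1}$, since the displayed inequality directions in the paper are a typo.
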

Thus, with probability at least $1 - c_0 p^{-c_1}$, $\norm{\frac{1}{n}\widetilde{\bX}^T\beps}_{\infty} \leq C(\alpha, d_{\max})\sigma\sqrt{\frac{\log{np}}{n}}$.

The restricted eigenvalue follows by the assumption and the exact same steps as in the proof of theorem~\ref{thm:ARres}.

\end{proof}


\section{Proofs for square-root LASSO}
\label{sec:proofssqrtlasso}
This section contains the proof of theorem~\ref{thm:sqrtlasso}.  The proof largely follows the recipe of the main result of~\cite{belloni2011square}.  
To recall the set-up, we have the linear model $\by = \bX \bbe_0 + \beps$ and observe the response vector $\by$ as well as $\bZ$.  We use the square root LASSO  
\begin{align}
\label{eq:sqrtlasso}
\widehat{\bbe} \in \argmin_{\bbe \in \mathbb{R}^{p}}\frac{1}{
\sqrt{n}}\norm{\by - \widehat{\bX} \bbe}_2 + \lambda \norm{\bbe}_1,
\end{align}
to recover $\bbe_0$.  We will use the notation $f(\bbe) = \frac{1}{\sqrt{n}}\norm{\by - \widehat{\bX} \bbe}_2$ where $\nabla f(\bbe_0) = -\frac{\frac{1}{n}\widehat{\bX}^T\left(\left(\bX - \widehat{\bX}\right)\bbe_0 + \beps\right)}{\frac{1}{\sqrt{n}}\norm{\left(\bX - \widehat{\bX}\right)\bbe_0 + \beps}_2}$.  We require the following lemma:

\begin{lemma}
\label{lem:lambdasqrt}
Under the assumptions of Theorem~\ref{thm:sqrtlasso} and taking $\lambda = c \sigma_X\sqrt{\frac{\log{p}}{n}}$, then $\lambda \geq c \norm{\nabla f(\bbe_0)}_{\infty}$ with probability at least $1 - \delta - 2p^{-2}$.
\end{lemma}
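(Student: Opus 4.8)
The plan is to write the sup-norm of the score as a ratio and control its numerator and denominator separately on a common high-probability event. Writing $\widetilde{\beps} = (\bX - \widehat{\bX})\bbe_0 + \beps$ for the effective noise of the decomposition~\eqref{eq:decomp}, the gradient formula gives
\[
\norm{\nabla f(\bbe_0)}_{\infty} = \frac{\norm{\frac{1}{n}\widehat{\bX}^T\widetilde{\beps}}_{\infty}}{\frac{1}{\sqrt{n}}\norm{\widetilde{\beps}}_2} =: \frac{N}{D}.
\]
The whole point is that the normalization by $D$ makes the score pivotal: both $N$ and $D$ scale with the unknown effective noise level $\tau := \sqrt{\E\{\widetilde{\beps}_i^2\}}$, so $\tau$ cancels and $\lambda = c\,\sigma_X\sqrt{\log p/n}$ requires no knowledge of $\sigma$ or $R$. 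Since the rows of $\bX$ (hence of $\widehat{\bX}$) are i.i.d.\ and $\beps$ is independent of $(\bX,\bZ)$, the coordinates $\widetilde{\beps}_i$ are i.i.d.\ with
\[
\tau^2 = \sigma^2 + \E\big\{\inprod{\bX_i - \widehat{\bX}_i}{\bbe_0}^2\big\},
\]
the cross term vanishing because $\beps$ is centered and independent and because $\widehat{\bX}_i = \E\{\bX_i \mid \bZ_i\}$ is orthogonal to the residual $\bX_i - \widehat{\bX}_i$.

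Next I would bound the numerator. Conditioning on $\bZ_i$ shows $\E\{\widehat{X}_{ia}\widetilde{\beps}_i\} = 0$ for every coordinate $a$ (again orthogonality of the conditional expectation together with independence of $\beps$), so $\frac{1}{n}\sum_i \widehat{X}_{ia}\widetilde{\beps}_i$ is an average of $n$ i.i.d.\ centered sub-exponential variables. By Fact~\ref{fact:subGaussianConditionalExpectation} each $\widehat{X}_{ia}$ is $\sigma_X$-sub-Gaussian, and a short computation (conditioning on $\bZ_i$) gives the Bernstein variance proxy $\E\{\widehat{X}_{ia}^2\widetilde{\beps}_i^2\} \lesssim \sigma_X^2\tau^2$. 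A Bernstein bound followed by a union bound over the $p$ columns then yields
\[
N \lesssim \sigma_X\,\tau\,\sqrt{\frac{\log p}{n}}
\]
with probability at least $1 - 2p^{-2}$, the variance term dominating the sub-exponential tail in the regime where $\log p/n$ is small. This is exactly the computation underlying Lemmas~\ref{lem:term1thm1} and~\ref{lem:term2thm1} (applied through the triangle inequality to the two pieces of $\widetilde{\beps}$), except that I retain the true second moment $\tau^2$ rather than its crude upper bound $(\sigma + \sigma_X\sqrt{1-\alpha}R)^2$; keeping $\tau$ is precisely what allows the cancellation against $D$.

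Finally I would lower-bound the denominator. The quantity $D^2 = \frac{1}{n}\sum_i \widetilde{\beps}_i^2$ is an average of i.i.d.\ sub-exponential variables with mean $\tau^2$ and sub-exponential parameter $\lesssim (\sigma + \sigma_X R)^2 \asymp \sigma^2 + \sigma_X^2R^2$. The Bernstein lower tail gives $D^2 \geq \tau^2/2$ with probability at least $1 - \delta$ precisely under the sample-size hypothesis $n \geq \frac{1}{c_1}(\ln \delta^{-1})(R^2\sigma_X^2 + \sigma^2)^2$ of Theorem~\ref{thm:sqrtlasso}. Intersecting the two events, $\norm{\nabla f(\bbe_0)}_{\infty} = N/D \lesssim \sigma_X\sqrt{\log p/n}$, so choosing the constant $c$ in $\lambda = c\,\sigma_X\sqrt{\log p/n}$ large enough gives $\lambda \geq c\,\norm{\nabla f(\bbe_0)}_{\infty}$ on an event of probability at least $1 - \delta - 2p^{-2}$. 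I expect the main obstacle to be the pivotality itself: one must ensure the numerator is governed by the genuine effective-noise scale $\tau$, not the worst-case scale in $R$, so that it cancels with $D$, and simultaneously guarantee that $D$ cannot collapse below $\tau$, which is exactly where the sample-size assumption enters to control the lower tail at level $\delta$.
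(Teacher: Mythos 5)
Your skeleton is the paper's: write $\norm{\nabla f(\bbe_0)}_{\infty}$ as a ratio $N/D$ with effective noise $\widetilde{\beps} = (\bX - \widehat{\bX})\bbe_0 + \beps$, bound the numerator $N = \norm{\frac{1}{n}\widehat{\bX}^T\widetilde{\beps}}_{\infty}$ via the triangle inequality and Lemmas~\ref{lem:term1thm1} and~\ref{lem:term2thm1} at level $2p^{-2}$, lower-bound the denominator $D = \frac{1}{\sqrt{n}}\norm{\widetilde{\beps}}_2$ via concentration of the norm under the sample-size hypothesis at level $\delta$, and intersect the events. The difference is where the cancellation happens: the paper keeps both $N$ and $D$ at the crude scale (numerator $\lesssim \sigma_X(\sigma + \sigma_X R)\sqrt{\log p / n}$, denominator $\gtrsim \sqrt{R^2\sigma_X^2 + \sigma^2}$ via Vershynin's norm concentration), while you insist on cancelling at the true effective-noise scale $\tau$.

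The genuine gap is your refined numerator bound $N \lesssim \sigma_X \tau \sqrt{\log p/n}$, which is asserted, not proved, and does not follow from the stated assumptions. First, the variance proxy $\E\{\widehat{X}_{ia}^2\widetilde{\beps}_i^2\} \lesssim \sigma_X^2\tau^2$ does not come out of a short conditioning argument: given $\bZ_i$ one has $\E\{\widetilde{\beps}_i^2 \mid \bZ_i\} = \sigma^2 + v(\bZ_i)$ where the random conditional variance $v(\bZ_i)$ is correlated with $\widehat{X}_{ia}^2$, and Cauchy--Schwarz with $v \leq 4\sigma_X^2 R^2$ and $\E v = \tau^2 - \sigma^2$ only yields $\E\{\widehat{X}_{ia}^2 v\} \lesssim \sigma_X^3 R \sqrt{\tau^2 - \sigma^2}$, which can dwarf $\sigma_X^2\tau^2$ when $\sigma_X R \gg \tau$ (e.g., $\alpha$ near $1$); sub-Gaussianity controls fourth moments only at the crude scale, not at $\tau$. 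Second, even granting the variance bound, the sub-exponential scale of $\widehat{X}_{ia}\widetilde{\beps}_i$ is $\sigma_X(\sigma + \sigma_X R)$, so the variance-dominated Bernstein regime at deviation $\sigma_X\tau\sqrt{\log p/n}$ requires $\sqrt{\log p/n} \lesssim \tau/(\sigma + \sigma_X R)$, a condition involving an unbounded unknown ratio that is not among the hypotheses of Theorem~\ref{thm:sqrtlasso}. Since your (correct) denominator step only gives $D \geq \tau/\sqrt{2}$, losing the sharp numerator leaves the ratio at $\sigma_X \cdot \frac{\sigma + \sigma_X R}{\tau}\sqrt{\log p/n}$, overshooting the target by the uncontrolled factor $(\sigma + \sigma_X R)/\tau$. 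It is worth noting that the paper's proof has the mirror-image weak spot: it asserts $\sqrt{\gamma} \geq \sqrt{8R^2\sigma_X^2 + 2\sigma^2}$, i.e., a crude-scale lower bound on $D$ (with $\gamma = \tau^2$), which is the same mismatch pushed into the denominator; your diagnosis of where the pivotality argument is delicate is accurate, but your proposed fix replaces one unproved matching step with another.
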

\begin{proof}
We first note that $\left(\bX - \widehat{\bX}\right)\bbe_0 + \beps$ is $8R^2\sigma_X^2 + 2\sigma^2$ sub-gaussian.  Note that Veryshynin~\cite{vershynin2018high} theorem 3.1.1 implies:
\begin{align}
    \label{ineq:fbeta0concentration}
\pr\left\{\left\lvert \frac{1}{\sqrt{n}}\norm{\left(\bX - \widehat{\bX}\right)\bbe_0 + \beps}_2 - \sqrt{\gamma} \right \rvert \geq \frac{t\sqrt{\gamma}}{\sqrt{n}}\right\} \leq \exp\left\{-c\frac{t^2}{\left(8R^2\sigma_X^2 + 2\sigma^2\right)^2}\right\},
\end{align}
where we let $\gamma = \E \frac{1}{n}\norm{\left(\bX - \widehat{\bX}\right)\bbe_0 + \beps}_2^2$.  Under the assumption $\frac{\left(8R^2\sigma_X^2 + 2\sigma^2\right)\sqrt{\frac{\ln{\delta^{-1}}}{c}}}{\sqrt{n}} \leq c_0$, this implies that with probability at least $1 - \delta$:
\[
\frac{1}{\sqrt{n}}\norm{\left(\bX - \widehat{\bX}\right)\bbe_0 + \beps}_2 \geq \sqrt{\gamma}(1 - c_0) \geq \sqrt{8R^2\sigma_X^2 + 2\sigma^2}(1 - c_0).
\]
Notice now that:
\[
\norm{\frac{1}{n}\widehat{\bX}^T\left(\left(\bX - \widehat{\bX}\right)\bbe_0 + \beps\right)}_{\infty} \leq \norm{\frac{1}{n}\widehat{\bX}^T\left(\left(\bX - \widehat{\bX}\right)\bbe_0\right)}_{\infty} + \norm{\frac{1}{n}\widehat{\bX}^T\beps}_{\infty}.\]
By lemmas~\ref{lem:term1thm1} and~\ref{lem:term2thm1}, the right hand side is upper bounded by $C\sigma_X\left(\sigma + \sigma_X R\right)\sqrt{\frac{\log{p}}{n}}$ with probability at least $1 - 2p^{-2}$.   Combining these implies that with probability at least $1 - 2p^{-2} - \delta$:
\[
\frac{\norm{\frac{1}{n}\widehat{\bX}^T\left(\left(\bX - \widehat{\bX}\right)\bbe_0 + \beps\right)}_{\infty}}{\frac{1}{\sqrt{n}}\norm{\left(\bX - \widehat{\bX}\right)\bbe_0 + \beps}_2} \leq C \sigma_X \sqrt{\frac{\log{p}}{n}}.
\]
This completes the proof.
\end{proof}

\begin{lemma}
\label{lem:sqrtlassocone}
Assume $\bbe_0$ satisfies \textbf{A3} and let $\widehat{\bw} = \widehat{\bbe} - \bbe_0$, where $\widehat{\bbe}$ is defined as in~\eqref{eq:sqrtlasso}.  Then, $\norm{\widehat{\bw}_{T^c}}_1 \leq \frac{c + 1}{c - 1}\norm{\widehat{\bw}_T}_1$.
\end{lemma}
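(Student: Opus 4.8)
The plan is to run the standard basic-inequality argument for $\ell_1$-penalized convex M-estimators, the only non-routine input being the control of the gradient $\nabla f(\bbe_0)$ supplied by Lemma~\ref{lem:lambdasqrt}. First I would record that $f(\bbe) = \frac{1}{\sqrt{n}}\norm{\by - \widehat{\bX}\bbe}_2$ is convex, and that it is differentiable at $\bbe_0$ on the event from Lemma~\ref{lem:lambdasqrt} (where the residual $(\bX - \widehat{\bX})\bbe_0 + \beps$ is bounded away from zero, so the Euclidean norm is smooth there and $\nabla f(\bbe_0)$ is the unique subgradient). Optimality of $\widehat{\bbe}$ gives $f(\widehat{\bbe}) + \lambda\norm{\widehat{\bbe}}_1 \leq f(\bbe_0) + \lambda\norm{\bbe_0}_1$, while convexity gives the first-order lower bound $f(\widehat{\bbe}) \geq f(\bbe_0) + \inp{\nabla f(\bbe_0)}{\widehat{\bw}}$. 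Subtracting $f(\bbe_0)$ and combining the two yields
\[
\inp{\nabla f(\bbe_0)}{\widehat{\bw}} + \lambda\norm{\widehat{\bbe}}_1 \leq \lambda\norm{\bbe_0}_1.
\]

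Next I would bound the inner-product term from below. By H\"older's inequality $\inp{\nabla f(\bbe_0)}{\widehat{\bw}} \geq -\norm{\nabla f(\bbe_0)}_{\infty}\norm{\widehat{\bw}}_1$, and Lemma~\ref{lem:lambdasqrt} provides $\norm{\nabla f(\bbe_0)}_{\infty} \leq \lambda/c$ (with $c>1$) on an event of probability at least $1 - \delta - 2p^{-2}$. Substituting and dividing by $\lambda$ gives $\norm{\widehat{\bbe}}_1 - \norm{\bbe_0}_1 \leq \tfrac{1}{c}\norm{\widehat{\bw}}_1$. The remaining step is purely algebraic: since $\bbe_0$ is supported on $T$, I would split $\norm{\widehat{\bbe}}_1 = \norm{(\bbe_0)_T + \widehat{\bw}_T}_1 + \norm{\widehat{\bw}_{T^c}}_1$ and apply the reverse triangle inequality to the first term to obtain $\norm{\widehat{\bbe}}_1 - \norm{\bbe_0}_1 \geq \norm{\widehat{\bw}_{T^c}}_1 - \norm{\widehat{\bw}_T}_1$. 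Chaining the two inequalities and using $\norm{\widehat{\bw}}_1 = \norm{\widehat{\bw}_T}_1 + \norm{\widehat{\bw}_{T^c}}_1$ leads to $\bigl(1 - \tfrac{1}{c}\bigr)\norm{\widehat{\bw}_{T^c}}_1 \leq \bigl(1 + \tfrac{1}{c}\bigr)\norm{\widehat{\bw}_T}_1$, which rearranges to the claimed cone membership $\norm{\widehat{\bw}_{T^c}}_1 \leq \frac{c+1}{c-1}\norm{\widehat{\bw}_T}_1$.

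There is essentially no hard analytic obstacle here: every step is a one-line consequence of convexity, H\"older, and the triangle inequality, and the real work has been front-loaded into Lemma~\ref{lem:lambdasqrt}. The two points requiring a little care are (i) ensuring $\nabla f(\bbe_0)$ is well defined, i.e.\ that the residual at $\bbe_0$ does not vanish, which is exactly the lower bound on $\tfrac{1}{\sqrt{n}}\norm{(\bX - \widehat{\bX})\bbe_0 + \beps}_2$ established inside the proof of Lemma~\ref{lem:lambdasqrt}; and (ii) the implicit requirement $c>1$, without which the cone constant $\frac{c+1}{c-1}$ is not meaningful. Both hold under the stated hypotheses, so the cone inclusion is valid on the same high-probability event as Lemma~\ref{lem:lambdasqrt}.
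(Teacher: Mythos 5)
Your proposal is correct and follows essentially the same argument as the paper: the basic inequality from optimality of $\widehat{\bbe}$, the first-order convexity bound via $\nabla f(\bbe_0)$, H\"older's inequality, the bound $\lambda \geq c\norm{\nabla f(\bbe_0)}_{\infty}$ from Lemma~\ref{lem:lambdasqrt}, and the triangle inequality to split over $T$ and $T^c$. Your added remark on differentiability of $f$ at $\bbe_0$ (the residual being bounded away from zero on the event of Lemma~\ref{lem:lambdasqrt}) is a point the paper leaves implicit, but it does not change the route.
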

\begin{proof}
Noting that $\widehat{\bbe}$ is a minimizer yields the following simple inequality:
\begin{align}
\label{ineq:oraclesqrt}
\frac{1}{\sqrt{n}}\norm{\widehat{\bX} \widehat{\bbe} - \by}_2 - \frac{1}{\sqrt{n}}\norm{\widehat{\bX} \bbe_0 - \by}_2 \leq \lambda\norm{\bbe_0}_1 - \lambda\norm{\widehat{\bbe}}_1.
\end{align}
Let $f(\bbe) = \frac{1}{\sqrt{n}}\norm{\by - \widehat{\bX} \bbe}_2$ and notice that by convexity,
\begin{align*}
\frac{1}{\sqrt{n}}\norm{\widehat{\bX} \widehat{\bbe} - \by}_2 - \frac{1}{\sqrt{n}}\norm{\widehat{\bX} \bbe_0 - \by}_2 &\geq \left(\nabla f(\bbe_0)\right)^T\left(\widehat{\bbe} - \bbe_0\right),
\end{align*}
The preceding two inequalities then imply:
\[
\lambda\left(\norm{\bbe_0}_1 - \norm{\widehat{\bw} + \bbe_0}_1\right) \geq \nabla f(\bbe_0)^T \widehat{\bw} \geq -\norm{\nabla f(\bbe_0)}_{\infty}\norm{\widehat{\bw}}_1.
\]
Noting that by lemma ~\ref{lem:lambdasqrt}, $\lambda \geq c\norm{\nabla f(\bbe_0)}_{\infty}$, we see that $-\frac{1}{c}\norm{\widehat{\bw}}_1 \leq \norm{\bbe_0}_1 - \norm{\widehat{\bw} + \bbe_0}_1$.  Thus, by the triangular inequality we see that:
\[
-c^{-1} \norm{\widehat{\bw}_T}_1 - c^{-1}\norm{\widehat{\bw}_{T^C}}_1 \leq \norm{\widehat{\bw}_T}_1 - \norm{\widehat{\bw}_{T^C}}_1.
\]
Re-arranging yields the result.
\end{proof}

It is useful to recall the theorem statement:
\begin{customthm}{4}
Assume \textbf{A1--A4} and that the data is MCAR($\alpha$).  Assume
additionally that there exists a constant $c_1 < 1$ such that $n \geq
\frac{1}{c_1}\left(\ln{\delta^{-1}}\right)\left(R^2\sigma_X^2 +
    \sigma^2\right)^2$.  Then, there
exist positive constants $c_2, c_3, c_4$ such that with probability at least $1
- \delta - c_2p^{-c_3}$, $\widehat{\bbe}$ as defined in~\eqref{eq:squarerootlasso} with
regularization parameter
\[\lambda = c_4\sigma_X\sqrt{\frac{\log{p}}{n}},\]
satisfies
\[
    \norm{\widehat{\bbe} - \bbe_0}_2 \lesssim \frac{\sigma_X\left(\sigma + \sigma_X
    R\right)}{\lambda_{\min}(\Sigma_{\widehat{\bX}})}\sqrt{\frac{s\log{p}}{n}}.
\]
\end{customthm}

\begin{proof}
We begin by writing the difference  $\frac{1}{n}\norm{\widehat{\bX} \widehat{\bbe} - \by}_2^2 - \frac{1}{n}\norm{\widehat{\bX} \bbe_0 - \by}_2^2$ in two different ways:
\begin{enumerate}
\item $\frac{1}{n}\norm{\widehat{\bX}\widehat{\bw}}_2^2 - \frac{2}{n}\left \langle \widehat{\bw}, \widehat{\bX}^T \left[\left(\bX - \widehat{\bX}\right)\bbe_0 + \beps\right] \right \rangle$, 
\item $\left(\frac{1}{\sqrt{n}}\norm{\widehat{\bX} \widehat{\bbe} - \by}_2 - \frac{1}{\sqrt{n}}\norm{\widehat{\bX} \bbe_0 - \by}_2\right)\left(\frac{1}{\sqrt{n}}\norm{\widehat{\bX} \widehat{\bbe} - \by}_2 + \frac{1}{\sqrt{n}}\norm{\widehat{\bX} \bbe_0 - \by}_2\right)$.
\end{enumerate}
Combining these yields:
\begin{align*}
\frac{1}{n}\norm{\widehat{\bX}\widehat{\bw}}_2^2 &= \frac{2}{n}\left \langle \widehat{\bw}, \widehat{\bX}^T \left[\left(\bX - \widehat{\bX}\right)\bbe_0 + \beps\right] \right \rangle + \left(f(\widehat{\bbe}) - f(\bbe_0)\right)\left(f(\widehat{\bbe}) + f(\bbe_0)\right) \\
&\leq \frac{2}{n}\left \langle \widehat{\bw}, \widehat{\bX}^T \left[\left(\bX - \widehat{\bX}\right)\bbe_0 + \beps\right] \right \rangle + \left(f(\widehat{\bbe}) + f(\bbe_0)\right) \lambda\left(\norm{\widehat{\bw}_T}_1 - \norm{\widehat{\bw}_{T^C}}_1 \right),
\end{align*}
where the inequality follows by~\eqref{ineq:oraclesqrt}.  Recalling the explicit calculation of $\nabla f(\bbe_0)$ and using H{\"o}lder's inequality implies:
\begin{align*}
\frac{2}{n}\left \langle \widehat{\bw}, \widehat{\bX}^T \left[\left(\bX - \widehat{\bX}\right)\bbe_0 + \beps\right] \right \rangle &\leq 2\norm{\widehat{\bw}}_1 \norm{\nabla f(\bbe_0)}_{\infty} \frac{1}{\sqrt{n}}\norm{\left(\bX - \widehat{\bX}\right)\bbe_0 + \beps}_2 \\
&\leq \frac{4\lambda \sqrt{s}}{(c - 1)n}\norm{\bw}_2 \frac{1}{\sqrt{n}}\norm{\left(\bX - \widehat{\bX}\right)\bbe_0 + \beps}_2,
\end{align*}
where the second inequality follows by lemma~\ref{lem:lambdasqrt}, lemma~\ref{lem:sqrtlassocone}, and Cauchy-Schwarz.  Note additionally that~\eqref{ineq:oraclesqrt} implies $f(\widehat{\bbe}) \leq f(\bbe_0) + \frac{\lambda}{n}\left(\norm{\widehat{\bw}_T}_1 - \norm{\widehat{\bw}_{T^C}}_1 \right)$ and $\norm{\widehat{\bw}_T}_1 - \norm{\widehat{\bw}_{T^C}}_1 \leq \norm{\widehat{\bw}}_1$ so that
\[
\left(f(\widehat{\bbe}) + f(\bbe_0)\right) \lambda\left(\norm{\widehat{\bw}_T}_1 - \norm{\widehat{\bw}_{T^C}}_1 \right) \leq 2f(\bbe_0)\lambda \norm{\widehat{\bw}}_1 +\lambda^2\norm{\widehat{\bw}}_1^2. 
\]
Combining and using $\norm{\widehat{\bw}}_1 \leq \frac{2c}{c-1}\sqrt{s}\norm{\widehat{\bw}}_2$, we see that:
\[
\frac{1}{n}\norm{\widehat{\bX}\widehat{\bw}}_2^2 \leq \frac{4\lambda
\sqrt{s}}{(c - 1)}\norm{\widehat{\bw}}_2 f(\bbe_0) +
\frac{4c}{c-1}\lambda\sqrt{s}f(\bbe_0)\norm{\widehat{\bw}}_2 +
\lambda^2\frac{4c^2}{(c - 1)^2}s\norm{\widehat{\bw}}_2^2.
\]

Let $\mathcal{A}_\text{RE}$ denote the event that $\inf_{\bw \in \mathcal{C} \cap S^{p-1}}\frac{1}{n}\norm{\widehat{\bX} \bw}_2^2
\geq \frac{\lambda_{\min}\left(\Sigma_{\widehat{\bX}}\right)}{2}$ where we take
the set $S$ in the assumptions of proposition~\ref{prop:RE_Condition} to be $T$.
Then the assumptions of the theorem,
fact~\ref{fact:subGaussianConditionalExpectation} and
proposition~\ref{prop:RE_Condition} imply that $\pr\{\mathcal{A}_{\text{RE}}\}
\geq 1 - 2p^{-c_3}$, where $c_3$ is a constant.  Now,  plugging in $\lambda =
c\sigma_X^2 \sqrt{\frac{\log{p}}{n}}$ and taking the constant $c_{0}$ in
assumption \textbf{A4} large enough implies that $
\frac{\lambda_{\min}\left(\Sigma_{\widehat{\bX}}\right)}{2} -
\frac{4c^4}{(c-1)^2} \frac{s\log{p}}{n} \geq
\frac{\lambda_{\min}\left(\Sigma_{\widehat{\bX}}\right)}{4}$.  This implies:
\[
    \frac{\lambda_{\min}\left(\Sigma_{\widehat{\bX}}\right)}{4}\norm{\widehat{\bw}}_2^2
    \leq C\sigma_X f(\bbe_0) \sqrt{\frac{s\log{p}}{n}}.
\]
Finally, noting that inequality~\eqref{ineq:fbeta0concentration} and the
assumptions of the theorem imply
$f(\bbe_0) \leq C\left(\sigma + \sigma_X R\right)$ with probability at least $1
- \delta$. The result follows by
re-arranging the above inequality and plugging in this upper bound for
$f(\bbe_0)$.
\end{proof}

\section{General results for the LASSO}
\label{sec:GeneralLasso}
This appendix provides proofs for the outline of the general technique to prove
error bounds for the Lasso.

\begin{customproposition}{A.1}
Given a design matrix $\widetilde{\bX} \in \mathbb{R}^{n \times p}$ and $\lambda>0$, consider the solution to the convex program
\begin{align*}
\widetilde{\bbe} \in \argmin_{\bbe \in \mathbb{R}^p} \left\{\frac{1}{2n}\norm{\by -
\widetilde{\bX}\bbe}_2^2 + \lambda \norm{\bbe}_1\right\}.
\end{align*}
Further, assume
\begin{align*}
&\norm{\frac{1}{n}\widetilde{\bX}^T\left(\widetilde{\bX} -\bX\right)\bbe_0}_{\infty} \leq \frac{\lambda}{4}, 
\hspace{.5cm}
\norm{\frac{1}{n}\widetilde{\bX}^T\beps}_{\infty} \leq \frac{\lambda}{4},\\
&\text{and}~~~~
\sup_{\bw \in \mathcal{C} \cap \mathcal{S}^{p-1}}\frac{1}{n}\norm{\widetilde{\bX}\bw}_2^2 \geq \kappa,
\end{align*} 
where $\mathcal{C}$ is the cone $\mathcal{C} = \left\{\bw \in \mathbb{R}^p: \norm{\bw_{S^c}}_1 \leq
    3\norm{\bw_S}_1\right\}$.
Then $\norm{\widetilde{\bbe} - \bbe_0}_2 \leq \frac{12\lambda\sqrt{s}}{\kappa}$.
\end{customproposition}

\noindent\begin{proofof}{Proposition~\ref{prop:GeneralLasso}}
We first re-write the objective $\frac{1}{2n}\norm{\by - \widetilde{\bX} \bbe}_2^2 + \lambda \norm{\bbe}_1$ by introducing the variable
    $\bw = \bbe - \bbe_0$:
    \begin{align*}
        \frac{1}{2n}\norm{\by - \widetilde{\bX} \bbe}_2^2 + \lambda \norm{\bbe}_1 &=
        \frac{1}{2n}\norm{\widetilde{\bX}\left(\bbe - \bbe_0\right) +
            \left(\widetilde{\bX}
        - \bX\right)\bbe_0 - \beps}_2^2 + \lambda\norm{\bbe}_1 \\
        &= \frac{1}{2n}\norm{ \widetilde{\bX}\bw + \left(\widetilde{\bX} - \bX\right)\bbe_0
        - \beps}_2^2 + \lambda\norm{\bbe_0 + \bw}_1 \\
        &= \frac{1}{2n}\norm{ \widetilde{\bX}\bw }_2^2 +
        \frac{1}{2n}\norm{\left(\widetilde{\bX} - \bX\right)\bbe_0 - \beps}_2^2 +
        \left\langle\bw, \frac{1}{n}\left[\widetilde{\bX}^T\left(\widetilde{\bX} -
        \bX\right)\bbe_0 - \beps\right]\right\rangle  \\
        & + \lambda\norm{\bbe_0 +
        \bw}_1
    \end{align*}
Writing $\mathcal{L}(\bw) = \frac{1}{2n}\norm{ \widetilde{\bX}\bw }_2^2 +
        \frac{1}{2n}\norm{\left(\widetilde{\bX} - \bX\right)\bbe_0 - \beps}_2^2 +
        \left\langle\bw, \frac{1}{n}\left[\widetilde{\bX}^T\left(\widetilde{\bX} -
        \bX\right)\bbe_0 - \beps\right]\right\rangle  
         + \lambda\norm{\bbe_0 +
        \bw}_1$ and letting $\widetilde{\bw} = \argmin_{\bw}\mathcal{L}(\bw)$, we see
        that $\mathcal{L}(\widetilde{\bw}) \leq \mathcal{L}(0)$.  Thus:
    \begin{align*}
        \frac{1}{2n}\norm{ \widetilde{\bX}\widetilde{\bw} }_2^2 &\leq -
        \left\langle\bw, \frac{1}{n}\left[\widetilde{\bX}^T\left(\widetilde{\bX} -
        \bX\right)\bbe_0 - \beps\right]\right\rangle + \lambda\left(
        \norm{\bbe_0}_1 - \norm{\bbe_0 + \widetilde{\bw}}_1 \right) \\
        &\leq \norm{ \widetilde{\bw} }_1\left\{\underbrace{\norm{
                    \frac{1}{n}\widetilde{\bX}^T\left(\widetilde{\bX} -
        \bX\right)\bbe_0 }_{\infty}}_{I.} + \underbrace{\norm{
        \frac{1}{n}\widetilde{\bX}^T\beps
        }_{\infty}}_{II.}\right\}  + \lambda\left(
        \norm{\bbe_0}_1 - \norm{\bbe_0 + \widetilde{\bw}}_1 \right) \\
        &\leq \frac{\lambda}{2} \norm{\widetilde{\bw}}_1 + \lambda\left(
        \norm{\bbe_0}_1 - \norm{\bbe_0 + \widetilde{\bw}}_1 \right)
    \end{align*}
    where the second inequality follows by H{\"o}lder's inequality and the
    triangle inequality and the upper bounds on terms I. and II. are by
    assumption.  Letting $S$ denote the set of entries on which $\bbe_0$ is supported and
    noting that $\frac{1}{2n}\norm{\hat{\bX}\widetilde{\bw}}_2^2 \geq 0$, we have:
    \begin{align*}
    \lambda\norm{\bbe_{0, S} + \widetilde{\bw}_{S}}_1 +
        \lambda\norm{\widetilde{\bw}_{S^c}}_1 \leq
        \frac{\lambda}{2}\norm{\widetilde{\bw}_S}_1 +
        \frac{\lambda}{2}\norm{\widetilde{\bw}_{S^c}}_1 + \lambda \norm{\bbe_{0, S}}_1
    \end{align*}
    Thus, by the triangle inequality, we see that $\norm{\widetilde{\bw}_{S^c}}_1 \leq
    3\norm{\widetilde{\bw}_S}_1$.  Thus, we see that the solution $\widetilde{\bw}$ belongs
    to the cone $\mathcal{C}$.  Now, by the third assumption, we see that
    $\frac{1}{2n}\norm{\widetilde{\bX}\widetilde{\bw}}_2^2 \geq
    \frac{\kappa}{2}$.  Thus,
    we have:
    \begin{align*}
        \frac{\kappa}{2}\norm{\widetilde{\bw}}_2^2 &\leq
        \frac{\lambda}{2}\norm{\widetilde{\bw}}_1 + \lambda\left(\norm{\bbe_0}_1 -
        \norm{\widetilde{\bw} + \bbe_0}_1\right) \\
        &\leq \frac{3\lambda}{2}\norm{\widetilde{\bw}_S + \widetilde{\bw}_{S^c}}_1 \\
        &\leq 6\sqrt{s}\lambda\norm{\widetilde{\bw}_S}_2
    \end{align*}
    Re-arranging the inequality gives the result.
\end{proofof}

\subsection{Restricted Eigenvalue Condition}
\noindent We restate the result for convenience.
\begin{customproposition}{A.5} 
    Let $\bX$ be a matrix with i.i.d. rows, each of which is sub-Gaussian with
    parameter $\sigma_X^2$ and has covariance matrix $\Sigma_{\bX}$.
    Then  $\bX$ satisfies a
                    restricted eigenvalue condition:
    \begin{align*}
        \sup_{\bw \in \mathcal{C} \cap S^{p-1}}\frac{1}{n}\norm{\bX \bw}_2^2
        \geq \frac{\lambda_{\text{min}}\left(\Sigma_X\right)}{2}
    \end{align*}
    with probability at least 
    \begin{align*}
        1 -
                2\exp\left\{-c n \left(\min\left(\frac{t^2}{\sigma_X^4},
                \frac{t}{\sigma_X^2}\right) + \frac{s \log{p}}{n}\right)\right\}
    \end{align*}
    where $\mathcal{C}$ is the cone $\mathcal{C} = \left\{\bw \in \mathbb{R}^p: \norm{\hat{\bw}_{S^c}}_1 \leq
    3\norm{\hat{\bw}_S}_1\right\}$. 
\end{customproposition}

\noindent\begin{proofof}{Proposition~\ref{prop:RE_Condition}}
    The proof can be found in Loh and Wainwright~\cite{loh2012high}, however we
    repeat much of the argument here for clarity.  The proof will follow three
    main steps:
    \begin{itemize}
        \item[1: ] Noting that $\mathcal{C} \cap B_2(1) \subseteq
            B_1(\sqrt{16s}) \cap B_2(1)$, we show $B_1(\sqrt{s}) \cap B_2(1) \subseteq
        3\text{cl}\left\{\text{conv}\left\{B_0(s) \cap B_2(1)\right\}\right\}$
    \item[2: ] Argue that on the simpler set $\bw \in B_0(s) \cap B_2(1)$,
        the behavior of the desired quadratic form concentrates around its
            expectation
    \item[3: ] Show that any $\bw \in
        3\text{cl}\left\{\text{conv}\left\{B_0(s) \cap B_2(1)\right\}\right\}$
            maintains the same bounds from Step 2
    \end{itemize}
    We now prove each step.
    \begin{itemize}
    \item[1: ] This can be found in Loh and
        Wainwright~\cite{loh2012high} Lemma 11.
    \item[2: ] We claim:
        \begin{align*}
        \pr\left\{\sup_{\bw \in B_0(s) \cap B_2(1)}\frac{1}{n}\left\lvert
            \norm{\bX \bw}_2^2 - \E\norm{\bX \bw}_2^2\right\rvert \geq
            t\right\} \leq 2 p^{s} 9^{s}\exp\left\{-c n
                \min\left(\frac{t^2}{\sigma_X^4}, \frac{t}{\sigma_X^2}\right)\right\}
        \end{align*}
            To do this, we first consider sets $S_U = \left\{\bw \in
            \mathbb{R}^p: \norm{\bw}_2 \leq 1, \text{supp}(\bw) \subseteq
            U\right\}$ and notice that $B_0(s) \cap
            B_2(1) = \bigcup_{\lvert U \rvert = s}S_U$. Now, let
            $\mathcal{N}_{\epsilon}$ be a $\epsilon$-cover of $S_U$ and note that
            there exists such a set with $\lvert \mathcal{N}_{\epsilon} \rvert \leq
            (\frac{3}{\epsilon})^{s}$ \cite{vershynin2018high}.  Taking
            $\widebar{\bX} \equiv \frac{1}{n}\bX^{T}\bX -
            \frac{1}{n}\E\bX^{T}\bX$, we re-write $\frac{1}{n}\left\lvert
            \norm{\bX \bw}_2^2 - \E\norm{\bX \bw}_2^2\right\rvert = \left\lvert
            \left\langle \bw, \widebar{\bX}\bw \right\rangle \right\rvert$ and
            see $\sup_{\bw \in S_U}\left\lvert
            \left\langle \bw, \widebar{\bX}\bw \right\rangle \right\rvert \leq
            \frac{1}{1 - 2\epsilon}\sup_{\bw \in \mathcal{N}_{\epsilon}}\left\lvert
            \left\langle \bw, \widebar{\bX}\bw \right\rangle \right\rvert$.
            Thus, we have:
            \begin{align*}
            \pr\left\{\sup_{\bw \in B_0(s) \cap B_2(1)}\frac{1}{n}\left\lvert
            \norm{\bX \bw}_2^2 - \E\norm{\bX \bw}_2^2\right\rvert \geq
                t\right\} &\leq \binom{p}{s}\pr\left\{\sup_{\bw \in S_U}\left\lvert
            \left\langle \bw, \widebar{\bX}\bw \right\rangle \right\rvert \geq
                t\right\} \\
                &\leq \binom{p}{s}\pr\left\{\sup_{\bw \in
                \mathcal{N}_{\epsilon}}\left\lvert
            \left\langle \bw, \widebar{\bX}\bw \right\rangle \right\rvert \geq
                (1 - 2\epsilon)t\right\} \\
                &\leq
                \binom{p}{s}\left\lvert\mathcal{N}_{\epsilon}\right\rvert\pr\left\{\left\lvert
                \left\langle \bw, \widebar{\bX}\bw \right\rangle \right\rvert
                \geq (1 - 2\epsilon)t\right\}
            \end{align*}
            Noting that the square of a sub-Gaussian random variable is
            sub-Exponential, we see that if $t \leq \frac{4e\sigma_X^2}{1 -
            2\epsilon}$, then, by Bernstein's Inequality (see e.g.
            Vershynin~\cite{vershynin2018high} Theorem 2.8.1):
            \begin{align*}
            \pr\left\{\left\lvert
                \left\langle \bw, \widebar{\bX}\bw \right\rangle \right\rvert
                \geq (1 - 2\epsilon)t\right\} \leq 2\exp\left\{-c n
                \min\left(\frac{t^2(1 - 2\epsilon)^2}{\sigma_X^4}, \frac{t (1 -
            2\epsilon)}{\sigma_X^2}\right)\right\} 
            \end{align*}
            Taking $\epsilon = \frac{1}{3}$ and noting that $\binom{p}{s} \leq p^s$
            implies the claim.
        \item[3: ] Suppose that for a fixed matrix $\bX$, $\left\lvert
            \left\langle \bw, \bX \bw \right\rangle \right\rvert \leq \delta$
            for all $\bw \in B_0(16s) \cap
            B_2(1)$. We claim that for all $\bw \in \mathcal{C}$ that $\left\lvert
            \left\langle \bw, \bX \bw \right\rangle \right\rvert \leq 27\delta$.
            To see this, let $\bw \in 3\text{conv}\left\{B_0(16s) \cap B_2(1)\right\}$.
            Then we can write $\bw = \sum_{i=1}^{k}\alpha_i \bw_i$ where
            $\alpha_i$ are non-negative weights satisfying $\sum_{i=1}^k
            \alpha_i = 1$ and $\bw_i \in B_0(16s) \cap B_2(3)$.  Then, we can see
            that:
            \begin{align*}
            \left\lvert
                \left\langle \bw, \bX \bw \right\rangle \right\rvert & = \left\lvert
                \left\langle \sum_{i=1}^{k}\alpha_i\bw_i, \bX
                \sum_{i=1}^{k}\alpha_i\bw_i \right\rangle \right\rvert \\
                &= \left\lvert \sum_{i, j}
            \alpha_i\alpha_j\left\langle \bw_i, \bX \bw_j \right\rangle
                \right\rvert \\
                &= \left\lvert \sum_{i,j}
                \frac{\alpha_i \alpha_j}{2}\left(\left\langle \bw_i + \bw_j, \bX
                (\bw_i + \bw_j)
                \right\rangle  - \left\langle \bw_i, \bX \bw_i
                \right\rangle - \left\langle \bw_j, \bX \bw_j
                \right\rangle\right)\right\rvert \\
                &\leq \sum_{i,j}\frac{\alpha_i \alpha_j}{2} \cdot \left\lvert
                36\delta + 9 \delta + 9 \delta\right\rvert = 27\delta
            \end{align*}
            Now since $\left \lvert \left\langle \bw, \bX \bw
                \right\rangle \right\rvert$ is a continuous function of $\bw$
                and by step $1$, we see that if for all $\bv \in B_0(16s) \cap
                B_2(1)$, $\left\lvert\left\langle \bv, \bX \bv
                \right\rangle \right\rvert \leq \delta$, then for all $\bw \in
                B_1(\sqrt{16s})
                \cap B_2(1)$, $\left\lvert\left\langle \bw, \bX \bw
                \right\rangle \right\rvert \leq 27\delta$.  This implies the
                claim since $\mathcal{C} \cap B_2(1) \subseteq B_1(\sqrt{16s})
                \cap B_2(1)$
    \end{itemize}
To conclude, let us work on the event $\mathcal{A} = \left\{\widebar{\bX}: \left\lvert
                \left\langle \bw, \widebar{\bX}\bw \right\rangle \right\rvert
                \leq \frac{\lambda_{\text{min}}(\Sigma_{\bX})}{2} \quad \forall
                \bw \in \mathcal{C} \cap B_2(1)\right\}$.  Conditioned on this
                event, we see that
                \begin{align*}
                \left\lvert
                    \left\langle \bw, \frac{1}{n}\bX^T\bX\bw \right\rangle -
                    \left\langle \bw, \frac{1}{n}\E \bX^T\bX\bw \right\rangle
                    \right\rvert \leq \frac{\lambda_{\text{min}}(\Sigma_{\bX})}{2}
                \end{align*}
                An application of the triangle inequality yields:
                \begin{align*}
                \left\lvert
                    \left\langle \bw, \frac{1}{n}\E\bX^T\bX\bw \right\rangle
                    \right\rvert - \left\lvert
                    \left\langle \bw, \frac{1}{n}\bX^T\bX\bw \right\rangle
                    \right\rvert \leq \frac{\lambda_{\text{min}}(\Sigma_{\bX})}{2}
                \end{align*} 
                and now noting that $\left\lvert\left\langle \bw, \frac{1}{n}\E\bX^T\bX\bw \right\rangle
                    \right\rvert \geq \lambda_{\text{min}}(\Sigma_{\bX})$ and
                    re-arranging implies $\left\langle \bw, \frac{1}{n} \bX^{T}
                    \bX \bw \right\rangle \geq
                    \frac{\lambda_{\text{min}}(\Sigma_{\bX})}{2} \quad \forall
                    \bw \in \mathcal{C} \cap B_2(1)$.  We now control
                    $\pr\{\mathcal{A}^c\}$.  By step 3, it suffices to use the
                    bound of step 2:
                    \begin{align*}
                   \pr\left\{\sup_{\bw \in B_0(s) \cap B_2(1)}\frac{1}{n}\left\lvert
            \norm{\bX \bw}_2^2 - \E\norm{\bX \bw}_2^2\right\rvert \geq
            t\right\} &\leq 2 p^{s} 9^{s}\exp\left\{-c n
                \min\left(\frac{t^2}{\sigma_X^4}, \frac{t}{\sigma_X^2}\right)\right\} \\
                    \end{align*}
                Taking $t = \frac{\lambda_{\text{min}}(\Sigma_{\bX})}{54}$, we
                have the result:
                \begin{align*}
                \pr\left\{\mathcal{A}\right\} \geq 1 -
                2\exp\left\{-c n \left(\min\left(\frac{t^2}{\sigma_X^4},
                \frac{t}{\sigma_X^2}\right) + \frac{s \log{p}}{n}\right)\right\}
                \end{align*}
                and we are done.
\end{proofof}

\section{Proofs of technical lemmas from appendix~\ref{sec:ProofsConditionalExpectation}}
\label{sec:appendixConditionalExpectation}
This appendix is dedicated to the proofs of the $\ell_\infty$ terms needed in theorem~\ref{thm:mainres}, corollary~\ref{cor:IdentityCovariance}, and theorem~\ref{thm:mnar}.  In particular, we devote one subsection to each of lemmas~\ref{lem:term1thm1},~\ref{lem:term2thm1},~\ref{lem:term1cor1},~\ref{lem:term2cor1},~\ref{lem:term1mnar}, and~\ref{lem:term2mnar}.  The strategy for each of the proofs is essentially the same, although the techniques vary significantly.  Each of these lemmas is concerned with one of the two random vectors $\widehat{\bX}^T\beps$ or $\widehat{\bX}^T\left(\widehat{\bX} - \bX\right)\bbe_0$.  We will be interested in the concentration of $\frac{1}{n}\norm{\cdot}_{\infty}$ for each of these random vectors.  In particular, we will aim to show in each of these lemmas that the random vector is the empirical average of a sum of sub-exponential with as tight a sub-exponential parameter as possible and we will conclude using a standard concentration inequality for sub-exponential random variables, such as Bernstein's inequality~\cite{vershynin2018high}.

\subsection{Proof of lemma~\ref{lem:term1thm1}}
\label{subsec:term1thm1}
Recall the definition of $\lambda$ in equation~\eqref{eq:mainlambda}:
\begin{align*}
\lambda = 4A\left(8e\sigma_X \sigma + 16\sqrt{2}e\sigma_X^2\sqrt{1 - \alpha}R\right)\sqrt{\frac{\log{p}}{n}}.
\end{align*}
We re-state the lemma for the reader's convenience:
\begin{customlemma}{A.2}
Assuming $\lambda$ as defined in equation~\eqref{eq:mainlambda}, and the assumptions of theorem~\ref{thm:mainres}, we have:
\[
\pr\left\{\norm{\frac{1}{n}\widehat{\bX}^T\left(\widehat{\bX} - \bX\right)\bbe_0}_{\infty} \geq \frac{\lambda}{4}\right\} \leq 2p^{1 - \frac{A^2}{2}}.
\]
\end{customlemma}
\begin{proof}
To clarify our strategy, let us write:
\begin{align*}
\norm{ \frac{1}{n}\widehat{\bX}^T\left(\widehat{\bX} -
                \bX\right)\bbe_0 }_{\infty} &= \norm{\bbe_0}_2 \max_{a=1, 2, \dots, p} \left\lvert
                \frac{1}{n}\sum_{i=1}^{n}\widehat{X}_{ia}\left\langle
                    \bX_i - \widehat{\bX}_i, \widetilde{\bbe}\right\rangle\right \rvert,
\end{align*}
where we have taken $\widetilde{\bbe_0} = \bbe_0/\norm{\bbe_0}_2$.  We will now show that the random variable $\widehat{X}_{ia}\left\langle
                    \bX_i - \widehat{\bX}_i, \widetilde{\bbe}\right\rangle$ is sub-exponential.  Take $\theta$ such that $\lvert \theta \rvert \leq \frac{1}{16e\sigma_X^2}$ and let $c_X = 16e\sigma_X^2$.  We now control the moment generating function (using the notation $S^{c} = [p] \ S$ for $S \subseteq [p]$):
             		\begin{align*}
             		\E \exp\left\{\theta\widehat{X}_{ia}\left\langle
                    \bX_i - \widehat{\bX}_i, \widetilde{\bbe}\right\rangle\right\} &= \sum_{S
                \subseteq [p]}\alpha^{\lvert S\rvert}(1 - \alpha)^{p - \lvert S
                \rvert} \E \exp\left\{\theta\widehat{X}_{ia}\left\langle
                \widehat{\bX}_{i,S^c} -
                \bX_{i, S^c}, \widetilde{\bbe}_{0,S^c} \right \rangle\right\}. 
                \end{align*}
                Notice now that by the orthogonality property of the conditional expectation,  
                $\E \widehat{X}_{ia}\left\langle
                \widehat{\bX}_{i,S^c} -
                \bX_{i, S^c}, \widetilde{\bbe}_{0,S^c} \right \rangle = 0$.  Additionally, note that $\widehat{X}_{ia}$ is $\sigma_X^2$ sub-gaussian and by fact~\ref{fact:sumsubgaussian}, 
                $\left\langle
                \widehat{\bX}_{i,S^c} -
                \bX_{i, S^c}, \widetilde{\bbe}_{0,S^c} \right\rangle$ is $4\sigma_X^2\norm{\widetilde{\bbe}_{0,S^c}}_2^2$                 sub-gaussian.  Thus, by lemma~\ref{lem:productsubgaussian}, 
                \begin{align*}
                \sum_{S
                \subseteq [p]}\alpha^{\lvert S\rvert}(1 - \alpha)^{p - \lvert S
                \rvert} \E \exp\left\{\theta\widehat{X}_{ia}\left\langle
                \widehat{\bX}_{i,S^c} -
                \bX_{i, S^c}, \widetilde{\bbe}_{0,S^c} \right \rangle\right\} &\leq \sum_{S \subseteq [p]} \alpha^{\lvert S\rvert}(1 -
                \alpha)^{p - \lvert S
                \rvert} e^{\frac{1}{2}\theta^2 c_X^2 \norm{\widetilde{\bbe}_{0, S^c}}_2^2}
                \\ 
                &= \E_S \exp\left\{\frac{1}{2}\theta^2 c_X^2 \norm{\widetilde{\bbe}_{0,
                S^c}}_2^2\right\}.
             		\end{align*}
             		We now control the term $\E_S \exp\left\{\frac{1}{2}\theta^2 c_X^2 \norm{\widetilde{\bbe}_{0,
                S^c}}_2^2\right\}$ using a technique similar to Herbst's argument~\cite{boucheron2013concentration}.  Define $\phi(\theta) \equiv \log \E_S
            \text{exp}\left\{\frac{1}{2}\theta^2 c_X^2 \norm{\widetilde{\bbe}_{0,
                S^c}}_2^2\right\}$.  Notice that $\phi(0) = 0$ and $\phi'(0) = 0$.  Thus, we
                have $\phi(\theta) = \int_{0}^{\theta}
                \int_{0}^{t_1}\phi''(t_2)d t_2 d t_1$.  Now, 
                \begin{align*}
                \phi''(\theta) &= c_X^2\E_S\left(\norm{\widetilde{\bbe}_{0,
                    S^c}}_2^2 z_S\right) + \theta^2
                    c_X^4\left(\E_S\left(\norm{\widetilde{\bbe}_{0,
                    S^c}}_2^4z_S\right) - \left[\E_S\left(\norm{\widetilde{\bbe}_{0,
                    S^c}}_2^2z_S\right)\right]^2\right) \\
                    &\leq c_X^2\E_S\left(\norm{\widetilde{\bbe}_{0,
                    S^c}}_2^2 z_S\right) + \theta^2
                    c_X^4\left(\E_S\left(\norm{\widetilde{\bbe}_{0,
                    S^c}}_2^4z_S\right)\right)
                \end{align*}
                where $z_S = \frac{e^{\frac{1}{2}\theta^2c_X^2\norm{\widetilde{\bbe}_{0,
                    S^c}}_2^2}}{\E_Se^{\frac{1}{2}\theta^2c_X^2\norm{\widetilde{\bbe}_{0,
                    S^c}}_2^2}}$.  Notice that
                    $e^{-\frac{1}{2}c_X^2\theta^2} \leq z_S \leq
                    e^{\frac{1}{2}c_X^2\theta^2}$.  Additionally, notice
                    that $\E_S\norm{\widetilde{\bbe}_{0,
                    S^c}}_2^2 = 1 - \alpha$ and $\E_S \norm{\widetilde{\bbe}_{0,
                    S^c}}_2^4 \leq 1 - \alpha$ (since $\widetilde{\bbe}_0$ is a unit
                    norm vector).  We thus see that:
                    \begin{align*}
                        \frac{\phi''(\theta)}{c_X^2(1 - \alpha)} \leq
                        e^{\frac{1}{2}c_X^2 \theta^2} + \theta^2
                        c_X^2 e^{\frac{1}{2}c_X^2\theta^2}.
                    \end{align*}
                    Now, take $\theta$ such that $\lvert \theta \rvert \leq \frac{1}{\sqrt{2}c_X}$ thus giving $\frac{\phi''(\theta)}{c_X^2(1 - \alpha)} \leq 2$.  Therefore:
                    \[\phi(\theta) = \int_{0}^{\theta}
                \int_{0}^{t_1}\phi''(t_2)d t_2 d t_1 \leq \theta^2 (1 - \alpha) c_X^2.\]
                This implies that for $\lvert \theta \rvert \leq \frac{1}{\sqrt{2}c_X}$
                \begin{align}
                \label{eq:subexpterm}
                \E \exp\left\{\theta\widehat{X}_{ia}\left\langle
                    \bX_i - \widehat{\bX}_i, \widetilde{\bbe}\right\rangle\right\} \leq e^{\theta^2 (1 - \alpha) c_X^2},
                \end{align}
                 so the desired term is sub-exponential.  To conclude, notice that:
                 \begin{align*}
                 \pr\left\{\frac{1}{n}\norm{\widehat{\bX}^T\left(\widehat{\bX} - \bX\right)\bbe_0}_{\infty} \geq \frac{\lambda}{4}\right\} &\leq  \pr\left\{\frac{1}{n}\norm{\widehat{\bX}^T\left(\widehat{\bX} - \bX\right)\widetilde{\bbe}_0}_{\infty} \geq \sqrt{2}Ac_X\sqrt{1 - \alpha} \sqrt{\frac{\log{p}}{n}}\right\} \\
                &= \pr\left\{\max_{a=1, 2, \dots, p} \left\lvert
                \frac{1}{n}\sum_{i=1}^{n}\widehat{X}_{ia}\left\langle
                    \bX_i - \widehat{\bX}_i, \widetilde{\bbe}\right\rangle\right \rvert \geq \sqrt{2}Ac_X\sqrt{1 - \alpha} \sqrt{\frac{\log{p}}{n}}\right\}.
                 \end{align*}
                 Under the assumption that $\sqrt{\frac{\log{p}}{n}} \leq \frac{\sqrt{1 - \alpha}}{A}$, we conclude by a union bound over $a \in [p]$ and invoking lemma~\ref{lem:subexpconcentration} with $Z_i = \widehat{X}_{ia}\left\langle
                    \bX_i - \widehat{\bX}_i, \widetilde{\bbe}\right\rangle$, $\widebar{\theta} = \frac{1}{\sqrt{2}c_X}$ and $\sigma_Z^2 = 2(1 - \alpha)c_X^2$, and $t = \sqrt{2}Ac_X\sqrt{1 - \alpha} \sqrt{\frac{\log{p}}{n}}$.

\end{proof}
\subsection{Proof of lemma~\ref{lem:term2thm1}}
\label{subsec:term2thm1}
We repeat the lemma for the reader's convenience:
\begin{customlemma}{A.3}
Assuming $\lambda$ as defined in equation~\eqref{eq:mainlambda}, and the assumptions of theorem~\ref{thm:mainres}, we have:
\[
\pr\left\{\norm{\frac{1}{n}\widehat{\bX}^T \beps}_{\infty} \geq \frac{\lambda}{4}\right\} \leq 2p^{1 - \frac{A^2}{2}}.
\]
\end{customlemma}
\begin{proof}
We write $\norm{\frac{1}{n}\widehat{\bX}^T \beps}_{\infty} = \max_{a = 1, 2, \dots, p} \left\lvert \frac{1}{n}\sum_{i=1}^{n}\widehat{X}_{ia}\epsilon_{i}\right\rvert$.  By lemma~\ref{lem:productsubgaussian}, the random variable $\widehat{X}_{ia}\epsilon_{i}$ is sub-exponential: for all $\theta \leq (8e\sigma_X\sigma)^{-1}$, $\E e^{\theta \widehat{X}_{ia} \epsilon_i} \leq e^{32e^2\theta^2\sigma_X^2 \sigma^2}$.  We are thus interested in:
\[\pr\left\{\norm{\frac{1}{n}\widehat{\bX}^T \beps}_{\infty} \geq \frac{\lambda}{4}\right\} \leq \pr\left\{\max_{a = 1, 2, \dots, p} \left\lvert \frac{1}{n}\sum_{i=1}^{n}\widehat{X}_{ia}\epsilon_{i}\right\rvert \geq 8eA\sigma_X\sigma\sqrt{\frac{\log{p}}{n}}\right\}\]

Thus, under the assumption that $\sqrt{\frac{\log{p}}{n}} \leq (A)^{-1}$, we conclude by invoking a union bound over $a \in [p]$ and lemma~\ref{lem:subexpconcentration} with $Z_i = \widehat{X}_{ia}\epsilon_i$, $\widebar{\theta} = (8e\sigma_X\sigma)^{-1}$, $\sigma_Z^2 = 64e^2\sigma_X^2\sigma^2$, and $t = 8eA\sigma_X\sigma\sqrt{\frac{\log{p}}{n}}$.
\end{proof}

\subsection{Proof of lemma~\ref{lem:term1cor1}}
\label{subsec:term1cor}
Let us first copy the regularization parameter $\lambda$ given in \eqref{eq:lambdacor}:
\[\lambda = 4A\sqrt{\alpha}\left(8\sqrt{2}e\sigma + 32e\sqrt{1 - \alpha}R\right)\sqrt{\frac{\log{p}}{n}}.\]
We repeat the lemma for the reader's convenience: 
\begin{customlemma}{A.6}
Assuming $\lambda$ as defined in equation~\eqref{eq:lambdacor}, and the assumptions of corollary~\ref{cor:IdentityCovariance}, we have:
\[
\pr\left\{\norm{\frac{1}{n}\widehat{\bX}^T\left(\widehat{\bX} - \bX\right)\bbe_0}_{\infty} \geq \frac{\lambda}{4}\right\} \leq 2p^{1 - \frac{A^2}{2}}.
\]
\end{customlemma}
\begin{proof}
As in the proof of lemma~\ref{lem:term1thm1}, we take
\begin{align*}
\norm{ \frac{1}{n}\widehat{\bX}^T\left(\widehat{\bX} -
                \bX\right)\bbe_0 }_{\infty} &= \norm{\bbe_0}_2 \max_{a=1, 2, \dots, p} \left\lvert
                \frac{1}{n}\sum_{i=1}^{n}\widehat{X}_{ia}\left\langle
                    \bX_i - \widehat{\bX}_i, \widetilde{\bbe}_0\right\rangle\right \rvert.
\end{align*}
We again would like to show that $\widehat{X}_{ia}\left\langle
                    \bX_i - \widehat{\bX}_i, \widetilde{\bbe}_0\right\rangle$ is sub-exponential.  Of course, we have done this already in the proof of lemma~\ref{lem:term1thm1}; however, in this simpler case we will be able to get tighter control on the sub-exponential constant.  We have:
                    \begin{align*}
                    \E \exp\left\{\theta\widehat{X}_{ia}\left\langle
                    \bX_i - \widehat{\bX}_i, \widetilde{\bbe}_0\right\rangle\right\} &= (1 - \alpha) + \alpha \E \exp\left\{\theta X_{ia}\sum_{b \neq a} (X_{ib} - \widehat{X}_{ib})\widetilde{\bbe}_{0,b}\right\}.
                    \end{align*}
Now, by inequality~\eqref{eq:subexpterm}, for $\lvert \theta \rvert \leq (256e^2\sqrt{2})^{-1}$, 
\[\E \exp\left\{\theta X_{ia}\sum_{b \neq a} (X_{ib} - \widehat{X}_{ib})\widetilde{\bbe}_{0,b}\right\} \leq \exp\left\{256e^2\theta^2(1 - \alpha)\right\}.\]
and we are interested in $(1 - \alpha) + \alpha \exp\left\{256e^2\theta^2(1 - \alpha)\right\}$.  Note that for any $\lvert \theta \rvert \leq (256\sqrt{2}e^2)^{-1}$, $256e^2\theta^2(1 - \alpha) \leq 1$.  We thus employ the numeric inequality $1 + \alpha(e^x - 1) \leq 1 + 2\alpha x \leq e^{2\alpha x}$ for $x \leq 1$ to see that:
\begin{align*}
(1 - \alpha) + \alpha \E \exp\left\{\theta X_{ia}\sum_{b \neq a} (X_{ib} - \widehat{X}_{ib})\widetilde{\bbe}_{0,b}\right\} &\leq (1 - \alpha) + \alpha \exp\left\{256e^2\theta^2(1 - \alpha)\right\} \\
&\leq \exp\left\{512e^2 \theta^2 \alpha(1-\alpha)\right\}.
\end{align*}
We are ready to conclude.  Notice that:
\begin{align*}
\pr\left\{\frac{1}{n}\norm{\widehat{\bX}^T\left(\widehat{\bX} - \bX\right)\bbe_0}_{\infty} \geq \frac{\lambda}{4}\right\} &\leq  \pr\left\{\frac{1}{n}\norm{\widehat{\bX}^T\left(\widehat{\bX} - \bX\right)\widetilde{\bbe}_0}_{\infty} \geq 32\sqrt{2}eA\sqrt{\alpha(1 - \alpha)} \sqrt{\frac{\log{p}}{n}}\right\} \\
&= \pr\left\{\max_{a=1, 2, \dots, p} \left\lvert
                \frac{1}{n}\sum_{i=1}^{n}\widehat{X}_{ia}\left\langle
                    \bX_i - \widehat{\bX}_i, \widetilde{\bbe}\right\rangle\right \rvert \geq 32\sqrt{2}eA\sqrt{\alpha(1 - \alpha)} \sqrt{\frac{\log{p}}{n}}\right\}.
\end{align*}
Under the assumption that $\sqrt{\frac{\log{p}}{n}} \leq \frac{\sqrt{\alpha(1 - \alpha)}}{8\sqrt{2}eA}$, the desired result follows by a union bound and using lemma~\ref{lem:subexpconcentration} with $Z_i = \widehat{X}_{ia}\left\langle
                    \bX_i - \widehat{\bX}_i, \widetilde{\bbe}\right\rangle$, $\widebar{\theta} = \frac{1}{256e^2\sqrt{2}}$, $\sigma_Z^2 = 1024e^2\alpha(1 - \alpha)$, and $t = 32\sqrt{2}eA\sqrt{\alpha(1 - \alpha)} \sqrt{\frac{\log{p}}{n}}$.
\end{proof}
\subsection{Proof of lemma~\ref{lem:term2cor1}}
\label{subsec:term2cor1}
We repeat this lemma here:
\begin{customlemma}{A.7}
Assuming $\lambda$ as defined in equation~\eqref{eq:lambdacor}, and the assumptions of corollary~\ref{cor:IdentityCovariance}, we have:
\[
\pr\left\{\norm{\frac{1}{n}\widehat{\bX}^T \beps}_{\infty} \geq \frac{\lambda}{4}\right\} \geq 2p^{1 - \frac{A^2}{2}}.
\]
\end{customlemma}
\begin{proof}
We write $\norm{\frac{1}{n}\widehat{\bX}^T \beps}_{\infty} = \max_{a = 1, 2, \dots, p} \left\lvert \frac{1}{n}\sum_{i=1}^{n}\widehat{X}_{ia}\epsilon_{i}\right\rvert$.  Notice that 
\[\E e^{\theta \widehat{X}_{ia} \epsilon_i} = (1 - \alpha) + \alpha \E e^{\theta X_{ia} \epsilon_i} \leq (1 - \alpha) + \alpha e^{32e^2\theta^2\sigma^2}.\]
Where the last inequality holds for all $\theta \leq (8e\sigma)^{-1}$ by lemma~\ref{lem:productsubgaussian}.  Thus by the numeric inequality $1 + \alpha(e^x - 1) \leq 1 + 2\alpha x \leq e^{2\alpha x}$ for $x \leq 1$, we have $\E e^{\theta \widehat{X}_{ia}\epsilon_i} \leq e^{64e^2\theta^2 \alpha \sigma^2}$.  We are thus interested in:
\[\pr\left\{\norm{\frac{1}{n}\widehat{\bX}^T \beps}_{\infty} \geq \frac{\lambda}{4}\right\} \leq \pr\left\{\max_{a = 1, 2, \dots, p} \left\lvert \frac{1}{n}\sum_{i=1}^{n}\widehat{X}_{ia}\epsilon_{i}\right\rvert \geq 8\sqrt{2}eA\sqrt{\alpha}\sigma\sqrt{\frac{\log{p}}{n}}\right\}\]
Thus, under the assumption that $\sqrt{\frac{\log{p}}{n}} \leq \frac{2\sqrt{\alpha}}{\sqrt{2}A}$, we conclude by invoking a union bound over $a \in [p]$ and lemma~\ref{lem:subexpconcentration} with $Z_i = \widehat{X}_{ia}\epsilon_i$, $\widebar{\theta} = (8e\sigma)^{-1}$, $\sigma_Z^2 = 128e^2\alpha\sigma^2$, and $t = 8\sqrt{2}eA\sqrt{\alpha}\sigma\sqrt{\frac{\log{p}}{n}}$.
\end{proof}

\subsection{Proof of lemma~\ref{lem:term1mnar}}
\label{subsec:term1mnar}
We repeat the regularization parameter $\lambda$ from ~\eqref{eq:lambdamnar}:
\[
\lambda = 4A\left(8e\sigma_X \sigma + 480e\sigma_X^2 R\right)\sqrt{\frac{\log{p}}{n}}\]
We now repeat the lemma for convenience:
\begin{customlemma}{A.8}
Assuming $\lambda$ as defined in equation~\eqref{eq:lambdamnar}, and the assumptions of corollary~\ref{thm:mnar}, we have:
\[
\pr\left\{\norm{\frac{1}{n}\widehat{\bX}^T\left(\widehat{\bX} - \bX\right)\bbe_0}_{\infty} \geq \frac{\lambda}{4}\right\} \leq 6p^{1 - \frac{A^2}{2}}.
\]
\end{customlemma}
\begin{proof}
We first re-write 
            \begin{align*}\norm{ \frac{1}{n}\widehat{\bX}^T\left(\widehat{\bX} -
                \bX\right)\bbe_0 }_{\infty} 
                &= \norm{\bbe_0}_2 \max_{a=1, 2, \dots, p} \left\lvert \frac{1}{n}
                \left\langle \widehat{\bX}\be_a, \left(\bX - \widehat{\bX
                }\right)\widetilde{\bbe}_0\right\rangle\right\rvert.
            \end{align*}
            Note that $\widehat{\bX}\be_a$ and $\left(\bX - \widehat{\bX}\right)\widetilde{\bbe}_0$ are both sub-gaussian random vectors and that $\E\left\langle \widehat{\bX}\be_i, \left(\bX - \widehat{\bX
                }\right)\widetilde{\bbe}_0\right\rangle = 0$ by the orthogonality
                property of the conditional expectation.  We will thus re-write the inner product as:
                \begin{align*}
                &\frac{\norm{\bbe_0}_2}{2} \max_{a=1, 2, \dots, p}\Biggl
                \lvert \underbrace{\left(\frac{1}{n}\norm{\widehat{\bX}\be_a + \left(\bX -
                \widehat{\bX}\right)\widetilde{\bbe}_0}_2^2 - \frac{1}{n}\E\norm{\widehat{\bX}\be_a + \left(\bX -
                \widehat{\bX}\right)\widetilde{\bbe}_0}_2^2\right)}_{I.} \\
                & - \underbrace{\left(\frac{1}{n}\norm{\widehat{\bX}\be_a}_2^2 -
                \frac{1}{n}\E\norm{\widehat{\bX}\be_i}_2^2\right)}_{II.} - \underbrace{\left(\frac{1}{n}\norm{\left(\bX -
                \widehat{\bX}\right)\widetilde{\bbe}_0}_2^2 - \E\norm{\left(\bX -
                \widehat{\bX}\right)\widetilde{\bbe}_0}_2^2\right)}_{III.}\Biggr\rvert.
                \end{align*}
                We thus have:
                A union bound as well as noting that $8e\sigma_X\sigma \geq 0$ gives:
                \begin{align*}
                \pr\left\{\norm{\frac{1}{n}\widehat{\bX}^T\left(\widehat{\bX} -
                \bX\right)\bbe_0}_{\infty} \geq \frac{\lambda}{4}\right\} &\leq
                p\pr\left\{\left\lvert I. - II. - III. \right\rvert \geq 432\sqrt{2}eA\sigma_X^2\sqrt{\frac{\log{p}}{n}}\right\}.
                \end{align*}
                We thus reduce ourselves to examining $\pr\left\{\lvert I. - II. -
                III.\rvert \geq t\right\}$.  Notice then that $\pr\left\{\lvert I. -
            II. - III.\rvert \geq t\right\} \leq \sum_{I \in
        \{I., II., III.\}}\pr\left\{\lvert I \rvert \geq \frac{t}{3}\right\}$.  We go
        through this sum term by term.  First, we examine $\pr\{\lvert I. \rvert
            \geq \frac{t}{3}\}$.  We re-write:
            \[\frac{1}{n}\norm{\widehat{\bX}\be_a + \left(\bX -
            \widehat{\bX}\right)\widetilde{\bbe}_0}_2^2 =
            \frac{1}{n}\sum_{i=1}^{n}\left(\left\langle
                        \widehat{\bX}_i, \be_a\right\rangle +
        \left\langle \bX_i - \widehat{\bX}_i, \widetilde{\bbe}_0\right\rangle
\right)^2.\]
Note that by two applications of fact~\ref{fact:sumsubgaussian}, $\left\langle
                        \widehat{\bX}_i, \be_a\right\rangle +
        \left\langle \bX_i - \widehat{\bX}_i, \widetilde{\bbe}_0\right\rangle$ is
        $10\sigma_X^2$ sub-gaussian.  Thus, by
        lemma~\ref{lem:subGaussianSquared}, we have that for $\lvert \theta
        \rvert \leq (160e\sigma_X^2)^{-1}$:
        \begin{align*}
        \E \exp\left\{\left(\left\langle
                        \widehat{\bX}_i, \be_a\right\rangle +
        \left\langle \bX_i - \widehat{\bX}_i, \widetilde{\bbe}_0\right\rangle\right)^2 - \E \left(\left\langle
                        \widehat{\bX}_i, \be_a\right\rangle +
        \left\langle \bX_i - \widehat{\bX}_i,
\widetilde{\bbe}_0\right\rangle\right)^2\right\} \leq e^{128(10e)^2\theta^2\sigma_X^4}.
\end{align*}
This implies that by lemma~\ref{lem:subexpconcentration}, taking $\widebar{\theta} = (160e\sigma_X^2)^{-1}, \sigma_Z^2 = (160e)^2\sigma_X^4$ and $t = 160\sqrt{2}eA\sigma_X^2\sqrt{\frac{\log{p}}{n}}$, that for $\sqrt{\frac{\log{p}}{n}} \leq \frac{1}{A}$, $\pr\left\{\lvert I. \rvert
            \geq 160eA\sigma_X^2\sqrt{\frac{\log{p}}{n}}\right\} \leq 2\exp\{-\frac{A^2}{2}\log{p}\}$.  Similarly, $\left \langle \widehat{\bX}_i, \be_a \right\rangle$ is $\sigma_X^2$ sub-gaussian, and lemma~\ref{lem:subGaussianSquared} implies that for $\lvert \theta \rvert \leq (8e\sigma_X^2)^{-1}$:
            \begin{align*}
        \E \exp\left\{\left(\left\langle
                        \widehat{\bX}_i, \be_a\right\rangle\right)^2 - \E \left(\left\langle
                        \widehat{\bX}_i, \be_a\right\rangle\right)^2\right\} \leq e^{128e^2\theta^2\sigma_X^4}.
\end{align*}
Thus, taking $\widebar{\theta} = (16e\sigma_X^2)^{-1}, \sigma_Z^2 = 256e^2\sigma_X^4$, and $t = 16eA\sigma_X^2\sqrt{\frac{\log{p}}{n}}$, and assuming $\sqrt{\frac{\log{p}}{n}} \leq \frac{1}{A}$, lemma~\ref{lem:subexpconcentration} yields $\pr\left\{\left\lvert II. \right\rvert \geq 160eA\sigma_X^2\sqrt{\frac{\log{p}}{n}}\right\} \leq \pr\left\{\left\lvert II. \right\rvert \geq 16eA\sigma_X^2\sqrt{\frac{\log{p}}{n}}\right\} \leq 2e^{-\frac{A^2}{2}\log{p}}$.  We additionally see that by fact~\ref{fact:sumsubgaussian}, $\left\langle \bX_i - \widehat{\bX}_i, \widetilde{\bbe}_0\right \rangle$ is $4\sigma_X^2$ sub-gaussian and thus lemma~\ref{lem:subGaussianSquared} implies that for $\lvert \theta \rvert \leq (64e\sigma_X^2)^{-1}$,
\begin{align*}
        \E \exp\left\{\left(
        \left\langle \bX_i - \widehat{\bX}_i, \widetilde{\bbe}_0\right\rangle\right)^2 - \E \left(
        \left\langle \bX_i - \widehat{\bX}_i,
\widetilde{\bbe}_0\right\rangle\right)^2\right\} \leq e^{128(4e)^2\theta^2\sigma_X^4}.
\end{align*}
Thus, taking $\widebar{\theta} = (64e\sigma_X^2)^{-1}, \sigma_Z^2 = (64e)^2\sigma_X^4$ and $t = 64eA\sigma_X^2\sqrt{\frac{\log{p}}{n}}$, and assuming $\sqrt{\frac{\log{p}}{n}} \leq \frac{1}{A}$, lemma~\ref{lem:subexpconcentration} implies $\pr\left\{\left\lvert III. \right \rvert \geq 160eA\sigma_X^2\sqrt{\frac{\log{p}}{n}}\right\} \leq \pr\left\{\left\lvert III. \right \rvert \geq 64eA\sigma_X^2\sqrt{\frac{\log{p}}{n}}\right\} \leq 2e^{-\frac{A^2}{2} \log{p}}$.  We are ready to conclude.  Under the most stringent assumption $\sqrt{\frac{\log{p}}{n}} \leq \frac{1}{A}$, 
\[\sum_{I \in
        \{I., II., III.\}}\pr\left\{\lvert I \rvert \geq 160eA\sigma_X^2\sqrt{\frac{\log{p}}{n}}\right\} \leq 6p^{-\frac{A^2}{2}}.\]
We thus have:
\begin{align*}
\pr\left\{\norm{\frac{1}{n}\widehat{\bX}^T\left(\widehat{\bX} -
                \bX\right)\bbe_0}_{\infty} \geq \frac{\lambda}{4}\right\} &\leq 6p^{1 - \frac{A^2}{2}},
\end{align*}
and our assumption that $A > \sqrt{2}$ yields the result.
\end{proof}

\subsection{Proof of lemma~\ref{lem:term2mnar}}
\label{subsec:term2mnar}
We repeat the lemma here for the reader's convenience:
\begin{customlemma}{A.9}
Assuming $\lambda$ as defined in equation~\eqref{eq:lambdamnar}, and the assumptions of corollary~\ref{thm:mnar}, we have:
\[
\pr\left\{\norm{\frac{1}{n}\widehat{\bX}^T \beps}_{\infty} \geq \frac{\lambda}{4}\right\} \leq 2p^{1 - \frac{A^2}{2}}.
\]
\end{customlemma}
\begin{proof}
This is exactly the same as lemma~\ref{lem:term2thm1}.
\end{proof}

\section{Proofs of technical lemmas from appendix~\ref{subsec:proofsAR}}
\label{sec:appendixARtechnicallemmas}
This appendix is dedicated to the proofs needed for theorem~\ref{thm:ARres}. It is organized as follows: subsection~\ref{subsec:usefulAR} collects facts that will be used in each proof in lemma~\ref{lem:usefulARlemmas}.  Then, subsection~\ref{subsec:proofARapprox} contains the proof of lemma~\ref{lem:ARapprox} which controls the term $\lvert \widehat{\phi} - \phi \rvert$.  Subsections~\ref{subsec:term1thmAR} - ~\ref{subsec:term6thmAR} contain the auxiliary lemmas necessary for each of the terms in theorem~\ref{thm:ARres}.  Finally, the assumptions of the theorem are addressed in subsection~\ref{subsec:verifylambdamin}.

\subsection{Useful facts for AR proofs}
\label{subsec:usefulAR}
Throughout this section, we will write the explicit forms $\widehat{X}_{ik}, \widetilde{X}_{ik}$ more succinctly, taking $f_L(\phi) = \frac{\phi^{d_1 + d_2}}{1 - \phi^{2(d_1 + d_2)}}\left(\phi^{-d_2} - \phi^{d_2}\right)$ and $f_R(\phi) = \frac{\phi^{d_1 + d_2}}{1 - \phi^{2(d_1 + d_2)}}\left(\phi^{-d_1} - \phi^{d_1}\right)$.  By the mean value theorem, we write 
\begin{align}
\label{eq:ARwritedifference}
\widetilde{X}_{ik} - \widehat{X}_{ik} = \left(\widehat{\phi} - \phi\right)\left(X_{i, L(k)}f_{L(k)}'(\xi_{L(k)}) + X_{i, R(k)}f_{R(k)}'(\xi_{R(k)})\right),
\end{align} 
for some $\xi_{L(k)}, \xi_{R(k)} \in (\min(\phi, \widehat{\phi}), \max(\phi, \widehat{\phi}))$.  We will make use many times of the following lemma, which collects various facts which will be useful for the proof:
\begin{lemma} Under the assumptions of theorem~\ref{thm:ARres}:
\label{lem:usefulARlemmas}
\begin{itemize}
\item[i.] For all $b \in [p]$, $\lvert f_{L(b)}'(\xi_{L(b))})\rvert, \lvert f_{R(b)}'(\xi_{R(b)})\rvert \leq C$, where $C$ is a universal constant.
\item[ii.] $\widehat{X}_{ia}, X_{i, L(a)}, X_{i, R(a)}$ are all $\sigma_X^2$ sub-gaussian.
\item[iii.] $\pr\left\{\max_{a,b \in [p] \times [p]} \frac{1}{n}\sum_{i=1}^{n}\lvert \widehat{X}_{ia}X_{i, L(b)}\rvert \geq 18e\sigma_X^2\right\} \leq 2p^{2 - \frac{A^2}{2}}$.
\end{itemize}
\end{lemma}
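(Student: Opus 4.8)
The plan is to treat the three items separately, working throughout on the high-probability event $\mathcal{A}_\phi = \{\,|\widehat{\phi}-\phi|\le \tfrac{4}{\alpha^2}\sqrt{\log p/(np)}\,\}$ supplied by Lemma~\ref{lem:ARapprox}, and conditionally on the missingness mask $\bM$. Conditioning on $\bM$ is the organizing idea: it renders the positions $L(\cdot),R(\cdot)$ and the gaps $d_1,d_2$ deterministic while, by MCAR independence, leaving the rows $\bX_i$ i.i.d.\ $\normal(0,\Sigma_{\bX})$. This is exactly what lets me apply the sub-Gaussian machinery to quantities like $X_{i,L(a)}$ that otherwise reference random indices.

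Item (ii) is the quickest. Each of $X_{i,L(a)},X_{i,R(a)}$ is a single coordinate of the row $\bX_i$, so taking $\bu$ to be a standard basis vector in the definition of a sub-Gaussian vector shows it is $\sigma_X^2$-sub-Gaussian. For $\widehat{X}_{ia}=\E\{X_{ia}\mid \bX_{iS}\}$ I would simply invoke Fact~\ref{fact:subGaussianConditionalExpectation}, which states that the conditional expectation of a $\sigma_X^2$-sub-Gaussian vector is again $\sigma_X^2$-sub-Gaussian.

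Item (i) is the one genuine obstacle, and the place I would spend the most care. The first step is an algebraic simplification: writing $d=d_1+d_2$, one checks $f_L(\phi)=\phi^{d_1}(1-\phi^{2d_2})/(1-\phi^{2d})$ and $f_R(\phi)=\phi^{d_2}(1-\phi^{2d_1})/(1-\phi^{2d})$. On $\mathcal{A}_\phi$ the evaluation point $\xi\in(\min(\phi,\widehat\phi),\max(\phi,\widehat\phi))$ lies in the fixed interval $[-\rho,\rho]$ with $\rho:=(1+|\phi|)/2<1$, since the sample-size hypothesis $n\ge c_1\alpha^{-8}s\log p$ (with $c_1$ large) forces $|\widehat\phi-\phi|\le(1-|\phi|)/2$. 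The hard part is then to see that, although the gaps $d_1,d_2$ may be arbitrarily large, every term produced by quotient-rule differentiation of $f_L,f_R$ is a bounded factor times one of $\rho^{d_j}$ or $m\rho^{m-1}$; because the denominator stays above $1-\rho^2>0$ and $\sup_{m\ge 1} m\rho^{m-1}<\infty$, the derivatives are bounded by a constant depending only on $\rho$ (hence only on $\phi,\alpha$), which under the paper's convention I record as the universal $C$. The subtlety to nail is precisely this uniform domination of the polynomial-in-gap factors by the geometric decay.

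Finally, item (iii) is routine concentration assembled from the earlier tools. By (ii), both $\widehat{X}_{ia}$ and $X_{i,L(b)}$ are $\sigma_X^2$-sub-Gaussian, so Lemma~\ref{lem:productsubgaussian} makes the product $\widehat{X}_{ia}X_{i,L(b)}$ sub-exponential, while Cauchy--Schwarz gives $\E|\widehat{X}_{ia}X_{i,L(b)}|\le (\E\widehat{X}_{ia}^2)^{1/2}(\E X_{i,L(b)}^2)^{1/2}\le \sigma_X^2$. Since the threshold $18e\sigma_X^2$ sits strictly above this mean, the deviation of the i.i.d.\ average $\tfrac1n\sum_i|\widehat{X}_{ia}X_{i,L(b)}|$ falls in the linear (Bernstein) regime, so Lemma~\ref{lem:subexpconcentration} with $\bar\theta\asymp(8e\sigma_X^2)^{-1}$ and $\sigma_Z^2\asymp\sigma_X^4$ yields a tail of the form $2\exp\{-cn\}$ for a fixed constant $c$; the hypothesis $n\ge c_1\alpha^{-8}s\log p$ with $c_1$ large makes $cn\ge \tfrac{A^2}{2}\log p$, and a union bound over the $p^2$ index pairs $(a,b)$ then produces the claimed $2p^{2-A^2/2}$.
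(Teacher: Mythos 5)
Your proof is correct and follows essentially the same route as the paper's: item (ii) is handled exactly as in the paper (conditioning on the mask so that $\E e^{\theta X_{i,L(a)}} = \E_{L(a)}\E\{e^{\theta X_{i,L(a)}}\mid L(a)\}$, plus Fact~\ref{fact:subGaussianConditionalExpectation} for $\widehat{X}_{ia}$), and item (iii) is the same assembly of Lemma~\ref{lem:absvalprodsubgaussian}, Bernstein's inequality (Lemma~\ref{lem:bernstein}), Fact~\ref{fact:expectationsubgaussianproduct} for the mean, and a union bound over the $p^2$ pairs. The only differences are minor: in (iii) the paper takes deviation $t = 16eA\sigma_X^2\sqrt{\log p/n}$ in the sub-Gaussian regime under $A\sqrt{\log p/n} \le 1$ to land directly on $2p^{2-A^2/2}$, whereas you work at a constant deviation and convert the resulting $e^{-cn}$ tail via the sample-size assumption (both valid), and for (i) — which the paper dismisses as a ``straightforward, but tedious calculation'' — your sketch (localizing $\xi$ in $[-\rho,\rho]$ with $\rho = (1+|\phi|)/2$ on the event of Lemma~\ref{lem:ARapprox}, then dominating the $m\rho^{m-1}$ factors and the denominator $1-\rho^2$) legitimately fills in the omitted argument, and you are right that the resulting constant depends on $\phi$ rather than being truly universal.
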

\begin{proof}

\begin{itemize}
\item[i.]  This follows from a straightforward, but tedious calculation and
    using the fact that $\lvert \phi \rvert < 1$.
\item[ii.] We see that $\widehat{X}_{ia}$ is sub-gaussian by using fact~\ref{fact:subGaussianConditionalExpectation} for vector $\widehat{\bX}_i$ and then using the fact that $\widehat{X}_{ia} = \left \langle \widehat{\bX}_i, \be_a\right\rangle$.  The second two statements follow by noticing that $\E e^{\theta X_{i, L(a)}} = \E_{L(a)}\E\left\{e^{\theta X_{i, L(a)}} \mid L(a)\right\}$ and using sub-gaussianity of each entry of $\bX$.  
\item[iii.]  Part ii. in combination with lemma~\ref{lem:absvalprodsubgaussian} imply that for any $\lvert \theta \rvert \leq (16e\sigma_X^2)^{-1}$, $\lvert \widehat{X}_{ia}X_{i, L(b)}\rvert$ satisfies $\E \exp\left\{\theta \left(\lvert \widehat{X}_{ia}X_{i, L(b)}\rvert - \E \lvert \widehat{X}_{ia}X_{i, L(b)}\rvert\right)\right\} \leq \E\exp\left\{128e^2\theta^2\sigma_X^4\right\}$.  Thus, Bernstein's inequality~\ref{lem:bernstein} with $\widebar{\theta} = (16e\sigma_X^2)^{-1}$ and $\sigma_Z^2 = 256e^2\sigma_X^4$ implies:
\[
\pr\left\{\left \lvert \frac{1}{n}\sum_{i=1}^{n}\left(\lvert \widehat{X}_{ia}X_{i, L(b)}\rvert - \E \lvert \widehat{X}_{ia}X_{i, L(b)}\rvert\right)\right \rvert \geq t\right\} \leq 2\exp\left\{\frac{n}{2}\min\left(\frac{t^2}{(16e\sigma_X^2)^2}, \frac{t}{16e\sigma_X^2}\right)\right\}.
\]
Now, a union bound over $[p] \times [p]$ as well as taking $t = 16eA\sigma_X^2\sqrt{\frac{\log{p}}{n}}$ and recalling the assumption that $A\sqrt{\frac{\log{p}}{n}} < 1$ (which is satisfied for $c_{\ell}$ in assumption \textbf{B2.} large enough) yields:
\[
\pr\left\{\max_{a,b \in [p] \times [p]}\left \lvert \frac{1}{n}\sum_{i=1}^{n}\left(\lvert \widehat{X}_{ia}X_{i, L(b)}\rvert - \E \lvert \widehat{X}_{ia}X_{i, L(b)}\rvert\right)\right \rvert \geq 16eA\sigma_X^2\sqrt{\frac{\log{p}}{n}}\right\} \leq 2p^{2 - \frac{A^2}{2}}.
\]
Noting that by fact~\ref{fact:expectationsubgaussianproduct}, $\E\lvert \widehat{X}_{ia}X_{i, L(b)}\rvert \leq \sigma_X^2$, the result follows.
\end{itemize}
\end{proof}

\subsection{Proof of lemma~\ref{lem:ARapprox}}
\label{subsec:proofARapprox}
We re-state the lemma here:
\begin{customlemma}{B.1}
Define $\hat{\phi}$ as in ~\eqref{eq:ARParamApprox}.  Then under the assumptions of theorem~\ref{thm:ARres},
\[\pr\left\{\left\lvert \hat{\phi} - \phi\right\rvert \geq \frac{4}{\alpha^2}\sqrt{\frac{\log{p}}{np}}\right\} \leq c p^{-C}.\]
where $c, C > 0$ are universal constants.
\end{customlemma}
\begin{proof}
It is helpful to recall the definition of $\widehat{\phi}$ from ~\eqref{eq:ARParamApprox}:
\begin{align*}
\hat{\phi} =
\frac{\frac{1}{\alpha^2 np}\sum_{i=1}^{n}\sum_{a=1}^{p-1}X_{ia}X_{i(a+1)}M_{ia}M_{i(a+1)}}{\frac{1}{\alpha n
p}\sum_{i=1}^{n}\sum_{a=1}^{p-1}X_{ia}^2 M_{ia}}.
\end{align*}
 Let us first examine the numerator: $\frac{1}{\alpha^2 np}\sum_{i=1}^{n}\sum_{a=1}^{p-1}X_{ia}X_{i(a+1)}M_{ia}M_{i(a+1)}$.  We
    have:
    \begin{align*}
        &\pr\left\{\left\lvert \frac{1}{n(p-1)}\sum_{i=1}^{n}\sum_{a=1}^{p-1} X_{ia}X_{i(a+1)}M_{ia} M_{i(a-1)} -
        \frac{\alpha^2 \phi}{1 - \phi^2} \right\rvert \geq t\right\} \\
        &= \sum_{\bM}\pr\left\{\left\lvert \frac{1}{n(p-1)}\sum_{i=1}^{n}\sum_{a=1}^{p-1} X_{ia}X_{i(a+1)}M_{ia} M_{i(a+1)} -
        \frac{\alpha^2 \phi}{1 - \phi^2} \right\rvert \geq t \mathrel{\Bigg|}\bM\right\}\pr\left\{\bM\right\}
    \end{align*}
Let $N_M \equiv \sum_{i,a}M_{ia} M_{i(a+1)}$ and notice that $\E
    \left\{\frac{1}{n(p-1)}\sum_{i=1}^{n}\sum_{a=1}^{p-1}
    X_{ia}X_{i(a+1)}M_{ia} M_{i(a+1)}\mathrel{\Bigg|}\bM\right\} =
    \frac{N_M \phi}{n(p-1)(1 - \phi^2)}$. By an application of the triangle
    inequality, we see that:
    \begin{align*}
        &\pr\left\{\left\lvert \frac{1}{n(p+1)}\sum_{i=1}^{n}\sum_{a=1}^{p-1} X_{ia}X_{i(a+1)}M_{ia} M_{i(a+1)} -
        \frac{\alpha^2 \phi}{1 - \phi^2} \right\rvert \geq
        t\mathrel{\Bigg|}\bM\right\} \\
        &\leq \pr\left\{\left\lvert \frac{1}{n(p-1)}\sum_{i=1}^{n}\sum_{a=1}^{p-1} X_{ia}X_{i(a+1)}M_{ia} M_{i(a+1)} -
        \frac{N_M \phi}{n(p-1)(1 - \phi^2)} \right\rvert \geq
        t - \left\lvert\frac{\phi}{1-\phi^2} \left(\alpha^2 -
        \frac{N_M}{n(p-1)}\right)\right\rvert\mathrel{\Bigg|}\bM\right\} 
    \end{align*}
 Now, consider flattening $\bX$ into the vector $\underline{\bX}$ which has
    distribution $N\left(0, \underline{\bSig}\right)$, where we define:
    \[\underline{\bSig} \equiv 
    \begin{bmatrix}
        \bSig & 0 & \dots & 0 \\
        0 & \bSig & \dots & 0 \\
        \vdots & \vdots & \dots & \vdots \\
        0 & 0 & \dots & \bSig
    \end{bmatrix}\]
    This implies that:
    \[
        \frac{1}{n(p-1)}\sum_{i,a} X_{ia}X_{i(a+1)} M_{ia}M_{i(a+1)}
        \stackrel{d}{=} \frac{1}{n(p-1)} \left\langle \bg,
        \underline{\bSig}^{\frac{1}{2}}\underline{\bA}\underline{\bSig}^{\frac{1}{2}}\bg\right\rangle 
    \]
    where $\bg \sim N(0, \bI_{np})$, $\bA \in \mathbb{R}^{np \times np}$ and
    \[
        \underline{\bA} =
        \begin{bmatrix}
        \bA_1 & 0 & \dots & 0 \\
        0 & \bA_2 & \dots & 0 \\
        \vdots & \vdots & \dots & \vdots \\
        0 & 0 & \dots & \bA_n
    \end{bmatrix}
    \]
    and $\left(\bA_{i}\right)_{jk} = \mathbbm{1}\left\{j = k - 1, \quad
    \bM_{ij}\bM_{ik} = 1\right\}$.  We will make use of the following
    Hanson-Wright inequality (see ~\cite{vershynin2018high} Theorem 6.2.1):
    \begin{align*}\pr\left\{\frac{1}{n(p-1)}\left\lvert \left\langle\bg,
        \underline{\bSig}^{\frac{1}{2}}\underline{\bA}\underline{\bSig}^{\frac{1}{2}}\bg\right\rangle
        - \E \left\langle\bg,
        \underline{\bSig}^{\frac{1}{2}}\underline{\bA}\underline{\bSig}^{\frac{1}{2}}\bg\right\rangle
        \right\rvert \geq t\right\} &\leq
        2\exp\left\{-c\min\left\{\frac{(tn(p-1))^2}{\norm{\underline{\bSig}^{\frac{1}{2}}\underline{\bA}\underline{\bSig}^{\frac{1}{2}}}_{F}^2},
        \frac{tn(p-1)}{\norm{\underline{\bSig}^{\frac{1}{2}}\underline{\bA}\underline{\bSig}^{\frac{1}{2}}}_{op}}\right\}\right\}
    \end{align*}
    We now upper bound $\norm{\underline{\bSig}^{\frac{1}{2}}\underline{\bA}\underline{\bSig}^{\frac{1}{2}}}_{F}^2$ and $\norm{\underline{\bSig}^{\frac{1}{2}}\underline{\bA}\underline{\bSig}^{\frac{1}{2}}}_{op}$.  We begin with the former:
    \[\norm{\underline{\bSig}^{\frac{1}{2}}\underline{\bA}\underline{\bSig}^{\frac{1}{2}}}_{F}^2
    =
    \sum_{i=1}^{n}\norm{\bSig^{\frac{1}{2}}\bA_i\bSig^{\frac{1}{2}}}_{F}^2 \leq
    n\norm{\bSig^{\frac{1}{2}}\bA\bSig^{\frac{1}{2}}}_{F}^2,\]
    where $\left(\bA\right)_{jk} = \mathbbm{1}\left\{j = k - 1\right\}$.  Noting
    the covariance structure of $\bSig$, we compute:
    \begin{align*}
        n\norm{\bSig^{\frac{1}{2}}\bA\bSig^{\frac{1}{2}}}_{F}^2 &=
        n\text{Tr}\left(\bSig^{\frac{1}{2}}\bA^{T}\bSig^{\frac{1}{2}}\bSig^{\frac{1}{2}}\bA\bSig^{\frac{1}{2}}\right)
        = n\text{Tr}\left(\bA\bSig\bA^T\bSig\right)
    \end{align*}
    Computing this trace gives:
    \begin{align*}
     n\text{Tr}\left(\bA\bSig\bA^T\bSig\right) &=n\sum_{a=2}^{p-1}\frac{\phi^2}{(1 - \phi^2)^2} \left(2 + \frac{2 -
    \phi^{2(a-1)} - \phi^{2(p-a)}}{1 - \phi^2}\right) \leq Cn(p-1)
    \end{align*}
    where the inequality is by using the assumption $\phi < 1$.  Now we tackle
    $\norm{\underline{\bSig}^{\frac{1}{2}}\underline{\bA}\underline{\bSig}^{\frac{1}{2}}}_{op}$.
    Sub-multiplicativity of the operator norm yields
    $\norm{\underline{\bSig}^{\frac{1}{2}}\underline{\bA}\underline{\bSig}^{\frac{1}{2}}}_{op}
    \leq \norm{\underline{\bSig}}_{op}\norm{\underline{\bA}}_{op}$.  Note that
    $\norm{\underline{\bA}}_{op} = 1$.  Additionally, since $\underline{\bSig}$
    is block diagonal with $\bSig$ as each of the $n$ blocks,
    $\norm{\underline{\bSig}}_{op} = \norm{\bSig}_{op}$.  Now, since $\bSig$ is
    Toeplitz,~\cite{gray2006toeplitz} Lemma 4.1 implies that $\norm{\bSig}_{op}
    \leq (1 - \phi^2)^{-1}(1 - \phi)^{-1}$ and the concentration inequality becomes:
    \begin{align*}\pr\left\{\frac{1}{n(p-1)}\left\lvert \left\langle\bg,
        \underline{\bSig}^{\frac{1}{2}}\underline{\bA}\underline{\bSig}^{\frac{1}{2}}\bg\right\rangle
        - \E \left\langle\bg,
        \underline{\bSig}^{\frac{1}{2}}\underline{\bA}\underline{\bSig}^{\frac{1}{2}}\bg\right\rangle
        \right\rvert \geq t\right\} &\leq 2\exp\left\{-cn(p-1)\min\left\{t^2, t\right\}\right\}.
   	\end{align*}
   	Now, let $\mathcal{M}_{\epsilon} = \left\{\left \lvert \alpha^2 -
    \frac{N_M}{n(p-1)} \right\rvert \leq \epsilon \right\}$;  that is,
    $\mathcal{M}_{\epsilon}$ denotes the event that $\frac{N_M}{n(p-1)}$
    estimates $\alpha^2$ with at most $\epsilon$ error.  Notice that on this
    event, $t - \left\lvert \frac{\phi}{1 - \phi^2}\left(\alpha^2 -
        \frac{N_M}{n(p-1)}\right)\right \rvert \geq t - \frac{\lvert \phi
    \rvert}{1 - \phi^2}\epsilon$.  Thus, by the Hanson-Wright inequality, we have:
   	\begin{align*}
   	&\sum_{\bM}\pr\left\{\left\lvert \frac{1}{n(p-1)}\sum_{i=1}^{n}\sum_{a=1}^{p-1}X_{ia}X_{i(a+1)}M_{ia}M_{i(a+1)} - \frac{\alpha^2\phi}{1 - \phi^2}\right \rvert \geq t \mathrel{\Bigg|}\bM\right\}\pr\left\{\bM\right\} \\
   	&\leq 2\exp\left\{-cn(p-1)\min\left\{\left(t - \frac{\lvert \phi \rvert}{1 -
            \phi^2}\epsilon\right)^2,
    t - \frac{\lvert \phi \rvert}{1 - \phi^2}\epsilon\right\}\right\} + \pr\left\{\mathcal{M}_{\epsilon}^c\right\}.
   	\end{align*}
   	We now tackle $\pr\left\{\mathcal{M}_{\epsilon}^{C}\right\} =
    \pr\left\{\left\lvert\frac{1}{n(p-1)}\sum_{i=1}^{n}\sum_{a=2}^{p}M_{ia}M_{i(a-1)}
    - \alpha^2\right\rvert \geq \epsilon\right\}$.  Notice that by the bounded differences inequality (e.g. Theorem 2.9.1 in
    ~\cite{vershynin2018high}), we have:
    \[\pr\left\{\left\lvert\frac{1}{n(p-1)}\sum_{i=1}^{n}\sum_{a=2}^{p}M_{ia}M_{i(a-1)}
    - \alpha^2\right\rvert \geq \epsilon\right\} \leq
    2\exp\left\{-\frac{\epsilon^2 n(p-1)}{2}\right\}.\]
    Thus,
    \begin{align*}
&\pr\left\{\left\lvert \frac{1}{n(p-1)}\sum_{i=1}^{n}\sum_{a=1}^{p-1} X_{ia}X_{i(a+1)}M_{ia} M_{i(a-1)} -
        \frac{\alpha^2 \phi}{1 - \phi^2} \right\rvert \geq t\right\} \\
        &\leq 2\exp\left\{-cn(p-1)\min\left\{\left(t - \frac{\lvert \phi
        \rvert}{1 - \phi^2}\epsilon\right)^2, t - \frac{\lvert \phi \rvert}{1 -
        \phi^2}\epsilon\right\}\right\}  + 2\exp\left\{-\frac{\epsilon^2 n(p-1)}{2}\right\}.
    \end{align*}
    Taking $t \asymp \sqrt{\frac{\log{p}}{np}}, \epsilon \asymp t$ gives:
    \begin{align*}
    \pr\left\{\left\lvert \frac{1}{n(p-1)}\sum_{i=1}^{n}\sum_{a=1}^{p-1} X_{ia}X_{i(a+1)}M_{ia} M_{i(a-1)} -
        \frac{\alpha^2 \phi}{1 - \phi^2} \right\rvert \geq \sqrt{\frac{\log{p}}{np}}\right\} \leq 4p^{-c_1}, 
    \end{align*}
    where $c_1 > 0$ is a universal constant.
    We now tackle the denominator $\frac{1}{ n
(p-1)}\sum_{i=1}^{n}\sum_{a=1}^{p-1}X_{ia}^2 M_{ia}$.  We are interested in:
\begin{align*}
    \pr\left\{\left\lvert \frac{1}{n
    (p-1)}\sum_{i=1}^{n}\sum_{a=1}^{p-1}X_{ia}^2 M_{ia} - \frac{\alpha}{1 -
    \phi^2}\right\rvert \geq t\right\}
\end{align*}
To this end, we define the matrix $\bA \in \mathbb{R}^{np \times np}$:
\[ \bA \equiv
\begin{bmatrix}
    \bSig^{\frac{1}{2}}\left(\sum_{a=1}^{p-1}\be_a \be_a^T
    \bM_{1a}\right)\bSig^{\frac{1}{2}} & 0 &\dots & 0 \\
    0 &  \bSig^{\frac{1}{2}}\left(\sum_{a=1}^{p-1}\be_a \be_a^T
    \bM_{2a}\right)\bSig^{\frac{1}{2}} & \dots & 0 \\
    \vdots & \vdots & \cdots & \vdots \\
    0 & 0 & \dots &  \bSig^{\frac{1}{2}}\left(\sum_{a=1}^{p-1}\be_a \be_a^T
    \bM_{na}\right)\bSig^{\frac{1}{2}}
\end{bmatrix}
\]
We thus see that we can write our estimator $\frac{1}{ n
(p-1)}\sum_{i=1}^{n}\sum_{a=1}^{p-1}X_{ia}^2 M_{ia} = \frac{1}{ n
(p-1)}\left\langle \bg, \bA \bg \right\rangle$. Where $\bg \sim N(0, \bI_{np})$.
We can calculate $\norm{\bA}_{F}^2 \leq n
\norm{\bSig^{\frac{1}{2}}\left(\sum_{a=1}^{p}\be_a \be_a^T
    \right)\bSig^{\frac{1}{2}}}_{F}^2 = np\left(\frac{1 - \phi^{2p}}{\left(1 -
\phi^2\right)^3}\right) \leq Cnp$ and $\norm{\bA}_{op} \leq (1 - \phi^2)^{-1}(1
- \phi)^{-1}$.  We thus combine again the Hanson-Wright inequality and bounded differences inequality in the same manner as above to see:
    \begin{align*}
&\pr\left\{\left\lvert \frac{1}{n
    (p-1)}\sum_{i=1}^{n}\sum_{a=1}^{p-1}X_{ia}^2 M_{ia} - \frac{\alpha}{1 -
    \phi^2}\right\rvert \geq t\right\} \\
&\leq 2\exp\left\{-cn(p-1)\min\left\{\left(t - C(\phi)\epsilon\right)^2, t -
C(\phi)\epsilon\right\}\right\} + 2\exp\left\{-2\epsilon^2n(p-1)\right\}. 
    \end{align*}
    Taking $t \asymp \sqrt{\frac{\log{p}}{np}}, \epsilon \asymp t$ gives:
    \begin{align*}
\pr\left\{\left\lvert \frac{1}{n
    (p-1)}\sum_{i=1}^{n}\sum_{a=1}^{p-1}X_{ia}^2 M_{ia} - \frac{\alpha}{1 -
    \phi^2}\right\rvert \geq \sqrt{\frac{\log{p}}{np}}\right\} 
    \leq 4p^{-c_2}. 
    \end{align*}
    where $c_2 > 0$ is a universal constant.  To conclude, we analyze $\left \lvert \frac{\frac{1}{\alpha^2 np}\sum_{i=1}^{n}\sum_{a=1}^{p-1}X_{ia}X_{i(a+1)}M_{ia}M_{i(a+1)}}{\frac{1}{\alpha n
p}\sum_{i=1}^{n}\sum_{a=1}^{p-1}X_{ia}^2 M_{ia}} - \phi \right \rvert$ noting that with probability at least $1 - c p^{-C}$,  $\left \lvert \frac{1}{\alpha^2 np}\sum_{i=1}^{n}\sum_{a=1}^{p-1}X_{ia}X_{i(a+1)}M_{ia}M_{i(a+1)} - \frac{\phi}{1 - \phi^2}\right \rvert \leq \frac{1}{\alpha^2}\sqrt{\frac{\log{p}}{np}}$ and $\left\lvert \frac{1}{\alpha n
p}\sum_{i=1}^{n}\sum_{a=1}^{p-1}X_{ia}^2 M_{ia} - \frac{1}{1 - \phi^2}\right\rvert \leq \frac{1}{\alpha}\sqrt{\frac{\log{p}}{np}}$.  Using these two facts as well as the assumption $\sqrt{\frac{\log{p}}{np}} \leq \frac{\alpha}{2}$ (which holds for $c_{\ell}$ large enough) implies the result.
\end{proof}

\subsection{Proof of lemma~\ref{lem:term1thmAR}}
\label{subsec:term1thmAR}
We re-state the lemma for the reader's convenience:
\begin{customlemma}{B.2}
Under the assumptions of theorem~\ref{thm:ARres}, for any $\bu \in \mathbb{R}^{p}$, we have:
\[
\pr\left\{\norm{\frac{1}{n}\widehat{\bX}^T\left(\widetilde{\bX} - \widehat{\bX}\right)\bu}_{\infty} \geq \frac{C\sigma_X^2}{\alpha^2}\norm{\bu}_1\sqrt{\frac{\log{p}}{np}}\right\} \leq c_0p^{-c_1},
\]
where $C, c_0, c_1$ denote universal constants.
\end{customlemma}

\begin{proof}
Let us begin by writing:
\begin{align*}
\norm{\frac{1}{n}\widehat{\bX}^T\left(\widetilde{\bX} - \widehat{\bX}\right)\bu}_{\infty} = \max_{a \in [p]}\left\lvert\frac{1}{n}
\sum_{i=1}^{n}\sum_{b=1}^{p}\widehat{X}_{ia}\left(\widetilde{X}_{ib}
-\widehat{X}_{ib}\right)u_b\right\rvert.
\end{align*}
Using~\eqref{eq:ARwritedifference}, we write:
\begin{align*}
\max_{a \in [p]}\left\lvert\frac{1}{n}
\sum_{i=1}^{n}\sum_{b=1}^{p}\widehat{X}_{ia}\left(\widetilde{X}_{ib}
-\widehat{X}_{ib}\right)u_b\right\rvert &= \max_{a \in [p]}\left\lvert \frac{1}{n}\sum_{i, b} \widehat{X}_{ia}(\widehat{\phi} - \phi)\left(X_{i, L(b)}f_{L(b)}'(\xi_{L(b)}) + X_{i, R(b)}f_{R(b)}'(\xi_{R(b)})\right)u_b\right \rvert.
\end{align*}
The triangular inequality then implies that the RHS is upper bounded by:
\begin{align}
\label{eq:ARterm1fundamentalineq}
\lvert \widehat{\phi} - \phi \rvert \max_{a \in [p]} \sum_{b=1}^{p}\lvert u_b\rvert \frac{1}{n}\sum_{i=1}^{n} \left(\lvert \widehat{X}_{i, a}X_{i, L(b)}f_{L(b)}'(\xi_{L(b)}) \rvert + \lvert \widehat{X}_{ia}X_{i, R(b)}f_{R(b)}'(\xi_{R(b)})\rvert\right).
\end{align}
Lemma~\ref{lem:usefulARlemmas} i. implies that~\eqref{eq:ARterm1fundamentalineq} is upper bounded by:
\[
 C \lvert \widehat{\phi} - \phi \rvert \norm{\bu}_1 \max_{a, b \in [p] \times [p]} \frac{1}{n}\sum_{i=1}^{n}\left(\lvert \widehat{X}_{ia}X_{i, L(b)}\rvert + \lvert \widehat{X}_{ia}X_{i, R(b)}\rvert\right).
\]
We are thus led to study:
\[
\pr\left\{\max_{a, b \in [p] \times [p]} \frac{1}{n}\sum_{i=1}^{n}\left(\lvert \widehat{X}_{ia}X_{i, L(b)}\rvert + \lvert \widehat{X}_{ia}X_{i, R(b)}\rvert\right) \geq t\right\} \leq \pr\left\{\max_{a,b \in [p] \times [p]}\frac{1}{n}\sum_{i=1}^{n} \lvert \widehat{X}_{ia}X_{i, L(b)}\rvert  \geq \frac{t}{2}\right\},
\]
where the inequality follows by an application of the union bound, and inequality $\pr\left\{A + B \geq t\right\} \leq 2 \pr\left\{A \geq \frac{t}{2}\right\}$ for identitically distributed (but not necessarily independent) non-negative random variables $A$ and $B$.  Let $\mathcal{A}_1$ denote the event $\left\{\max_{a,b \in [p] \times [p]}\frac{1}{n}\sum_{i=1}^{n} \lvert \widehat{X}_{ia}X_{i, L(b)}\rvert  \geq 18e\sigma_X^2\right\}$ and notice that lemma~\ref{lem:usefulARlemmas} iii. implies $\pr\{\mathcal{A}_1\} \geq 1 - 2p^{2 - \frac{A^2}{2}}$ (that is we take $t = 36e\sigma_X^2$).  Additionally, let $\mathcal{A}_2$ denote the event that $\lvert \widehat{\phi} - \phi\rvert \leq \frac{4}{\alpha^2}\sqrt{\frac{\log{p}}{np}}$ and notice that by lemma~\ref{lem:ARapprox}, $\pr\{\mathcal{A}_2\} \geq 1 - cp^{-C}$.  The result follows immediately.
\end{proof}

\subsection{Proof of lemma~\ref{lem:term2thmAR}}
\label{subsec:term2thmAR}
We re-state the lemma for the reader's convenience:
\begin{customlemma}{B.4}
Under the assumptions of theorem~\ref{thm:ARres}, we have:
\[
\pr\left\{\norm{\frac{1}{n}\left(\widetilde{\bX} - \widehat{\bX}\right)^T\left(\widetilde{\bX} - \widehat{\bX}\right)\bu}_{\infty} \geq C\frac{\sigma_X^2}{\alpha^4}\norm{\bu}_1\frac{\log{p}}{np}\right\} \leq c_0p^{-c_1},
\]
where $C, c_0, c_1$ denote universal constants.
\end{customlemma}
\begin{proof}
We begin by writing:
\[
\norm{\frac{1}{n}\left(\widetilde{\bX} - \widehat{\bX}\right)^T\left(\widetilde{\bX} - \widehat{\bX}\right)\bu}_{\infty} = \max_{a \in [p]}\left\lvert\frac{1}{n}\sum_{i=1}^{n}\sum_{b=1}^{p}\left(\widetilde{X}_{ia} - \widehat{X}_{ia}\right)\left(\widetilde{X}_{ib} - \widehat{X}_{ib}\right)u_b\right\rvert.
\]
Using ~\eqref{eq:ARwritedifference}, this can be written as:
\[
\max_{a \in [p]}\left\lvert \widehat{\phi} - \phi \right \rvert^2\left\lvert\frac{1}{n}\sum_{i=1}^{n}\sum_{b=1}^{p}\prod_{k \in \{a,b\}}\left(X_{i, L(k)}f_L'(\xi_{L(k)}) + X_{i, R(k)}f_R'(\xi_{R(k)})\right)u_b\right\rvert.
\]
The triangular inequality, expanding the product and using lemma~\ref{lem:usefulARlemmas} i., we can upper bound this quantity by:
\begin{align*}
&C\max_{a \in [p]}\left\lvert \widehat{\phi} - \phi \right \rvert^2\sum_{b=1}^{p}\lvert u_b \rvert \frac{1}{n}\sum_{i=1}^{n}\left(\lvert X_{i, L(a)}X_{i, L(b)} \rvert + \lvert X_{i, L(a)}X_{i, R(b)} \rvert + \lvert X_{i, R(a)} X_{i, L(b)} \rvert + \lvert X_{i, R(a)} X_{i, R(b)} \rvert\right) \\
&\leq C\left\lvert \widehat{\phi} - \phi \right \rvert^2\norm{\bu}_1\max_{a,b \in [p] \times [p]} \frac{1}{n}\sum_{i=1}^{n}\left(\lvert X_{i, L(a)}X_{i, L(b)} \rvert + \lvert X_{i, L(a)}X_{i, R(b)} \rvert + \lvert X_{i, R(a)} X_{i, L(b)} \rvert + \lvert X_{i, R(a)} X_{i, R(b)} \rvert\right).
\end{align*}
Just as in the proof of lemma~\ref{lem:term1thmAR}, we are led to study:
\[
\pr\left\{\max_{a,b \in [p] \times [p]}\frac{1}{n}\sum_{i=1}^{n} \lvert \widehat{X}_{ia}X_{i, L(b)}\rvert  \geq \frac{t}{4}\right\}.
\]
Lemma~\ref{lem:usefulARlemmas} iii. with $t=72e\sigma_X^2$ implies that :
\[\max_{a,b \in [p] \times [p]} \frac{1}{n}\sum_{i=1}^{n}\left(\lvert X_{i, L(a)}X_{i, L(b)} \rvert + \lvert X_{i, L(a)}X_{i, R(b)} \rvert + \lvert X_{i, R(a)} X_{i, L(b)} \rvert + \lvert X_{i, R(a)} X_{i, R(b)} \rvert\right) \leq 72e\sigma_X^2,\]
with probability at least $1 - p^{2 - \frac{A^2}{2}}$.  Additionally, lemma~\ref{lem:ARapprox} implies that $\left\lvert \widehat{\phi} - \phi \right \rvert ^2 \leq \frac{16}{\alpha^4}\frac{\log{p}}{np}$ with probability at least $1 - c p^{-C}$.  The result follows immediately.
\end{proof}

\subsection{Proof of lemma~\ref{lem:term3thmAR}}
\label{subsec:term3thmAR}
We re-state the lemma here for ease of reading:
\begin{customlemma}{B.5}
Under the assumptions of theorem~\ref{thm:ARres}, for any $\bu \in \mathbb{R}^{p}$, we have:
\[
\pr\left\{\norm{\frac{1}{n}\left(\widetilde{\bX} - \widehat{\bX}\right)^T\left(\widehat{\bX} - \bX\right)\bu}_{\infty} \geq C\frac{\sigma_X^2}{\alpha^2}\norm{\bu}_1 \sqrt{\frac{\log{p}}{np}}\right\} \leq c_0p^{-c_1},
\]
where $C, c_0, c_1$ denote universal constants.
\end{customlemma}
\begin{proof}
We begin by writing:
\[
\norm{\frac{1}{n}\left(\widetilde{\bX} - \widehat{\bX}\right)^T\left(\widehat{\bX} - \bX\right)\bu}_{\infty} = \max_{a \in [p]}\left\lvert \frac{1}{n}\sum_{i=1}^{n}\sum_{b=1}^{p}\left(\widetilde{X}_{ia} - \widehat{X}_{ia}\right)\left(\widehat{X}_{ib} - X_{ib}\right)u_b \right \rvert.
\]
The triangular inequality, ~\eqref{eq:ARwritedifference}, and lemma~\ref{lem:usefulARlemmas} i. imply this is upper bounded by:
\[
C\left\lvert\widehat{\phi} - \phi \right\rvert \norm{\bu}_1 \max_{a, b \in [p] \times [p]}\frac{1}{n}\sum_{i=1}^{n}\left(\lvert X_{i, L(a)}\widehat{X}_{ib} \rvert + \lvert X_{i, L(a)}X_{ib} \rvert + \lvert X_{i, R(a)} \widehat{X}_{ib} \rvert + \lvert X_{i, R(a)} X_{ib} \rvert \right).
\]
Lemma~\ref{lem:usefulARlemmas} iii. with $t=72e\sigma_X^2$ implies 
\[
\max_{a, b \in [p] \times [p]}\frac{1}{n}\sum_{i=1}^{n}\left(\lvert X_{i, L(a)}\widehat{X}_{ib} \rvert + \lvert X_{i, L(a)}X_{ib} \rvert + \lvert X_{i, R(a)} \widehat{X}_{ib} \rvert + \lvert X_{i, R(a)} X_{ib} \rvert \right) \leq 72e\sigma_X^2,
\]
with probability at least $1 - p^{2 - \frac{A^2}{2}}$ and lemma~\ref{lem:ARapprox} implies that $\lvert \widehat{\phi} - \phi \rvert \leq \frac{4}{\alpha^2}\sqrt{\frac{\log{p}}{np}}$ with probability at least $1 - cp^{-C}$.  The result follows immediately.
\end{proof}

\subsection{Proof of lemma~\ref{lem:term4thmAR}}
\label{subsec:term4thmAR}
We re-state the lemma here for ease of reading:
\begin{customlemma}{B.5}
Assuming $\lambda$ as defined in equation~\eqref{eq:lambdaAR}, and the assumptions of theorem~\ref{thm:ARres}, we have, we have:
\[
\pr\left\{\norm{\frac{1}{n}\widehat{\bX}^T\left(\widehat{\bX} - \bX\right)\bbe_0}_{\infty} \leq \frac{\lambda}{16}\right\} \geq 1 - c_0p^{-c_1},
\]
where $c_0, c_1$ denote universal constants.
\end{customlemma}
\begin{proof}
This follows immediately from lemma~\ref{lem:term1thm1}.
\end{proof}

\subsection{Proof of lemma~\ref{lem:term5thmAR}}
\label{subsec:term5thmAR}
We repeat the lemma here:
\begin{customlemma}{B.7}
Assuming $\lambda$ as defined in equation~\eqref{eq:lambdaAR}, and the assumptions of theorem~\ref{thm:ARres}, we have:
\[
\pr\left\{\norm{\frac{1}{n}\left(\widetilde{\bX} - \widehat{\bX}\right)^T\beps}_{\infty} \geq C\frac{\sigma_X\sigma}{\alpha^2}\sqrt{\frac{\log{p}}{n}}\right\} \leq c_0p^{-c_1},
\]
where $C, c_0, c_1$ are universal constants.
\end{customlemma}
\begin{proof}
We will write:
\[
\norm{\frac{1}{n}\left(\widetilde{\bX} - \widehat{\bX}\right)^T\beps}_{\infty} = \max_{a \in [p]}\left\lvert \frac{1}{n} \sum_{i=1}^{n} \left(\widetilde{X}_{ia} - \widehat{X}_{ia}\right)\epsilon_i\right \rvert.
\]
Using equation~\eqref{eq:ARwritedifference}, the triangular inequality, and lemma~\ref{lem:usefulARlemmas} i., this quantity is upper bounded by:
\[
C \left\lvert \widehat{\phi} - \phi \right \rvert \max_{a \in
[p]}\frac{1}{n}\sum_{i=1}^{n}\left(\lvert X_{i, L(a)}\epsilon_i \rvert + \lvert
X_{i, R(a)}\epsilon_i\rvert\right).
\]
We are interested in 
\[
\label{eq:ARepsilonmiddlestep}
\pr\left\{\max_{a \in
[p]}\frac{1}{n}\sum_{i=1}^{n}\left(\lvert X_{i, L(a)}\epsilon_i \rvert + \lvert
X_{i, R(a)}\epsilon_i\rvert\right) \geq t\right\} \leq 2\pr\left\{\max_{a \in
[p]}\frac{1}{n}\sum_{i=1}^{n}\lvert X_{i, L(a)}\epsilon_i \rvert \geq
\frac{t}{2}\right\}.
\]
We will require a concentration inequality similar to
lemma~\ref{lem:usefulARlemmas} iii. Lemma~\ref{lem:usefulARlemmas} ii. implies that $X_{i, L(a)}$ is $\sigma_X^2$
sub-gaussian and by assumption $\epsilon_i$ is $\sigma^2$ sub-gaussian.  Thus,
lemma~\ref{lem:absvalprodsubgaussian} implies that for any $\lvert \theta \rvert
\leq (16e\sigma_X \sigma)^{-1}$, $\lvert X_{i, L(a)} \epsilon_i\rvert$ satisfies
$\E\exp\left\{\theta\left(\lvert X_{i, L(a)} \epsilon_i\rvert - \E \lvert X_{i,
    L(a)} \epsilon_i\rvert\right)\right\} \leq \exp\left\{128e\theta^2
    \sigma_X^2 \sigma^2\right\}$.  Thus, Bernstein's
    inequality~\ref{lem:bernstein} with $\widebar{\theta} =
    (16e\sigma_X\sigma)^{-1}$ and $\sigma_Z^2 = 256e^2\sigma_X^2 \sigma^2$
    implies:
    \[
    \pr\left\{\left\lvert \frac{1}{n}\sum_{i=1}^{n} \left( \lvert X_{i,
                L(a)}\epsilon_i \rvert - \E \lvert X_{i, L(a)} \epsilon_i \rvert
        \right) \right \rvert \geq t \right\} \leq
        2\exp\left\{-\frac{n}{2}\min\left(\frac{t^2}{(16e\sigma_X\sigma)^2},
            \frac{t}{16e\sigma_X\sigma}\right)\right\}.
    \]
    Taking $t = 16eA\sigma_X\sigma \sqrt{\frac{\log{p}}{n}}$, a union bound over
    $[p]$ and recalling the assumption that $A\sqrt{\frac{\log{p}}{n}} < 1$
    yields:
    \[
        \pr\left\{\max_{a \in [p]}\left\lvert \frac{1}{n}\sum_{i=1}^{n} \left( \lvert X_{i,
                L(a)}\epsilon_i \rvert - \E \lvert X_{i, L(a)} \epsilon_i \rvert
    \right) \right \rvert \geq 16eA\sigma_X\sigma\sqrt{\frac{\log{p}}{n}}
\right\} \leq 2p^{1 - \frac{A^2}{2}}.
    \]
    Noting the assumption that $A\sqrt{\frac{\log{p}}{n}} < 1$ and fact~\ref{fact:expectationsubgaussianproduct}, this implies:
    \[
    \pr\left\{\max_{a \in [p]} \frac{1}{n}\sum_{i=1}^{n}\lvert X_{i, L(a)}\epsilon_i \rvert \geq 18eA\sigma_X\sigma\right\} \leq 2p^{1 - \frac{A^2}{2}}.
    \] 
    Therefore, we take $t = 36eA\sigma_X\sigma$ in ~\eqref{eq:ARepsilonmiddlestep}.  Additionally, we recall that lemma~\ref{lem:ARapprox} implies $\lvert \widehat{\phi} - \phi \rvert \leq \frac{4}{\alpha^2}\sqrt{\frac{\log{p}}{np}}$ with probability at least $1 - cp^{-C}$.  Putting these together implies that with probability at least $1 - cp^{-C}$:
    \[
    C \left \lvert \widehat{\phi} - \phi \right \rvert \max_{a \in [p]}\frac{1}{n}\sum_{i=1}^{n} \left(\lvert X_{i, L(a)}\epsilon_i \rvert + \lvert X_{i, R(a)}\epsilon_i\right) \leq C\frac{\sigma_X\sigma}{\alpha^2}\sqrt{\frac{\log{p}}{np}}.
    \]
    The result is immediate.
\end{proof}

\subsection{Proof of lemma~\ref{lem:term6thmAR}}
\label{subsec:term6thmAR}
\begin{customlemma}{B.6}
Assuming $\lambda$ as defined in equation~\eqref{eq:lambdaAR}, and the assumptions of theorem~\ref{thm:ARres}, we have:
\[
\pr\left\{\norm{\frac{1}{n}\widehat{\bX}^T\beps}_{\infty} \leq C\sigma_X\sigma\sqrt{\frac{\log{p}}{n}}\right\} \geq c_0 p^{-c_1},
\]
where $C, c_0, c_1$ are universal constants.
\end{customlemma}
\begin{proof}
This is immediate from lemma~\ref{lem:term2thm1}.
\end{proof}

\subsection{Proof of $\lambda_{\min}(\Sigma_{\widehat{\bX}}) > 0$}
\label{subsec:verifylambdamin}
As mentioned after the assumptions of the main theorem, we need to verify a lower bound on the minimum eigenvalue of the covariance.  We remark that this is included to show a specific case of when the minimum eigenvalue can be lower bounded by a constant.  For this reason, we have carried out the analysis in far from the tightest way possible and with more work, we believe the assumption $\alpha > 0.844$ need not be so restrictive.  We have the following lemma:
\begin{lemma}
\label{lem:verifylambdamin}
Under the assumptions of theorem~\ref{thm:ARres}, $\Sigma_{\widehat{\bX}}$ satisfies $\lambda_{\min}(\Sigma_{\widehat{\bX}}) > 0$.
\end{lemma}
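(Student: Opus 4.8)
The plan is to compare $\Sigma_{\widehat{\bX}}$ with $\Sigma_{\bX}$ through the orthogonality of conditional expectation, and to argue that the imputation error perturbs the spectrum only mildly when few entries are missing. Since the rows are i.i.d., $\Sigma_{\widehat{\bX}} = \E[\widehat{\bX}_1 \widehat{\bX}_1^{\top}]$ and $\Sigma_{\bX} = \E[\bX_1 \bX_1^{\top}]$, viewing each row as a column vector in $\mathbb{R}^p$. Writing $\bX_1 = \widehat{\bX}_1 + (\bX_1 - \widehat{\bX}_1)$, the defining property $\E\{\bX_1 - \widehat{\bX}_1 \mid \bZ\} = 0$ together with the $\bZ$-measurability of $\widehat{\bX}_1$ gives $\E[\widehat{\bX}_1(\bX_1 - \widehat{\bX}_1)^{\top}] = 0$, so that
\begin{align*}
\Sigma_{\bX} = \Sigma_{\widehat{\bX}} + \Sigma_{\bX - \widehat{\bX}}, \qquad \Sigma_{\bX - \widehat{\bX}} := \E\big[(\bX_1 - \widehat{\bX}_1)(\bX_1 - \widehat{\bX}_1)^{\top}\big] \succeq 0.
\end{align*}
Hence, for every unit vector $\bv$,
\begin{align*}
\bv^{\top}\Sigma_{\widehat{\bX}}\bv = \bv^{\top}\Sigma_{\bX}\bv - \bv^{\top}\Sigma_{\bX - \widehat{\bX}}\bv \geq \lambda_{\min}(\Sigma_{\bX}) - \lambda_{\max}(\Sigma_{\bX - \widehat{\bX}}),
\end{align*}
so it suffices to lower bound $\lambda_{\min}(\Sigma_{\bX})$ and upper bound $\lambda_{\max}(\Sigma_{\bX - \widehat{\bX}})$.

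For the first term, $\Sigma_{\bX}$ is the Toeplitz matrix generated by the AR(1) spectral density $f(\theta) = (1 - 2\phi\cos\theta + \phi^2)^{-1}$, whose essential infimum is $(1+|\phi|)^{-2}$ and essential supremum is $(1-|\phi|)^{-2}$. By the Toeplitz spectral bounds of \cite{gray2006toeplitz}, the eigenvalues of every finite section lie between these values, so $\lambda_{\min}(\Sigma_{\bX}) \geq (1+|\phi|)^{-2}$ (and, as already noted in the remark, $\lambda_{\max}(\Sigma_{\bX}) \leq (1-\phi^2)^{-1}(1-\phi)^{-1}$).

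For the error term I would write $\lambda_{\max}(\Sigma_{\bX - \widehat{\bX}}) = \sup_{\|\bv\|_2 = 1}\E\big[(\sum_{a} v_a (X_{1a} - \widehat{X}_{1a}))^2\big]$ and condition on the missingness pattern $\bM_1$ of the first row. Because $X_{1a} - \widehat{X}_{1a} = 0$ at observed coordinates, only missing coordinates contribute, and these partition into maximal runs separated by observed coordinates. By the Markov property of the AR(1) chain the residuals supported on distinct runs are conditionally uncorrelated given the observed coordinates (runs abutting the boundary are handled identically, since one-sided or empty conditioning only shrinks the residual). The conditional second moment therefore splits across runs, and on a run $B$ the conditional covariance of $\bX_B$ is a Schur complement $\Sigma_{BB} - \Sigma_{B,O}\Sigma_{OO}^{-1}\Sigma_{OB} \preceq \Sigma_{BB}$, a principal submatrix of $\Sigma_{\bX}$; thus run $B$ contributes at most $\lambda_{\max}(\Sigma_{\bX})\|\bv_B\|_2^2$. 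Summing over the runs (which partition the missing coordinates) and taking expectation over $\bM_1$, in which each coordinate is missing with probability $1-\alpha$,
\begin{align*}
\E\Big[\Big(\sum_{a} v_a (X_{1a} - \widehat{X}_{1a})\Big)^{2}\Big] \leq \lambda_{\max}(\Sigma_{\bX})\,\E\sum_{a \text{ missing}} v_a^2 = (1 - \alpha)\,\lambda_{\max}(\Sigma_{\bX})\,\|\bv\|_2^2,
\end{align*}
so that $\lambda_{\max}(\Sigma_{\bX - \widehat{\bX}}) \leq (1 - \alpha)(1 - \phi^2)^{-1}(1 - \phi)^{-1}$.

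Combining the three displays yields $\lambda_{\min}(\Sigma_{\widehat{\bX}}) \geq (1+|\phi|)^{-2} - (1-\alpha)(1-\phi^2)^{-1}(1-\phi)^{-1}$, which is strictly positive once $1-\alpha$ is small relative to the explicit $\phi$-dependent gap $(1-\phi)^2/(1+|\phi|)$; with these deliberately loose constants this reduces to the restricted regime quoted in the discussion (e.g. $\alpha > 0.844$ over the admissible range of $\phi$). I expect the main obstacle to be the rigorous execution of the third step: carefully verifying that, given the observed coordinates, the residuals on distinct runs are genuinely uncorrelated (a clean consequence of the Gauss--Markov structure, but one that must be justified including for boundary runs where a neighbor is absent) and that the per-run quadratic form is dominated by $\lambda_{\max}(\Sigma_{\bX})$ through the Schur-complement comparison. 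The remaining ingredients---the Toeplitz eigenvalue bounds and the elementary first-moment computation over Bernoulli runs---are routine bookkeeping.
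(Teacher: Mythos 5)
Your proposal is correct, and after the shared opening move it takes a genuinely different route from the paper. Both arguments begin with the identical orthogonality decomposition $\Sigma_{\bX} = \Sigma_{\widehat{\bX}} + \Sigma_{\bX - \widehat{\bX}}$ and the comparison $\lambda_{\min}(\Sigma_{\widehat{\bX}}) \geq \lambda_{\min}(\Sigma_{\bX}) - \lambda_{\max}(\Sigma_{\bX - \widehat{\bX}})$. From there the paper differs in both ingredients: (i) it bounds the residual covariance by comparison with the zero-imputed estimator $\bx_0$, using that the conditional expectation is the $L^2$-optimal $\bz$-measurable predictor, and then computes $\E(\bx - \bx_0)(\bx - \bx_0)^{\top} = (1-\alpha)^2 \Sigma_{\bX} + \alpha(1-\alpha)\diag(\Sigma_{\bX})$ exactly (the signs in the paper's display are typos), which gives $\lambda_{\max}(\Sigma_{\bX-\widehat{\bX}}) \leq (1-\alpha)^2\lambda_{\max}(\Sigma_{\bX}) + \alpha(1-\alpha)(1-\phi^2)^{-1}$, marginally tighter than your $(1-\alpha)\lambda_{\max}(\Sigma_{\bX})$ since the diagonal entry is at most $\lambda_{\max}(\Sigma_{\bX})$; and (ii) it lower-bounds $\lambda_{\min}(\Sigma_{\bX}) \geq 1/4$ via Gershgorin, which only works on a restricted range of $\phi$, as the paper concedes. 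Your Toeplitz-symbol bound $\lambda_{\min}(\Sigma_{\bX}) \geq (1+\lvert\phi\rvert)^{-2} \geq 1/4$ holds for \emph{all} $\lvert\phi\rvert < 1$, so your version actually removes the paper's implicit restriction on $\phi$ and replaces the condition $\alpha > 0.844$ with an explicit $\phi$-dependent one. One simplification on your side: the run decomposition, and the cross-run conditional uncorrelatedness that you flag as the main obstacle, are unnecessary. Conditionally on the missingness pattern $\bM$, the residual over the \emph{entire} missing set $B$ is jointly Gaussian with covariance equal to the single Schur complement $\Sigma_{BB} - \Sigma_{B,O}\Sigma_{OO}^{-1}\Sigma_{O,B} \preceq \Sigma_{BB} \preceq \lambda_{\max}(\Sigma_{\bX}) I$, so $\E\left[\langle \bv, \bx - \widehat{\bx}\rangle^2 \mid \bM\right] \leq \lambda_{\max}(\Sigma_{\bX})\norm{\bv_B}_2^2$ directly, and averaging over $\bM$ produces the factor $1-\alpha$; no Markov property is needed at this step at all (the paper's zero-imputation route likewise avoids it, and needs only MCAR and second moments rather than Gaussianity).
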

\begin{proof}
Consider a row of $\bX$, let it be $\bx$ and a row of $\widehat{\bX}$ which we call $\widehat{\bx}$.  Note that $\Sigma_{\bX} = \E \bx \bx^T$ and $\Sigma_{\widehat{\bX}} = \E \widehat{\bx} \widehat{\bx}^T$.  Notice:
\[
\E \bx \bx^T = \E \left(\bx - \widehat{\bx} + \widehat{\bx}\right)\left(\bx -\widehat{\bx} + \widehat{\bx}\right)^T = \E \widehat{\bx} \widehat{\bx}^T - \E \left(\bx - \widehat{\bx}\right)\left(\bx - \widehat{\bx}\right)^T.
\]
This implies that $\lambda_{\min}\left(\E \widehat{\bx} \widehat{\bx}^T\right) \geq \lambda_{\min}\left(\E \bx \bx^T \right) - \lambda_{\max}\left(\E \left(\bx - \widehat{\bx}\right)\left(\bx - \widehat{\bx}\right)^T\right)$.  Let $\bx_0$ denote the zero-imputed estimator for the same observation $\bz$ for which $\widehat{\bx} = \E \bx \mid \bz$.  Then by the orthogonality property of the condtional expectation, we have $\lambda_{\min}\left(\E \widehat{\bx} \widehat{\bx}^T\right) \geq \lambda_{\min}\left(\E \bx \bx^T \right) - \lambda_{\max}\left(\E \left(\bx - \bx_0\right)\left(\bx - \bx_0\right)^T\right)$.  Notice now that $\E \left(\bx - \bx_0\right)\left(\bx - \bx_0\right)^T = (1 - \alpha)^2 \Sigma_{\bX} - \alpha(1 - \alpha)\diag\left(\Sigma_{\bX}\right)$, which has maximum eigenvalue $(1 - \alpha)^2\lambda_{\max}(\Sigma_{\bX}) - \frac{\alpha(1 - \alpha)}{1 - \phi^2}$.  Now, Gershgorin's circle theorem implies that $\lambda_{\min}(\Sigma_{\bX}) \geq \frac{1}{4}$ and we know $\lambda_{\max}(\Sigma_{\bX}) \leq 3$.  Thus, assuming $\alpha > 0.844$, $\lambda_{\min}(\Sigma_{\widehat{\bX}}) > 0$.
\end{proof}

\section{Proofs of technical lemmas from appendix~\ref{subsec:proofGaussianRes}}
\label{sec:appendixboundedtechincallemmas}
This appendix is dedicated to the proofs needed for theorem~\ref{thm:GaussianRes}. It is organized as follows: sub-section~\ref{subsec:operatornorm} provides a useful lemma which allows us to control the empirical covariance, sub-section~\ref{subsec:markovblankets} provides a series of lemmas controlling the Markov blankets, subsection~\ref{subsec:usefulGaussianlemmas} provides a series of technical lemmas which will be used multiple times, and subsections~\ref{subsec:term1thmGaussian}-~\ref{subsec:term5thmGaussian} provide proofs for lemmas ~\ref{lem:term1thmGaussian} - ~\ref{lem:term5thmGaussian}.

\subsection{Control of the empirical covariance}
\label{subsec:operatornorm}
We begin with a lemma controlling the empirical covariance.
\begin{lemma}
    \label{lem:operatorMissing}
Consider the empirical covariance matrix  
\[\widetilde{\bSig} = \frac{1}{\alpha^2 n}\sum_{i=1}^{n}\bX_{0,i} \bX_{0,i}^\top -
        \frac{1 - \alpha}{\alpha^2 n}\sum_{i=1}^{n}\diag\left(\bX_{0,i}
        \bX_{0,i}^\top\right).\]  

We have the following two results:        
    \begin{enumerate}[align=left]
    \item[(i.)] For any submatrices $\bSig_{SS}, \widetilde{\bSig}_{SS}$
        and $u \geq 0$, if $\sqrt{\frac{|S| + u}{n}} \leq \frac{1}{2}$, we have:
        \begin{align*}
            \norm{\widetilde{\bSig}_{SS} - \bSig_{SS}}_{op} \leq
            \frac{C(1-\alpha)\norm{\bSig}_{op}}{\alpha^2}\sqrt{\frac{|S| + u}{n}}
        \end{align*}
        with probability at least $1 - e^{-u}$.
    \item[(ii.)]  Assume that for some $A \geq \sqrt{3}$,
        $\frac{A}{\alpha}\sqrt{\frac{\log{p}}{n}} \leq 1$.  Then, 
            \[\norm{\widetilde{\bSig} - \bSig}_{\ell_1 \rightarrow
        \ell_{\infty}} \leq \frac{CA\norm{\bSig}_{op}}{\alpha^2}\sqrt{\frac{\log{p}}{n}}\]
            with probability at least $1 - p^{3 - A^2}$.
    \end{enumerate}
\end{lemma}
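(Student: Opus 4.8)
The plan is to treat the two claims separately: (i) is an operator-norm bound over the low-dimensional coordinate subspace indexed by $S$, while (ii) is an entrywise bound, but both reduce to Bernstein-type concentration for a sum of independent sub-exponential variables, one per row. Throughout I will use that the rows of $\bX_0$ are i.i.d., that $\widetilde{\bSig}$ is unbiased for $\bSig$ (the computation preceding the lemma, using $\E[M_{ia}M_{ib}]=\alpha^2$ for $a\neq b$, $\E[M_{ia}]=\alpha$, together with the diagonal correction), and that for $\bX_i\sim\normal(0,\bSig)$ each coordinate is sub-Gaussian with parameter $\Sigma_{aa}\le\norm{\bSig}_{op}$.

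For (i), I would use the variational characterization $\norm{\widetilde{\bSig}_{SS}-\bSig_{SS}}_{op}=\sup_{v\in \mathcal{S}^{|S|-1}}\lvert v^\top(\widetilde{\bSig}_{SS}-\bSig_{SS})v\rvert$ (symmetry), and replace the supremum by a maximum over a $1/4$-net $\mathcal{N}$ of the sphere in $\mathbb{R}^{|S|}$, paying the standard factors $\lvert\mathcal{N}\rvert\le 9^{|S|}$ and $\sup_v\le 2\max_{\mathcal{N}}$ \cite{vershynin2018high}. For a fixed $v$ supported on $S$, write $v^\top(\widetilde{\bSig}-\bSig)v=\frac1n\sum_{i=1}^n(Q_i-\E Q_i)$, where with $D_i=\diag(M_{i\cdot})$ one has $Q_i=\frac{1}{\alpha^2}\big[\langle D_iv,\bX_i\rangle^2-(1-\alpha)\sum_a v_a^2 M_{ia}X_{ia}^2\big]$. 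Conditioning on the mask $D_i$, the quantity $\langle D_iv,\bX_i\rangle$ is centered Gaussian and $\sum_a v_a^2 M_{ia}X_{ia}^2$ is a nonnegative quadratic form in that same Gaussian, so $Q_i$ is sub-exponential; the crux is to bound its sub-exponential parameter by $K\lesssim \frac{(1-\alpha)\norm{\bSig}_{op}}{\alpha^2}$. Bernstein's inequality (Lemma~\ref{lem:bernstein}, Lemma~\ref{lem:subexpconcentration}) then gives $\pr\{\lvert\frac1n\sum(Q_i-\E Q_i)\rvert\ge t\}\le 2\exp(-cn\min(t^2/K^2,t/K))$; taking $t\asymp K\sqrt{(|S|+u)/n}$ and using the hypothesis $\sqrt{(|S|+u)/n}\le\frac12$ to remain in the quadratic (sub-Gaussian) regime where $\min(\cdot)=t^2/K^2$, the exponent dominates the net cardinality $|S|\log 9$ and leaves a surplus $-u$, yielding the bound with probability at least $1-e^{-u}$.

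For (ii), the induced $\ell_1\to\ell_\infty$ norm of a matrix equals its largest entry in absolute value, so $\norm{\widetilde{\bSig}-\bSig}_{\ell_1\to\ell_\infty}=\max_{a,b\in[p]}\lvert\widetilde{\Sigma}_{ab}-\Sigma_{ab}\rvert$. For each fixed pair $(a,b)$ one has $\widetilde{\Sigma}_{ab}-\Sigma_{ab}=\frac1n\sum_i\big(\frac{M_{ia}M_{ib}}{p_{ab}}X_{ia}X_{ib}-\Sigma_{ab}\big)$ with $p_{ab}=\alpha^2$ off the diagonal and $p_{ab}=\alpha$ on it; each summand is a centered product of two sub-Gaussians (Lemma~\ref{lem:productsubgaussian}) multiplied by a mask ratio bounded by $1/\alpha^2$, hence sub-exponential with parameter $\lesssim \norm{\bSig}_{op}/\alpha^2$. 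Applying Bernstein with $t\asymp\frac{A\norm{\bSig}_{op}}{\alpha^2}\sqrt{\log p/n}$, where the hypothesis $\frac{A}{\alpha}\sqrt{\log p/n}\le 1$ ensures the sub-Gaussian regime, gives failure probability at most $2p^{-cA^2}$ per entry; a union bound over the $p^2$ pairs produces the stated bound with probability at least $1-p^{3-A^2}$.

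The main obstacle is the sub-exponential-parameter estimate in step (i): unlike the entrywise bound, the form $Q_i$ couples the Bernoulli masks and the Gaussian coordinates nonlinearly, so obtaining the sharp $\alpha$-dependence requires conditioning on $D_i$, controlling the conditional Gaussian chaos (via the MGF bound for squared sub-Gaussians, Lemma~\ref{lem:subGaussianSquared}), and then integrating over the masks in a way that isolates the $(1-\alpha)$ factor arising from the missing-data randomness and matched by the diagonal correction term. The net machinery and part (ii) are routine by comparison.
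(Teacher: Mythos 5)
Your overall architecture matches the paper's: for (i) a $1/4$-net of the sphere with $|\mathcal{N}_{1/4}|\le 9^{|S|}$ and the factor-of-two comparison, followed by Bernstein for sub-exponential summands; for (ii) the reduction of $\norm{\cdot}_{\ell_1\to\ell_\infty}$ to the maximum entry, per-entry Bernstein in the sub-Gaussian regime, and a union bound over $p^2$ pairs (part (ii) is essentially the paper's proof, modulo the small point that $X_{ia}X_{ib}$ has nonzero mean $\Sigma_{ab}$, so Lemma~\ref{lem:productsubgaussian} must be applied in its symmetrized form for centered-but-not-mean-zero products — the paper explicitly invokes this variant). However, there is a genuine gap in your part (i), and it sits exactly at the step you yourself flag as the crux: the claim that the combined summand $Q_i=\frac{1}{\alpha^2}\bigl[\langle D_i v,\bX_i\rangle^2-(1-\alpha)\sum_a v_a^2 M_{ia}X_{ia}^2\bigr]$ is sub-exponential with parameter $K\lesssim\frac{(1-\alpha)\norm{\bSig}_{op}}{\alpha^2}$ is false. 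Take $\alpha=1$: all masks are one, the correction term vanishes, and $Q_i=\langle v,\bX_i\rangle^2$, whose centered MGF is sub-exponential with parameter of order $\norm{\bSig}_{op}$, not $0$. More generally, conditional on the mask, the Gaussian-chaos fluctuation of $\langle D_i v,\bX_i\rangle^2/\alpha^2$ is of order $\norm{\bSig}_{op}\norm{D_i v}_2^2/\alpha^2$, and the diagonal correction is a \emph{different} nonnegative quadratic form that cannot cancel the cross terms $\frac{1}{\alpha^2}\sum_{a\neq b}v_a v_b M_{ia}M_{ib}X_{ia}X_{ib}$; no conditioning-then-integrating scheme over the masks will extract a $(1-\alpha)$ factor at the level of the per-summand MGF. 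As written, your argument proves part (i) only with the $(1-\alpha)$ prefactor replaced by a constant.

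The paper proceeds differently at precisely this point: it splits $\langle\bu,(\widetilde{\bSig}-\bSig)\bu\rangle$ into $U_1(\bu)$, the deviation of $\frac{1}{\alpha^2 n}\sum_i \bZ_i\bZ_i^\top$, and $U_2(\bu)$, the deviation of the diagonal correction, bounding each MGF separately (Lemma~\ref{lem:subGaussianSquared} for $U_1$; for $U_2$, a Jensen argument over the simplex weights $(u_1^2,\dots,u_{|S|}^2)$). The $(1-\alpha)$ appears there only through the explicit $\frac{1-\alpha}{\alpha^2}$ prefactor of the correction term $U_2$ — the $U_1$ tail, $\exp\{-cn\alpha^4 t^2/\norm{\bSig}_{op}^2\}$, carries no such factor, which confirms that your claimed $K$ cannot hold. (You may note that even the paper's final assembly is delicate here: substituting $t\propto(1-\alpha)\sqrt{(|S|+u)/n}$ into the $U_1$ tail yields an exponent of order $(1-\alpha)^2(|S|+u)$, which degrades against the net cardinality $9^{|S|}$ as $\alpha\to 1$; the $(1-\alpha)$ refinement genuinely comes only from the $U_2$ piece.) The repair for your proposal is therefore to abandon the single-parameter claim for $Q_i$ and adopt the two-term decomposition, accepting that only the correction term contributes the $(1-\alpha)$ factor.
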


\begin{proof}
    \vspace{1mm}
    \noindent i.
The proof follows by an $\epsilon$-net argument and is a straightforward
    extension of ~\cite{vershynin2018high}, Exercise 4.7.3.  First, let
    $\mathcal{N}_{1/4}$ be a $\frac{1}{4}$-net of the unit sphere $S^{|S| -
    1}$.  For convenience, we will drop the $S$ dependence of $\tilde{\bSig}$
    and $\bSig$ for the remainder of the proof, noting that both matrices are
    square with dimensions $|S| \times |S|$. Note now that $\norm{\tilde{\bSig} - \bSig}_{op} \leq 2\sup_{\bu \in
    \mathcal{N}_{1/4}}\left\lvert \left\langle\left(\tilde{\bSig} -
    \bSig\right)\bu, \bu\right\rangle\right\rvert$. Thus, we have:
    \begin{align*}
        \pr\left\{\norm{\tilde{\bSig} - \bSig}_{op} \geq t\right\} &\leq
        \sum_{\bu \in \mathcal{N}_{1/4}}\pr\left\{\left\lvert \left\langle\left(\tilde{\bSig} -
        \bSig\right)\bu, \bu\right\rangle\right\rvert \geq \frac{t}{2}\right\}
        \\
        &\leq \sum_{\bu \in \mathcal{N}_{1/4}}\pr\left\{\left\lvert U_1(\bu) \right\rvert \geq \frac{t}{4}\right\} +
        \pr\left\{\left\lvert U_2(\bu) \right\rvert \geq \frac{t}{4}\right\}
    \end{align*}
    where 
    \begin{align*}
        &U_1(\bu) = \left\langle \bu,
        \left(\frac{1}{n}\sum_{i=1}^{n}\frac{1}{\alpha^2}\bZ_{i}\bZ_{i}^T -
        \E\frac{1}{\alpha^2}\bZ_{1}\bZ_{1}^T\right)\bu\right\rangle\\
        &U_2(\bu) =
        \left\langle \bu, \left(\frac{1}{n}\sum_{i=1}^{n}\frac{1 -
        \alpha}{\alpha^2}\diag\left(\bZ_i \bZ_i^T\right) - \E\frac{1 -
        \alpha}{\alpha^2}\diag\left(\bZ_1 \bZ_1^T\right)\right)\bu\right\rangle
    \end{align*} 
    Let us first tackle the term $U_1(\bu)$.  To this end, we re-write:
    \begin{align*}
        \pr\left\{\left\lvert U_1(\bu) \right\rvert \geq \frac{t}{4}\right\} &=
        \pr\left\{\frac{1}{\alpha^2 n}\left\lvert\sum_{i=1}^{n}\left\langle \bZ_i, \bu \right\rangle^2 -
    \E \left\langle \bZ_i, \bu \right\rangle^2 \right\rvert\geq \frac{t}{4}\right\}
    \end{align*}
    Notice that $\E e^{\lambda
    \left\langle \bZ_i, \bu \right\rangle} \leq
    e^{\frac{1}{2}\lambda^2\norm{\bSig}_{op}\norm{\bu}_2^2}$.
    Thus, by Lemma~\ref{lem:subGaussianSquared}, we have $\forall \lambda \leq
    \frac{1}{8e\norm{\bSig}_{op}}$:
    \[\E\text{exp}\left\{\lambda\left(\left\langle \bZ_i, \bu \right\rangle^2 -
    \E \left\langle \bZ_i, \bu \right\rangle^2\right)\right\} \leq e^{\lambda^2
    32 e^2 \norm{\bSig}_{op}^2}\]
    Thus, if $t \leq \frac{32e \norm{\bSig}_{op}}{\alpha^2}$, we have:
    \[\pr\left\{\frac{1}{\alpha^2 n}\left\lvert\sum_{i=1}^{n}\left\langle \bZ_i, \bu \right\rangle^2 -
    \E \left\langle \bZ_i, \bu \right\rangle^2 \right\rvert\geq
    \frac{t}{4}\right\} \leq 2\text{exp}\left\{-\frac{n \alpha^4
    t^2}{1024e^2\norm{\bSig}_{op}^2}\right\}\]
    which implies by the union bound:
    \[\sum_{\bu \in \mathcal{N}_{1/4}}\pr\left\{\left\lvert U_1(\bu)\right\rvert
        \geq \frac{t}{4}\right\} \leq 2\left\lvert \mathcal{N}_{1/4}
        \right\rvert \text{exp}\left\{-\frac{n \alpha^4
    t^2}{1024e^2\norm{\bSig}_{op}^2}\right\}\]
    We now tackle $U_2(\bu)$.  First, we see that:
    \begin{align*}
        U_2(\bu) = \frac{1 - \alpha}{\alpha^2 n} \sum_{i=1}^{n} \left\langle
        \bu, \left(\text{diag}\left(\bZ_i \bZ_i^T\right) - \E \text{diag}\left(\bZ_i
        \bZ_i^T\right)\right)\bu\right\rangle = \frac{1 - \alpha}{\alpha^2 n}
        \sum_{i=1}^{n} \sum_{j=1}^{|S|} u_j^2 \left(\bZ_{ij}^2 - \E
        \bZ_{ij}^2\right)
    \end{align*}
    We are interested in 
    \begin{align*}
        \E \text{exp}\left\{\lambda\sum_{j=1}^{|S|} u_j^2 \left(\bZ_{ij}^2 - \E
        \bZ_{ij}^2\right)\right\} &= 1 + \sum_{k=2}^{\infty} \frac{\lambda^k \E
        \left\{\left(\sum_{j=1}^{|S|}u_j^2 \left(\bZ_{ij}^2 - \E
        \bZ_{ij}^2\right) \right)^k\right\}}{k!} \\
        &\leq 1 + \sum_{k=2}^{\infty} \frac{\lambda^k \E
        \left\{\left(\sum_{j=1}^{|S|}u_j^2 \bZ_{ij}^{2}\right)^k\right\}}{k!}
    \end{align*}
    Where the inequality follows by the non-negativity of $\bZ_{ij}^2$.  Notice
    now that since $\bu$ lies on the unit sphere, the vector $\left(\bu_1^2,
    \bu_2^2, \dots, \bu_{|S|}^2\right)$ lies on the simplex, and thus Jensen's
    inequality (with respect to the discrete distribution formed by the squared
    elements of $\bu$) gives $\left(\sum_{j=1}^{|S|}u_j^2 \bZ_{ij}^{2}\right)^k \leq
    \sum_{j=1}^{|S|}u_j^2 \bZ_{ij}^{2k}$.  Thus, we have:
    \begin{align*}
    1 + \sum_{k=2}^{\infty} \frac{\lambda^k \E
        \left\{\left(\sum_{j=1}^{|S|}u_j^2 \bZ_{ij}^{2}\right)^k\right\}}{k!}
        &\leq 1 + \sum_{k=2}^{\infty} \frac{\lambda^k 
        \sum_{j=1}^{|S|}u_j^2 \E \bZ_{ij}^{2k}}{k!} \\
        &\leq 1 + \sum_{k=2}^{\infty}\frac{\lambda^k 2k \left(2
        \norm{\bSig}_{op}\right)^k \Gamma(k) \norm{\bu}_2^2}{k!} 
    \end{align*}
    where the first inequality follows by the calculations of
    Equation~\ref{eq:2k_moment} and noticing that each random variable
    $\bZ_{ij}$ has sub-Gaussian constant at most $\norm{\bSig}_{op}$.  Now,
    under the assumption $\lambda < \frac{1}{8e\norm{\bSig}_{op}}$, we have:
    \begin{align*}
    1 + \sum_{k=2}^{\infty}\frac{\lambda^k 2k \left(2
        \norm{\bSig}_{op}\right)^k \Gamma(k) \norm{\bu}_2^2}{k!} 
        \leq \text{exp}\left\{32e^2\lambda^2 \norm{\bSig}_{op}^2\right\}
    \end{align*}
    Thus, if $t \leq \frac{32(1 - \alpha)e \norm{\bSig}_{op}}{\alpha^2}$, we
    have 
    \[\sum_{\bu \in \mathcal{N}_{1/4}}\pr\left\{\left\lvert U_1(\bu)\right\rvert
        \geq \frac{t}{4}\right\} \leq 2\left\lvert \mathcal{N}_{1/4}
        \right\rvert \text{exp}\left\{-\frac{n \alpha^4
    t^2}{1024(1 - \alpha)^2e^2\norm{\bSig}_{op}^2}\right\}\]
    Noting that $\left\lvert \mathcal{N}_{1/4} \right\rvert \leq 9^{|S|}$ and
    taking $t = \frac{64e(1 - \alpha)\norm{\bSig}_{op}\sqrt{|S| + u}}{\alpha^2
    \sqrt{n}}$ gives the result as long as $\sqrt{\frac{|S| + u}{n}} \leq
    \frac{1}{2}$.

    \vspace{1mm}
    \noindent ii. Notice that the off-diagonal entries of
    $\widetilde{\bSig}_{SS}$ are given by
    $\frac{1}{\alpha^2 n} \sum_{i=1}^{n}\bZ_{i\ell}\bZ_{ik}$ and the diagonal
    entries are given by $\frac{1}{\alpha n} \sum_{i=1}^{n} \bZ_{i\ell}^2$.  We
    notice that repeating the argument of Lemma~\ref{lem:productsubGaussian}
    using a symmetrization argument to account for products with non-zero mean
    implies $\E e^{\lambda \left(\bZ_{i\ell} \bZ_{ik} - \E \bZ_{i\ell}
    \bZ_{ik}\right)} \leq e^{128 e^2 \lambda^2
    \norm{\bSig}_{op}^2}$ for all $\lambda \leq \frac{1}{16e\norm{\bSig}_{op}}$.
    Similarly, by Lemma~\ref{lem:subGaussianSquared}, $\E e^{\lambda \left(
    \bZ_{i\ell}^2 - \E \bZ_{i\ell}^2\right)} \leq e^{32e^2 \lambda^2 \norm{\bSig}_{op}^2}$ for all
    $\lambda \leq \frac{1}{8e\norm{\bSig}_{op}}$.  This implies that for $t \leq
    \frac{16e\norm{\bSig}_{op}}{\alpha^2} \wedge
    \frac{8e\norm{\bSig}_{op}}{\alpha} $:
    \begin{align*}
        \pr\left\{\left\lvert \frac{1}{\alpha^2 n} \sum_{i=1}^{n} \bZ_{i\ell}
        \bZ_{ik} - \bSig_{\ell k} \right\rvert \geq t\right\} &\leq
        2\text{exp}\left\{- \frac{\alpha^4 n t^2}{256 e^2
        \norm{\bSig}_{op}^2}\right\}\\ 
        \pr\left\{\left\lvert \frac{1}{\alpha n} \sum_{i=1}^{n} \bZ_{i\ell}^2
         - \bSig_{\ell \ell} \right\rvert \geq t\right\} &\leq
        2\text{exp}\left\{- \frac{\alpha^2 n t^2}{64 e^2
        \norm{\bSig}_{op}^2}\right\}
    \end{align*}
    Thus for $p \geq 2$ a union bound gives: 
    \begin{align*}
        \pr\left\{\norm{\widetilde{\bSig} - \bSig}_{\ell_1 \rightarrow
        \ell_{\infty}} \geq t\right\} \leq \text{exp}\left\{- \frac{\alpha^4 n t^2}{256 e^2
        \norm{\bSig}_{op}^2} + 3 \log{p}\right\}
    \end{align*}
    Thus, taking $t = \frac{A \cdot 16e
    \norm{\bSig}_{op}}{\alpha^2}\sqrt{\frac{\log{p}}{n}}$ and assuming
    $\frac{A}{\alpha}\sqrt{\frac{\log{p}}{n}} \leq 1$ yields the result with
    probability at least $1 - p^{3-A^2}$
\end{proof}

\subsection{Control of Markov blankets}
\label{subsec:markovblankets}
\begin{lemma}
\label{lem:markovblanketmoments}
    \begin{itemize}
        Under the assumptions of theorem~\ref{thm:GaussianRes}, for any $\bu \in \mathbb{R}^{p}$, we have the following:
        \item[i.] $\E\left\{\left(\sum_{b=1}^{p}\left\lvert u_b \right\rvert\sqrt{S_{(i,b)}}\sum_{j
            \in S_{(i,b)}}\left\lvert X_{ij}\right\rvert\right)^2\right\} \leq
            \norm{\bu}_1^2 \norm{\bSig}_{op} \E S_{(i,b)}^3$, and
    \item[ii.] $\E\left\{\left(\sum_{b=1}^{p}\left\lvert u_b\right\rvert\sqrt{S_{(i,b)}}\sum_{j
            \in S_{(i,b)}}\left\lvert X_{ij}\right\rvert\right)^4\right\} \leq
            3\norm{\bu}_1^4 \norm{\bSig}_{op}^2 \E S_{(i,b)}^6$.
    \end{itemize}
    \end{lemma}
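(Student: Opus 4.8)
The plan is to reduce both moment bounds to a single per-coordinate estimate by peeling off the $\ell_1$ norm of $\bu$ with a weighted Jensen inequality, and then to condition on the missingness pattern so that the Markov blankets $S_{(i,b)}$ become fixed sets while only the Gaussian coordinates $X_{ij}$ remain random. Writing $T_b = \sqrt{|S_{(i,b)}|}\sum_{j\in S_{(i,b)}}|X_{ij}|$, the quantity of interest is $Y = \sum_{b=1}^p |u_b|\,T_b$. Introducing the probability weights $p_b = |u_b|/\norm{\bu}_1$ and using convexity of $t\mapsto t^2$ (resp.\ $t\mapsto t^4$), Jensen's inequality gives $Y^2 \le \norm{\bu}_1 \sum_b |u_b| T_b^2$ and $Y^4 \le \norm{\bu}_1^3 \sum_b |u_b| T_b^4$. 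Taking expectations reduces (i) and (ii) to showing $\E T_b^2 \le \norm{\bSig}_{op}\,\E|S_{(i,b)}|^3$ and $\E T_b^4 \le 3\norm{\bSig}_{op}^2\,\E|S_{(i,b)}|^6$, after which summing over $b$ and bounding $\sum_b|u_b| \le \norm{\bu}_1$ yields the claimed inequalities (here $\E S_{(i,b)}^3$, $\E S_{(i,b)}^6$ are understood as uniform bounds on the blanket moments).

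The key observation for the per-coordinate estimate is that, under MCAR, the blanket $S_{(i,b)}$ is a deterministic function of the missingness pattern alone, being the percolation boundary of Definition~\ref{def:markovblanket}, and is therefore independent of the Gaussian row $\bX_i$. Hence I condition on $\{S_{(i,b)} = S\}$ for a fixed set $S$; on this event the law of $\bX_i$ is unchanged. For the second moment I expand $(\sum_{j\in S}|X_{ij}|)^2 = \sum_{j,k\in S}|X_{ij}||X_{ik}|$ and bound each term by Cauchy--Schwarz, $\E[|X_{ij}||X_{ik}|]\le \sqrt{\bSig_{jj}\bSig_{kk}}\le \norm{\bSig}_{op}$, using $\bSig_{jj}\le\norm{\bSig}_{op}$. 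The $|S|^2$ terms combine with the prefactor $|S|$ from $\sqrt{|S|}^2$ to give $\E[T_b^2\mid S_{(i,b)}=S]\le |S|^3\norm{\bSig}_{op}$; taking expectation over the pattern gives the bound. The fourth-moment estimate is identical in spirit: expand into a sum of $|S|^4$ four-fold products, bound each by generalized H\"older together with the Gaussian fourth-moment identity $\E X_{ij}^4 = 3\bSig_{jj}^2 \le 3\norm{\bSig}_{op}^2$, so that $\E[(\sum_{j\in S}|X_{ij}|)^4]\le 3|S|^4\norm{\bSig}_{op}^2$, and multiply by the $|S|^2$ coming from $\sqrt{|S|}^4$.

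The computations are routine; the only points requiring care are the two structural facts that make the conditioning legitimate, namely that the blanket sets depend only on the missingness and not on the covariate values (so that the Gaussian moments may be evaluated under the unconditional law of $\bX_i$), and that the diagonal entries of $\bSig$ are controlled by $\norm{\bSig}_{op}$. I expect the main, and rather mild, obstacle to be bookkeeping the powers of $|S_{(i,b)}|$ correctly: one must check that the $\sqrt{|S_{(i,b)}|}$ weights contribute exactly the extra factors $|S|^1$ and $|S|^2$ needed to turn the $|S|^2$ and $|S|^4$ combinatorial counts into $\E|S_{(i,b)}|^3$ and $\E|S_{(i,b)}|^6$. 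Finiteness of these blanket moments is guaranteed by assumption \textbf{C2}, that is, the subcritical percolation condition $(1-\alpha)(d_{\max}-1)<1$.
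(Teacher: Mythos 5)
Your proof is correct, and it takes a genuinely different route from the paper's. The paper expands the square (resp.\ fourth power) of $\sum_b |u_b| T_b$, with $T_b = \sqrt{S_{(i,b)}}\sum_{j\in S_{(i,b)}}|X_{ij}|$, into a sum over pairs (resp.\ quadruples) of indices $b$, handling diagonal and cross terms separately: the cross terms are decoupled via Cauchy--Schwarz-type bounds such as $\E\sqrt{S_{(i,b)}S_{(i,c)}} \leq \E S_{(i,b)}$, and for part (ii) the paper invokes a dedicated elementary inequality, $\E\, Y_1Y_2Y_3Y_4 \leq \E\, Y_1^4$ for identically distributed nonnegative variables, to reduce mixed fourth moments to pure ones. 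Your weighted Jensen step, $\bigl(\sum_b p_b T_b\bigr)^k \leq \sum_b p_b T_b^k$ with $p_b = |u_b|/\norm{\bu}_1$ for $k=2,4$, collapses all of this bookkeeping at once: it eliminates the cross terms in $b$ entirely and reduces both parts to a single per-coordinate moment bound, making the four-variable trick unnecessary. Your approach also has the virtue of making explicit a structural fact the paper uses only implicitly, namely that under MCAR the blanket $S_{(i,b)}$ is a function of the missingness mask alone and hence independent of the Gaussian row, which is what licenses factoring $\E S_{(i,b)}^3$ and $\E S_{(i,b)}^6$ away from the Gaussian moments; the paper's own display for the off-diagonal terms pulls an expectation through a product of dependent factors without comment, and your conditioning argument is the rigorous version of that step. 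Two small remarks: your use of the exact Gaussian identity $\E X_{ij}^4 = 3\bSig_{jj}^2$ is fine under \textbf{C1} but slightly less general than the paper's sub-Gaussian-style bound $\E|X_{ij}|^4 \leq 3\norm{\bSig}_{op}^2$, and your reading of $\E S_{(i,b)}^3$, $\E S_{(i,b)}^6$ as uniform bounds over $b$ matches the paper's intent, since all blankets are stochastically dominated by the same tree variable via Lemma~\ref{lem:Comparison}.
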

    \begin{proof}
    \begin{itemize}
    \item[i.] We first expand:
        \begin{align*}
        \E\left\{\left(\sum_{b=1}^{p}\left\lvert u_b\right\rvert\sqrt{S_{(i,b)}}\sum_{j
            \in S_{(i,b)}}\left\lvert X_{ij}\right\rvert\right)^2\right\} =
            \E\left\{\sum_{b, c \in [p] \times [p]} \prod_{I \in \{b, c\}} \left\lvert u_I\right\rvert\sqrt{S_{(i,I)}}\sum_{j
            \in S_{(i,I)}}\left\lvert\bX_{ij}\right\rvert\right\}
        \end{align*}
        We first look at the terms $b = c$ and the quantity of interest is thus
            $\E\left\{S_{(i,b)} \left(\sum_{j \in S_{(i,b)}}
            |X_{ij}|\right)^2\right\}$.  Noticing that $\E X_{ij}^2 \leq
            \norm{\bSig}_{op}$ and $\E \bX_{ij} X_{ik} \leq \norm{\bSig}_{op}$
            shows $\E\left\{S_{(i,b)} \left(\sum_{j \in S_{(i,b)}}
            |X_{ij}|\right)^2\right\} \leq \norm{\bSig}_{op} \E S_{(i,b)}^3$.
            Similarly noting that by Cauchy-Schwarz, $\E \sqrt{S_{(i,b)}
            S_{(i,c)}} \leq \E S_{i,b}$, we see that:
            \begin{align*}
            \E\left\{\sum_{b \neq c \in [p] \times [p]} \prod_{I \in \{b, c\}} \left\lvert u_I \right\rvert\sqrt{S_{(i,I)}}\sum_{j
                \in S_{(i,I)}}\left\lvert\bX_{ij}\right\rvert\right\} &= \sum_{b
                \neq c \in [p] \times [p]} \prod_{I \in \{b, c\}} \left\lvert u_I \right\rvert \E \left\{\sqrt{S_{(i,I)}}\sum_{j
                \in S_{(i,I)}}\left\lvert\bX_{ij}\right\rvert\right\} \\
                &\leq \sum_{b
                \neq c \in [p] \times [p]} \prod_{I \in \{b, c\}}
                \left\lvert u_I\right\rvert \norm{\bSig}_{op} \E
                S_{(i,b)}^3
            \end{align*}
            Summing the two pieces gives the desired result.
        \item[ii.]  First notice that $4$ identically distributed, positive
            random variables $Y_1, Y_2, Y_3, Y_4$ are such that $\E Y_1 Y_2 Y_3 Y_4
            \leq E Y_1^4$.  One can see this by noticing that $Y_1 Y_2 Y_3 Y_4 \leq
            \frac{1}{2}\left((Y_1 Y_2)^2 + (Y_3Y_4)^2\right)$ and that $\E
            (Y_1Y_2)^2 \leq \E Y_1^4$ where the second inequality is by
            Cauchy-Schwarz.  Now, expanding and noting that $\E |X_{ij}|^4
            \leq 3 \norm{\bSig}_{op}^2$ gives the result (following the same
            recipe as above).
    \end{itemize}
\end{proof}

In the remaining of this subsection, we bound the moment generating function (MGF) of the Markov blanket.

Let $G= (V=[d],E)$ be the sparsity graph of the precision matrix $\bOmega$; i.e., an edge is present if and only if the corresponding entry in $\bOmega$ is not zero. 
Each vertex is independently declared `open' with probability $\alpha$ and `closed' otherwise. 
We let $\omega_v \in \{0,1\}$ indicate the state of a vertex $v$ (open if $\omega_v =1$ and closed otherwise).   
The Markov blanket of the random variable $X_{i,u}$ (at vertex $u \in V$) is the set of first open vertices encountered by all walks starting at vertex $u$ on the graph.         
We denote its size by $S_{G}(v)$. Let $S_{T}(v)$ denote the size of the Markov blanket of $v$ in the $d_{\max}$-regular tree $T$ rooted at $v$, which we define in the same way.  
We first compare $S_{G}(v)$ with $S_{T}(v)$, and then perform a recursive argument to bound the MGF of latter.    
\begin{lemma}\label{lem:Comparison}
Let $v \in V$. There exists a coupling of $S_{G}(v)$ and $S_{T}(v)$ such that $\Prob(S_{G}(v) \le S_{T}(v)) = 1$.
\end{lemma}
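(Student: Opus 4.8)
The plan is to reinterpret the Markov blanket combinatorially and then exhibit an explicit coupling driven by the regular tree. First I would observe that, writing $C_G(v)$ for the connected cluster of \emph{closed} vertices containing the root $v$ (those reachable from $v$ through closed vertices), the set of first open vertices met by walks out of $v$ is exactly the \emph{open outer boundary} of $C_G(v)$: the collection of open vertices having a neighbour in $C_G(v)$. Thus $S_G(v)=|\partial^{\mathrm{op}}C_G(v)|$, and likewise $S_T(v)=|\partial^{\mathrm{op}}C_T(v)|$ for the closed cluster of the root in the $d_{\max}$-regular tree $T$. This reduction turns the statement into a comparison between the boundaries of two site-percolation clusters, with $T$ the ``maximally branching'' host because $G$ has maximum degree at most $d_{\max}$.

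Then I would construct the coupling by a synchronized breadth-first exploration, taking the i.i.d.\ states on $T$ as the master randomness and projecting them onto $G$. I maintain a map $\phi$ from the explored vertices of $T$ onto explored vertices of $G$, initialised by $\phi(\mathrm{root})=v$ with matching states. Whenever a tree vertex $x$ is found closed and processed, its children---there are $d_{\max}$ of them at the root and $d_{\max}-1$ otherwise---receive the i.i.d.\ tree states; since $\deg_G(\phi(x))\le d_{\max}$ and one neighbour of $\phi(x)$ points back toward $v$, the remaining neighbours of $\phi(x)$ in $G$ inject into the children of $x$, which is how $\phi$ is extended. The state that $G$ assigns to each graph vertex $w$ is \emph{copied from the first tree vertex that maps to} $w$ in the BFS order; because distinct graph vertices have distinct first-preimages and the tree states are independent, the resulting configuration on $G$ is genuinely i.i.d.\ Bernoulli$(\alpha)$, so the region the exploration uncovers is the true closed cluster $C_G(v)$ with the correct law.

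With the coupling in hand the comparison is immediate: take any $u\in\partial^{\mathrm{op}}C_G(v)$, adjacent to a closed cluster vertex $w$. The state of $u$ was fixed when $u$ first appeared as a child of some processed (hence closed) tree vertex, i.e.\ at a tree vertex $y^\ast$ with $\phi(y^\ast)=u$ whose parent projects into $C_G(v)$; since $u$ is open so is $y^\ast$, whence $y^\ast\in\partial^{\mathrm{op}}C_T(v)$. Thus $\phi$ carries $\partial^{\mathrm{op}}C_T(v)$ onto $\partial^{\mathrm{op}}C_G(v)$, forcing $S_G(v)\le S_T(v)$ pointwise under the coupling, which is the desired $\Prob(S_G(v)\le S_T(v))=1$.

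The main obstacle---and the part deserving care---is reconciling two requirements that pull against each other: the projected states on $G$ must have the exact product law (so that $S_G(v)$ has its true distribution), yet the many tree preimages of a single graph vertex created by cycles must not corrupt either that marginal or the boundary surjection. The first-occurrence copying rule is what resolves this, and I would verify carefully that (i) the first-preimage map is injective, which yields the independence, and (ii) every open graph-boundary vertex is certified open by \emph{its} first preimage, which is always a child of a closed explored tree vertex and hence lies in $\partial^{\mathrm{op}}C_T(v)$.
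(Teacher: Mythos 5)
Your proof is correct, but it runs the coupling in the opposite direction from the paper, and the two constructions are genuinely different. The paper takes the percolation on $G$ as given and \emph{lifts} it: it forms the path tree of $G$ rooted at $v$ (vertices are simple paths out of $v$), assigns each path the state of its endpoint, restores independence by resampling all but one path per endpoint, and finally embeds this in the infinite $d_{\max}$-regular tree with fresh states. You instead take the i.i.d.\ states on $T$ as the master randomness and \emph{project} them onto $G$ by a synchronized BFS with a first-occurrence copying rule. Your route is the standard exploration/branching-process domination argument from percolation, and its two load-bearing checks are exactly the ones you flag: the first-preimage map is a well-defined injection (so the projected states on $G$ are revealed one at a time, each fresh and independent of the exploration's past, giving the correct product law), and every $u \in \partial^{\mathrm{op}}C_G(v)$ is explored with its first preimage $y^\ast$ a child of a processed vertex $x$ --- processed vertices are tree-closed with tree-closed ancestors, so $x \in C_T(v)$ regardless of whether $\phi(x)$'s graph state agrees with $x$'s tree state --- whence $y^\ast \in \partial^{\mathrm{op}}C_T(v)$ and $u \mapsto y^\ast$ injects the $G$-boundary into the $T$-boundary. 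What each approach buys: yours is self-contained and sidesteps the path-tree machinery entirely, including the delicate issue in the paper's construction of which duplicate path retains its original state (prefixes of a retained path need not themselves be retained, a point the paper glosses over); the paper's construction is shorter to state and reuses a known combinatorial object.

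Two small points of precision. First, your identity $S_G(v) = |\partial^{\mathrm{op}}C_G(v)|$ fails in the corner case where $v$ itself is open (the closed cluster is empty but the blanket is $\{v\}$, of size $1$, consistent with the MGF recursion's $\alpha e^{\theta}$ term); this is harmless because your coupling shares the root's state between $T$ and $G$, giving $S_G(v) = S_T(v) = 1$ there. Second, the phrase ``$\phi$ carries $\partial^{\mathrm{op}}C_T(v)$ onto $\partial^{\mathrm{op}}C_G(v)$'' is loose: the image of the tree boundary under $\phi$ can contain vertices outside the graph boundary (a later preimage of a graph-closed vertex may be tree-open and adjacent to $C_T(v)$). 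The correct and sufficient statement is the injection in the other direction, via first preimages, which your argument in fact establishes; the excess tree-boundary vertices only help the inequality.
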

\begin{proof}
The construction of the coupling relies on the notion of the \emph{path tree} of a graph~\cite{godsil1981matchings}. \emph{The path tree of $G$ rooted at $v$} is a finite tree rooted at $v$ whose vertices are simple paths $(v,v_1,\cdots,v_l)$ in $G$ starting at $v$ (a path is simple if no vertex appears more than once in it). Two paths are adjacent if one can be obtained by appending one new vertex to the other, i.e., edges of the path tree are of the form $(v,v_1,\cdots,v_l) \sim (v,v_1,\cdots,v_{l+1})$. 

Let $\tilde{T}$ be the path tree rooted at $v$ obtained from $G$. We first associate to every path $(v,v_1,\cdots,v_l)$ the state $\omega_{v_l}$ of its endpoint in $G$. We then consider all paths staring at $v$ and having the same endpoint $u$. If there is more than one such path (and this will be the case if the graph $G$ contains cycles) then we independently resample the state (open/closed) of all but one path. (We do this for all $u \neq v$.) The path whose state is not resampled is chosen arbitrarily, e.g., uniformly at random. 
In this way we have constructed a \emph{Bernoulli site percolation process} on the tree $\tilde{T}$ such that the size of the Markov blanket of $v$ in $G$ is almost surely upper-bounded by that of $v$ in $\tilde{T}$. 

Lastly, the tree $\tilde{T}$ has maximal degree $d_{\max}$ but may not be regular (it is not if $G$ is not regular). We extend it to an infinite $d_{\max}$-regular tree $T$ rooted at $v$ and associate to the extra vertices independent states sampled with probability $\alpha$. The size of the Markov blanket of $v$ can only grow with this operation.           
\end{proof}
Now we bound the moment generating function of $S_{T}(v)$. Let $u$ be an offspring of $v$. Let $S^{\downarrow}_{T}(u)$ be the number of first open vertices which are descendants of $u$, and $S^{\downarrow \le l}_{T}(u)$ the number of first open vertices up to distance $l$ from $u$. 
 Define $\chi(\theta) \equiv \E e^{\theta S^{\downarrow}_{T}(u)}$ and $\chi^{\le l}(\theta) \equiv \E e^{\theta S^{\downarrow \le l}_{T}(u)}$ , $\theta \in \mathbb{R}$. These functions do not depend on the specific choice of $u$. It is clear by monotone convergence that $\chi^{\le l}$ converges pointwise to $\chi$ as $l \to \infty$. Also note that $\chi$ is so far only defined formally: contrarily to $S^{\downarrow \le l}_{T}(u)$ which is finite almost surely, $S^{\downarrow}_{T}(u)$ may be infinite. 
 \begin{lemma}
\label{lem:BlanketMGF}
We have $\chi^{\le 0} = \alpha e^{\theta} + 1-\alpha$, and the functions $\chi^{\le l}, l \ge 1$ satisfy the recursion: $\chi^{\le l}(\theta) = \alpha e^{\theta} + (1-\alpha)(\chi^{\le l-1}(\theta))^{d_{\max}-1}$.
\end{lemma}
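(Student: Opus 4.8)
The plan is to derive the recursion directly from the branching structure of the $d_{\max}$-regular tree together with the independence of the site states across disjoint subtrees. Throughout, fix an offspring $u$ of $v$ and consider the subtree of $T$ spanned by $u$ and all of its descendants; since $u$ is a non-root vertex of a $d_{\max}$-regular tree, it has exactly $d_{\max}-1$ children, each of which roots a subtree isomorphic as a rooted tree to the one rooted at $u$.

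First I would settle the base case. The quantity $S^{\downarrow \le 0}_{T}(u)$ only inspects $u$ itself: if $u$ is open (probability $\alpha$) then $u$ is the unique first open vertex reachable within distance $0$, so $S^{\downarrow \le 0}_{T}(u)=1$; if $u$ is closed (probability $1-\alpha$) no open vertex is found within distance $0$, so $S^{\downarrow \le 0}_{T}(u)=0$. Hence
\[
\chi^{\le 0}(\theta)=\E e^{\theta S^{\downarrow \le 0}_{T}(u)}=\alpha e^{\theta}+(1-\alpha).
\]

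Next I would establish the recursive decomposition of $S^{\downarrow \le l}_{T}(u)$ for $l\ge 1$ by conditioning on the state $\omega_u$. If $u$ is open, every downward walk from $u$ stops immediately at $u$, so the truncated Markov blanket is the single vertex $u$ and $S^{\downarrow \le l}_{T}(u)=1$. If $u$ is closed, no walk stops at $u$, and the first open vertices within distance $l$ of $u$ are precisely the first open vertices within distance $l-1$ of each child $u_1,\dots,u_{d_{\max}-1}$; because the child subtrees are vertex-disjoint, these sets are disjoint and
\[
S^{\downarrow \le l}_{T}(u)=\sum_{j=1}^{d_{\max}-1} S^{\downarrow \le l-1}_{T}(u_j).
\]

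Finally I would pass to the moment generating function. Conditioning on $\omega_u$ and inserting the decomposition gives
\[
\chi^{\le l}(\theta)=\alpha e^{\theta}+(1-\alpha)\,\E\exp\Big\{\theta\sum_{j=1}^{d_{\max}-1} S^{\downarrow \le l-1}_{T}(u_j)\Big\}.
\]
The states of the vertices in the $d_{\max}-1$ child subtrees are mutually independent (independent site percolation on disjoint subgraphs), so the variables $S^{\downarrow \le l-1}_{T}(u_j)$ are independent; moreover each child $u_j$ is a non-root vertex with $d_{\max}-1$ children of its own, so $S^{\downarrow \le l-1}_{T}(u_j)$ has the same law as $S^{\downarrow \le l-1}_{T}(u)$, with MGF $\chi^{\le l-1}(\theta)$. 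Factorizing the MGF of the independent sum into a product then yields
\[
\chi^{\le l}(\theta)=\alpha e^{\theta}+(1-\alpha)\big(\chi^{\le l-1}(\theta)\big)^{d_{\max}-1},
\]
as claimed. The only genuinely delicate points are the bookkeeping of the truncation budget—moving from $u$ to a child consumes one unit of distance, which is why the depth drops from $l$ to $l-1$—and the identification of each child subtree with a fresh copy of the same branching recursion; both follow from the self-similarity of the regular tree and the i.i.d.\ nature of the site states.
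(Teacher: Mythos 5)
Your proof is correct and follows essentially the same route as the paper's: condition on the state of $u$ (open gives $S^{\downarrow \le l}_{T}(u)=1$, closed gives a sum over the $d_{\max}-1$ children at depth $l-1$), then use independence of the disjoint child subtrees and self-similarity of the tree to factorize the MGF. Your write-up simply makes explicit the disjointness and depth-bookkeeping details that the paper leaves implicit.
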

\noindent\begin{proof}
Since $S^{\downarrow \le 0}_{T}(u) = 1$ with probability $\alpha$ and $0$ otherwise, the first statement follows. 
Next, we exploit the recursive structure of the tree. The vertex $u$ has $d_{\max}-1$ offsprings and 
\begin{align*} 
S^{\downarrow \le l}_{T}(u) = 
\begin{cases}
1 &~\mbox{with probability}~ \alpha\\
\sum_{w \sim u} Z(w) &~\mbox{with probability}~ 1-\alpha,
\end{cases} 
\end{align*}
where $Z_w$ is the number of first open vertices which are descendants of $w$ and are within distance $l-1$ from $w$.
Since $(Z(w))_{w \sim u}$ are independent and have the same distribution as $S^{\downarrow \le l-1}_{T}(u)$, we have
\begin{align*}
\E e^{\theta S^{\downarrow \le l}_{T}(u)} &= \alpha e^{\theta} + (1-\alpha) \E e^{\theta \sum_{w \sim u} S^{\downarrow \le l-1}_{T}(w)}\\
&=\alpha e^{\theta} + (1-\alpha) \Big(\E e^{\theta S^{\downarrow \le l-1}_{T}(u)}\Big)^{d_{\max}-1}.
\end{align*}
\end{proof}

\noindent Next, Lemma~\ref{lem:generatingFunctionBoundary} follows from Lemma~\ref{lem:Comparison} and the following result.
\begin{lemma}
    If $(1-\alpha)(d_{\max}-1)<1$ then the random variable $S_{T}(v)$ has sub-exponential tail.  That is, there
    exists a constant $c = c(\alpha, d_{\text{max}})>0$ depending only on $\alpha,
    d_{\text{max}}$ such that:
    \begin{align*}
        \pr\left\{S_T(v) \geq t\right\} \leq 2e^{-ct}.
    \end{align*}
\end{lemma}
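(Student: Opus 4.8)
The plan is to show that the moment generating function (MGF) of $S_T(v)$ is finite in a one-sided neighborhood of the origin, and then obtain the sub-exponential tail by Markov's inequality. In the application $v$ is the closed (missing) vertex whose entry is being imputed, and the root of the $d_{\max}$-regular tree $T$ has exactly $d_{\max}$ offsprings $u_1,\dots,u_{d_{\max}}$. Every walk out of $v$ enters one of the $d_{\max}$ disjoint subtrees hanging off these offsprings, so the Markov blanket of $v$ decomposes as $S_T(v)=\sum_{j=1}^{d_{\max}} S^{\downarrow}_{T}(u_j)$, a sum of $d_{\max}$ independent copies of $S^{\downarrow}_{T}(u)$ (the case where $v$ is open gives the trivial value $S_T(v)=1$ and only improves every bound below). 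Consequently $\E e^{\theta S_T(v)} = \chi(\theta)^{d_{\max}}$, and it suffices to exhibit a single $\theta>0$ with $\chi(\theta)<\infty$.

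To bound $\chi(\theta)$ I will exploit the recursion of Lemma~\ref{lem:BlanketMGF}. Writing $g_\theta(x) = \alpha e^{\theta} + (1-\alpha)\, x^{d_{\max}-1}$, the recursion reads $\chi^{\le l}(\theta)=g_\theta\big(\chi^{\le l-1}(\theta)\big)$ with initialization $\chi^{\le 0}(\theta)=\alpha e^{\theta}+1-\alpha$, and recall that $\chi^{\le l}(\theta)\uparrow\chi(\theta)$ as $l\to\infty$. The map $g_\theta$ is increasing and convex on $[0,\infty)$. The key step is to produce a \emph{super-solution}: a value $x^\star$ together with a parameter $\theta>0$ satisfying both $g_\theta(x^\star)\le x^\star$ and $\chi^{\le 0}(\theta)\le x^\star$. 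Granting this, monotonicity of $g_\theta$ gives by induction that $\chi^{\le l}(\theta)\le x^\star$ for all $l$: the base case is $\chi^{\le 0}(\theta)\le x^\star$, and if $\chi^{\le l-1}(\theta)\le x^\star$ then $\chi^{\le l}(\theta)=g_\theta\big(\chi^{\le l-1}(\theta)\big)\le g_\theta(x^\star)\le x^\star$. Passing to the limit yields $\chi(\theta)\le x^\star<\infty$.

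The existence of such a pair $(x^\star,\theta)$ is exactly where the subcriticality hypothesis $(1-\alpha)(d_{\max}-1)<1$ is used, and I expect this to be the main obstacle. At $\theta=0$ the point $x=1$ is a fixed point, since $g_0(1)=\alpha+(1-\alpha)=1$, and its multiplier is $g_0'(1)=(1-\alpha)(d_{\max}-1)<1$. Hence for $x$ slightly above $1$ one has $g_0(x)=1+g_0'(1)(x-1)+o(x-1)<x$, so we may fix some $x^\star>1$ with $g_0(x^\star)<x^\star$ strictly (when $d_{\max}=2$ the map $g_0$ is affine and $g_0(x)<x$ for every $x>1$, so this is immediate). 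Now $g_\theta(x^\star)=g_0(x^\star)+\alpha(e^\theta-1)$ and $\chi^{\le 0}(\theta)=1+\alpha(e^\theta-1)$ are continuous in $\theta$ and tend respectively to $g_0(x^\star)<x^\star$ and to $1<x^\star$ as $\theta\downarrow 0$; therefore a sufficiently small $\theta>0$ makes both $g_\theta(x^\star)\le x^\star$ and $\chi^{\le 0}(\theta)\le x^\star$ hold, as required.

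With $\theta>0$ fixed as above and $M\equiv\chi(\theta)^{d_{\max}}\le (x^\star)^{d_{\max}}<\infty$, Markov's inequality yields $\pr\{S_T(v)\ge t\}\le M e^{-\theta t}$ for all $t\ge 0$. Taking $c=c(\alpha,d_{\max})\in(0,\theta)$ small enough and using the trivial bound $\pr\{S_T(v)\ge t\}\le 1$ on the range of small $t$ where $2e^{-ct}\ge 1$, this converts into the stated form $\pr\{S_T(v)\ge t\}\le 2e^{-ct}$; since $\theta$, $x^\star$, and hence $M$ and $c$ depend only on $\alpha$ and $d_{\max}$, so does the final constant. This lemma combined with Lemma~\ref{lem:Comparison} then gives Lemma~\ref{lem:generatingFunctionBoundary}.
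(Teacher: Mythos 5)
Your proof is correct, and it follows the paper's overall strategy --- bounding the MGF $\chi(\theta)$ of the subtree blanket via the recursion of Lemma~\ref{lem:BlanketMGF} and then converting to a tail bound --- but the way you tame the recursion is genuinely different. The paper carries out an explicit fixed-point analysis of $f_{\theta}(x)=\alpha e^{\theta}+(1-\alpha)x^{d_{\max}-1}$: it locates the point $x_*=((d_{\max}-1)(1-\alpha))^{-1/(d_{\max}-2)}$ where $f_{\theta}'=1$, derives from $f_{\theta}(x_*)\le x_*$ an explicit maximal threshold $\bar{\theta}(\alpha,d_{\max})$ below which two fixed points exist, and then argues the iteration started at $x_0=1$ converges to the smaller one by checking $f_{\theta}'(x_0)=(1-\alpha)(d_{\max}-1)<1$. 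You instead avoid computing any fixed points: you observe that at $\theta=0$ the point $x=1$ is a fixed point with multiplier $(1-\alpha)(d_{\max}-1)<1$, extract a strict super-solution $x^\star>1$ with $g_0(x^\star)<x^\star$, and perturb in $\theta$ by continuity; monotonicity of $g_\theta$ then traps the whole sequence $\chi^{\le l}(\theta)\le x^\star$ by induction, and monotone convergence gives $\chi(\theta)\le x^\star$. Your softer argument buys two things: it is uniform in $d_{\max}$, handling the $d_{\max}=2$ case (where the paper's formula for $x_*$ is undefined and the map is affine) without a separate device beyond the remark you make, and it makes the final Chernoff step explicit, including the small-$t$ adjustment needed to reach the stated form $2e^{-ct}$, which the paper leaves implicit. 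What the paper's computation buys in exchange is a quantitative, explicit admissible range $[0,\bar{\theta}]$ for $\theta$, whereas your perturbation argument only yields existence of some $\theta>0$ --- which is all the lemma requires. One cosmetic point: your identity $\E e^{\theta S_T(v)}=\chi(\theta)^{d_{\max}}$ holds conditionally on $v$ being closed; unconditionally the paper's formula $\E e^{\theta S_T(v)}=\alpha e^{\theta}+(1-\alpha)\chi(\theta)^{d_{\max}}$ applies, but since you flag that the open case gives $S_T(v)=1$ and only improves the bound, this does not affect correctness.
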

\noindent\begin{proof}
It suffices to prove that there exists $\theta>0$ such that $\chi(\theta)<+\infty$ when $(1-\alpha)(d_{\max}-1)<1$. Indeed, by the argument used in Lemma~\ref{lem:BlanketMGF}, the MGF of $v$ (the parent of $u$ and root of the tree) is given by $\E e^{\theta S_{T}(v)} = \alpha e^{\theta} + (1-\alpha)\chi(\theta)^{d_{\max}}$.
Therefore the argument boils down to analyzing the convergence of $\chi^{\le l}$ to a fixed point of $f_{\theta}(x) \equiv \alpha e^{\theta}+(1-\alpha)x^{d_{\max}-1}$. We have $f_{\theta}'(x) = (d_{\max}-1)(1-\alpha)x^{d_{\max}-2}$. The point at which $f_{\theta}'(x)=1$ is $x_* = ((d_{\max}-1)(1-\alpha))^{-1/(d_{\max}-2)}$. The equation $f_{\theta}(x)=x$ has two roots if $f_{\theta}(x_*) \le x_*$ (which merge when $f_{\theta}(x_*) = x_*$) and no roots otherwise. After rearranging the inequality $f_{\theta}(x_*) \le x_*$, this provides a bound on $\theta$:
\[\alpha e^{\theta} \le \frac{d_{\max}-2}{d_{\max}-1}\frac{1}{((d_{\max}-1)(1-\alpha))^{-1/(d_{\max}-2)}}.\]
Let $\bar{\theta} = \bar{\theta}(\alpha,d_{\max})$ the maximal value of $\theta$ such that the above bound holds, and let $\theta < \bar{\theta}$. It remains to show that the fixed point iteration $x_{l} = f_{\theta}(x_{l-1})$, $x_0 = 1$ converges to one of the two roots when $(d_{\max}-1)(1-\alpha) <1$. This will be true if $x_0$ is smaller than the largest fixed point, call it $x_+$. We observe that $f_{\theta}$ has derivative larger than 1 the second time it crosses the diagonal: $f_{\theta}'(x_+) >1$. Since $f_{\theta}'$ is increasing it suffices to check that $f_{\theta}'(x_0) < 1$ to ensure that $x_0 < x_+$. And we have $f_{\theta}'(x_0) = (1-\alpha)(d_{\max}-1)$.    
To sum up, if $(1-\alpha)(d_{\max}-1)<1$ and $\theta <\bar{\theta}$ then $(\chi^{\le l}(\theta))_{l\ge 0}$ converges to a finite limit, hence $S^{\downarrow}_{T}(u)$ has a finite MGF on $[0,\bar{\theta}]$ (and is the smallest of the two fixed points of $f_{\theta}$.)
\end{proof}

The next lemma follows immediately from Lemma~\ref{lem:Comparison} and the preceding result.
\begin{lemma}
    \label{lem:generatingFunctionBoundary}
    The random variables $S(i,b)$ have sub-exponential tails.  That is, there
    exists a constant $c =c(\alpha, d_{\text{max}})>0$ depending only on $\alpha,
    d_{\text{max}}$ such that for all $t \ge 0$,
    \begin{align*}
        \pr\left\{S(i,b) \geq t\right\} \leq c_0 e^{-ct},
    \end{align*}
    where $c_0$ is a universal constant.
\end{lemma}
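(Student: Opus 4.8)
The plan is to obtain the tail bound by transferring the sub-exponential estimate already established for the Markov blanket on the regular tree to the graph $G_i$ via stochastic domination. First I would observe that, by construction (Definition~\ref{def:sparsitygraph} and Definition~\ref{def:markovblanket}), each $G_i$ is an independent copy of the sparsity graph $G$ carrying an i.i.d.\ Bernoulli$(\alpha)$ site-percolation process, a vertex being open if and only if the corresponding covariate entry is observed. Hence the law of $S(i,b)$ coincides with that of the quantity $S_G(b)$ analyzed in the preceding lemmas and, crucially, does not depend on the row index $i$; it therefore suffices to bound $\pr\{S_G(b)\ge t\}$ uniformly in $b$.

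The main step is then to invoke Lemma~\ref{lem:Comparison}, which furnishes a coupling of $S_G(b)$ with $S_T(b)$ — the size of the Markov blanket of $b$ in the infinite $d_{\max}$-regular tree under the same percolation — satisfying $S_G(b)\le S_T(b)$ almost surely. Under Assumption \textbf{C2}, which guarantees $(1-\alpha)(d_{\max}-1)<1$, the preceding (unlabeled) lemma shows that $S_T(b)$ has a sub-exponential tail: there is a constant $c=c(\alpha,d_{\max})>0$ with $\pr\{S_T(b)\ge t\}\le 2e^{-ct}$. Combining the coupling with this estimate yields
\[
\pr\{S(i,b)\ge t\}=\pr\{S_G(b)\ge t\}\le \pr\{S_T(b)\ge t\}\le 2e^{-ct},
\]
so the claim holds with the universal constant $c_0=2$ and the same $c$.

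There is essentially no remaining obstacle, since the analytic work was carried out upstream: in the fixed-point analysis of the preceding lemma (controlling the recursion $\chi^{\le l}(\theta)=\alpha e^{\theta}+(1-\alpha)(\chi^{\le l-1}(\theta))^{d_{\max}-1}$ from Lemma~\ref{lem:BlanketMGF}, whose convergence is exactly where the percolation-threshold condition $(1-\alpha)(d_{\max}-1)<1$ is used), and in the path-tree coupling of Lemma~\ref{lem:Comparison}. The only points deserving a word of care are that the domination is a statement about the joint law whereas the conclusion uses only the marginal tail of $S_T(b)$ — which is precisely what the coupling delivers — and that the bound must be uniform over the vertex $b$ and the row $i$; the latter follows from the vertex-regular structure of the tree and the independence of the percolation across rows, so the same $c$ and $c_0$ serve for every $(i,b)$.
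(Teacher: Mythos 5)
Your proposal is correct and follows exactly the paper's route: the paper also proves this lemma by combining the coupling of Lemma~\ref{lem:Comparison} with the sub-exponential tail for $S_T(v)$ obtained from the fixed-point analysis of the MGF recursion under $(1-\alpha)(d_{\max}-1)<1$. Your additional remarks on uniformity in $(i,b)$ and on using only the marginal tail from the coupling are sound but left implicit in the paper.
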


The same strategy implies the following result:
\begin{lemma}
\label{lem:moments}
Suppose that $(1 - \alpha)(d_{\max} - 1)^k < 1$.  Then, the Markov blankets
    $S_{(i,b)}$ satisfy:
    \begin{align*}
    \E S_{(i,b)}^k \leq \alpha + (1 - \alpha) d_{\max}^k \cdot \frac{\alpha}{1 -
        (1 - \alpha)(d_{\max} - 1)^k}
    \end{align*}
\end{lemma}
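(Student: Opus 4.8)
The plan is to replay the generating-function argument behind Lemma~\ref{lem:generatingFunctionBoundary}, but with the exponential moment replaced by the $k$-th power moment. First I would invoke the coupling of Lemma~\ref{lem:Comparison}, which yields $S_{(i,b)} \le S_T(v)$ almost surely, where $S_T(v)$ is the size of the Markov blanket of the root in the infinite $d_{\max}$-regular tree. Since both sides are non-negative, this reduces the claim to bounding $\E\, S_T(v)^k$ on the tree.

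Next, exactly as in Lemma~\ref{lem:BlanketMGF}, I would exploit the branching recursion. For an offspring $u$ of the root, let $m_k$ denote $\E[S^{\downarrow}_T(u)]^k$ and let $m_k^{\le l}$ denote $\E[S^{\downarrow \le l}_T(u)]^k$. Conditioning on the state of $u$ gives $S^{\downarrow \le l}_T(u)=1$ with probability $\alpha$ and $S^{\downarrow \le l}_T(u)=\sum_{w \sim u} Z(w)$ with probability $1-\alpha$, where the $d_{\max}-1$ summands are i.i.d.\ copies of $S^{\downarrow \le l-1}_T$. The one deterministic ingredient is the power-mean inequality $\bigl(\sum_{j=1}^{n} a_j\bigr)^k \le n^{k-1}\sum_{j=1}^n a_j^k$ for non-negative reals and $k \ge 1$; taking expectations gives $\E\bigl(\sum_{w\sim u} Z(w)\bigr)^k \le (d_{\max}-1)^k\, m_k^{\le l-1}$, so that $m_k^{\le l} \le \alpha + (1-\alpha)(d_{\max}-1)^k\, m_k^{\le l-1}$, with base case $m_k^{\le 0}=\alpha$.

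This is an affine recursion $x_l \le \alpha + \rho\, x_{l-1}$ with contraction factor $\rho = (1-\alpha)(d_{\max}-1)^k$, which is $<1$ by hypothesis. Because the fixed point $\alpha/(1-\rho)$ dominates the base value $\alpha$, a one-line induction gives $m_k^{\le l} \le \alpha/(1-\rho)$ uniformly in $l$, and monotone convergence ($S^{\downarrow \le l}_T \uparrow S^{\downarrow}_T$) transfers the bound to $m_k$. Finally, applying the same conditioning at the root---which has $d_{\max}$ rather than $d_{\max}-1$ offsprings---together with the power-mean inequality yields $\E\, S_T(v)^k \le \alpha + (1-\alpha) d_{\max}^k\, m_k$, which is precisely the claimed bound once the value of $m_k$ is substituted.

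The only genuine subtlety, and the reason for working with the truncations $S^{\downarrow \le l}_T$, is that $S^{\downarrow}_T(u)$ may a priori have infinite moments: one cannot rearrange the self-referential inequality $m_k \le \alpha + (1-\alpha)(d_{\max}-1)^k\, m_k$ into a bound on $m_k$ without first knowing $m_k<\infty$. Establishing the uniform-in-$l$ estimate on $m_k^{\le l}$ and only then letting $l \to \infty$---the exact analogue of the $\chi^{\le l}\to\chi$ convergence used in the proof of the sub-exponential tail bound---resolves this cleanly, and it is at this step that the threshold condition $(1-\alpha)(d_{\max}-1)^k<1$ is used.
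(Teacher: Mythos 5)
Your proof is correct and follows exactly the strategy the paper intends when it says ``the same strategy implies the following result'': the coupling of Lemma~\ref{lem:Comparison} to reduce to the regular tree, the truncated recursion of Lemma~\ref{lem:BlanketMGF} adapted from the MGF to the $k$-th moment, and passage to the limit in $l$ before rearranging the self-referential inequality. The power-mean inequality $\bigl(\sum_{j=1}^{n} a_j\bigr)^k \le n^{k-1}\sum_{j=1}^{n} a_j^k$ is precisely the ingredient that replaces the factorization of the MGF over independent offspring, and your handling of the potential infiniteness of $m_k$ via the uniform-in-$l$ bound is the right (and necessary) precaution.
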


\subsection{Useful lemmas}
\label{subsec:usefulGaussianlemmas}
We will use many times the lemmas in this sub-section.
\begin{lemma}
\label{lem:differencecontrol}
For all $b \in [p]$ and $i \in [n]$, we have:
\[
\norm{\left(\bSig_{b, S_{(i,b)}} -
        \widetilde{\bSig}_{b, S_{(i,b)}}\right)\bSig_{S_{(i,b)},
        S_{(i,b)}}^{-1}}_{\infty} \leq C(\alpha, d_{\max}) \sqrt{\left \lvert S_{(i,b)}\right\rvert}\sqrt{\frac{\log{2np}}{n}},
\]
and 
\[
\norm{\widetilde{\bSig}_{b, S_{(i,b)}}\left(\bSig_{S_{(i,b)},
        S_{(i,b)}}^{-1} - \widetilde{\bSig}_{S_{(i,b)},
        S_{(i,b)}}^{-1}\right)}_{\infty} \leq C(\alpha)\sqrt{\left \lvert S_{(i,b)} \right \rvert} \sqrt{\frac{\log{p}}{n}},
\]
with probability at least $1 - \frac{c}{p}$, where $c$ is a universal constant
and $C(\alpha)$ is a constant depending only on $\alpha$.
\end{lemma}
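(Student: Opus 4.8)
The plan is to reduce both displays to the elementary bound $\norm{\bv^{\top}\bM}_{\infty}\leq\norm{\bv^{\top}\bM}_{2}\leq\norm{\bv}_{2}\,\norm{\bM}_{op}$ for a row vector $\bv$ and matrix $\bM$, and then to supply the three ingredients this requires: a deterministic control of the true inverse, a control of the estimation error of $\widetilde{\bSig}$ on the relevant index set, and a control of the size of the random set $S_{(i,b)}$. Writing $S=S_{(i,b)}$ throughout, Cauchy interlacing together with \textbf{C1} gives $\lambda_{\min}(\bSig_{SS})\geq\lambda_{\min}(\bSig)\geq\underline{c}$, hence $\norm{\bSig_{SS}^{-1}}_{op}\leq 1/\underline{c}$ for \emph{every} subset $S$; likewise $\max_{j,k}\lvert\Sigma_{jk}\rvert\leq\lambda_{\max}(\bSig)\leq\overline{c}$, so $\norm{\bSig_{b,S}}_{2}\leq\overline{c}\sqrt{\lvert S\rvert}$.

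For the first inequality I would bound
\[
\norm{(\bSig_{b,S}-\widetilde{\bSig}_{b,S})\bSig_{SS}^{-1}}_{\infty}\leq\norm{\bSig_{b,S}-\widetilde{\bSig}_{b,S}}_{2}\,\norm{\bSig_{SS}^{-1}}_{op}\leq\frac{1}{\underline{c}}\sqrt{\lvert S\rvert}\,\max_{j,k}\lvert\widetilde{\Sigma}_{jk}-\Sigma_{jk}\rvert.
\]
The entrywise error is exactly the $\ell_1\rightarrow\ell_\infty$ norm controlled by Lemma~\ref{lem:operatorMissing}(ii), which holds simultaneously for all $p^2$ entries and therefore for every (possibly $\bM$-dependent) choice of blanket $S$ at once. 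Taking $A$ a large enough absolute constant yields the claimed $\sqrt{\lvert S\rvert}\sqrt{\log(2np)/n}$ bound on an event of probability at least $1-p^{3-A^2}$.

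For the second inequality I would first invoke the resolvent identity $\bSig_{SS}^{-1}-\widetilde{\bSig}_{SS}^{-1}=\bSig_{SS}^{-1}(\widetilde{\bSig}_{SS}-\bSig_{SS})\widetilde{\bSig}_{SS}^{-1}$, so that
\[
\norm{\widetilde{\bSig}_{b,S}(\bSig_{SS}^{-1}-\widetilde{\bSig}_{SS}^{-1})}_{\infty}\leq\norm{\widetilde{\bSig}_{b,S}}_{2}\,\norm{\bSig_{SS}^{-1}}_{op}\,\norm{\widetilde{\bSig}_{SS}-\bSig_{SS}}_{op}\,\norm{\widetilde{\bSig}_{SS}^{-1}}_{op}.
\]
Two points need care. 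The estimator $\widetilde{\bSig}$ is not positive semidefinite, so $\widetilde{\bSig}_{SS}^{-1}$ must be shown to exist and to be bounded: by Weyl's inequality and Lemma~\ref{lem:operatorMissing}(i), $\lambda_{\min}(\widetilde{\bSig}_{SS})\geq\underline{c}-\norm{\widetilde{\bSig}_{SS}-\bSig_{SS}}_{op}\geq\underline{c}/2$ once the operator-norm error falls below $\underline{c}/2$, which the sample-size hypothesis \textbf{C3} guarantees, giving $\norm{\widetilde{\bSig}_{SS}^{-1}}_{op}\leq 2/\underline{c}$. Combining this with $\norm{\widetilde{\bSig}_{b,S}}_{2}\leq\norm{\bSig_{b,S}}_{2}+\sqrt{\lvert S\rvert}\max_{j,k}\lvert\widetilde{\Sigma}_{jk}-\Sigma_{jk}\rvert\lesssim\sqrt{\lvert S\rvert}$ leaves the operator-norm error $\norm{\widetilde{\bSig}_{SS}-\bSig_{SS}}_{op}$ as the only remaining factor.

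The main obstacle — and the step I expect to dominate the proof — is that Lemma~\ref{lem:operatorMissing}(i) bounds this operator-norm error by $\sqrt{(\lvert S\rvert+u)/n}$, which grows with $\lvert S\rvert$; used naively it would produce an unwanted $\lvert S\rvert^{3/2}$ factor instead of the claimed $\sqrt{\lvert S\rvert}$. I would resolve this by (a) applying Lemma~\ref{lem:operatorMissing}(i) with $u\asymp\log(2np)$ and union bounding over the $np$ pairs $(i,b)$, and (b) invoking the sub-exponential tail of the blanket sizes from Lemma~\ref{lem:generatingFunctionBoundary} (valid under \textbf{C2}) with a union bound to conclude $\max_{i,b}\lvert S_{(i,b)}\rvert\lesssim\log(np)$ with probability $1-c/p$. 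On the intersection of these events $\sqrt{(\lvert S\rvert+u)/n}\lesssim\sqrt{\log(2np)/n}$ is constant in $\lvert S\rvert$, so the prefactor stays at $\sqrt{\lvert S\rvert}$ and the $(1-\alpha)/\alpha^{2}$ and $1/\underline{c}^{2}$ factors collect into the constant $C(\alpha)$. With $A$ absolute and the slack in \textbf{C3} large enough, all failure probabilities sum to $c/p$; the only genuinely delicate bookkeeping is the dependence between the random blanket $S_{(i,b)}$ (a function of $\bM$) and $\widetilde{\bSig}$ (a function of $\bX$ and $\bM$), which is handled precisely because the entrywise and operator-norm controls above are uniform over all index sets.
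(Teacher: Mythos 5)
Your proposal is correct and follows essentially the same route as the paper's proof: the same two-term decomposition, the same invocation of Lemma~\ref{lem:operatorMissing} (part (ii) for the entrywise control in the first display, part (i) with $u \asymp \log(2np)$ for the operator-norm error in the second), and the same key step of bounding $\max_{i,b}\lvert S_{(i,b)}\rvert \lesssim \log(2np)$ via the sub-exponential blanket tails of Lemma~\ref{lem:generatingFunctionBoundary} together with a union bound over the $np$ pairs, so that the operator-norm error stays at order $\sqrt{\log(2np)/n}$ and the prefactor remains $\sqrt{\lvert S_{(i,b)}\rvert}$. The only differences are cosmetic: you invert $\widetilde{\bSig}_{SS}$ via the resolvent identity plus Weyl's inequality where the paper expands a Neumann series (convergent under the same smallness condition), and you route the $\sqrt{\lvert S\rvert}$ factor through $\norm{\cdot}_{2}$ where the paper uses the inequality $\norm{\bx^{\top}\bA}_{\infty} \leq \sqrt{d}\,\norm{\bx}_{\infty}\norm{\bA}_{op}$.
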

\begin{proof}
We examine the second term first.  Note that for some vector $\bx \in
\mathbb{R}^{d}$ and square matrix $\bA \in \mathbb{R}^{d \times d}$, we have the
following inequality (letting $\bA^{(i)}$ denote the $i$th column of $\bA$):
\begin{align*}
    \norm{\bx^T \bA}_{\infty} = \max_{i = 1, 2, \dots, d}\left\lvert\left\langle
    \bx, \bA^{(i)}\right\rangle\right\rvert \leq \max_{i=1,2,\dots, d}
    \norm{\bx}_{\infty} \norm{\bA^{(i)}}_{1} \leq \sqrt{d}\max_{i = 1, \dots, d}
    \norm{\bx}_{\infty} \norm{\bA^{(i)}}_{2} \leq
    \sqrt{d}\norm{\bx}_{\infty}\norm{\bA}_{op},
\end{align*}
which implies immediately:
    \begin{align*}
\norm{\widetilde{\bSig}_{b, S_{(i,b)}}\left(\bSig_{S_{(i,b)},
        S_{(i,b)}}^{-1} - \widetilde{\bSig}_{S_{(i,b)},
        S_{(i,b)}}^{-1}\right)}_{\infty} &\leq \norm{\widetilde{\bSig}_{b,
        S_{(i,b)}}}_{\infty}\sqrt{S_{(i,b)}}\norm{\bSig_{S_{(i,b)},
        S_{(i,b)}}^{-1} - \widetilde{\bSig}_{S_{(i,b)},
        S_{(i,b)}}^{-1}}_{op}.
\end{align*}
Our strategy will be to take the power series
    expansion of $\widetilde{\bSig}_{S_{(i,b)},
        S_{(i,b)}}^{-1} = \left(\bSig_{S_{(i,b)},
        S_{(i,b)}} + \bW_{S_{(i,b)},
        S_{(i,b)}}\right)^{-1}$, where for convenience we have used $\bW_{S_{(i,b)},
        S_{(i,b)}} = \widetilde{\bSig}_{S_{(i,b)},
        S_{(i,b)}} - \bSig_{S_{(i,b)},
        S_{(i,b)}}$.  Recall that by Lemma ~\ref{lem:operatorMissing} i., with
        probability at least $1 - e^{-u}$, $\norm{\widetilde{\bSig}_{SS} - \bSig_{SS}}_{op} \leq
            \frac{64e(1-\alpha)\norm{\bSig}_{op}}{\alpha^2}\sqrt{\frac{|S| +
            u}{n}}$ as long as $\sqrt{\frac{|S| + u}{n}} \leq \frac{1}{2}$.  Let
            $\mathcal{A}_O = \left\{\norm{\bW_{S_{(i,b)},
        S_{(i,b)}}}_{op} \leq
            \frac{64e(1-\alpha)\norm{\bSig}_{op}}{\alpha^2}\sqrt{\frac{|S_{(i,b)}| +
            2\log{2np}}{n}} \right\}$.  We will additionally need control on
            $|S(i,b)|$.
Let event $\mathcal{A}_B$ denote the event that the size of the Markov blankets
behave ``nicely", i.e. $\mathcal{A}_B = \left\{|S(i,b)| \leq
\frac{2}{C_B(\alpha, d_{\max})}\log{2np} \quad \forall i \in [n], b \in
[p]\right\}$ and notice that Lemma ~\ref{lem:generatingFunctionBoundary} implies
$\pr\left\{\mathcal{A}_B\right\} \geq 1 - \frac{1}{2np}$.  Now, on the event
$\mathcal{A}_O \cap \mathcal{A}_B$, we have:
\begin{align*}
\norm{\bW_{S_{(i,b)},
        S_{(i,b)}}}_{op} \leq
            \frac{64e(1-\alpha)\norm{\bSig}_{op}\left(1 + \frac{2}{C_B(\alpha, d_{\max})}\right)}{\alpha^2}\sqrt{\frac{\log{2np}}{n}}.
\end{align*}
Noticing that by assumption $\frac{64e(1-\alpha)\norm{\bSig}_{op}\left(1 +
\frac{2}{C_B(\alpha, d_{\max})}\right)}{\alpha^2}\sqrt{\frac{\log{2np}}{n}} <
1$, so we can take the power series expansion of $\left(\widetilde{\bSig}_{S_{(i,b)},
        S_{(i,b)}} - \bSig_{S_{(i,b)},
        S_{(i,b)}}\right)^{-1}$ and we see thus that:
        \begin{align*}
        \norm{\bSig_{S_{(i,b)},
        S_{(i,b)}}^{-1} - \widetilde{\bSig}_{S_{(i,b)},
        S_{(i,b)}}^{-1}}_{op} 
        &\leq \norm{\bSig_{S_{(i,b)},
        S_{(i,b)}}^{-1}}_{op}\sum_{k=1}^{\infty}\norm{\bSig_{S_{(i,b)},
        S_{(i,b)}}^{-1}}_{op}^{k}\norm{\bW_{S_{(i,b)},
        S_{(i,b)}}}_{op}^{k} \\
        &\leq
            \frac{128e(1-\alpha)\norm{\bSig}_{op}\left(1 +
\frac{2}{C_B(\alpha, d_{\max})}\right)}{\alpha^2
\lambda_{\text{min}}(\bSig)^2}\sqrt{\frac{\log{2np}}{n}},
    \end{align*}
    where in the first inequality we have used the power series expansion, the
    triangle inequality, and the sub-multiplicativity of the operator norm and
    the second inequality used the assumption that $\frac{64e(1-\alpha)\norm{\bSig}_{op}\left(1 +
\frac{2}{C_B(\alpha, d_{\max})}\right)}{\alpha^2
\lambda_{\text{min}}(\bSig)}\sqrt{\frac{\log{2np}}{n}} < \frac{1}{2}$. Now, let
$\mathcal{A}_M$ denote the event that $\norm{\widetilde{\bSig} - \bSig}_{\ell_1 \rightarrow
        \ell_{\infty}} \leq \frac{64
        e\norm{\bSig}_{op}}{\alpha^2}\sqrt{\frac{\log{p}}{n}}$.  Then on the
        event $\mathcal{A}_M \cap \mathcal{A}_O \cap \mathcal{A}_B$ and assuming
        that $\frac{64e}{\alpha^2}\sqrt{\frac{\log{p}}{n}} \leq 1$, we
    upper bound the overall term by:
    \begin{align*}
\norm{\widetilde{\bSig}_{b, S_{(i,b)}}\left(\bSig_{S_{(i,b)},
        S_{(i,b)}}^{-1} - \widetilde{\bSig}_{S_{(i,b)},
        S_{(i,b)}}^{-1}\right)}_{\infty} &\leq \norm{\widetilde{\bSig}_{b,
        S_{(i,b)}}}_{\infty}\sqrt{S_{(i,b)}}\norm{\bSig_{S_{(i,b)},
        S_{(i,b)}}^{-1} - \widetilde{\bSig}_{S_{(i,b)},
        S_{(i,b)}}^{-1}}_{op} \\
        &\leq \frac{128e(1-\alpha)\norm{\bSig}_{op}^{2}\left(1 +
\frac{2}{C_B(\alpha, d_{\max})}\right)}{\alpha^2
        \lambda_{\text{min}}(\bSig)^2}\sqrt{|S_{(i,b)}|}\sqrt{\frac{\log{2np}}{n}}.
    \end{align*}
    Similarly, we see that (working on the same event):
    \begin{align*}
    \norm{\left(\bSig_{b, S_{(i,b)}} -
        \widetilde{\bSig}_{b, S_{(i,b)}}\right)\bSig_{S_{(i,b)},
        S_{(i,b)}}^{-1}}_{\infty} &\leq \norm{\bSig_{b, S_{(i,b)}} -
        \widetilde{\bSig}_{b,
        S_{(i,b)}}}_{\infty}\sqrt{S(i,b)}\norm{\bSig_{S_{(i,b)},
        S_{(i,b)}}^{-1}}_{op} \\
        &\leq
        \frac{64e\norm{\bSig}_{op}}{\alpha^2\lambda_{\text{min}}(\bSig)}\sqrt{\frac{\log{p}}{n}}\sqrt{S_{(i,b)}}.
    \end{align*}
    The result follows immediately by recalling the assumption that $c_{\ell} \leq \lambda_{\min}(\bSig) \leq \norm{\bSig}_{op} \leq c_{\sigma}$, noting that $\pr\left\{\mathcal{A}_{O}\right\} \geq 1 - \frac{1}{2np}$ by lemma~\ref{lem:operatorMissing} i. and a union bound, and noting that $\pr\left\{\mathcal{A}_M\right\} \geq 1 - \frac{1}{p}$ by lemma~\ref{lem:operatorMissing} ii.
\end{proof}

\begin{lemma}
\label{lem:ucontrol}
Under the assumptions of theorem~\ref{thm:GaussianRes} and for any vector $\bu \in \mathbb{R}^{p}$, 
\[
    \frac{1}{n}\sum_{i=1}^{n}\left(\sum_{b=1}^{p}\left\lvert u_b \right\rvert\sqrt{S_{(i,b)}}\sum_{j
        \in S_{(i,b)}}\left\lvert X_{ij}\right\rvert\right)^2 \leq C(\alpha,
        d_{\max})\norm{\bu}_1^2,
\]
with probability at least $1 - \frac{1}{n}$, where $C(\alpha, d_{\max})$ is a constant depending only on $\alpha, d_{\max}$.
\end{lemma}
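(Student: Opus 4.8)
The plan is to recognize the left-hand side as an empirical average of i.i.d.\ random variables and to control it by a simple second-moment (Chebyshev) argument, feeding in the moment estimates already established for the Markov blankets. Write
\[
W_i = \sum_{b=1}^{p}\left\lvert u_b\right\rvert\sqrt{S_{(i,b)}}\sum_{j\in S_{(i,b)}}\left\lvert X_{ij}\right\rvert,
\]
so the quantity of interest is $\frac{1}{n}\sum_{i=1}^{n}W_i^2$. Since the rows $\bX_i$ are drawn i.i.d.\ (assumption \textbf{C1}) and the missingness pattern is i.i.d.\ across rows under MCAR, both the blanket sizes $S_{(i,b)}$ and the entries $X_{ij}$ of row $i$ are functions of row $i$ alone; hence the variables $W_i^2$, $i\in[n]$, are i.i.d. This reduces the statement to concentration of an i.i.d.\ average around its mean.

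First I would bound the mean. By Lemma~\ref{lem:markovblanketmoments} i., $\E W_i^2 \le \norm{\bu}_1^2\,\norm{\bSig}_{op}\,\E S_{(i,b)}^3$. The operator norm is bounded, $\norm{\bSig}_{op}\le\overline{c}$, under \textbf{C1}, while the sub-exponential tail of the blanket size from Lemma~\ref{lem:generatingFunctionBoundary} (valid under the percolation condition $(1-\alpha)(d_{\max}-1)<1$ of \textbf{C2}) guarantees all moments of $S_{(i,b)}$ are finite, so $\E S_{(i,b)}^3 \le C(\alpha,d_{\max})$; one may also invoke Lemma~\ref{lem:moments} directly. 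Consequently $\E W_i^2 \le C(\alpha,d_{\max})\norm{\bu}_1^2$. In the same way, Lemma~\ref{lem:markovblanketmoments} ii.\ together with the finiteness of $\E S_{(i,b)}^6$ (again from the sub-exponential tail) yields $\E W_i^4 \le C(\alpha,d_{\max})\norm{\bu}_1^4$, and in particular $\mathrm{Var}(W_i^2)\le \E W_i^4 \le C(\alpha,d_{\max})\norm{\bu}_1^4$.

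Then I would apply Chebyshev's inequality to the i.i.d.\ average $\bar V = \frac{1}{n}\sum_{i=1}^{n}W_i^2$. Its mean is $\E W_i^2$ and its variance is $\mathrm{Var}(W_i^2)/n \le C(\alpha,d_{\max})\norm{\bu}_1^4/n$, so
\[
\pr\left\{\bar V \ge \E W_i^2 + t\right\} \le \frac{\mathrm{Var}(W_i^2)}{n t^2} \le \frac{C(\alpha,d_{\max})\norm{\bu}_1^4}{n t^2}.
\]
Choosing $t = \sqrt{C(\alpha,d_{\max})}\,\norm{\bu}_1^2$ makes the right-hand side at most $1/n$, and on the complementary event $\bar V \le \E W_i^2 + t \le C(\alpha,d_{\max})\norm{\bu}_1^2$ after renaming the constant. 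This is precisely the claimed bound with probability at least $1-\tfrac{1}{n}$.

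The argument is essentially routine once the moment estimates are in hand, so there is no serious analytic obstacle internal to this lemma; the real content has been pushed into Lemma~\ref{lem:markovblanketmoments} and the tail bound for the blanket sizes. The one point requiring care is the probability level: because the target failure probability is only $1/n$ rather than polynomially small in $p$, a crude Chebyshev bound driven by the fourth moment of $W_i$ is exactly strong enough, and one must verify that the chosen deviation $t$ has the same order $\norm{\bu}_1^2$ as the mean so the final estimate remains a constant multiple of $\norm{\bu}_1^2$. No exponential concentration or $\epsilon$-net argument is needed here.
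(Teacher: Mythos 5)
Your proof is correct and follows essentially the same route as the paper: both apply Chebyshev's inequality to the i.i.d.\ average of $Z_i = W_i^2$, using parts i.\ and ii.\ of Lemma~\ref{lem:markovblanketmoments} (with the blanket-size moments controlled via the sub-exponential tail) to bound the mean and second moment by $C(\alpha,d_{\max})\norm{\bu}_1^2$ and $C(\alpha,d_{\max})\norm{\bu}_1^4$ respectively. If anything, your write-up is slightly more careful than the paper's terse version, since you make explicit the choice of deviation level $t \asymp \norm{\bu}_1^2$ and the source of the moment finiteness.
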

\begin{proof}
This involves a simple application of Chebyshev's inequality.  First, we let Let 
\[
Z_i = \left(\sum_{b=1}^{p}\left\lvert u_b \right\rvert\sqrt{S_{(i,b)}}\sum_{j
        \in S_{(i,b)}}\left\lvert X_{ij}\right\rvert\right)^2.
\]
Chebyshev's inequality then implies:
\[
\pr\left\{\left \lvert\frac{1}{n}\sum_{i=1}^{n} Z_i  - \E Z_i \right \rvert \geq \sqrt{\E Z_i^2}\right\} \leq \frac{1}{n}.
\]
Now, by lemma~\ref{lem:markovblanketmoments} and the assumption that $c_{\ell} \leq \lambda_{\min}(\bSig) \leq \norm{\bSig}_{op} \leq c_{\sigma}$ imply the result.
\end{proof}

\begin{lemma}
\label{lem:maxmarkovboundarycontrol}
    Under the assumptions of theorem~\ref{thm:GaussianRes}, 
    \[
\max_{a \in 1, 2, \dots, p}\frac{1}{n}\sum_{i=1}^{n}\left(\sqrt{S_{(i,a)}}\sum_{j
        \in S_{(i,a)}} \left\lvert X_{ij}\right\rvert\right)^2 \leq
        C(\alpha, d_{\max}), 
    \]
    with probability at least $1 - c_0 p^{-c_1}$, where $c_0, c_1$ are absolute constants. 
\end{lemma}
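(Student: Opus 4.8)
The plan is to fix a column $a$, observe that the summands
\[
Y_{i,a} \;:=\; S_{(i,a)}\Big(\textstyle\sum_{j \in S_{(i,a)}}\lvert X_{ij}\rvert\Big)^2
\]
are i.i.d.\ across $i \in [n]$ (the rows of $\bX$ and the missingness pattern being i.i.d.), to concentrate their empirical average $\frac1n\sum_i Y_{i,a}$ around its mean, and to close with a union bound over $a \in [p]$. The first ingredient is the moment control: taking $\bu = \be_a$ in Lemma~\ref{lem:markovblanketmoments} gives $\E Y_{1,a} \le \norm{\bSig}_{op}\,\E S_{(1,a)}^3$ and $\E Y_{1,a}^2 \le 3\norm{\bSig}_{op}^2\,\E S_{(1,a)}^6$. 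Since Lemma~\ref{lem:generatingFunctionBoundary} shows that $S_{(1,a)}$ has a sub-exponential tail whenever $(1-\alpha)(d_{\max}-1)<1$, all of its moments — in particular $\E S^3$ and $\E S^6$ — are finite constants depending only on $\alpha,d_{\max}$; combined with \textbf{C1} ($\norm{\bSig}_{op}\le\overline{c}$) this bounds $\mu := \E Y_{1,a}$ and $v:=\E Y_{1,a}^2$ by constants $C(\alpha,d_{\max})$.

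The obstacle is that $Y_{i,a}$ is itself \emph{not} sub-exponential: conditioned on a blanket of size $s$ it behaves like $s^3$, so its tail is only stretched-exponential and Bernstein does not apply directly. I would get around this by truncating at the blanket level. Set $L = C(\alpha,d_{\max})\log(2np)$ and let $\mathcal{A}_B = \{\,S_{(i,a)} \le L \text{ for all } i\in[n],\,a\in[p]\,\}$; by Lemma~\ref{lem:generatingFunctionBoundary} and a union bound over the $np$ pairs, $\pr\{\mathcal{A}_B^c\} \le (2np)^{-1}$ for $C$ large enough (this is the same event $\mathcal{A}_B$ already used in Lemma~\ref{lem:differencecontrol}). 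On $\mathcal{A}_B$ every individual indicator $\mathbbm{1}\{S_{(i,a)}\le L\}$ equals one, so the quantity of interest coincides with $\frac1n\sum_i \widetilde Y_{i,a}$, where $\widetilde Y_{i,a} := Y_{i,a}\,\mathbbm{1}\{S_{(i,a)}\le L\}$. The truncated variables are i.i.d.\ across $i$, still satisfy $\E\widetilde Y_{i,a}\le\mu$ and $\E\widetilde Y_{i,a}^2\le v$, and are now sub-exponential: by Cauchy--Schwarz $\widetilde Y_{i,a} \le L^2\sum_{j\in S_{(i,a)}}X_{ij}^2$, a sum of at most $L$ squared sub-Gaussians, which by Lemma~\ref{lem:subGaussianSquared} is sub-exponential with scale $b \asymp L^3 \asymp \log^3(2np)$.

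I would then apply the sub-exponential Bernstein inequality (Lemma~\ref{lem:bernstein}) to $\frac1n\sum_i \widetilde Y_{i,a}$ at the constant deviation level $t=\mu$ (using $\E\widetilde Y_{i,a}\le\mu$, so that $\{\frac1n\sum_i\widetilde Y_{i,a}\ge 2\mu\}\subseteq\{\frac1n\sum_i(\widetilde Y_{i,a}-\E\widetilde Y_{i,a})\ge\mu\}$). With variance proxy of order $b^2$ and scale $b\asymp\log^3(2np)$, the bound reads $\pr\{\frac1n\sum_i\widetilde Y_{i,a}\ge 2\mu\}\le 2\exp(-c\,n/\log^6(2np))$. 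A union bound over $a\in[p]$ multiplies by $p$, and assumption \textbf{C3} ($n \gtrsim \log^7 p$) guarantees $n/\log^6(2np)\gtrsim\log p$, making the total at most $c_0 p^{-c_1}$. Finally, on $\mathcal{A}_B$ the maximum over $a$ of $\frac1n\sum_i Y_{i,a}$ equals that of $\frac1n\sum_i\widetilde Y_{i,a}$, so intersecting with $\mathcal{A}_B$ and adding $\pr\{\mathcal{A}_B^c\}$ yields the claim with $C(\alpha,d_{\max})=2\mu$. The delicate point — and the reason the plan is not purely routine — is precisely this last balance: the blanket truncation forces the per-term sub-exponential scale to grow like $\log^3(2np)$, and it is this growth, squared inside Bernstein, that makes the $\log^7 p$ lower bound on $n$ in \textbf{C3} exactly what is needed for the tail to beat the union bound over the $p$ columns.
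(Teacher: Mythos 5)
Your argument is correct, but it reaches the conclusion by a genuinely different route than the paper. The paper never truncates the blanket sizes on a high-probability event; instead it derives an unconditional stretched-exponential tail for $Z_i = S_{(i,a)}\bigl(\sum_{j \in S_{(i,a)}}\lvert X_{ij}\rvert\bigr)^2$ by conditioning on $S_{(i,a)}=k$, summing the resulting bounds $\exp\{-t/(16e\norm{\bSig}_{op}k^3) - C_B k\}$ over $k$, and balancing around $k \asymp t^{1/4}$ to get $\pr\{Z_i \ge t\} \lesssim e^{-a t^{1/4}}$; it then feeds this $\gamma = 1/4$ tail into the generic truncation result (Lemma~\ref{lem:truncation}), whose internal truncation at $M \asymp (nt^2)^{4/7}$ yields concentration at scale $\exp\{-c(nt^2)^{1/7}\}$, and chooses $t \asymp \sqrt{\log^7(np)/n}$ so that \textbf{C3} makes the deviation term dominated by $\norm{\bSig}_{op}\E S_{(i,a)}^3$. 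Your version truncates earlier and more coarsely — a single event $\mathcal{A}_B$ bounding \emph{all} $np$ blankets by $C\log(2np)$, the same event already used in Lemma~\ref{lem:differencecontrol} — after which the per-term variables are honestly sub-exponential with scale $\asymp \log^3(2np)$ and plain Bernstein (Lemma~\ref{lem:bernstein}) applies. The two approaches land on the identical $\log^7 p$ arithmetic (your $n/\log^6(2np) \gtrsim \log p$ versus the paper's $(nt^2)^{1/7} \gtrsim \log(np)$ with $t \lesssim 1$), so neither buys a weaker sample-size condition; what yours buys is elementarity — it bypasses the stretched-exponential machinery of Lemma~\ref{lem:truncation} entirely and reuses an event the proof of Theorem~\ref{thm:GaussianRes} needs anyway — at the cost of an extra additive $(2np)^{-1}$ in the failure probability, which is harmless here. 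Two small points to make explicit if you write this up: the sub-exponential MGF bound for $\sum_{j \in S_{(i,a)}} X_{ij}^2$ should be stated conditionally on the missingness mask (the index set $S_{(i,a)}$ is random but mask-measurable and independent of $\bX$, so the bound holds uniformly over masks with blanket size at most $L$ and then averages), and since the coordinates of a row are dependent, the scale for the sum of $L$ squared sub-Gaussians should be justified by a Hölder or quadratic-form argument rather than by independence — both routine, and both consistent with the $b \asymp L^3$ you asserted.
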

\begin{proof}
We proceed by obtaining tail bounds on the quantity $\left(\sqrt{S_{(i,a)}}\sum_{j
        \in S_{(i,a)}} \left\lvert X_{ij}\right\rvert\right)^2$ and using this
        to find an exponential tail bound on the empirical average from which
        point we can conclude with a union bound.  Notice that
    \begin{align*}
        \pr\left\{S_{(i,a)}\sum_{\ell,j \in S_{(i,a)} \times S_{(i,a)}}\left\lvert
        X_{ij}X_{i\ell}\right\rvert \geq t\right\} &= \sum_{k =1}^{p}
        \pr\left\{\sum_{\ell,j \in S_{(i,a)} \times S_{(i,a)}}\left\lvert
        X_{ij}X_{i\ell}\right\rvert \geq
        \frac{t}{k}\right\} \pr\left\{S_{(i,a)} = k\right\} \\
        &\leq
        \sum_{k=1}^{\infty}4\text{exp}\left\{-\frac{t}{16e\norm{\bSig}_{op} k^3} -
        C_B(\alpha, d_{\text{max}})k\right\},
    \end{align*}
    where the last inequality follows by lemma~\ref{lem:generatingFunctionBoundary} and noting that $\sum_{\ell,j \in S_{(i,a)} \times S_{(i,a)}}\left\lvert
        X_{ij}X_{i\ell}\right\rvert$ is a sub-exponential random variable.
    Now, we balance terms in the infinite sum around $t^{\frac{1}{4}}$ to see:
    \begin{align*}
    \sum_{k=1}^{\infty}4\exp\left\{-\frac{t}{16e\norm{\bSig}_{op} k^3} -
        C_B(\alpha, d_{\text{max}})k\right\} &=
        4e^{-at^{\frac{1}{4}}}\sum_{k=1}^{\infty}\exp\left\{at^{\frac{1}{4}}
        - \frac{t + 16e\norm{\bSig}_{op} C_B(\alpha,
        d_{\max})k^4}{16e\norm{\bSig}_{op}k^3}\right\} \\
        &= 4e^{-at^{\frac{1}{4}}} \Biggl(\sum_{k \leq t^{1/4}}\exp\left\{at^{\frac{1}{4}}
        - \frac{t + 16e\norm{\bSig}_{op} C_B(\alpha,
        d_{\max})k^4}{16e\norm{\bSig}_{op}k^3}\right\}\\
        & + \sum_{k > t^{1/4}}\exp\left\{at^{\frac{1}{4}}
        - \frac{t + 16e\norm{\bSig}_{op} C_B(\alpha,
        d_{\max})k^4}{16e\norm{\bSig}_{op}k^3}\right\}\Biggr) \\
        &\leq \frac{4}{1 - e^{-C_B(\alpha, d_{\max})/2}}e^{-at^{\frac{1}{4}}}
    \end{align*}
    where we have taken $a$ such that $a < \frac{1}{16e\norm{\bSig}_{op}} \wedge
    \frac{C_B(\alpha, d_{\max})}{2}$. Now, assuming 
    \[t < \frac{128e}{a -  ae^{-C_B(\alpha, d_{\max})/2}}\left(n
    \frac{a -  ae^{-C_B(\alpha, d_{\max})/2}}{1024e^2}\right)^{3}\] 
    by Lemma~\ref{lem:truncation}, we have: 
    \begin{align*}
        &\pr\left\{\frac{1}{n}\sum_{i=1}^{n}\left(\sqrt{S_{(i,a)}}\sum_{j
        \in S_{(i,a)}} \left\lvert X_{ij}\right\rvert\right)^2 \geq 2\left(t + \E \left(\sqrt{S_{(i,a)}}\sum_{j
        \in S_{(i,a)}} \left\lvert X_{ij}\right\rvert\right)^2\right)\right\}
        \\
        &\leq 2
        \text{exp}\left\{-a\left(n t^2\frac{a -  ae^{-C_B(\alpha, d_{\max})/2}}{1024e^2}\right)^{\frac{1}{7}}
        + \log{n\left(\frac{4}{1 - e^{-C_B(\alpha, d_{\max})/2}}\right)}\right\}
    \end{align*}
    Thus, let $\mathcal{A}_{T}$ be the event that 
    \[\max_{a \in 1, 2, \dots, p}\frac{1}{n}\sum_{i=1}^{n}\left(\sqrt{S_{(i,a)}}\sum_{j
        \in S_{(i,a)}} \left\lvert X_{ij}\right\rvert\right)^2 \leq
        2\left(A\sqrt{\frac{(\log{c_1 np})^7}{n}} + \E \left(\sqrt{S_{(i,a)}}\sum_{j
        \in S_{(i,a)}} \left\lvert X_{ij}\right\rvert\right)^2\right) \]
    where $c_1 = 2 + \frac{4}{1 - e^{-C_B(\alpha, d_{\max})/2}}$ and note that
    $\pr\left\{\mathcal{A}_T\right\} \geq 1 - \text{exp}\left\{\left(1 - c_2
    A^{2/7}\right)\log{c_1 np}\right\}$ where $c_2 = a \left(\frac{a -
    ae^{-C_B(\alpha, d_{\max})/2}}{1024e^2}\right)^{1/7}$.  Thus, on the event
    $\mathcal{A}_T$, and assuming $A\sqrt{\frac{(\log{c_1 np})^7}{n}} \leq
    \norm{\bSig}_{op} \E S_{(i,a)}^3$, we have by Lemma ~\ref{lem:markovblanketmoments}:
    \[\max_{a \in 1, 2, \dots, p}\frac{1}{n}\sum_{i=1}^{n}\left(\sqrt{S_{(i,a)}}\sum_{j
        \in S_{(i,a)}} \left\lvert\bX_{ij}\right\rvert\right)^2 \leq 4\norm{\bSig}_{op} \E S_{(i,a)}^3.\]
     The result follows immediately taking $A$ large enough and using lemma~\ref{lem:moments}.
\end{proof}

\subsection{Proof of lemma~\ref{lem:term1thmGaussian}}
\label{subsec:term1thmGaussian}
We re-state the lemma for convenience.
\begin{customlemma}{B.9}
Under the assumptions of theorem~\ref{thm:GaussianRes}, for any $\bu \in \mathbb{R}^{p}$, we have:
\[
\pr\left\{\norm{\frac{1}{n}\widehat{\bX}^T\left(\widetilde{\bX} - \widehat{\bX}\right)\bu}_{\infty} \geq C(\alpha, d_{\max})\norm{\bu}_1\sqrt{\frac{\log{np}}{n}}\right\} \leq n^{-1} + c_0p^{-c_1},
\]
where $c_0, c_1$ denote universal constants and $C(\alpha, d_{\max})$ a constant depending only on $\alpha, d_{\max}$.
\end{customlemma}
\begin{proof}
We write:
\begin{align*}
\norm{\frac{1}{n}\widehat{\bX}^T\left(\widetilde{\bX} -
\widehat{\bX}\right)\bu}_{\infty} &= \max_{a = 1, 2, \dots, p}\left\lvert\frac{1}{n}
    \sum_{i=1}^{n}\widehat{X}_{ia}\sum_{b=1}^{p}\left(\widetilde{X}_{ib}
-\widehat{X}_{ib}\right)u_b\right\rvert\\
    &\leq \max_{a \in
    1, 2, \dots,
    p}\sqrt{\frac{1}{n}\sum_{i=1}^{n}\left\lvert\widehat{X}_{ia}\right\rvert^2}
    \cdot \sqrt{\frac{1}{n}\sum_{i=1}^{n}\left(\sum_{b=1}^{p}\left(\widetilde{X}_{ib}
    -\widehat{X}_{ib}\right)u_b\right)^2} ,
\end{align*}
where the inequality is by Cauchy-Schwarz over the indices $i$. Our strategy is
then to upper bound each of the square root terms separately.  The more
    difficult term is the second, so we begin there, first by re-writing the
    approximate conditional expectation explicitly:
    \begin{align*}
    \sqrt{\frac{1}{n}\sum_{i=1}^{n}\left(\sum_{b=1}^{p}\left(\widetilde{X}_{ib}
        -\widehat{X}_{ib}\right)u_b\right)^2} &= \sqrt{\frac{1}{n}\sum_{i=1}^{n}\left(\sum_{b=1}^{p}\left\lvert u_b\left(\bSig_{b, S_{(i,b)}}\bSig_{S_{(i,b)},
        S_{(i,b)}}^{-1} -  \widetilde{\bSig}_{b, S_{(i,b)}}\widetilde{\bSig}_{S_{(i,b)},
        S_{(i,b)}}^{-1}\right)\bX_{S(i,b)}\right\rvert\right)^2}. 
    \end{align*}
    An application of H{\"o}lder's inequality (noting that $\left(\bSig_{b, S_{(i,b)}}\bSig_{S_{(i,b)},
        S_{(i,b)}}^{-1} -  \widetilde{\bSig}_{b, S_{(i,b)}}\widetilde{\bSig}_{S_{(i,b)},
        S_{(i,b)}}^{-1}\right)$ is a row vector) yields the following upper bound:
    \begin{equation}
    \label{eq:TermAUpperBound}
\sqrt{\frac{1}{n}\sum_{i=1}^{n}\left(\sum_{b=1}^{p}\left\lvert u_b \right\rvert\norm{\bSig_{b, S_{(i,b)}}\bSig_{S_{(i,b)},
        S_{(i,b)}}^{-1} -  \widetilde{\bSig}_{b, S_{(i,b)}}\widetilde{\bSig}_{S_{(i,b)},
        S_{(i,b)}}^{-1}}_{\infty}\sum_{j
    \in S_{(i,b)}}\left\lvert X_{ij}\right\rvert\right)^2}.
    \end{equation}
The key difficulty is that each of the terms is correlated across indices $i$ in
    a complicated manner,
    so we are unable to immediately use standard concentration inequalities on
    the sum.  To deal with this, we will uniformly upper-bound each of these
    terms which correlate the terms in the sum.
To this end, we have:
    \begin{align*}
\norm{\bSig_{b, S_{(i,b)}}\bSig_{S_{(i,b)},
        S_{(i,b)}}^{-1} -  \widetilde{\bSig}_{b, S_{(i,b)}}\widetilde{\bSig}_{S_{(i,b)},
        S_{(i,b)}}^{-1}}_{\infty} &\leq \norm{\left(\bSig_{b, S_{(i,b)}} -
        \widetilde{\bSig}_{b, S_{(i,b)}}\right)\bSig_{S_{(i,b)},
        S_{(i,b)}}^{-1}}_{\infty} + \\
        &\norm{\widetilde{\bSig}_{b, S_{(i,b)}}\left(\bSig_{S_{(i,b)},
        S_{(i,b)}}^{-1} - \widetilde{\bSig}_{S_{(i,b)},
        S_{(i,b)}}^{-1}\right)}_{\infty}.
    \end{align*}
    Now, by lemma~\ref{lem:differencecontrol}, we have:
    \[
    \norm{\bSig_{b, S_{(i,b)}}\bSig_{S_{(i,b)},
        S_{(i,b)}}^{-1} -  \widetilde{\bSig}_{b, S_{(i,b)}}\widetilde{\bSig}_{S_{(i,b)},
        S_{(i,b)}}^{-1}}_{\infty} \leq C(\alpha, d_{\max})\sqrt{\lvert S_{(i,b)} \rvert}\sqrt{\frac{\log{np}}{n}},
    \]
    with probability at least $1 - \frac{c}{p}$.  We thus upper bound ~\eqref{eq:TermAUpperBound} with:
    \[
    C(\alpha, d_{\max}) \sqrt{\frac{\log{np}}{n}} \sqrt{\frac{1}{n}\sum_{i=1}^{n}\left(\sum_{b=1}^{p}\left\lvert u_b \right\rvert\sqrt{S_{(i,b)}}\sum_{j
    \in S_{(i,b)}}\left\lvert X_{ij}\right\rvert\right)^2}.
    \]
    By lemma~\ref{lem:ucontrol}, with probability at least $1 - \frac{1}{n}$, 
    \[
    C(\alpha, d_{\max}) \sqrt{\frac{\log{np}}{n}} \sqrt{\frac{1}{n}\sum_{i=1}^{n}\left(\sum_{b=1}^{p}\left\lvert u_b \right\rvert\sqrt{S_{(i,b)}}\sum_{j
    \in S_{(i,b)}}\left\lvert X_{ij}\right\rvert\right)^2} \leq C(\alpha, d_{\max})\norm{\bu}_1\sqrt{\frac{\log{np}}{n}}.
    \]
    Now, note that $\max_{a \in [p]}\frac{1}{n}\sum_{i=1}^{n} \lvert \widehat{X}_{ia} \rvert ^2 \leq 18e\norm{\bSig}_{op}$ with probability at least $1 - c_0 p^{-c_1}$ by the same argument as lemma~\ref{lem:usefulARlemmas} iii. Combining these pieces implies the result. 
\end{proof}

\subsection{Proof of lemma~\ref{lem:term2thmGaussian}}
\label{subsec:term2thmGaussian}
We re-state the lemma for convenience:
\begin{customlemma}{B.11}
Under the assumptions of theorem~\ref{thm:GaussianRes}, we have:
\[
\pr\left\{\norm{\frac{1}{n}\left(\widetilde{\bX} - \widehat{\bX}\right)^T\left(\widetilde{\bX} - \widehat{\bX}\right)\bu}_{\infty} \geq C(\alpha, d_{\max})\norm{\bu}_1\frac{\log{np}}{n}\right\} \leq n^{-1} + c_0p^{-c_1},
\]
where $c_0, c_1$ denote universal constants and $C(\alpha, d_{\max})$ a constant depending only on $\alpha, d_{\max}$.
\end{customlemma}
\begin{proof}
We write:
        \begin{align*}
        \norm{\frac{1}{n}\left(\widetilde{\bX} -
            \widehat{\bX}\right)^T\left(\widetilde{\bX} -
            \widehat{\bX}\right)\bu}_{\infty} &= \max_{a = 1, 2, \dots, p}\left\lvert\frac{1}{n}
            \sum_{i=1}^{n}\left(\widetilde{X}_{ia} -
            \widehat{X}_{ia}\right)\sum_{b=1}^{p}\left(\widetilde{X}_{ib}
-\widehat{X}_{ib}\right)u_b\right\rvert\\
    &\leq \max_{a \in
    1, 2, \dots,
            p}\sqrt{\frac{1}{n}\sum_{i=1}^{n}\left(\widetilde{X}_{ia} - \widehat{X}_{ia}\right)^2}
    \cdot \sqrt{\frac{1}{n}\sum_{i=1}^{n}\left(\sum_{b=1}^{p}\left(\widetilde{X}_{ib}
    -\widehat{X}_{ib}\right)u_b\right)^2}.
        \end{align*}
    We tackle the first term first, noticing that with probability at least $1 - c_0p^{-c_1}$, 
    \begin{align*}
        \max_{a \in
    1, 2, \dots,
            p}\sqrt{\frac{1}{n}\sum_{i=1}^{n}\left(\widetilde{X}_{ia} -
            \widehat{X}_{ia}\right)^2} &\leq \max_{a \in [p]}\sqrt{\frac{1}{n}\sum_{i=1}^{n}\left(\norm{\bSig_{a, S_{(i,a)}}\bSig_{S_{(i,a)},
        S_{(i,a)}}^{-1} -  \widetilde{\bSig}_{a, S_{(i,a)}}\widetilde{\bSig}_{S_{(i,a)},
        S_{(i,a)}}^{-1}}_{\infty}\sum_{j
    \in S_{(i,a)}}\left\lvert X_{ij}\right\rvert\right)^2}\\
        &\leq C(\alpha, d_{\max})\sqrt{\frac{\log{np}}{n}}\max_{a \in
        [p]}\sqrt{\frac{1}{n}\sum_{i=1}^{n}\left(\sqrt{S_{(i,a)}}\sum_{j
        \in S_{(i,a)}} \left\lvert X_{ij}\right\rvert\right)^2} \\
        &\leq C(\alpha, d_{\max})\sqrt{\frac{\log{np}}{n}},
    \end{align*}
    where the second inequality follows by lemma~\ref{lem:differencecontrol} and the last inequality follows by lemma~\ref{lem:maxmarkovboundarycontrol}.  The second term we analyzed in lemma~\ref{lem:term1thmGaussian}, and thus with probability at least $1 - c_0p^{-1} - \frac{1}{n}$, 
    \[
\sqrt{\frac{1}{n}\sum_{i=1}^{n}\left(\sum_{b=1}^{p}\left(\widetilde{X}_{ib}
    -\widehat{X}_{ib}\right)u_b\right)^2} \leq C(\alpha, d_{\max})\norm{\bu}_1\sqrt{\frac{\log{np}}{n}}.
    \]
    The result follows immediately.
\end{proof}

\subsection{Proof of lemma~\ref{lem:term3thmGaussian}}
\label{subsec:term3thmGaussian}
We re-state the lemma:
\begin{customlemma}{B.12}
Under the assumptions of theorem~\ref{thm:GaussianRes}, for any $\bu \in \mathbb{R}^{p}$, we have:
\[
\pr\left\{\norm{\frac{1}{n}\left(\widetilde{\bX} - \widehat{\bX}\right)^T\left(\widehat{\bX} - \bX\right)\bu}_{\infty} \geq C(\alpha, d_{\max})\norm{\bu}_1 \sqrt{\frac{\log{np}}{n}}\right\} \leq n^{-1} + c_0p^{-1},
\]
where $c_0, c_1$ denote universal constants and $C(\alpha, d_{\max})$ a constant depending only on $\alpha, d_{\max}$.
\end{customlemma}
\begin{proof}
We begin by writing
\begin{align*}
    \norm{\frac{1}{n}\left(\widetilde{\bX} -
            \widehat{\bX}\right)^T\left(\widehat{\bX} -
            \bX\right)\bbe_0}_{\infty} &\leq \max_{a \in
    1, 2, \dots,
        p}\sqrt{\frac{1}{n}\sum_{i=1}^{n}\left(\widetilde{X}_{ia} -
        \widehat{X}_{ia}\right)^2}
        \cdot \norm{\bu}_2\sqrt{\frac{1}{n}\sum_{i=1}^{n}\left\langle \widehat{\bX}_i -
        \bX_i, \widetilde{\bu} \right\rangle^2},
    \end{align*}
    where $\widetilde{\bu} = \frac{\bu}{\norm{\bu}_2}$.  Notice that by fact~\ref{fact:sumsubgaussian},
    the vector $\widehat{\bX}_{i} - \bX_i$ is sub-Gaussian with
    parameter $4 \norm{\bSig}_{op}$.  Notice also that:
        \begin{align*}
        \E\left\langle \widehat{\bX}_i -
            \bX_i, \widetilde{\bu} \right\rangle^2 &\leq \sum_{a,b}
            \left\lvert \widetilde{u}_{a} \widetilde{u}_{b} \right  \rvert \E \left
            \lvert \widehat{X}_{ia} - X_{ia}\right\rvert^2  \\
            &\leq 4\sum_{a,b} \left\lvert \widetilde{u}_{a}
            \widetilde{u}_{b} \right \rvert \E
            X_{ia}^2 \\
            &\leq 4\norm{\bSig}_{op} \norm{\widetilde{\bu}}_1^2,
        \end{align*}
        where the first inequality is by Cauchy Schwarz and the second by
        Jensen's inequality.  
        Applying Lemma~\ref{lem:subGaussianSquared} and
       	lemma~\ref{lem:bernstein}, we obtain:
        \begin{align*}
            \pr\left\{\frac{1}{n}\sum_{i=1}^{n}\left\langle \widehat{\bX}_i -
        \bX_i, \widetilde{\bu} \right\rangle^2 -  \E\left\langle \widehat{\bX}_i -
            \bX_i, \widetilde{\bu} \right\rangle^2 \geq t\right\} &\leq
            \text{exp}\left\{-c\frac{nt^2}{\norm{\bSig}_{op}^2}\right\}
        \end{align*}
        for all $t < C\norm{\bSig}_{op}$.  Setting $t = \norm{\bSig}_{op}$
        implies
        \[
        \sqrt{\frac{1}{n}\sum_{i=1}^{n}\left\langle \widehat{\bX}_i -
        \bX_i, \widetilde{\bu} \right\rangle^2} \leq C \norm{\bu}_1^2.
        \]
        with probability at least $1 - e^{-c n}$.  Note that by the analysis of lemma~\ref{lem:term2thmGaussian}, with probability at least $1 - c_0p^{-1} - n^{-1}$
        \[
        \max_{a \in
    1, 2, \dots,
        p}\sqrt{\frac{1}{n}\sum_{i=1}^{n}\left(\widetilde{X}_{ia} -
        \widehat{X}_{ia}\right)^2} \leq C(\alpha, d_{\max})\sqrt{\frac{\log{np}}{n}}.
        \]
        The result follows immediately.
\end{proof}

\subsection{Proof of lemma~\ref{lem:term5thmGaussian}}
\label{subsec:term5thmGaussian}
We re-state the lemma for convenience. 
\begin{customlemma}{B.14}
Under the assumptions of theorem~\ref{thm:GaussianRes}, we have:
\[
\pr\left\{\norm{\frac{1}{n}\left(\widetilde{\bX} - \widehat{\bX}\right)^T\beps}_{\infty} \geq C(\alpha, d_{\max})\sigma\sqrt{\frac{\log{np}}{n}}\right\} \leq c_0n^{-1} + c_1p^{-c_2},
\]
where $c_0, c_1$ denote universal constants and $C(\alpha, d_{\max})$ a constant depending only on $\alpha, d_{\max}$.
\end{customlemma}

\begin{proof}
We write:
\begin{align*}
    \norm{\frac{1}{n}\left(\widetilde{\bX} -
        \widehat{\bX}\right)^T\beps}_{\infty} &= \max_{a = 1, 2, \dots, p}
        \left\lvert \frac{1}{n}\sum_{i=1}^{n} \left(\widetilde{X}_{ia} -
        \widehat{X}_{ia}\right)\epsilon_i \right\rvert \\
        &\leq \max_{a = 1, 2, \dots,
        p}\sqrt{\frac{1}{n}\sum_{i=1}^{n}\left(\widetilde{X}_{ia} -
        \widehat{X}_{ia}\right)^2}\sqrt{\frac{1}{n}\sum_{i=1}^{n}\epsilon_i^2}.
    \end{align*}
    Note that by the analysis of lemma~\ref{lem:term2thmGaussian}, with probability at least $1 - c_0p^{-1} - n^{-1}$
        \[
        \max_{a \in
    1, 2, \dots,
        p}\sqrt{\frac{1}{n}\sum_{i=1}^{n}\left(\widetilde{X}_{ia} -
        \widehat{X}_{ia}\right)^2} \leq C(\alpha, d_{\max})\sqrt{\frac{\log{np}}{n}}.
        \]
        Now define the event $\mathcal{A}_E$ such that $\frac{1}{n}\sum_{i=1}^{n}
    \epsilon_i^2 \leq 2\sigma^2$ and notice that by
    lemma~\ref{lem:subGaussianSquared} and lemma~\ref{lem:bernstein}, we have
    $\pr\left\{\mathcal{A}_E^C\right\} \leq
    \exp\left\{-Cn\right\}$.  The result follows from these obvservations.
\end{proof}


\section{Facts about sub-gaussian and sub-exponential random variables}
\label{sec:appendixsubgaussian}
This appendix contains a collection of facts about sub-gaussian random variables.

\begin{lemma}
\label{lem:productsubgaussian}
Assume $X$ is $\sigma_X^2$ sub-gaussian and $Y$ is $\sigma_Y^2$ sub-gaussian and that $XY$ is zero-mean.
Then, $\forall \theta \leq \frac{1}{8e\sigma_X \sigma_Y}$:
\[\E e^{\theta XY} \leq e^{32e^2\theta^2 \sigma_X^2 \sigma_Y^2}.\]
\end{lemma}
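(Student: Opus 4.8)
The plan is to expand the moment generating function of $XY$ as a power series, bound the individual moments via Cauchy--Schwarz together with the separate sub-Gaussian moment growth of $X$ and $Y$, and then resum. First I would record the one-sided moment bound for a $\sigma^2$ sub-Gaussian variable. Integrating the tail estimate $\pr\{|X| \geq t\} \leq 2 e^{-t^2/(2\sigma_X^2)}$ against the layer-cake weight $2k\,t^{2k-1}\,dt$ and substituting $u = t^2/(2\sigma_X^2)$ produces a Gamma integral, yielding
\[
\E X^{2k} \leq 2k\,(2\sigma_X^2)^k\,\Gamma(k),
\]
and the identical bound for $Y$. This is the only place the sub-Gaussian hypothesis enters.

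Next I would write, using that all moments of $X$ and $Y$ are finite and that the series of absolute moments converges on the prescribed $\theta$-range (which justifies exchanging expectation and summation by dominated convergence),
\[
\E e^{\theta XY} = \sum_{k=0}^{\infty} \frac{\theta^k \,\E (XY)^k}{k!}.
\]
The $k=0$ term equals $1$, and the hypothesis that $XY$ is zero-mean annihilates the $k=1$ term $\theta\,\E(XY)=0$; this is exactly where that assumption is needed, since otherwise an uncentered linear-in-$\theta$ term would survive. For $k \geq 2$ I would apply Cauchy--Schwarz in the form $|\E (XY)^k| \leq \E |X|^k|Y|^k \leq \sqrt{\E X^{2k}\,\E Y^{2k}}$, which crucially requires no independence of $X$ and $Y$, and then insert the moment bound to obtain $|\E(XY)^k| \leq 2k\,\Gamma(k)\,(2\sigma_X\sigma_Y)^k$.

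The payoff of this specific moment bound is that the factorials telescope: since $\Gamma(k) = (k-1)!$, one has $\tfrac{2k\,\Gamma(k)}{k!} = 2$, so the tail of the series collapses to a geometric series,
\[
\E e^{\theta XY} \leq 1 + 2\sum_{k=2}^{\infty} \left(2|\theta|\sigma_X\sigma_Y\right)^k = 1 + \frac{2\,(2|\theta|\sigma_X\sigma_Y)^2}{1 - 2|\theta|\sigma_X\sigma_Y}.
\]
Under the stated range $|\theta| \leq (8e\sigma_X\sigma_Y)^{-1}$ the ratio satisfies $2|\theta|\sigma_X\sigma_Y \leq \tfrac{1}{4e} \leq \tfrac{1}{2}$, so the denominator is at least $\tfrac{1}{2}$ and the bracketed sum is at most $16\,\theta^2\sigma_X^2\sigma_Y^2$. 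Finishing with $1+x \leq e^x$ and the deliberately loose numerical estimate $16 \leq 32e^2$ delivers the claimed inequality.

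The computation is routine throughout, and no single step is a genuine obstacle; the two points that merit care are the justification of the term-by-term expansion (absolute convergence of the moment series on the given $\theta$-range, so that Fubini/dominated convergence applies) and the use of Cauchy--Schwarz to decouple the moments, which is what lets the argument avoid any independence assumption on $X$ and $Y$. I would also note in passing that the route above actually yields a much smaller constant than $32e^2$; the stated value is simply a loose, uniform bound carried elsewhere in the paper.
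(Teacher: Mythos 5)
Your proof is correct and follows essentially the same route as the paper's: Taylor-expand the moment generating function, kill the linear term with $\E XY = 0$, decouple via Cauchy--Schwarz $\lvert\E (XY)^k\rvert \leq \sqrt{\E X^{2k}\,\E Y^{2k}}$ (no independence needed), and bound the $2k$-th moments by $2k(2\sigma^2)^k\Gamma(k)$ through tail integration. The only divergence is in the resummation, where you exploit the exact identity $k\,\Gamma(k) = k!$ to collapse the series to a geometric one with ratio $2|\theta|\sigma_X\sigma_Y$, whereas the paper uses the looser Stirling-type bounds $\Gamma(k)\leq k^k$, $2k \leq 2^k$, $k! \geq k^k/e^k$ to get ratio $4e|\theta|\sigma_X\sigma_Y$; your bookkeeping yields the sharper constant $16$ before you correctly relax it to the stated $32e^2$.
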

\begin{proof}
Taylor expanding $e^{\theta XY}$ gives: 
\begin{align*}
\E e^{\theta XY} \leq 1 +
\sum_{k=2}^{\infty} \frac{\theta^{k}\E |XY|^{k}}{k!} \leq 1 +
\sum_{k=2}^{\infty} \frac{\theta^{k} \sqrt{\E |X|^{2k} \E |Y|^{2k}}}{k!},
\end{align*}
where the first inequality follows since $\E XY = 0$ and $XY \leq |XY|$ and the
second inequality follows by Cauchy-Schwarz.  Now,
\begin{align}
\label{eq:2k_moment}
\E |X|^{2k} = \int_{0}^{\infty} 2k t^{2k-1} \Pr\left\{|X| \geq t\right\} dt \leq 2k
(2\sigma_X^2)^k \Gamma(k),
\end{align}
where $\Gamma(k) = \int_{0}^{\infty} t^{k-1}e^{-t}dt$.  Stirling's inequality
gives $\Gamma(k) \leq k^{k}$ and noting that $2k \leq 2^{k}$ for $k \geq 2$
implies $\E |X|^{2k} \leq (4\sigma_X^2)^{k} k^{k}$.  Proceeding similarly for
$\E |Y|^{2k}$ yields 
\[1 +
\sum_{k=2}^{\infty} \frac{\theta^{k} \sqrt{\E |X|^{2k} \E |Y|^{2k}}}{k!} \leq 1
+ \sum_{k=2}^{\infty} \frac{\theta^k (4\sigma_X \sigma_Y)^k k^k}{k!} \leq 1 +
\sum_{k=2}^{\infty}(4\theta e \sigma_X \sigma_Y)^k = 1 + \frac{(4\theta e
\sigma_X \sigma_Y)^{2}}{1 - 4\theta e \sigma_X \sigma_Y},\]
where the third inequality used the fact that $k! \geq \frac{k^k}{e^k}$ and the
last equality invoked the sum of a geometric series.  Finally, noting that
$\theta \leq \frac{1}{8e\sigma_X \sigma_Y}$, we can upper bound 
\[1 + \frac{(4\theta e
\sigma_X \sigma_Y)^{2}}{1 - 4\theta e \sigma_X \sigma_Y} \leq 1 +
32e^2\theta^2\sigma_X^2\sigma_Y^2 \leq e^{32 e^2 \theta^2 \sigma_X^2 \sigma_Y^2}.\]
\end{proof}

\noindent We will repeatedly make use of the following related lemma whose proof
is nearly identical.
\begin{lemma}
\label{lem:subGaussianSquared}
    Assume $X$ is $\sigma_X^2$ sub-gaussian.  Then, $\forall \theta \leq
    \frac{1}{16e\sigma_X^2}$:
    \[
    \E e^{\theta \left(X^2 - \E X^2\right)} \leq e^{128e^2\theta^2\sigma_X^4}.
    \]
\end{lemma}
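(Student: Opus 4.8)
The plan is to mirror the proof of Lemma~\ref{lem:productsubgaussian} almost verbatim, the only genuinely new ingredient being the centering of $X^2$. First I would Taylor-expand the moment generating function and use that $X^2 - \E X^2$ is mean-zero to annihilate the linear term, passing to absolute moments so the sign of $\theta$ and of $X^2-\E X^2$ is immaterial:
\[
\E e^{\theta(X^2 - \E X^2)} = 1 + \sum_{k=2}^{\infty} \frac{\theta^k \E (X^2 - \E X^2)^k}{k!} \leq 1 + \sum_{k=2}^{\infty} \frac{|\theta|^k \E |X^2 - \E X^2|^k}{k!}.
\]

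Next I would control the centered absolute moments $\E|X^2 - \E X^2|^k$ by the elementary split $|a-b|^k \le 2^{k-1}(|a|^k + |b|^k)$, which gives $\E|X^2 - \E X^2|^k \le 2^{k-1}\bigl(\E X^{2k} + (\E X^2)^k\bigr)$. Both pieces are then handled by the raw moment estimate already recorded in~\eqref{eq:2k_moment}, namely $\E|X|^{2k}\le 2k(2\sigma_X^2)^k \Gamma(k) \le (4\sigma_X^2)^k k^k$ (using $\Gamma(k)\le k^k$ and $2k\le 2^k$ for $k\ge 2$); the same estimate at $k=1$ yields $\E X^2 \le 4\sigma_X^2$, so $(\E X^2)^k \le (4\sigma_X^2)^k \le (4\sigma_X^2)^k k^k$. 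Combining the two bounds gives $\E|X^2 - \E X^2|^k \le 2^{k}(4\sigma_X^2)^k k^k = (8\sigma_X^2)^k k^k$.

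Then I would substitute this into the series and invoke $k! \ge k^k/e^k$ to collapse each summand into a geometric term:
\[
\E e^{\theta(X^2 - \E X^2)} \le 1 + \sum_{k=2}^{\infty} (8e\theta\sigma_X^2)^k = 1 + \frac{(8e\theta\sigma_X^2)^2}{1 - 8e\theta\sigma_X^2},
\]
where summing the geometric series requires $8e|\theta|\sigma_X^2 < 1$. Finally, imposing the hypothesis $|\theta| \le \tfrac{1}{16e\sigma_X^2}$ forces $8e|\theta|\sigma_X^2 \le \tfrac12$, so the denominator exceeds $\tfrac12$ and the bound becomes $1 + 2(8e\theta\sigma_X^2)^2 = 1 + 128 e^2\theta^2\sigma_X^4 \le e^{128e^2\theta^2\sigma_X^4}$, which is exactly the claim.

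I expect no real obstacle beyond bookkeeping: the only deviation from Lemma~\ref{lem:productsubgaussian} is the centering, requiring (i) the passage to absolute moments and (ii) care in tracking the $2^{k-1}$ factor from the split so that the constants land precisely on the stated $128e^2$. The threshold $\tfrac{1}{16e\sigma_X^2}$ is chosen exactly to turn the geometric-series denominator into a clean factor of $2$, so the constant-chasing is the only delicate part and it is already pinned down by the target bound.
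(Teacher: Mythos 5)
Your proposal is correct and takes essentially the same route as the paper: Taylor-expand the MGF, kill the linear term by centering, bound the centered absolute moments by (a constant)$^k$ times the raw moments $\E|X|^{2k}$ from~\eqref{eq:2k_moment}, and sum the geometric series exactly as in Lemma~\ref{lem:productsubgaussian}. The only cosmetic difference is the centering step: you use the convexity split $|a-b|^k \leq 2^{k-1}(|a|^k + |b|^k)$ together with $(\E X^2)^k \leq (4\sigma_X^2)^k$, while the paper writes $X^2 - \E X^2 = X^2 - \E_{X_1} X_1^2$ for an i.i.d.\ copy $X_1$ and applies Jensen's inequality to get $\E|X^2 - \E X^2|^k \leq 2^k \E|X|^{2k}$; both yield the same bound $(8\sigma_X^2)^k k^k$ and hence identical constants.
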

\begin{proof}
As before, we begin by Taylor expanding $e^{\lambda XY}$ gives: 
\begin{align*}
\E e^{\lambda \left(X^2 - \E X^2\right)} \leq 1 +
\sum_{k=2}^{\infty} \frac{\lambda^{k}\E \left\lvert X^2 - \E X^2\right\rvert^{k}}{k!} \leq 1 +
    \sum_{k=2}^{\infty} \frac{\left(2\lambda\right)^{k} \E |X|^{2k}}{k!}
\end{align*}
    where the last inequality follows by writing $X^2 - \E X^2 = X^2 - \E_{X_1}
    X_1^2$ where $X_1$ is an i.i.d. copy of $X$ and using Jensen's inequality.  The remainder of the proof
    follows exactly as that of Lemma~\ref{lem:productsubgaussian}.
\end{proof}

\noindent We will use another similar lemma, whose proof we omit as it can be easily derived using the ideas of the previous two lemmas.
\begin{lemma}
\label{lem:absvalprodsubgaussian}
Assume $X$ is $\sigma_X^2$ sub-gaussian and $Y$ is $\sigma_Y^2$ sub-gaussian.  Then, for all $\lvert \theta \rvert \leq \frac{1}{16e\sigma_X \sigma_Y}$:
\[
\E \exp\left\{\theta \left(\lvert XY \rvert - \E \lvert XY \rvert \right)\right\} \leq \exp\left\{128e^2\theta^2 \sigma_X^2 \sigma_Y^2\right\}.
\]
\end{lemma}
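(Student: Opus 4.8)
The plan is to combine the Taylor-expansion strategy of Lemma~\ref{lem:productsubgaussian} with the symmetrization-by-an-independent-copy argument used in Lemma~\ref{lem:subGaussianSquared}. Writing $Z = \lvert XY \rvert - \E \lvert XY \rvert$, I would first expand the moment generating function as $\E e^{\theta Z} = 1 + \theta \E Z + \sum_{k \geq 2} \theta^k \E Z^k / k!$ and note that the linear term vanishes, since $Z$ is centered by construction. Bounding $\theta^k \E Z^k \le \lvert\theta\rvert^k \E \lvert Z\rvert^k$ reduces the task to controlling the central absolute moments $\E\lvert Z\rvert^k$ for $k \ge 2$; this is the only place that requires any care, and it is the step I expect to be the crux.

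For those central moments I would introduce an independent copy $(X_1, Y_1)$ of $(X,Y)$ and write $\E \lvert XY\rvert = \E_{X_1,Y_1}\lvert X_1 Y_1\rvert$, so that $\E\lvert Z\rvert^k = \E_{X,Y}\lvert \E_{X_1,Y_1}(\lvert XY\rvert - \lvert X_1 Y_1\rvert)\rvert^k \le \E\lvert\, \lvert XY\rvert - \lvert X_1 Y_1\rvert\,\rvert^k$ by Jensen's inequality applied to the convex map $t \mapsto \lvert t\rvert^k$. The elementary bound $\lvert a - b\rvert^k \le 2^{k-1}(\lvert a\rvert^k + \lvert b\rvert^k)$ together with $\E\lvert X_1 Y_1\rvert^k = \E\lvert XY\rvert^k$ then yields $\E\lvert Z\rvert^k \le 2^k\, \E\lvert XY\rvert^k$. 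This extra factor $2^k$ is exactly the centering cost that turns the constant $\tfrac{1}{8e\sigma_X\sigma_Y}$ of Lemma~\ref{lem:productsubgaussian} into the tighter range $\tfrac{1}{16e\sigma_X\sigma_Y}$ claimed here.

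It then remains to reuse the moment estimates already established in the proof of Lemma~\ref{lem:productsubgaussian}. By Cauchy--Schwarz, $\E\lvert XY\rvert^k \le \sqrt{\E\lvert X\rvert^{2k}\,\E\lvert Y\rvert^{2k}}$, and the sub-Gaussian moment bound $\E\lvert X\rvert^{2k}\le 2k(2\sigma_X^2)^k\Gamma(k)\le (4\sigma_X^2)^k k^k$ (and likewise for $Y$) gives $\E\lvert XY\rvert^k \le (4\sigma_X\sigma_Y)^k k^k$, hence $\E\lvert Z\rvert^k \le (8\sigma_X\sigma_Y)^k k^k$. Plugging this into the expansion and using $k! \ge k^k/e^k$ leaves the geometric series $\sum_{k\ge 2}(8\lvert\theta\rvert e\sigma_X\sigma_Y)^k$. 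Under the hypothesis $\lvert\theta\rvert \le \tfrac{1}{16e\sigma_X\sigma_Y}$ the ratio $r = 8\lvert\theta\rvert e\sigma_X\sigma_Y$ satisfies $r \le 1/2$, so the series is at most $r^2/(1-r) \le 2r^2 = 128e^2\theta^2\sigma_X^2\sigma_Y^2$. Thus $\E e^{\theta Z} \le 1 + 128e^2\theta^2\sigma_X^2\sigma_Y^2 \le e^{128e^2\theta^2\sigma_X^2\sigma_Y^2}$, as claimed. The main obstacle, as anticipated, lies entirely in the bookkeeping of the centering step and in tracking the constants so that the stated range of $\theta$ and the final exponent come out consistently; no genuinely new idea beyond the two preceding lemmas is required, which is why the paper omits the details.
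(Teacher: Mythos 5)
Your proof is correct, and it is exactly the argument the paper intends: the paper omits the proof of Lemma~\ref{lem:absvalprodsubgaussian}, stating it follows from the ideas of Lemmas~\ref{lem:productsubgaussian} and~\ref{lem:subGaussianSquared}, which is precisely your combination of the Taylor expansion with Cauchy--Schwarz moment bounds and the independent-copy/Jensen centering step. Your constant tracking checks out, including the observation that the $2^k$ centering cost is what shrinks the admissible range of $\theta$ from $(16e\sigma_X\sigma_Y)^{-1}$ versus the $(8e\sigma_X\sigma_Y)^{-1}$ of the uncentered lemma, and the final bound $r^2/(1-r)\le 2r^2 = 128e^2\theta^2\sigma_X^2\sigma_Y^2$ for $r\le 1/2$.
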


\noindent We collect a few simple facts about sub-gaussian random variables, which we state without proof.
\begin{fact}
\label{fact:expectationsubgaussianproduct}
Assume $X$ is $\sigma_X^2$ sub-Gaussian and $Y$ is $\sigma_Y^2$ sub-gaussian, then $\E XY \leq \sigma_X \sigma_Y$.
\end{fact}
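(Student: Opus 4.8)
The plan is to reduce the claim to the Cauchy--Schwarz inequality after first extracting a second-moment bound from the sub-Gaussian moment generating function hypothesis. Concretely, I would first show that any mean-zero $\sigma_X^2$-sub-Gaussian variable satisfies $\E X^2 \le \sigma_X^2$, and symmetrically $\E Y^2 \le \sigma_Y^2$. Given these two bounds, the conclusion is immediate: since $\E XY \le |\E XY|$, Cauchy--Schwarz gives $\E XY \le \sqrt{\E X^2 \, \E Y^2} \le \sqrt{\sigma_X^2 \sigma_Y^2} = \sigma_X \sigma_Y$, which is exactly the assertion.

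The only substantive step is therefore the variance bound, and I would obtain it directly from the defining inequality $\E e^{\theta X} \le e^{\sigma_X^2 \theta^2/2}$ (valid for all $\theta \in \mathbb{R}$). The idea is to compare the two sides near $\theta = 0$. Both the function $g(\theta) = \E e^{\theta X}$ and the function $h(\theta) = e^{\sigma_X^2 \theta^2/2}$ equal $1$ at $\theta = 0$, and both have vanishing first derivative there (for $g$ this uses $\E X = 0$, since $g'(0) = \E X$). Because $g(\theta) \le h(\theta)$ in a neighborhood of $0$ with matching zeroth- and first-order terms, the second-order coefficients must satisfy $g''(0) \le h''(0)$. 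Since $g''(0) = \E X^2$ and $h''(0) = \sigma_X^2$, this yields $\E X^2 \le \sigma_X^2$. Equivalently, one may write $\E X^2 = \lim_{\theta \to 0}\big(\E e^{\theta X} - 1\big)/(\theta^2/2)$ and bound the right-hand side by $\limsup_{\theta\to 0}\big(e^{\sigma_X^2\theta^2/2}-1\big)/(\theta^2/2) = \sigma_X^2$.

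There is essentially no genuine obstacle here; the result is elementary. The one point deserving a line of care is the differentiation-under-the-expectation (or equivalently the Taylor-remainder control) used to justify $g''(0) = \E X^2$, which is legitimate precisely because the sub-Gaussian hypothesis guarantees that the moment generating function is finite and smooth in a neighborhood of the origin, so all moments exist and the power series may be differentiated termwise. Once $\E X^2 \le \sigma_X^2$ and $\E Y^2 \le \sigma_Y^2$ are in hand, the Cauchy--Schwarz step requires no assumptions beyond finiteness of the second moments, so the proof closes in a single further line.
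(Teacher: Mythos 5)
Your proof is correct. Note that the paper itself offers no argument here---this fact appears in a list that the authors explicitly ``state without proof''---so there is nothing to compare against; what you have written is the standard and, importantly, the \emph{sharp} derivation. Two points in its favor are worth recording. First, the second-order MGF comparison at $\theta=0$ is genuinely needed to get the constant $1$: the cruder tail-integration route used elsewhere in the paper (the computation $\E|X|^{2k} \le 2k(2\sigma_X^2)^k\Gamma(k)$, specialized to $k=1$) only yields $\E X^2 \le 4\sigma_X^2$, which would degrade the fact to $\E XY \le 4\sigma_X\sigma_Y$. Your comparison argument is sound: with $\phi = h - g \ge 0$ near $0$, $\phi(0)=\phi'(0)=0$, the Taylor expansion $\phi(\theta) = \tfrac{\theta^2}{2}\phi''(0) + o(\theta^2)$ forces $\phi''(0)\ge 0$, and the interchange of limit and expectation is justified exactly as you say, e.g.\ by dominating $|e^{\theta x}-1-\theta x|/\theta^2$ with $\tfrac{x^2}{2}e^{|x|}$ for $|\theta|\le 1$, which is integrable under the sub-Gaussian hypothesis. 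Second, your Cauchy--Schwarz step delivers the stronger conclusion $\E|XY| \le \sigma_X\sigma_Y$, which is in fact the form the paper invokes later (e.g.\ when bounding $\E\,\lvert \widehat{X}_{ia}X_{i,L(b)}\rvert \le \sigma_X^2$ in the AR(1) analysis), and it requires no independence between $X$ and $Y$, matching the fact's hypotheses.
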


\begin{fact}
\label{fact:sumsubgaussian}
Assume $X$ is $\sigma_X^2$ sub-gaussian and $Y$ is $\sigma_Y^2$ sub-gaussian.  Then $X + Y$ is $2\left(\sigma_X^2 + \sigma_Y^2\right)$ sub-gaussian.
\end{fact}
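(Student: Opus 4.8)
The plan is to control the moment generating function (MGF) of $X+Y$ directly, since the paper's definition of $\sigma^2$-sub-Gaussianity is precisely an MGF bound of the form $\E e^{\theta W} \le e^{\sigma^2\theta^2/2}$ for all $\theta \in \mathbb{R}$. The key point is that no independence between $X$ and $Y$ is assumed, so I cannot simply factor the MGF of the sum into a product of MGFs; instead I will use Cauchy--Schwarz, which is exactly the tool that lets one decouple $\E[e^{\theta X}e^{\theta Y}]$ without independence, at the (unavoidable) cost of a factor of two in the variance proxy.

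First I would write, for an arbitrary $\theta \in \mathbb{R}$,
\[
\E e^{\theta(X+Y)} = \E\!\left[e^{\theta X}e^{\theta Y}\right] \le \left(\E e^{2\theta X}\right)^{1/2}\left(\E e^{2\theta Y}\right)^{1/2},
\]
where the inequality is Cauchy--Schwarz applied to the two nonnegative functions $e^{\theta X}$ and $e^{\theta Y}$. Next I would invoke the sub-Gaussian hypotheses on $X$ and $Y$, but evaluated at the doubled argument $2\theta$: these give $\E e^{2\theta X} \le e^{2\sigma_X^2\theta^2}$ and $\E e^{2\theta Y} \le e^{2\sigma_Y^2\theta^2}$, since $\sigma_X^2(2\theta)^2/2 = 2\sigma_X^2\theta^2$ and likewise for $Y$.

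Combining the two displays yields
\[
\E e^{\theta(X+Y)} \le e^{\sigma_X^2\theta^2}\,e^{\sigma_Y^2\theta^2} = e^{(\sigma_X^2+\sigma_Y^2)\theta^2} = e^{\frac{2(\sigma_X^2+\sigma_Y^2)}{2}\theta^2},
\]
which is exactly the MGF bound characterizing a $2(\sigma_X^2+\sigma_Y^2)$-sub-Gaussian random variable; since $\theta$ was arbitrary, this proves the claim. There is no real obstacle here---the statement is a routine consequence of Cauchy--Schwarz---and the only point worth flagging is that the factor $2$ in the variance proxy is genuinely a byproduct of not assuming independence: had $X$ and $Y$ been independent one could factor the MGF directly and obtain the sharper parameter $\sigma_X^2+\sigma_Y^2$.
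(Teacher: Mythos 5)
Your proof is correct. The paper states this fact without proof (it appears in the list of ``simple facts about sub-gaussian random variables, which we state without proof''), so there is no argument of the paper's to compare against; your Cauchy--Schwarz argument is the standard way to establish it, and every step checks out: applying the sub-Gaussian hypothesis at the doubled argument gives $\E e^{2\theta X} \leq e^{\sigma_X^2 (2\theta)^2/2} = e^{2\sigma_X^2 \theta^2}$ and likewise for $Y$, and combining yields $\E e^{\theta(X+Y)} \leq e^{(\sigma_X^2 + \sigma_Y^2)\theta^2}$, which is exactly the MGF bound for the parameter $2(\sigma_X^2 + \sigma_Y^2)$. (Since the paper's definition requires mean zero, note also that $X+Y$ is mean zero, so the conclusion is well-posed; this is immediate.)

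One small correction to your closing remark: the factor of $2$ is \emph{not} genuinely forced by the absence of independence. H\"older's inequality with conjugate exponents $p = (\sigma_X + \sigma_Y)/\sigma_X$ and $q = (\sigma_X + \sigma_Y)/\sigma_Y$ --- equivalently, the triangle inequality for the sub-Gaussian norm --- gives the sharper parameter $(\sigma_X + \sigma_Y)^2$ without any independence assumption, and $(\sigma_X + \sigma_Y)^2 \leq 2(\sigma_X^2 + \sigma_Y^2)$ with equality only when $\sigma_X = \sigma_Y$. Cauchy--Schwarz is just the special case $p = q = 2$ of this, and it is the reason your constant is the slightly weaker one. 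This does not affect the validity of your proof, since the fact as stated claims only the constant $2(\sigma_X^2 + \sigma_Y^2)$, which your argument delivers.
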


\section{Concentration inequalities}
\label{sec:appendixConcentration}
In this appendix, we provide some useful concentration inequalities.  

\subsection{Sub-exponential concentration}
We will often be interested in the sub-gaussian portion of the tail of sub-exponential random variables.  The following well-known lemma formalizes this concentration.
\begin{lemma}
\label{lem:subexpconcentration}
Assume $Z_1, Z_2, \dots, Z_n$ are i.i.d. zero-mean random variables such that $\E e^{\theta Z_i} \leq e^{\theta^2 \sigma_Z^2 /2}$ for all $\lvert \theta \rvert \leq \widebar{\theta}$.  Then, for any $t \leq \sigma_Z^2 \widebar{\theta}$:
\[\pr\left\{\left \lvert \frac{1}{n}\sum_{i=1}^{n}Z_i \right \rvert \geq t\right\} \leq 2\exp\left\{-\frac{nt^2}{2\sigma_Z^2}\right\}\]
\end{lemma}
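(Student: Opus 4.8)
The plan is to apply the Chernoff bounding method, exploiting the fact that the hypothesized moment generating function bound holds on the symmetric interval $\lvert \theta \rvert \le \widebar{\theta}$, and that the condition $t \le \sigma_Z^2 \widebar{\theta}$ is precisely what is needed to keep the optimal exponential-tilting parameter inside this interval. Because the variables are zero-mean with a genuinely sub-Gaussian MGF bound on $[-\widebar\theta,\widebar\theta]$, the sum inherits a sub-Gaussian tail as long as we do not tilt past $\widebar\theta$.

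First I would control the upper tail. For any $\theta \in [0, \widebar{\theta}]$, Markov's inequality applied to $e^{\theta \sum_i Z_i}$, together with independence and the assumed bound $\E e^{\theta Z_i} \le e^{\theta^2 \sigma_Z^2/2}$, gives
\[
\pr\left\{\frac{1}{n}\sum_{i=1}^n Z_i \ge t\right\} \le e^{-\theta n t} \prod_{i=1}^n \E e^{\theta Z_i} \le \exp\left\{-\theta n t + \frac{n\theta^2 \sigma_Z^2}{2}\right\}.
\]
The exponent is a convex quadratic in $\theta$, minimized at $\theta^\star = t/\sigma_Z^2$. The key observation is that the hypothesis $t \le \sigma_Z^2 \widebar{\theta}$ is exactly equivalent to $\theta^\star \le \widebar{\theta}$, so the unconstrained minimizer is admissible. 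Choosing $\theta = \theta^\star$ produces exponent $-nt^2/(2\sigma_Z^2)$, whence $\pr\{\frac{1}{n}\sum_i Z_i \ge t\} \le \exp\{-nt^2/(2\sigma_Z^2)\}$.

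Then I would obtain the lower tail by the identical argument applied to $-Z_i$: since the MGF bound is assumed for all $\lvert \theta \rvert \le \widebar{\theta}$, it holds in particular for $\theta \in [-\widebar{\theta},0]$, and the same optimization yields $\pr\{\frac{1}{n}\sum_i Z_i \le -t\} \le \exp\{-nt^2/(2\sigma_Z^2)\}$. A union bound over the two one-sided events introduces the factor of $2$ and gives the stated two-sided inequality. There is no substantive obstacle here — the only point requiring care is checking that $\theta^\star$ stays feasible, which is guaranteed by $t \le \sigma_Z^2 \widebar{\theta}$; for larger $t$ one would have to clip $\theta$ to $\widebar\theta$ and would recover only the linear-in-$t$ (sub-exponential) regime, but that range is outside the scope of this lemma.
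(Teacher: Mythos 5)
Your proposal is correct and is essentially the paper's own argument: both apply the Chernoff--Markov bound with the optimal tilt (your $\theta^\star = t/\sigma_Z^2$ per summand is the same as the paper's choice $\theta = nt/\sigma_Z^2$ applied to the average), observe that the condition $t \leq \sigma_Z^2 \widebar{\theta}$ is exactly what keeps the optimizer feasible, and handle the lower tail by applying the same bound to $-Z_i$ with a union bound supplying the factor of $2$. No gap to report.
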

\begin{proof}
Note that $\pr\left\{\left \lvert \frac{1}{n}\sum_{i=1}^{n}Z_i \right \rvert \geq t\right\} = \pr\left\{\frac{1}{n}\sum_{i=1}^{n}Z_i \geq t\right\} + \pr\left\{\frac{1}{n}\sum_{i=1}^{n}Z_i  \leq -t\right\}$.  We analyze the first term:
\begin{align}
\label{eq:subexpconcentration}
\pr\left\{\frac{1}{n}\sum_{i=1}^{n}Z_i \geq t\right\} &\leq e^{-\theta t}e^{\frac{\theta^2 \sigma_Z^2}{2n}} = e^{-\frac{nt^2}{2\sigma_Z^2}}
\end{align}
where the first step has used Markov's inequality and the second step is by taking $\theta = \frac{nt}{\sigma_Z^2}$ which is by assumption $\leq \widebar{\theta}$.  The second term follows by considering $-Z_i$ which have the same property of the moment generating function, and we are done.
\end{proof}

The following lemma (Bernstein's inequality) is a generalization of this lemma, whose proof follows by optimizing the RHS of the inequality in~\eqref{eq:subexpconcentration} over all $\theta \in \mathbb{R}$.  The proof is straightforward and we omit it here:
\begin{lemma} (Bernstein Inequality)
\label{lem:bernstein}
Assume $Z_1, Z_2, \dots, Z_n$ are i.i.d. zero-mean random variables such that $\E e^{\theta Z_i} \leq e^{\theta^2 \sigma_Z^2 /2}$ for all $\lvert \theta \rvert \leq \widebar{\theta}$.  Then for any $t \geq 0$:
\[
\pr\left\{\left \lvert \frac{1}{n}\sum_{i=1}^{n}Z_i \right \rvert \geq t\right\} \leq 2\exp\left\{-\frac{n}{2}\min\left(\frac{t^2}{\sigma_Z^2}, t\widebar{\theta}\right)\right\}
\]
\end{lemma}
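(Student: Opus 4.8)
The plan is to follow the Chernoff-bound recipe already used for Lemma~\ref{lem:subexpconcentration}, but to optimize the exponent over the admissible range $|\theta| \le \widebar{\theta}$ rather than committing to the single value $\theta = nt/\sigma_Z^2$. First I would reduce to a one-sided bound by writing $\pr\{|\frac{1}{n}\sum_i Z_i| \ge t\} \le \pr\{\frac{1}{n}\sum_i Z_i \ge t\} + \pr\{\frac{1}{n}\sum_i Z_i \le -t\}$ and noting that the variables $-Z_i$ satisfy the same moment generating function hypothesis, so it suffices to bound one tail and double the result.

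For the upper tail, independence tensorizes the moment generating function, giving $\E \exp\{\theta \sum_i Z_i\} \le \exp\{n\theta^2\sigma_Z^2/2\}$ for every $0 \le \theta \le \widebar{\theta}$. Applying Markov's inequality to $\exp\{\theta\sum_i Z_i\}$ then yields, exactly as in~\eqref{eq:subexpconcentration},
\[
\pr\left\{\frac{1}{n}\sum_{i=1}^n Z_i \ge t\right\} \le \exp\left\{-n\left(\theta t - \frac{\theta^2\sigma_Z^2}{2}\right)\right\}, \qquad 0 \le \theta \le \widebar{\theta}.
\]
The remaining step is to maximize the concave function $g(\theta) = \theta t - \theta^2\sigma_Z^2/2$ over the interval $[0,\widebar{\theta}]$, equivalently to make the exponent $-n g(\theta)$ as negative as possible.

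The optimization splits into two cases according to whether the unconstrained maximizer $\theta^\star = t/\sigma_Z^2$ lies inside $[0,\widebar{\theta}]$. If $t \le \sigma_Z^2\widebar{\theta}$, then $\theta^\star$ is admissible and the choice $\theta = t/\sigma_Z^2$ yields $g(\theta^\star) = t^2/(2\sigma_Z^2)$, reproducing exactly $\frac{1}{2}\min(t^2/\sigma_Z^2, t\widebar{\theta})$ since in this regime $t^2/\sigma_Z^2 \le t\widebar{\theta}$. If instead $t > \sigma_Z^2\widebar{\theta}$, then $g'(\theta) = t - \theta\sigma_Z^2 > 0$ throughout $[0,\widebar{\theta}]$, so $g$ is increasing there and the best admissible choice is $\theta = \widebar{\theta}$; using $\widebar{\theta} < t/\sigma_Z^2$ one checks $g(\widebar{\theta}) = \widebar{\theta} t - \widebar{\theta}^2\sigma_Z^2/2 \ge \widebar{\theta} t/2 = \frac{1}{2}\min(t^2/\sigma_Z^2, t\widebar{\theta})$. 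Combining the two cases and doubling for the lower tail produces the stated bound. There is no genuine obstacle here: the only point requiring care is verifying that the constrained optimum reproduces precisely the $\min(t^2/\sigma_Z^2, t\widebar{\theta})$ form, and observing that the sub-Gaussian regime $t \le \sigma_Z^2\widebar{\theta}$ recovers Lemma~\ref{lem:subexpconcentration} as the special case.
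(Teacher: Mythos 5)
Your proof is correct and is precisely the argument the paper has in mind: it omits the proof, noting only that it follows by optimizing the Chernoff exponent from Lemma~\ref{lem:subexpconcentration} over the admissible range of $\theta$, which is exactly your case analysis on whether $\theta^\star = t/\sigma_Z^2$ lies in $[0,\widebar{\theta}]$. Both cases check out (in particular $g(\widebar{\theta}) \geq \widebar{\theta}t/2$ when $t > \sigma_Z^2\widebar{\theta}$), so nothing further is needed.
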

\subsection{Tail Bounds for Sums of Random Variables}
\begin{lemma}
\label{lem:truncation}
    Consider iid non-negative random variables $(Z_i)_{i \in [n]}$ and assume
    the tail bound:
    \begin{align*}
    \pr\left\{Z_i \geq t\right\} \leq Ce^{-ct^{\gamma}} 
    \end{align*}
    where $\gamma \in (0,1)$ and $C, c$ are constants.  Then, assuming $t < \frac{8eC}{\gamma
    c}\left(n \frac{\gamma c}{64e^2C}\right)^{\frac{1 - \gamma}{\gamma}}$:
   \begin{align*}
    \pr\left\{\frac{1}{n}\sum_{i=1}^{n} Z_i \geq 2(t + \E Z_1)\right\} \leq 2
        \text{exp}\left\{-C\left(\left(n t^2\right)^{\frac{\gamma}{2 - \gamma}}
        + \log{n}\right)\right\}
    \end{align*} 
    where $C$ is a universal constant.
\end{lemma}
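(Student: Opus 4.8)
The plan is to run a truncation argument, since for $\gamma<1$ the tail is heavier than exponential and the moment generating function of $Z_i$ is infinite for every $\theta>0$, ruling out a direct Chernoff bound. First I would fix a truncation level $M>0$ (to be optimized at the end) and introduce the event $\mathcal{A}_M=\{\max_{i\in[n]}Z_i\le M\}$. A union bound together with the hypothesized tail gives $\pr\{\mathcal{A}_M^c\}\le nCe^{-cM^\gamma}$, and on $\mathcal{A}_M$ the sum is unchanged, $\sum_i Z_i=\sum_i\bar{Z}_i$ where $\bar{Z}_i=Z_i\mathbbm{1}\{Z_i\le M\}\in[0,M]$. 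Thus it suffices to control the bounded i.i.d.\ sum $\frac1n\sum_i\bar{Z}_i$ and then add back the overflow probability $\pr\{\mathcal{A}_M^c\}$.

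Next I would apply a Bernstein-type bound to the centered truncated variables $\bar{W}_i=\bar{Z}_i-\E\bar{Z}_i$. These are bounded by $M$ in absolute value and have variance at most $v:=\E Z_1^2$, which is finite because the stretched-exponential tail forces $\E Z_1^k\le \frac{Ck}{\gamma}c^{-k/\gamma}\Gamma(k/\gamma)<\infty$ for all $k$ (derived exactly as in the calculation of $\E|X|^{2k}$ preceding Lemma~\ref{lem:productsubgaussian}); truncation only decreases these moments. Consequently $\E e^{\theta\bar{W}_i}\le e^{\theta^2\sigma_Z^2/2}$ with $\sigma_Z^2\asymp v$ for all $|\theta|\le \bar{\theta}\asymp 1/M$, so Lemma~\ref{lem:bernstein} yields $\pr\{\frac1n\sum_i\bar{W}_i\ge 2t\}\le 2\exp\{-\frac n2\min(\frac{(2t)^2}{\sigma_Z^2},\,2t\bar{\theta})\}$. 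Since $\E\bar{Z}_i\le\E Z_i=\mu$, the event $\frac1n\sum_i\bar{Z}_i\ge 2(t+\mu)$ implies $\frac1n\sum_i\bar{W}_i\ge 2t+\mu\ge 2t$, so this bound transfers directly to the truncated sum.

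Combining the two pieces gives $\pr\{\frac1n\sum_iZ_i\ge 2(t+\mu)\}\le nCe^{-cM^\gamma}+2\exp\{-\frac n2\min(\frac{4t^2}{\sigma_Z^2},\,\frac{2t}{M}\cdot\text{const})\}$, and the final step is to choose $M$ so that the two error terms are of the same order. Selecting $M$ of the order $(nt^2)^{1/(2-\gamma)}$ (up to the explicit constants $c,C,\gamma$) makes the overflow exponent $cM^\gamma$ match the target $(nt^2)^{\gamma/(2-\gamma)}$, while the factor $n$ multiplying the overflow term produces the additive $\log n$ inside the exponent. The hypothesis $t<\frac{8eC}{\gamma c}(n\frac{\gamma c}{64e^2C})^{(1-\gamma)/\gamma}$ is then invoked to guarantee that, for this $M$, the deviation $2t$ stays within the range where the sub-Gaussian branch of Bernstein's inequality is in force and the overflow term is the dominant one.

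I expect the delicate, and main, obstacle to be precisely this tuning: verifying that for the chosen $M$ the two competing exponents genuinely balance to the stated form with the claimed constants, and that the $t$-range hypothesis places us on the intended branch of Lemma~\ref{lem:bernstein} rather than its heavier linear branch (which would degrade the exponent). The truncation and Bernstein steps themselves are routine once the moment bound $\E Z_1^k\le\frac{Ck}{\gamma}c^{-k/\gamma}\Gamma(k/\gamma)$ and the boundedness $\bar{Z}_i\le M$ are in hand; the bookkeeping of constants $c,C,e,\gamma$ through the optimization of $M$ is where all the real work lies.
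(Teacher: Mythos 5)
Your skeleton (truncate at $M$, control the overflow by a union bound, apply Bernstein to the truncated centered variables, then balance $M$) is the same as the paper's, and your overflow step and the choice $M \asymp (nt^2)^{1/(2-\gamma)}$ are fine. But there is a genuine gap exactly at the step you flagged as "routine": your MGF bound for the truncated variable is too weak. You treat $\bar Z_i$ as merely \emph{bounded by $M$ with constant variance $v = \E Z_1^2$}, which gives Bernstein parameters $(\sigma_Z^2, \bar\theta) \asymp (v, 1/M)$. With these parameters the sub-Gaussian branch of Lemma~\ref{lem:bernstein} only applies for $t \lesssim \sigma_Z^2\bar\theta \asymp v/M$; plugging in $M \asymp (nt^2)^{1/(2-\gamma)}$, this forces $t \lesssim n^{-1/(4-\gamma)}$, a vanishing range, while the lemma's hypothesis allows $t$ as large as $n^{(1-\gamma)/\gamma}$. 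Outside that tiny range you are stuck on the linear branch $\asymp nt/M$, and it does not deliver the claimed exponent: at the extreme $t \asymp n^{(1-\gamma)/\gamma}$ one gets $M \asymp n^{1/\gamma}$ and $nt/M \asymp 1$ -- a trivial bound -- whereas the lemma demands an exponent $\asymp (nt^2)^{\gamma/(2-\gamma)} \asymp n$ there. Re-optimizing $M$ to balance $cM^{\gamma}$ against $nt/M$ (i.e., $M \asymp (nt)^{1/(1+\gamma)}$) still only yields exponent $(nt)^{\gamma/(1+\gamma)}$, strictly weaker than the target for large $t$. So no tuning of $M$ rescues the argument once you have discarded all tail information beyond boundedness and the second moment.

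The missing idea, which is the heart of the paper's proof, is that the stretched-exponential tail makes the \emph{effective Bernstein range} of $Z_1^{\downarrow}$ equal to $M^{1-\gamma}$, not $M$. Concretely, the paper bounds the truncated moments by splitting the tail integral, $\E \lvert Z_1^{\downarrow}\rvert^{\ell} = \int_0^M \ell y^{\ell-1}\pr\{Z_1 \ge y\}\,dy \le \frac{\ell\, M^{\ell(1-\gamma)}\Gamma(\ell)}{\gamma c^{\ell}}$ (writing $y^{\ell-1} = y^{\ell-\ell_0} y^{\ell_0-1}$ with $\ell_0 = \gamma\ell$), so the moments grow like $\ell!\,(M^{1-\gamma}/c)^{\ell}$. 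This yields $\E e^{\lambda(Z_1^{\downarrow} - \E Z_1^{\downarrow})} \le \exp\{\tfrac{32e^2C}{\gamma c^2} M^{2-2\gamma}\lambda^2\}$ for $\lambda \lesssim c\,M^{\gamma-1}$, i.e., variance proxy $\asymp M^{2-2\gamma}$ (growing with $M$, unlike your constant $v$) but scale $\bar\theta \asymp c M^{\gamma-1} \gg 1/M$. With these parameters the sub-Gaussian branch holds for all $t \lesssim M^{1-\gamma}$, which is precisely the lemma's hypothesis after substituting $M \asymp \bigl(nt^2 \tfrac{\gamma c}{64e^2C}\bigr)^{1/(2-\gamma)}$, and the resulting exponent $n t^2/M^{2-2\gamma} \asymp M^{\gamma} \asymp (nt^2)^{\gamma/(2-\gamma)}$ then matches the overflow exponent $cM^{\gamma} - \log(Cn)$, giving the statement. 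Your correct observation that $\E Z_1^k \le \frac{Ck}{\gamma} c^{-k/\gamma}\Gamma(k/\gamma)$ actually points at the problem: these moments grow like $(k!)^{1/\gamma}$, far faster than sub-exponential, so only the $M$-dependent truncated-moment bound -- not boundedness plus a variance -- captures the correct trade-off.
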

\begin{proof}
We proceed by a truncation argument. Let $Z_i^{\downarrow} :=
    Z_i\mathbbm{1}\left\{Z_i \leq
    M\right\}$ and $Z_i^{\uparrow} := Z_i\mathbbm{1}\left\{Z_i \geq M\right\}$.  We then see that:
    \begin{align*}
        \pr\left\{\frac{1}{n}\sum_{i=1}^{n}Z_i \geq 2 ( t + \E Z_1)\right\} &\leq \underbrace{\pr\left\{\frac{1}{n}\sum_{i=1}^{n}Z_i^{\downarrow} \geq
        t + \E Z_1\right\}}_{A.} + \underbrace{\pr\left\{\frac{1}{n}\sum_{i=1}^{n}Z_i^{\uparrow} \geq
       t + \E Z_1\right\}}_{B.} 
    \end{align*}
    We examine term B. first.  Notice that $\pr\left\{\frac{1}{n}\sum_{i=1}^{n}Z_i^{\uparrow} \geq
        t + \E Z_1\right\} \leq n \pr\left\{Z_1^{\uparrow} \geq
        t + \E Z_1\right\}$.  Additionally, we have:
        \begin{align*}
        \pr\left\{Z_1^{\uparrow} \geq
            t + \E Z_1\right\} &= \pr\left\{Z_1^{\uparrow} \geq t + \E Z_1 \mid
            Z_1 < M\right\}\pr\left\{Z_1 <
            M\right\} + \\
            &\pr\left\{Z_1^{\uparrow} \geq t + \E Z_1 \mid
            Z_1 \geq M\right\}\pr\left\{Z_1 \geq M\right\}
            \\
            &\leq \pr\left\{Z_1 \geq M\right\}
        \end{align*}
    Thus:
    \begin{align*}
        \pr\left\{\frac{1}{n}\sum_{i=1}^{n}Z_i \geq 2(t + \E Z_1)\right\} &\leq
        \pr\left\{\frac{1}{n}\sum_{i=1}^{n}Z_i^{\downarrow} \geq
        t + \E Z_1\right\} + n\pr\left\{Z_1 \geq M\right\} \\
        &\leq 2\pr\left\{\frac{1}{n}\sum_{i=1}^{n}Z_i^{\downarrow} \geq
        t + \E Z_1\right\} \vee n\pr\left\{Z_1 \geq M\right\} \\
        &\leq 2\pr\left\{\frac{1}{n}\sum_{i=1}^{n}Z_i^{\downarrow} - \E
        Z_i^{\downarrow} \geq
        t\right\} \vee n\pr\left\{Z_1 \geq M\right\}
    \end{align*}
    where the last inequality follows because $\E Z_1 \geq \E
    Z_1^{\downarrow}$.  The truncated portion now has a moment generating function that exists, so
    we aim to compute $\pr\left\{\frac{1}{n}\sum_{i=1}^{n}Z_i^{\downarrow} - \E
    Z_i^{\downarrow} \geq
        t\right\}$ by bounding
        $\E\left\{e^{\lambda \left(Z_1^{\downarrow} - \E
        Z_1^{\downarrow}\right)}\right\}$.  We have:
    \begin{align*}
        \E\left\{e^{\lambda \left(Z_1^{\downarrow} - \E Z_1^{\downarrow}\right)}\right\} = \sum_{\ell =
        0}^{\infty}\frac{\lambda^{\ell} \E  \left(Z_1^{\downarrow} - \E
        Z_1^{\downarrow}
        \right)^{\ell}}{\ell!} \leq 1 + \sum_{\ell
        =2}^{\infty}\frac{\lambda^{\ell} 2^{\ell} \E  \left\lvert
        Z_1^{\downarrow} \right\rvert^{\ell}}{\ell!}
    \end{align*}
    where the inequality is by the same trick used in the proof of Lemma~\ref{lem:subGaussianSquared}.  Now, notice that 
    \begin{align*}
        \E \left \lvert Z_1^{\downarrow} \right \rvert ^{\ell} &= \int_{0}^{\infty}
        \ell y^{\ell - 1} \pr\left\{\left\lvert Z_1^{\downarrow} \right\rvert
        \geq y\right\}dy \\
        &= \int_{0}^{M}
        \ell y^{\ell - \ell_0}y^{\ell_0 - 1} \pr\left\{\left\lvert Z_1^{\downarrow} \right\rvert
        \geq y\right\}dy \\
        &\leq \frac{\ell M^{\ell - \ell_0}\Gamma\left(\ell_0
        \gamma^{-1}\right)}{\gamma c^{\ell_0 \gamma^{-1}}}
    \end{align*}
    Here, we have chosen $\ell_0$ arbitrarily, and we set it such that $\ell_0
    \gamma^{-1} = \ell$.  Thus, we have:
    \begin{align*}
    1 + \sum_{\ell
        =2}^{\infty}\frac{\lambda^{\ell} 2^{\ell} \E  \left\lvert
        Z_1^{\downarrow} \right\rvert^{\ell}}{\ell!} &\leq 1 + \sum_{\ell =
        2}^{\infty} \frac{(2\lambda)^{\ell}}{\ell!}\frac{\ell M^{\ell - \ell_0}\Gamma\left(\ell_0
        \gamma^{-1}\right)}{\gamma c^{\ell_0 \gamma^{-1}}} \\
        &\leq 1 + \frac{C}{\gamma}\sum_{\ell = 2}^{\infty} \left(\frac{4e\lambda
        M^{1 - \gamma}}{c}\right)^{\ell} 
    \end{align*}
    Now, under the assumption $\lambda < \frac{c M^{1 - \gamma}}{8e}$, we get
    the upper bound: 
    \begin{align*}
    \E\left\{e^{\lambda \left(Z_1^{\downarrow} - \E
        Z_1^{\downarrow}\right)}\right\} \leq \text{exp}\left\{\frac{32 e^2
        C}{\gamma c^2} M^{2 - 2\gamma} \lambda^2\right\} 
    \end{align*}
    Thus, we can see that under the assumption $t < \frac{8eC}{\gamma c} M^{1 -
    \gamma}$, we have:
    \begin{align*}
    \pr\left\{\frac{1}{n}\sum_{i=1}^{n}Z_i^{\downarrow} \geq
        t + \E Z_1\right\} \leq 2\text{exp}\left\{-\frac{nt^2 \gamma c^2}{64e^2C
        M^{2 - 2\gamma}}\right\} 
    \end{align*}
    Putting these pieces together, we see that under the same assumption on $t$:
    \begin{align*}
        \pr\left\{\frac{1}{n}\sum_{i=1}^{n} Z_i \geq 2(t + \E Z_1)\right\} \leq
        2\text{exp}\left\{-\frac{nt^2 \gamma c^2}{64e^2C
        M^{2 - 2\gamma}}\right\} \vee \text{exp}\left\{-cM^{\gamma} +
        \log{Cn}\right\}
    \end{align*}
    Balancing terms, we set the truncation $M = \left(nt^2 \frac{\gamma c}{64
    e^2 C}\right)^{\frac{1}{2 - \gamma}}$.  Thus, if $t < \frac{8eC}{\gamma
    c}\left(n \frac{\gamma c}{64e^2C}\right)^{1 - \gamma}{\gamma}$, we have:
    \begin{align*}
    \pr\left\{\frac{1}{n}\sum_{i=1}^{n} Z_i \geq 2(t + \E Z_1)\right\} \leq 2
        \text{exp}\left\{-c\left(n t^2 \frac{\gamma c}{64 e^2 C}\right)^{\frac{\gamma}{2 - \gamma}} + \log{C n}\right\}
    \end{align*}
\end{proof}

\end{document}